\newtheorem{theorem}{Theorem}
\newtheorem{prop}[theorem]{Proposition}
\newtheorem{lem}[theorem]{Lemma}
\newtheorem{cor}[theorem]{Corollary}
\newtheorem{defn}[theorem]{Definition}
\newtheorem{bem}[theorem]{Remark}
\newtheorem{example}[theorem]{Example}
\newtheorem{thm}{Theorem}
\newcommand{\R}{\mathbb{R}}
\newcommand{\C}{\mathbb{C}}
\newcommand{\N}{\mathbb{N}}
\newcommand{\Ha}{\mathcal H}
\newcommand{\Lie}{\mathcal L}
\def\H{{\mathcal H}}
\newcommand{\id}{\operatorname{Id}}
\newcommand{\tr}{\operatorname{tr}}
\newcommand{\pr}{pr}
\newcommand{\abs}[1]{\lvert#1\rvert}
\newcommand{\grad}{\text{grad}}
\newcommand{\gog}{{\mathfrak g}}
\newcommand{\gol}{{\mathfrak l}}
\newcommand{\gom}{{\mathfrak m}}
\newcommand{\goa}{{\mathfrak a}}
\newcommand{\gon}{{\mathfrak n}}
\newcommand{\gop}{{\mathfrak p}}
\newcommand{\goso}{{\mathfrak so}}
\newcommand{\Pol}{\operatorname{Pol}}
\newcommand{\Sol}{\operatorname{Sol}}
\newcommand{\Diff}{\operatorname{Diff}}
\newcommand{\Ad}{\operatorname{Ad}}
\newcommand{\GL}{\operatorname{GL}}
\newcommand{\Ind}{\operatorname{Ind}}
\newcommand{\Hom}{\operatorname{Hom}}
\newcommand{\ch}{{\,\vee}}
\newcommand{\Ch}{{\bf ch}}
\newcommand{\dm}{d}
\def\st{\stackrel{\text{def}}{=}}
\def\X{{\mathfrak X}}
\def\dim{\operatorname{dim}}
\numberwithin{theorem}{subsection}
\numberwithin{equation}{section}
\begin{document}

\title[Symmetry breaking operators on differential forms]
{Conformal symmetry breaking differential operators on differential forms}

\author[Matthias Fischmann, Andreas Juhl and Petr Somberg]
{Matthias Fischmann$^1$, Andreas Juhl$^2$ and Petr Somberg$^1$}

\address{$^1$ E. \v{C}ech Institute, Mathematical Institute of Charles University,
Sokolovsk\'a 83, Praha 8 - Karl\'{\i}n, Czech Republic}

\email{fischmann@karlin.mff.cuni.cz, somberg@karlin.mff.cuni.cz}

\address{$^2$ Humboldt-Universit\"at, Institut f\"ur Mathematik, Unter den Linden
6, 10099 Berlin, Germany}

\email{ajuhl@math.hu-berlin.de}

\keywords{Symmetry breaking operators, Branson-Gover operators, $Q$-curvature
operators, residue families, generalized Verma modules, branching problems,
singular vectors, Jacobi and Gegenbauer polynomials}

\subjclass[2010]{Primary 22E46, 35J30, 53A30; Secondary 22E47, 33C45}

\thanks{$^{1}$ Research supported by grant GA~CR~P201/12/G028.
\newline \indent
$^2$ Research supported by SFB $647$ "Space-Time-Matter" of DFG at
Humboldt-University Berlin.}

\begin{abstract}
We study conformal symmetry breaking differential operators which map
differential forms on $\R^n$ to differential forms on a codimension one
subspace $\R^{n-1}$. These operators are equivariant with respect to the
conformal Lie algebra of the subspace $\R^{n-1}$. They correspond to
homomorphisms of generalized Verma modules for $\goso(n,1)$ into generalized
Verma modules for $\goso(n+1,1)$ both being induced from fundamental form
representations of a parabolic subalgebra. We apply the $F$-method to derive
explicit formulas for such homomorphisms. In particular, we find explicit
formulas for the generators of the intertwining operators of the related
branching problems restricting generalized Verma modules for $\goso(n+1,1)$ to
$\goso(n,1)$. As consequences, we find closed formulas for all conformal
symmetry breaking differential operators in terms of the first-order operators
$\dm$, $\delta$, $\bar{\dm}$ and $\bar{\delta}$ and certain hypergeometric
polynomials. A dominant role in these studies will be played by two infinite
sequences of symmetry breaking differential operators which depend on a complex
parameter $\lambda$. These will be termed the conformal first and second type
symmetry breaking operators. Their values at special values of $\lambda$ appear
as factors in two systems of factorization identities which involve the
Branson-Gover operators of the Euclidean metrics on $\R^n$ and $\R^{n-1}$ and
the operators $\dm$, $\delta$, $\bar{\dm}$ and $\bar{\delta}$ as factors,
respectively. Moreover, they are shown to naturally recover the gauge companion
and $Q$-curvature operators of the Euclidean metric on the subspace $\R^{n-1}$,
respectively.
\end{abstract}

\maketitle

\centerline \today

\tableofcontents

\section{Introduction}\label{intro}

The present paper is motivated by recent developments in conformal differential
geometry. One of the central notions in this area is Branson's critical
$Q$-curvature $Q_n(g) \in C^\infty(M)$ of a Riemannian manifold $(M,g)$ of even
dimension $n$. The curvature invariant $Q_n(g)$ is the last element in the
sequence $Q_2(g), Q_4(g), \dots, Q_n(g)$ of curvature invariants of respective
orders $2,4,\dots,n$ \cite{Branson}. If $(M,g)$ is regarded as a hypersurface
in the Riemannian manifold $(X,g)$, the $Q$-curvatures of $(M,g)$ naturally
contribute to the structure of one-parameter families of conformally covariant
differential operators
\begin{equation}\label{curved-D}
   D_{2N}(g;\lambda): C^\infty(X) \to C^\infty(M), \; \lambda \in \C
\end{equation}
of order $2N$ mapping functions on the ambient manifold $X$ to functions on $M$
\cite{Juhl}. Here conformal covariance means that
$$
   e^{(\lambda+N)\iota^*(\varphi)} D_{2N}(e^{2\varphi}g;\lambda)(u) = D_{2N}(g;\lambda)
   \left(e^{\lambda \varphi} u\right)
$$
for all $u, \varphi \in C^\infty(X)$, and $\iota^*$ denotes the pull-back
defined by the embedding $\iota: M \hookrightarrow X$. Such families generalize
the even-order families
\begin{equation}\label{intertwine}
   D_{2N}(\lambda): C^\infty(S^{n+1}) \to C^\infty(S^n), \; \lambda \in \C
\end{equation}
of differential operators which are associated to the equatorial embedding $S^n
\hookrightarrow S^{n+1}$ of spheres, and which intertwine spherical principal
series representations of the conformal group of the embedded round sphere
$S^n$, but not of the conformal group of the ambient round sphere $S^{n+1}$.
The latter lack of invariance motivates to refer to them as {\it symmetry
breaking operators} \cite{kobayashi-speh, Kobayashi-Pevzner}.

The closely related residue families \cite{Juhl} are associated to embeddings
$\iota: M \to M \times [0,\varepsilon)$, $\iota(m) = (m,0)$ of $M$ into small
neighborhoods. In this case, the metrics on the neighborhoods are determined by
the Poincar\'e-Einstein metric of the metric $g$ on $M$ in the sense of
Fefferman and Graham \cite{FG3}. The connection of residue families to
$Q$-curvatures reveals astonishing recursive structures in the sequence
$Q_{2N}$ of $Q$-curvatures \cite{Juhl1}.

The equivariant flat models $C^\infty(\R^{n+1}) \to C^\infty(\R^n)$ of
\eqref{intertwine} coincide with the residue families for the flat metric on
$\R^n$. In turn, these operators correspond to homomorphisms of generalized
Verma modules. Their classification is an algebraic problem \cite{Juhl},
\cite{koss}.

In the present paper, we study conformal symmetry breaking differential
operators
$$
   \Omega^p(\R^n) \to \Omega^q(\R^{n-1})
$$
on differential forms which are associated to the embedding $\iota: \R^{n-1}
\to \R^n$. We shall use Euclidean coordinates $x_1,\dots,x_n$ on $\R^n$ and
assume that $\R^{n-1} \subset \R^n$ is the hyperplane $x_n = 0$. The operators
of interest are equivariant for the conformal Lie algebra
$\mathfrak{so}(n,1,\R)$ of $\R^{n-1}$ with respect to principal series
representations of $\mathfrak{so}(n,1,\R)$ and $\mathfrak{so}(n+1,1,\R)$
realized on forms on $\R^{n-1}$ and $\R^n$, respectively. The operators
correspond to symmetry breaking differential operators $\Omega^p (S^n) \to
\Omega^q(S^{n-1})$. We emphasize that the study of symmetry breaking operators
of that type which are {\em not} differential operators remains outside the
scope of this paper. However, for the sake of simplicity we shall often refer
to conformal symmetry breaking differential operators just as conformal
symmetry breaking operators.

A fundamental method for the systematic study of symmetry breaking differential
operators was introduced in \cite{koss,kob,Kobayashi-Pevzner} and is termed the
$F$-method. It provides a systematic method to study the corresponding
homomorphism of generalized Verma modules. In our situation, it yields a
description of the corresponding homomorphisms of generalized Verma modules for
the two orthogonal Lie algebras $\mathfrak{so}(n,1,\R)$ and
$\mathfrak{so}(n+1,1,\R)$. Applying the $F$-method, actually leads to explicit
descriptions of all conformal symmetry breaking differential operators on
differential forms. It turns out that the operators of interest are given by
two infinite sequences
$$
   D_N^{(p \to p)}(\lambda): \Omega^p(\R^n) \to \Omega^p(\R^{n-1}) \quad
   \mbox{and} \quad D_N^{(p \to p-1)}(\lambda): \Omega^p(\R^n) \to
   \Omega^{p-1}(\R^{n-1}), \; \N \in \N_0
$$
of one-parameter families, some additional operators, and compositions of these
with the Hodge star operator of the Euclidean metric on $\R^{n-1}$. These
operators will be referred to as the first and second type families and the
third and fourth type operators, respectively. We shall display explicit
formulas for all operators and establish their basic mapping properties.

The following theorem is our main result on even-order families of the first
type (Theorem \ref{coeffeven}).

\begin{thm}\label{main-even} Assume that $N \in \N$ and $0 \le p \le n-1$. The one-parameter family
\begin{equation*}
   D_{2N}^{(p \to p)}(\lambda): \Omega^p(\R^n) \to \Omega^p(\R^{n-1}), \; \lambda \in \C,
\end{equation*}
of differential operators, which is defined by the formula
\begin{align}\label{SBO-even-0}
   & D_{2N}^{(p \to p)}(\lambda) = \sum_{i=1}^{N-1} (\lambda\!+\!p\!-\!2i) \alpha_i^{(N)}(\lambda) (\dm \delta)^{N-i}
   \iota^* (\bar{\delta} \bar{\dm})^i \notag \\
   & + (\lambda\!+\!p) \sum_{i=0}^N \alpha_i^{(N)}(\lambda) (\dm\delta)^{N-i} \iota^* (\bar{\dm} \bar{\delta})^i
   + (\lambda\!+\!p\!-\!2N) \sum_{i=0}^N \alpha_i^{(N)}(\lambda) (\delta \dm)^{N-i}
   \iota^*(\bar{\delta} \bar{\dm})^i
\end{align}
with the coefficients
\begin{equation}\label{SBO-coeff-a}
   \alpha_i^{(N)}(\lambda) = (-1)^{i} 2^N \frac{N!}{(2N)!} \binom{N}{i}
   \prod_{k=i+1}^N (2\lambda\!+\!n\!-\!2k) \prod_{k=1}^i (2\lambda\!+\!n\!-\!2k\!-\!2N\!+\!1),
\end{equation}
satisfies the intertwining relation
\begin{equation}\label{inter-even}
   \dm\pi^{\prime (p)}_{-\lambda+2N-p}(X) D_{2N}^{(p \to p)}(\lambda) =
   D_{2N}^{(p \to p)}(\lambda) \dm\pi_{-\lambda-p}^{(p)}(X) \quad
   \mbox{for all $X \in \mathfrak{so}(n,1,\R)$}.
\end{equation}
\end{thm}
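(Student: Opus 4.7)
The plan is to apply the F-method, which converts the intertwining condition \eqref{inter-even} into an algebraic condition on the polynomial symbol of $D_{2N}^{(p\to p)}(\lambda)$ on the dual of the nilpotent radical of a maximal parabolic in $\mathfrak{so}(n,1,\R)$.

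First, I would decompose $\mathfrak{so}(n,1,\R) = \bar{\gon}^- + \gol + \bar{\gon}^+$ adapted to the maximal parabolic $\bar{\gop} = \gol + \bar{\gon}^+$ that stabilizes the isotropic direction fixing the model for $\R^{n-1}$. Equivariance of the ansatz \eqref{SBO-even-0} with respect to all of $\bar{\gop}$ is essentially automatic: the building blocks $\dm, \delta, \bar{\dm}, \bar{\delta}$ and $\iota^*$ are conformally equivariant with known weights on the Levi factor, every monomial in \eqref{SBO-even-0} has the same total differential degree $2N$, and the dilation shift $(-\lambda+2N-p)-(-\lambda-p)=2N$ exactly matches this degree. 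Hence only equivariance under a single transverse generator $Y \in \bar{\gon}^-$, namely a special conformal inversion of $\R^{n-1}$ transverse to $\bar{\gop}$, remains to be checked.

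After Fourier transform along $\bar{\gon}^+$, the operator $\pi_{-\lambda-p}^{(p)}(Y)$ becomes a first-order differential operator $\widehat{\pi}(Y)$ with polynomial coefficients acting on $\Lambda^p(\R^n)^*$-valued polynomials on $\bar{\gon}^{+*}$, and the intertwining requirement reduces to a singular-vector equation that the symbol $\hat{D}_{2N}^{(p\to p)}(\lambda)$ lie in the kernel of $\widehat{\pi}(Y)$ modulo the ideal of polynomials vanishing on the hyperplane dual to $\iota$. Decomposing this symbol into $\gol$-isotypic components built from the two scalar invariants $|\xi|^2$ and $\xi_n^2$ and from the wedge/contraction operators associated with $\dm,\delta,\bar{\dm},\bar{\delta}$ produces exactly the three-block structure of \eqref{SBO-even-0}. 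The scalar prefactors $(\lambda+p-2i)$, $(\lambda+p)$, $(\lambda+p-2N)$ are forced to appear as the eigenvalues of the conformal weight shifts on the three $\gol$-isotypic components under this decomposition, which both explains their form and reduces the singular-vector equation to a two-term recursion relating $\alpha_{i}^{(N)}(\lambda)$ and $\alpha_{i+1}^{(N)}(\lambda)$.

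The \emph{principal obstacle} is then the final step: verifying that the closed product formula \eqref{SBO-coeff-a} solves this recursion. I would do this by recognizing the generating function $\sum_i \alpha_i^{(N)}(\lambda)\, t^i$ as a shifted Jacobi polynomial in disguise, as foreshadowed by the paper's keyword list, and reducing the recursion to a standard contiguous relation for Gauss ${}_2F_1$-functions. The most delicate piece of bookkeeping is tracking how $\dm, \delta, \bar{\dm}, \bar{\delta}$ braid past $\iota^*$ under the action of $Y$; in particular, the interior product $\iota_{\partial_n}$ generated when $\iota^*$ is commuted past tangential operators must be collected correctly so that all spurious terms cancel on the nose, leaving precisely the Jacobi-type contiguous identity to be checked.
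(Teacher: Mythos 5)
Your overall strategy --- F-method, reduction to the special-conformal generators transverse to the parabolic of $\gog^\prime$, solution of the resulting singular-vector equation, and identification of the coefficients via hypergeometric identities --- is exactly the paper's route (Sections \ref{SingularVectors}, \ref{DiffOp} and \ref{geometric}). Your reduction to $\gol^\prime$-equivariance plus a single transverse generator is also sound, since the $E_j^+$ are conjugate under $M^\prime \simeq SO(n-1,\R)$; note only that the singular-vector condition is an exact annihilation condition on $\Pol_{2N}(\gon_-^*(\R))\otimes \Lambda^p(\gon_-(\R))\otimes\C_\lambda$, not a condition modulo an ideal.

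The gap sits in the step you yourself flag as the principal obstacle, and it is structural rather than merely computational. The three sums in \eqref{SBO-even-0} are \emph{not} the $\gol^\prime$-isotypic components of the symbol: the symbols of $(\bar{\dm}\bar{\delta})^i$ and $(\bar{\delta}\bar{\dm})^i$ overlap (their sum is the scalar $\bar{\Delta}^i$), and the paper stresses that this way of writing the family is not canonical. What the F-method actually produces after the $\gol^\prime$-analysis is an ansatz in the genuinely isotypic basis $\xi_n^{2N-2j}|\xi'|^{2j}\otimes\id$, $\xi_n^{2N-2j-1}|\xi'|^{2j}\,E_n\wedge i_E$, $\xi_n^{2N-2j-2}|\xi'|^{2j}\,\alpha\wedge i_E$, and the transverse equivariance then becomes the coupled system \eqref{eq:EvenSystemPtoP} of six ODEs for three polynomials $P,Q,R$ in $t=|\xi'|^2/\xi_n^2$, solved by \emph{Gegenbauer} coefficients --- not a single two-term recursion for the $\alpha_i^{(N)}(\lambda)$. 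Passing from that form (Theorem \ref{EvenDiffOp-type1}) to \eqref{SBO-even-0} is a separate change of basis: one expands $\iota^*\partial_n^{2k}$ and $\iota^*i_{\partial_n}\partial_n^{2k+1}$ via $\partial_n^2=\Delta-\bar{\Delta}$ and $i_{\partial_n}\partial_n=\delta-\bar{\delta}$, regroups, and evaluates the resulting binomially weighted sums of Gegenbauer coefficients in closed form by the Zhu--Vandermonde formula (Lemma \ref{RelationJacobiGegenbauer} and the identities \eqref{id-1}--\eqref{id-3}). That is also where the prefactors $(\lambda+p)$, $(\lambda+p-2i)$, $(\lambda+p-2N)$ actually come from: they are outputs of the summation, not a priori eigenvalues of conformal weight shifts. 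As written, your plan would stall at the point where you expect a clean two-term recursion for the $\alpha_i^{(N)}$; what is really needed is the intermediate Gegenbauer solution followed by a genuine resummation (a terminating ${}_2F_1$ evaluated at $1$, rather than a contiguous relation) to arrive at \eqref{SBO-coeff-a}.
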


Here we used the following notation. In \eqref{SBO-even-0}, the operators $\dm$
and $\delta$ are the differential and co-differential on forms on $\R^{n-1}$.
Their counterparts on $\R^n$ are denoted by $\bar{\dm}$ and $\bar{\delta}$.
$\iota^*$ denotes the pull-back induced by the embedding $\iota: \R^{n-1}
\hookrightarrow \R^n$. Finally, the representations $\pi_{\lambda}^{(p)}$ on
$\Omega^p(\R^{n})$ are defined by
$$
   \pi_{\lambda}^{(p)}(\gamma) = e^{\lambda \Phi_\gamma} \gamma_*
$$
for all conformal diffeomeorphisms $\gamma$ of the Euclidean metric $g_0$ on
$\R^n$ so that $\gamma_*(g_0) = e^{2\Phi_\gamma} g_0$ for some $\Phi_\gamma \in
C^\infty(\R^{n})$. The analogous representations on $\Omega^p(\R^{n-1})$ are
denoted by $\pi^{\prime (p)}_\lambda$. The coefficients
$\alpha_i^{(N)}(\lambda)$ are related to Jacobi polynomials (Remark
\ref{coeff-Jacobi}).

Note that one may also include (a constant multiple of) the operator $\iota^*$
as the case $N=0$ in the sequence $D_{2N}^{(p \to p)}(\lambda)$.

By formula \eqref{SBO-even-0}, the families $D_{2N}^{(p \to p)}(\lambda)$ are
{\em linear} in the degree $p$ of the forms. The family $D_{2N}^{(0 \to
0)}(\lambda)$ equals the product of $(\lambda-2N)$ and the equivariant
even-order family $D_{2N}(\lambda)$ studied in \cite{Juhl} and
\cite{koss}.\footnote{The families $D_{2N}^{(p\to p)}(\lambda)$ raise conformal
weights. In contrast, all families in \cite{Juhl} lower conformal weights
because of opposite conventions for representations. The conventions here
correspond to those in \cite{koss}.} We recall that the families
$D_{2N}(\lambda): C^\infty(S^n) \to C^\infty(S^{n-1})$ interpolate between the
GJMS-operators of order $2N$ on $S^n$ and $S^{n-1}$. Similarly, the conformal
symmetry breaking operators $D_{2N}^{(p \to p)}(\lambda)$ interpolate between
\begin{equation}\label{interpol-1}
   D_{2N}^{(p \to p)}(N\!-\!\tfrac{n-1}{2}) = -L_{2N}^{(p)} \iota^*
\end{equation}
and
\begin{equation}\label{interpol-2}
   D_{2N}^{(p \to p)}(N\!-\!\tfrac{n}{2}) = -\iota^* \bar{L}_{2N},
\end{equation}
where $L_{2N}^{(p)}$ and $\bar{L}_{2N}^{(p)}$ are the respective Branson-Gover
operators of order $2N$ on the Euclidean spaces $\R^{n-1}$ and $\R^n$
\cite{BransonGover}. These identities are special cases of the factorizations
of the families $D_{2N}^{(p \to p)}(\lambda)$ (for specific choices of
$\lambda$) into products of lower-order families and Branson-Gover operators
(Theorem \ref{MainFactEven1}).

The following result provides the analogous formula for odd-order families
$\Omega^p(\R^n) \to \Omega^p(\R^{n-1})$ of the first type (see Theorem
\ref{coeffodd2} and Remark \ref{gamma-alternative}).

\begin{thm}\label{main-odd} Assume that $N \in \N_0$ and $0 \le p \le n-1$. The
one-parameter family
\begin{equation*}
   D_{2N+1}^{(p \to p)}(\lambda): \Omega^p(\R^n) \to \Omega^p(\R^{n-1}), \; \lambda \in \C,
\end{equation*}
of differential operators, which is defined by the formula
\begin{align}\label{SBO-odd-0}
   D_{2N+1}^{(p \to p)}(\lambda) & = \sum_{i=1}^N  \gamma_i^{(N)}(\lambda;p)(\dm \delta)^{N-i}\dm \iota^* i_{\partial_n}
   (\bar{\delta}\bar{\dm})^i \notag \\
   & + (\lambda\!+\!p) \sum_{i=0}^N  \beta_i^{(N)}(\lambda)
   (\dm\delta)^{N-i}\dm \iota^* i_{\partial_n}(\bar{\dm}\bar{\delta})^i \notag \\
   & + (\lambda\!+\!p\!-\!2N\!-\!1) \sum_{i=0}^{N}  \beta_i^{(N)}(\lambda) (\delta\dm)^{N-i}
   \iota^* i_{\partial_n} \bar{\dm}(\bar{\delta}\bar{\dm})^i
\end{align}
with the coefficients
\begin{equation}
   \beta_i^{(N)}(\lambda) \st (-1)^i 2^N \frac{N!}{(2N\!+\!1)!} \binom{N}{i}
   \prod_{k=i+1}^N (2\lambda\!+\!n\!-\!2k) \prod_{k=1}^i (2\lambda\!+\!n\!-\!2k\!-\!2N\!-\!1)
\end{equation}
and
\begin{equation}
   \gamma_i^{(N)}(\lambda;p) = (\lambda\!+\!p\!-\!2i) \beta_i^{(N)}(\lambda) -(\lambda\!+\!p\!-\!2i\!+\!1)
   \beta_{i-1}^{(N)}(\lambda),
\end{equation}
satisfies the intertwining relation
\begin{equation}\label{inter-odd}
   \dm\pi^{\prime (p)}_{-\lambda+2N+1-p}(X) D_{2N+1}^{(p \to p)}(\lambda) =
   D_{2N+1}^{(p \to p)}(\lambda) \dm\pi_{-\lambda-p}^{(p)}(X) \quad
   \mbox{for all $X \in \mathfrak{so}(n,1,\R)$}.
\end{equation}
\end{thm}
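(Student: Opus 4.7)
The plan is to adapt the $F$-method approach used for Theorem \ref{main-even} to the odd-order setting. The intertwining relation \eqref{inter-odd} is equivalent to the existence of a homomorphism between generalized Verma modules for $\mathfrak{so}(n+1,1,\R)\supset\mathfrak{so}(n,1,\R)$ induced from fundamental form representations of the respective parabolic subalgebras. Passing through the Fourier dual, such a homomorphism is described by a vector-valued polynomial on the opposite nilradical which is Levi-equivariant, homogeneous of degree $2N+1$, and annihilated by an explicit second-order operator (the "$F$-operator") that encodes the action of the special conformal generators. The goal is to show that the symbol of $D_{2N+1}^{(p\to p)}(\lambda)$, with the coefficients $\beta_i^{(N)}$ and $\gamma_i^{(N)}$ as given in \eqref{SBO-odd-0}, has exactly these properties.

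First I would dispense with the Levi part of the equivariance: translations along $\R^{n-1}$, rotations of $\R^{n-1}$ and the dilation generator act formally, and each of the three sums in \eqref{SBO-odd-0} has the correct total weight shift $(\lambda+p)\mapsto(\lambda-2N-1+p)$ and form-degree shift because each building block $\dm,\delta,\bar{\dm},\bar{\delta},\iota^*,i_{\partial_n}$ is Levi-equivariant of a manifest bidegree. This step is essentially bookkeeping and reduces the problem to checking equivariance under the special conformal generators $X_j$ ($1\le j\le n-1$).

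The substantive step is then to commute each $X_j$ through the three blocks appearing in \eqref{SBO-odd-0} using the standard commutators of $X_j$ with $\bar{\dm},\bar{\delta},\iota^*,i_{\partial_n}$ in the representation $\pi_{-\lambda-p}^{(p)}$. Each such commutation produces a correction linear in $\lambda$ or in the form-degree of the intermediate argument. Regrouping these corrections by their underlying differential-form pattern and by the exponent $i$ reduces the vanishing of the total expression to a finite system of scalar identities on the coefficients. The definition $\gamma_i^{(N)}(\lambda;p)=(\lambda+p-2i)\beta_i^{(N)}(\lambda)-(\lambda+p-2i+1)\beta_{i-1}^{(N)}(\lambda)$ is engineered precisely to absorb the mismatch between the corrections generated by the $\bar{\dm}\bar{\delta}$- and $\bar{\delta}\bar{\dm}$-blocks, leaving a single two-term recurrence on the $\beta_i^{(N)}(\lambda)$ which is the odd-order analogue of the Jacobi-type recursion used in the even case and is verified directly from the closed product formula defining $\beta_i^{(N)}(\lambda)$.

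The principal obstacle is the combinatorial bookkeeping of all correction terms, in particular the sign and $p$-dependence introduced by $i_{\partial_n}$ and by the interaction of $\iota^*$ with the normal derivative that appears in $X_j$; setting up the $F$-equation so that the mixed form-degree shifts encoded in the first and third summands (which differ by one $\bar{\dm}$) are handled uniformly is what forces the specific linear combination $\gamma_i^{(N)}(\lambda;p)$. As a consistency check, \eqref{SBO-odd-0} can be recovered from Theorem \ref{main-even} by composing the even-order family with a suitable first-order operator built from $\dm,\bar{\dm}$ and $\iota^*i_{\partial_n}$ and invoking the factorization identities of Theorem \ref{MainFactEven1}.
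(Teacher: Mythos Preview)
Your proposal is a plan rather than a proof, and it blurs two steps that the paper keeps logically separate. In the paper, the intertwining relation \eqref{inter-odd} is established first in the Fourier-transformed (polynomial) picture: one writes the $\gol'$-equivariant ansatz $\xi_n^{2N+1}P(t)\otimes\id + \xi_n^{2N}Q(t)\,E_n\wedge i_E + \xi_n^{2N-1}R(t)\,\alpha\wedge i_E$ and imposes annihilation by the operators $P_j(\lambda)$ of Lemma \ref{fundamental}. This yields the explicit system \eqref{eq:OddSystemPtoP} of six ODEs, whose polynomial solutions are found in terms of Gegenbauer coefficients (Theorem \ref{OddFromPtoP}). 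Dualizing via Lemma \ref{trans} produces the operator in the non-geometric form of Theorem \ref{OddDiffOp-type1}, written with $\Delta^j$, $\partial_n$, $\iota^*$, $i_{\partial_n}$. The geometric formula \eqref{SBO-odd-0} is then obtained by a separate, purely algebraic rewriting (Theorem \ref{coeffodd2}) using Lemma \ref{DiffCoDiff} and Lemma \ref{RelationJacobiGegenbauer}; the expression for $\gamma_i^{(N)}$ in terms of $\beta_i^{(N)}$ is the content of Remark \ref{gamma-alternative}. No equivariance is checked at that second stage.

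Your ``substantive step'' instead proposes to verify equivariance directly in the geometric basis by commuting $X_j$ through $\bar{\dm},\bar{\delta},\iota^*,i_{\partial_n}$. That route is in principle possible but is not what the paper does, and as written it is not a proof: you neither record those commutators (the action of $E_j^+$ is the rather involved expression \eqref{adjoint}, not a ``standard'' identity), nor identify the resulting scalar system, nor verify it. The assertion that $\gamma_i^{(N)}$ ``absorbs the mismatch'' is correct in hindsight but requires exactly the computation you omit. Finally, your consistency check is off: Theorem \ref{MainFactEven1} concerns factorizations through Branson--Gover operators at a discrete set of $\lambda$, not first-order operators, and cannot recover the odd-order family for generic $\lambda$; the supplementary factorizations of Theorem \ref{SuppFact} do link even- and odd-order families via $\dm,\delta,\bar{\dm},\bar{\delta}$, but again only at isolated values of $\lambda$ and only after both families have been independently constructed.
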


Again, the coefficients $\beta_i^{(N)}(\lambda)$ are related to Jacobi
polynomials (Remark \ref{coeff-Jacobi}). Moreover, $\partial_n$ denotes the
normal vector field of the hyperplane $\R^{n-1}$ and $i_{\partial_n}$ inserts
that field into the forms. For $p=0$, the family $D_{2N+1}^{(p \to
p)}(\lambda)$ reduces to the product of $(\lambda-2N-1)$ and the equivariant
odd-order family $D_{2N+1}(\lambda)$ studied in \cite{Juhl}

The analogous formulas for the conformal symmetry breaking families of the
second type follow from the above results for the families of the first type by
{\em conjugation} with the Hodge star operators on $\R^n$ and $\R^{n-1}$
(Theorem \ref{Hodge-c} and Theorems \ref{coeffeven2}, \ref{coeffodd}). These
operators generalize the operator $\iota^* i_{\partial_n}: \Omega^p(\R^n) \to
\Omega^{p-1}(\R^{n-1})$.

For general form degrees $p$, the families in Theorem \ref{main-even} and
Theorem \ref{main-odd} are uniquely determined by their equivariance, up to
scalar multiples. However, on middle degree forms $p=\frac{n}{2}$ if $n$ is
even and $p=\frac{n-1}{2}$ if $n$ is odd, there are additional equivariant
families. In fact, if $n$ is even, additional conformal symmetry breaking
operators
$$
   \Omega^{\frac{n}{2}}(\R^n) \to \Omega^{\frac{n}{2}}(\R^{n-1}) \quad \mbox{and}
   \quad \Omega^{\frac{n}{2}}(\R^n) \to \Omega^{\frac{n}{2}-1}(\R^{n-1})
$$
are given by the respective compositions
\begin{equation}\label{add-1}
   D_N^{(\frac{n}{2} \to \frac{n}{2})}(\lambda) \, \bar{\star} \quad \mbox{and} \quad
   D_N^{(\frac{n}{2} \to \frac{n}{2}-1)}(\lambda) \, \bar{\star},
\end{equation}
where $\bar{\star}$ denotes the Hodge star operator of the Euclidean metric on
$\R^n$. Similarly, for odd $n$, additional conformal symmetry breaking
operators
$$
   \Omega^{\frac{n-1}{2}}(\R^n) \to \Omega^{\frac{n-1}{2}}(\R^{n-1}) \quad
   \mbox{and} \quad \Omega^{\frac{n+1}{2}}(\R^n) \to \Omega^{\frac{n-1}{2}}(\R^{n-1})
$$
are given by the respective compositions
\begin{equation}\label{add-2}
   \star \, D_N^{(\frac{n-1}{2} \to \frac{n-1}{2})}(\lambda) \quad \mbox{and} \quad
   \star D_N^{(\frac{n+1}{2} \to \frac{n-1}{2})}(\lambda),
\end{equation}
where $\star$ is the Hodge star operator of the Euclidean metric on $\R^{n-1}$.

For general form degrees, the analogous {\em compositions} of families of the
first and the second type with Hodge star operators yield further equivariant
differential operators on differential forms. The following classification
result describes all such symmetry breaking differential operators.

\begin{thm}\label{classification} The linear differential operators
$$
   D: \Omega^p(\R^n) \to \Omega^q(\R^{n-1})
$$
which satisfy the intertwining relation
\begin{equation}\label{inter}
   \dm\pi^{\prime (q)}_\eta (X) D = D \dm\pi_\mu^{(p)}(X)
\end{equation}
for all $X \in \mathfrak{so}(n,1,\R)$ and some $\mu, \eta \in \C$ are generated
by the following operators.
\begin{itemize}
\item [(1)] {The case $q=p$}. The families $D_N^{(p \to p)}(\lambda)$ of the first type. Here $N \in
\N_0$, $\lambda \in \C$ and $\mu=-\lambda-p$, $\eta = -\lambda-p+N$.
\item [(2)] {The case $q=p-1$}. The families $D_N^{(p \to p-1)}(\lambda)$ of the second type. Here $N \in
\N_0$, $\lambda \in \C$ and $\mu=-\lambda-p$, $\eta = -\lambda-(p-1)+N$.
\item [(3)] {The case $q=1$ and $p=0$}. The operators $d \dot{D}_N^{(0 \to
0)}(N)$ with $N \in \N_0$. Here $\mu=-N$ and $\eta=0$.
\item [(4)] {The case $q=p+1$}. The operator $d \iota^*$. Here $\mu=\eta=0$.
\item [(5)] {The case $q=n-2$ and $p=n$}. The operators $\delta \dot{D}_N^{(n \to
n-1)}(N)$ with $N \in \N_0$. Here $\mu=-n-N$ and $\eta=-n+3$.
\item [(6)] {The case $q=p-2$}. The operator $\delta \iota^* i_{\partial_n}$. Here $\mu=n-2p$ and $\eta=n-2p+3$.
\item [(7)] The compositions of the above operators with $\star$.
\end{itemize}
In (3) and (5), the dot denotes derivatives of the families with respect to the
parameter $\lambda$.
\end{thm}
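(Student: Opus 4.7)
The plan is to reformulate the classification via the $F$-method as a question about singular vectors in the restriction of a generalized Verma module for $\mathfrak{so}(n+1,1)$ (induced from the $p$-th fundamental form representation with weight $\mu$) to $\mathfrak{so}(n,1)$ with target the generalized Verma module induced from the $q$-th fundamental form representation with weight $\eta$. By the duality between differential intertwining operators and such homomorphisms, and since the translations act trivially (because $D$ is a constant-coefficient differential operator in the flat model), the full intertwining condition \eqref{inter} reduces to: (a) equivariance under the Levi factor $\mathfrak{so}(n-1) \oplus \R$, which is a purely algebraic constraint on the symbol, and (b) equivariance under the opposite nilradical $\bar{\gon} \subset \mathfrak{so}(n,1)$, which becomes a system of PDEs on the Fourier-transformed symbol.

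First I would extract from (a) the admissible pairs $(p,q)$ together with the relation between $\mu$ and $\eta$. The leading symbol of $D$ is an $\mathfrak{so}(n-1)$-equivariant polynomial map between wedge powers, and a standard branching argument (together with $\R$-equivariance counting derivatives) forces the degree change to lie in $\{p-2,p-1,p,p+1\}$, with $\mu$ and $\eta$ constrained by a linear relation depending on the order. Composing a putative $D$ with the Hodge star operators $\star$ on $\R^{n-1}$ and $\bar{\star}$ on $\R^n$ swaps form degrees $p\leftrightarrow n-p$ and $q\leftrightarrow n-1-q$; this shows cases with $q\in\{n-p-1,n-p,n-p-2,n-p-3\}$ are obtained from the previous four by (7), so it suffices to classify solutions in the four ``unstarred'' cases.

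Next I would construct solutions in each of the four cases. For $q=p$ and $q=p-1$, the families $D_N^{(p\to p)}(\lambda)$ and $D_N^{(p\to p-1)}(\lambda)$ of Theorems \ref{main-even}, \ref{main-odd} (and their second-type analogues) provide a solution for every $N\in\N_0$ and every $\lambda\in\C$ realizing the given $(\mu,\eta)$; this covers (1) and (2). For $q=p+1$ with $p\neq 0$ the constraints from (a) are incompatible with the existence of a nonzero symbol except when $\mu=\eta=0$, and then only $d\iota^*$ survives, giving (4); the exceptional possibility $p=0,q=1$ arises because $D_N^{(0\to0)}(\lambda)$ takes values in scalar functions on which the exterior derivative $d$ is available, and at the point $\lambda=N$ the family vanishes to first order, so the derivative $d\dot D_N^{(0\to0)}(N)$ produces a new nonzero solution at weights $\mu=-N$, $\eta=0$, giving (3). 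The cases (6) and (5) are dual (under $\star,\bar\star$) to (4) and (3).

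The main obstacle is completeness: showing that in each admissible case the listed operators span all solutions. I would handle it by lifting the intertwining equation through the $F$-method to an equation on the polynomial symbol, which after separating the $\mathfrak{so}(n-1)$-isotypic components reduces to a Fuchsian ODE in the radial variable (essentially the Gegenbauer/Jacobi equation, matching the polynomial coefficients already used in Theorems \ref{main-even}, \ref{main-odd} and Remark \ref{coeff-Jacobi}); the dimension of its polynomial solution space is at most one outside a discrete set of $\lambda$, and the exceptional jumps occur precisely at the parameter values producing (3)--(6). This dimension count, combined with the explicit solutions listed, yields the classification.
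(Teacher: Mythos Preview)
Your overall strategy matches the paper's: reduce via the $F$-method to singular vectors, use $\gol'$-equivariance (Proposition~\ref{hom-structure}) to obtain four basic types of homomorphisms indexed by $q\in\{p+1,p,p-1,p-2\}$, then impose $\gon_+'$-invariance to obtain Gegenbauer-type ODEs whose polynomial solutions are unique up to scalar (Sections~\ref{sv-type1}--\ref{sv-type-4}), and finally account for compositions with $\star$.

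There is one concrete misattribution, however. You claim that for $q=p+1$ with $p\neq 0$ ``the constraints from (a) are incompatible with the existence of a nonzero symbol except when $\mu=\eta=0$.'' This is false: the $\gol'$-equivariance step \emph{does} permit homomorphisms of every order $N$ for $q=p+1$ (these are exactly the maps \eqref{add-homo-1} in Proposition~\ref{hom-structure}, namely $\omega\mapsto \xi_n^{N-2j-1}|\xi'|^{2j}\,i_E(\omega)$). What kills the higher-order candidates for $p\ge 1$ is step (b), the $\gon_+'$-invariance: Theorem~\ref{sv-third} shows that the resulting system forces $P=0$ unless $p=1$ (dually, $p=0$ on the operator side) or $N=1$. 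The same remark applies to your treatment of $q=p-2$. So the cut between ``which $(p,q)$ are possible'' and ``which orders survive'' happens at the nilradical step, not at the Levi step; your outline would go through once you relocate that argument.
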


Some comments concerning this classification are in order.

Firstly, we observe that $d D_N^{(p \to p)}(N-p) = 0$ and that the derivative
of the composition $d D_N^{(p \to p)}(\lambda)$ (with respect to the parameter
$\lambda$) at $\lambda = N-p$ is equivariant only if $p=0$.\footnote{Only for
the degree $p=0$ the first type families $D_N^{(p \to p)}(\lambda)$ are given
by one sum. They have the property $D_N^{(0 \to 0)}(N)=0$.} This yields the
operators in Theorem \ref{classification}/(3). Similarly, $\delta D_N^{(p \to
p-1)}(N-n+p) = 0$ and the derivative of the composition $\delta D_N^{(p \to
p-1)}(\lambda)$ at $\lambda = N-n+p$ is equivariant only if
$p=n$.\footnote{Only for the degree $p=n$, the second type families $D_N^{(p
\to p-1)}(\lambda)$ are given by one sum. They have the property $D_N^{(n \to
n-1)}(N) = 0$.} This yields the operators in Theorem \ref{classification}/(5).

There are some overlaps in the classification: Indeed, for $N=0$, (3) is a
special case of (4), and (5) is a special case of (6).

Theorem \ref{Hodge-c} implies that the operators in Theorem
\ref{classification}/(1),(2), Theorem \ref{classification}/(3),(5) and Theorem
\ref{classification}/(4),(6) are Hodge-conjugate to each other.

The classification implies that all conformal symmetry breaking operators of
the type
$$
   \Omega^p(\R^n) \to \Omega^p(\R^{n-1})
$$
are given by the families
\begin{equation*}
   D_N^{(p \to p)}(\lambda): \Omega^p(\R^n) \to \Omega^p(\R^{n-1})
\end{equation*}
and
\begin{equation*}
   \star \, D_N^{(\frac{n}{2} \to \frac{n}{2}-1)}(\lambda):
   \Omega^{\frac{n}{2}}(\R^n) \to \Omega^{\frac{n}{2}}(\R^{n-1})
\end{equation*}
for even $n$ and
\begin{equation*}
   \star \, D_N^{(\frac{n-1}{2} \to \frac{n-1}{2})}(\lambda):
   \Omega^{\frac{n-1}{2}}(\R^n) \to \Omega^{\frac{n-1}{2}}(\R^{n-1})
\end{equation*}
for odd $n$ as well as the additional operators
\begin{align*}
  \star \, d \iota^*: \Omega^{\frac{n}{2}-1} (\R^n) & \to \Omega^{\frac{n}{2}-1}(\R^{n-1}), \\
  \star \, \delta \iota^*i_{\partial_n}: \Omega^{\frac{n+1}{2}}(\R^n) & \to
  \Omega^{\frac{n+1}{2}}(\R^{n-1}).
\end{align*}
Since the second family is proportional to $D_N^{(\frac{n}{2} \to
\frac{n}{2})}(\lambda) \, \bar{\star}$, it decomposes as the direct sum of two
families which are given by $D_N^{(\frac{n}{2} \to \frac{n}{2})}(\lambda)$. An
analogous comment concerns the third family. In addition, for $n=2$, we have
the operators
$$
   \star \, d \dot{D}_N^{(0 \to 0)}(N): \Omega^0(\R^2) \to \Omega^0(\R^1).
$$

Similarly, it follows that all conformal symmetry breaking operators of the
type
$$
\Omega^p(\R^n) \to \Omega^{p+1}(\R^{n-1})
$$
are given by the families
$$
   \star \, D_N^{(\frac{n}{2}-1 \to \frac{n}{2}-1)}(\lambda):
   \Omega^{\frac{n}{2}-1}(\R^n) \to \Omega^{\frac{n}{2}}(\R^{n-1})
$$
for even $n$ and
$$
    \star \, D_N^{(\frac{n-1}{2} \to \frac{n-3}{2})}(\lambda):
    \Omega^{\frac{n-1}{2}}(\R^n) \to \Omega^{\frac{n+1}{2}}(\R^{n-1})
$$
for odd $n$ as well as the additional operators
\begin{align*}
   d \iota^*: \Omega^p(\R^n) & \to \Omega^{p+1}(\R^{n-1}), \\
   d \dot{D}_N^{(0 \to 0)}(N): \Omega^0(\R^n) & \to \Omega^1(\R^{n-1})
\end{align*}
and
$$
   \star \, d \dot{D}_N^{(0 \to 0)}(N): \Omega^0(\R^3) \to \Omega^1(\R^2).
$$

In dimension $n=2$, the conformal symmetry breaking operators
$D_N^{(1,0)}(\lambda)$ and $D_N^{(1,0)}(\lambda) \, \bar{\star}$ were found in
\cite{KKP} (see Remark \ref{special-2}).

We continue with the description of two basic properties of the families of the
first and second type.

Firstly, we note that both types of conformal symmetry breaking operators are
connected through systems of additional factorization identities which involve
the operators $d$, $\delta$, $\bar{d}$ and $\bar{\delta}$ as factors (Theorems
\ref{SuppFact} and \ref{SuppFact-2}). The simplest of these relations are
\begin{equation}\label{B-fact-1}
   -(2N) \dm D_{2N-1}^{(p \to p-1)}(-p\!+\!2N) = D_{2N}^{(p \to p)}(-p\!+\!2N)
\end{equation}
and
\begin{equation}\label{B-fact-2}
   (2N) D_{2N-1}^{(p+1 \to p)}(-p\!-\!1) \bar{\dm} = D_{2N}^{(p \to p)}(-p).
\end{equation}
Note that the latter identities imply that $d D_{2N}^{(p \to p)}(-p\!+\!2N) =
0$ and $D_{2N}^{(p \to p)}(-p) \bar{d} = 0$.

The following result states respective connections of both types of conformal
symmetry breaking operators to the critical $Q$-curvature operators
$Q_{n-1-2p}^{(p)}$ and the gauge companion operators $G^{(p)}_{n-2p}$ of
$(\R^{n-1},g_0)$ (we refer to Section \ref{Branson-Gover} for the definition of
these concepts). Let dot denote the derivative with respect to $\lambda$.

\begin{thm}\label{G-Q-holo} If $n-1$ is even and $n-2p \ge 3$, we have
\begin{equation}\label{Q-holo}
    \dot{D}^{(p \to p)}_{n-1-2p}(-p)|_{\ker(\bar{\dm})} = Q_{n-1-2p}^{(p)} \iota^*.
\end{equation}
Similarly, if $n-1$ is even and $n-2p \ge 1$, then
\begin{equation}\label{G-holo}
    D^{(p\to p-1)}_{n-2p}(-p)|_{\ker(\bar{\dm})} = - G^{(p)}_{n-2p} \iota^*.
\end{equation}
\end{thm}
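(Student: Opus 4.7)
The plan is to prove \eqref{G-holo} and \eqref{Q-holo} by direct computation, comparing the restriction to $\ker(\bar{\dm})$ of the explicit formulas from Theorems \ref{main-even}, \ref{main-odd} (and their second-type analogues in Theorems \ref{coeffeven2}, \ref{coeffodd}) with the Euclidean formulas for $Q^{(p)}_{n-1-2p}$ and $G^{(p)}_{n-2p}$ recorded in Section \ref{Branson-Gover}. The two assertions are handled in parallel because both right-hand sides are polynomial differential operators in $\dm$ and $\delta$ of known, simple Euclidean form, and because restriction to closed forms commutes cleanly with $\iota^{*}$ via $\iota^{*}\bar{\dm}=\dm\iota^{*}$.

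For \eqref{G-holo}, since $n-1$ is even and $n-2p\ge 1$, both $n$ and $n-2p$ are odd, so $D^{(p\to p-1)}_{n-2p}(\lambda)$ is an odd-order second-type family, obtainable from an odd-order first-type family via Theorem \ref{Hodge-c}. Substituting $\lambda=-p$ with $N=(n-2p-1)/2$ produces zeros in the Pochhammer factor $\prod_{k=1}^{i}(2\lambda+n-2k-2N-1)$ appearing in the coefficients $\beta^{(N)}_{i}$, $\gamma^{(N)}_{i}$, which collapses the defining sum to a short explicit expression. Restricting to $\ker(\bar{\dm})$ and using $\dm^{2}=0$ eliminates all surviving $\bar{\dm}$-decorated terms, so only a polynomial in $\dm,\delta$ applied to $\iota^{*}i_{\partial_{n}}$ remains. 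A coefficient-by-coefficient match with the Euclidean formula for $G^{(p)}_{n-2p}$ then yields the claimed identity (with the overall sign traced through the normalizations).

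For \eqref{Q-holo}, the factorization identity \eqref{B-fact-2} at $2N=n-1-2p$ reads
$D^{(p\to p)}_{n-1-2p}(-p)=(n-1-2p)\,D^{(p+1\to p)}_{n-2p-2}(-p-1)\,\bar{\dm}$,
so the left-hand side vanishes on $\ker(\bar{\dm})$. Hence $D^{(p\to p)}_{n-1-2p}(\lambda)|_{\ker(\bar{\dm})}$ has a simple zero at $\lambda=-p$, and its first Taylor coefficient $\dot D^{(p\to p)}_{n-1-2p}(-p)|_{\ker(\bar{\dm})}$ is a well-defined $\mathfrak{so}(n,1,\R)$-equivariant operator. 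Differentiating \eqref{SBO-even-0} in $\lambda$ at $\lambda=-p$ and restricting, most contributions still carry a residual $\bar{\dm}$ and disappear, so the answer again reduces to an explicit polynomial in $\dm,\delta$ applied to $\iota^{*}$, which is matched to the Euclidean formula for $Q^{(p)}_{n-1-2p}$. The principal obstacle lies in the derivative bookkeeping: each $\alpha^{(N)}_{i}(\lambda)$ is a product of two Pochhammer factors with several zeros at $\lambda=-p$, interacting non-trivially with the prefactors $(\lambda+p-2i)$, $(\lambda+p)$, $(\lambda+p-2N)$. The Jacobi-polynomial interpretation of $\alpha^{(N)}_{i}$ in Remark \ref{coeff-Jacobi} should make the vanishing pattern transparent, and Hodge conjugation via Theorem \ref{Hodge-c} provides a consistency check of \eqref{Q-holo} against \eqref{G-holo}.
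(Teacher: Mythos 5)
Your plan matches the paper's own proof: \eqref{G-holo} is Theorem \ref{TheGOperator}, obtained by evaluating the geometric formula of Theorem \ref{coeffodd} at $\lambda=-p$ on closed forms, where the $(\bar{\dm}\bar{\delta})^i$-terms are killed not by the restriction but by the coefficient identities $\gamma_i^{(N)}(-p;n-p)=0$ for $i\ge 1$ together with $(n-2p)\beta_0^{(N)}(-p)=1$, leaving $-\delta(\dm\delta)^{\frac{n-1}{2}-p}\iota^*$ (applied to $\iota^*$, not $\iota^*i_{\partial_n}$); and \eqref{Q-holo} is Theorem \ref{holo-formula}, proved exactly as you describe from the vanishing of $D^{(p\to p)}_{2N}(-p)$ on $\ker(\bar{\dm})$. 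The derivative bookkeeping you worry about is in fact trivial: on $\ker(\bar{\dm})$ only the first sum of \eqref{SBO-even-0} survives and it carries the explicit overall prefactor $(\lambda+p)$, so the derivative at $\lambda=-p$ equals $\sum_i\alpha_i^{(N)}(-p)(\dm\delta)^{N-i}\iota^*(\bar{\dm}\bar{\delta})^i$, which collapses to $(\dm\delta)^N\iota^*$ because $\alpha_i^{(N)}(N-\tfrac{n-1}{2})=0$ for $i\ge 1$ and $\alpha_0^{(N)}(N-\tfrac{n-1}{2})=1$.
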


Although the $Q$-curvature $Q_n = Q_n^{(0)}$ of the Euclidean metric on $\R^n$
vanishes, the formula \eqref{Q-holo} is non-trivial. However, it is an easy
consequence of formula \eqref{SBO-even-0} for the even-order families of the
first type. An important aspect of formula \eqref{Q-holo} is the fact that it
resembles its analog
$$
   \dot{D}_n^{res}(0;g)(1) = Q_n(g)
$$
for general metrics $g$ on a manifold of even dimension $n$ \cite{Juhl},
\cite{JG-holo}. For more details see Theorem \ref{TheGOperator} and Theorem
\ref{holo-formula}.

\vspace{3pt}

We finish this section with an outline of the content of the paper.

In Section \ref{FMethod}, we first review an application of the Fourier
transform to generalized Verma modules which is known as the $F$-method. We
explain the role of singular vectors in this connection and describe
qualitative results on some branching laws.

Section \ref{SingularVectors} is devoted to the construction of singular
vectors which correspond to conformal symmetry breaking operators
$\Omega^p(\R^n) \to \Omega^q(\R^{n-1})$. We find two one-parameter families of
singular vectors which describe the embedding of the $\goso(n,1,\R)$-submodules
in the branching laws of the generalized Verma modules which are induced from
twisted fundamental representations on exterior forms. They are termed the
singular vectors of first and second type. Their construction rests on solving
systems of ordinary differential equations produced by the $F$-method. It turns
out that the solution spaces are determined by Gegenbauer polynomials. The main
results are Theorems \ref{OddFromPtoP}, \ref{EvenFromPtoP}, \ref{OddFromPtoP-1}
and \ref{EvenFromPtoP-1}. Finally, reducible summands in the branching laws
lead to additional singular vectors of the third and the fourth type (Theorem
\ref{sv-third}, Theorem \ref{sv-fourth}). There are some subtleties for middle
degree forms.

Then, in Section \ref{DiffOp}, we apply the results on singular vectors to
define conformal symmetry breaking operators. The singular vectors of the first
type induce the first type families $D_N^{(p \to p)}(\lambda)$ of conformal
symmetry breaking operators of order $N \in \N$ acting on differential
$p$-forms on $\R^n$ and taking values in $p$-forms on $\R^{n-1}$. By
conjugation with the Hodge star operators on $\R^n$ and $\R^{n-1}$, we obtain
the second type families $D_N^{(p \to p-1)}(\lambda)$ of conformal symmetry
breaking operators. They correspond to the singular vectors of the second type
found in Section \ref{SingularVectors} and act on differential $p$-forms on
$\R^n$ and take values in $(p\!-\!1)$-forms on $\R^{n-1}$. The main results
here are Theorems \ref{EvenDiffOp-type1} and \ref{OddDiffOp-type1} (first type
families) and Theorems \ref{EvenDiffOp-type2} and \ref{OddDiffOp-type2} (second
type families). Next, singular vectors of type three and four induce conformal
symmetry breaking operators of type three and four, respectively. Again, these
two types are Hodge conjugate to each other. The main results here are Theorems
\ref{DO3-Even} and \ref{DO3-Odd} (type three) and Theorems \ref{DO4-Even} and
\ref{DO4-Odd} (type four). Finally, in Section \ref{proof-classify}, we combine
the previous results in a proof of the classification result Theorem
\ref{classification}. In the last two subsections of Section \ref{DiffOp}, we
display explicit formulas for low-order special cases, confirm the equivariance
of the first-order families both by direct calculations and as consequences of
the conformal covariance of their curved analogs and prove that the results of
\cite{KKP} in dimension $n=2$ are special cases of ours.

In Section \ref{geometric}, we derive alternative closed formulas for all
conformal symmetry breaking operators in terms of the geometric operators $d$,
$\delta$ and their bar-versions. That process amounts to changing to another
basis of invariants. Although the resulting formulas allow quick proofs of some
of the basic properties of the families in later sections, it should be
stressed that neither of the two ways to write them is canonical. The
discussion of the first and second type families is divided into two
subsections according to the parity of their orders. The main results are
contained in Theorems \ref{coeffeven} and \ref{coeffeven2} (even order) and
Theorems \ref{coeffodd2} and \ref{coeffodd} (odd order). These results cover
Theorem \ref{main-even} and Theorem \ref{main-odd}. An analogous discussion
yields geometric formulas for operators of type three and four.

In Section \ref{Properties}, we establish basic properties of the first and
second type families of conformal symmetry breaking operators. We first recall
some results on Branson-Gover, gauge companion and $Q$-curvature operators
\cite{BransonGover} on forms. Then we derive systems of identities which show
that at special arguments any family of the first type factors into the product
of a lower-order family of the first type and a Branson-Gover operator for the
Euclidean metrics on $\R^n$ or $\R^{n-1}$ (Theorems \ref{MainFactEven1} and
\ref{MainFactEven2}). The special arguments for which such {\em main
factorizations} take place are naturally determined by the conformal weights of
the Branson-Gover operators involved. As very special cases, we obtain the
relations \eqref{interpol-1} and \eqref{interpol-2}. Furthermore, we show that
both types of conformal symmetry breaking operators are linked by systems of
{\em supplementary factorizations} (Theorems \ref{SuppFact} and
\ref{SuppFact-2}) which involve the four operators $d$, $\delta$, $\bar{\dm}$
and $\bar{\delta}$ as factors. The overall interest in both types of
factorization identities comes from the conjecture that they literally extend
to curved analogs of the families. Theorem \ref{TheGOperator} (see also
\eqref{G-holo}) gives a description of the gauge companion operators for the
Euclidean metric in terms of odd-order conformal symmetry breaking operators of
the second type. Finally, for the Euclidean metric, we introduce the notion of
$Q$-curvature polynomials (Definition \ref{QCurvPoly}) on differential forms
(generalizing the case $p=0$ analyzed in \cite{Juhl1}) and show how these cover
the $Q$-curvature operator on closed differential forms (see Theorem
\ref{TangQPoly}). \vspace{3pt}

The results of the present paper solve a basic problem of conformal geometry in
a very special but important case. Indeed, it is natural to ask for a
description of all {\em conformally covariant} differential operators mapping
differential forms on a Riemannian manifold $(X,g)$ to differential forms on a
submanifold $(M,g)$ (with the induced metric).\footnote{A version of the
problem appears as Problem 2.2 in \cite{KKP}. It replaces conformal covariance
by equivariance with respect to an appropriate group of conformal
diffeomorphisms.} In the codimension one case and for differential operators
acting on functions, that problem has been analyzed in \cite{Juhl}. The case of
differential operators on forms on $(X,M) = (\R^n,\R^{n-1})$ with the Euclidean
metric on the background manifold $\R^n$ is studied here. The significance of
the present results for the general curved case comes from the fact that the
formulas in the curved case arise from the geometric formulas derived here by
adding lower-order {\em curvature correcting terms} which involve curvature of
the submanifold and curvature being associated to the embedding. However,
closed formulas for these correction terms seem to be out of reach presently.
There are at least two ways to attempt to understand their structure. In the
codimension one case, a purely algebraic approach could be based on the
construction of lifts of our families of homomorphisms to families of
homomorphisms of semi-holonomic Verma modules \cite{eastwoodslovak}. A
different approach will be developed in the forthcoming paper \cite{fjs}, where
we identify the families constructed here with two types of so-called residue
family operators on differential forms. For general metrics, these residue
families are defined by the asymptotic expansions of eigenforms of the
Laplacian of a Poincar\'e-Einstein metric of a conformal manifold. Their
construction extends the construction of residue families in \cite{Juhl}.

\section{Preliminaries}\label{FMethod}

In the present section, we briefly review basic notation and results initiated
and developed in \cite{ko, Kobayashi-Pevzner, koss, kob}. We apply these
results to describe the situation of our interest: the branching problem for
generalized Verma modules of real orthogonal simple Lie algebras, conformal
parabolic subalgebras and fundamental representations on forms twisted by
characters as inducing data.

\subsection{The $F$-method}\label{F-method}

We first introduce our setting. Let $G$ be a connected real reductive Lie group
with Lie algebra $\gog(\R)$, $P\subset G$ a parabolic subgroup with Lie algebra
$\gop(\R)$, $\gop(\R) = \gol(\R) \oplus \gon_+(\R)$ its Levi decomposition and
$\gon_-(\R)$ the opposite (negative) nilradical,
$\gog(\R)=\gon_-(\R)\oplus\gop(\R)$. The complexifications of Lie algebras
$\gog(\R)$, $\gop(\R)$, $\gol(\R)$, $\gon_+(\R)$, etc., will be denoted $\gog$,
$\gop$, $\gol$, $\gon_+$, etc.

Given a complex finite dimensional $P$-module $(\rho,V)$, we consider the
induced representation $\left(\pi,\Ind_{P}^{G}(V)\right)$ of $G$ on smooth
sections of the homogeneous vector bundle $G \times_{P}V \to G/P$, i.e., $\pi$
acts on the space
\begin{equation*}
   \Ind_{P}^{G}(V) \st C^\infty(G,V)^{P}=\left\{f\in C^\infty(G,V) \,|\, f(g p) =
   \rho(p^{-1}) f(g),\, g\in  G,p\in  P\right\}
\end{equation*}
by the left regular representation. Given another data $(G^\prime,
P^\prime,\rho^\prime:P^\prime\to \GL(V^\prime))$ such that $G^\prime\subset G$
is reductive, $P^\prime=P\cap G^\prime$ and $\gop$ is $\gog^\prime$-compatible,
cf. \cite[Definition $4.5$]{Kobayashi-Pevzner}, the space of continuous
$G^\prime$-equivariant homomorphisms
\begin{equation}\label{eq:SymmBreakingOp}
   \Hom_{G^\prime}\left(\Ind^{G}_{P}(V),\Ind^{G^\prime}_{P^\prime}(V^\prime)\right)
\end{equation}
is termed the space of {\it symmetry breaking operators} \cite{kobayashi-speh}.
Here the $G^\prime$-equivariance is defined with respect to the actions
$\pi(g^\prime)$ and $\pi^\prime(g^\prime)$ for all $g^\prime\in G^\prime$,
where $\pi(g^\prime)$ is realized by embedding $G^\prime\subset G$. The space
of $G^\prime$-equivariant differential operators $D:\Ind^{G}_{P}(V)\to
\Ind^{G\prime}_{P^\prime}(V^\prime)$, cf. \cite[Section 2]{Kobayashi-Pevzner},
denoted by
\begin{equation}\label{eq:DiffSymmBreakingOp}
   \Diff_{G^\prime} \left(\Ind^{G}_{P}(V),\Ind^{G\prime}_{P^\prime}(V^\prime)\right),
\end{equation}
is a subspace of \eqref{eq:SymmBreakingOp}.

Let ${\fam2 U}(\gog)$ be the universal enveloping algebra of $\gog$. Denoting
$V^\ch$ the dual (or, contragredient) representation to $V$, the generalized
Verma module ${\fam2 M}^{\gog}_{\gop}(V^\ch)$ is defined by
\begin{equation*}
   {\fam2 M}^{\gog}_{\gop}(V^\ch) \st {\fam2 U}(\gog)\otimes_{{\fam2 U}(\gop)} V^\ch.
\end{equation*}
The induced action will be denoted by $\pi^\ch$.

The space \eqref{eq:DiffSymmBreakingOp} of $G^\prime$-equivariant differential
operators is isomorphic to the space of $\gog^\prime$-homomorphisms of
generalized Verma modules,
\begin{equation}\label{eq:one-to-one}
   \Diff_{G^\prime}\left(\Ind^{G}_{P}(V),\Ind^{G\prime}_{P^\prime}(V^\prime)\right)\simeq
   \Hom_{\gog^\prime} \left({\fam2 M}^{\gog^\prime}_{\gop^\prime}( (V^\prime)^\ch),
   {\fam2 M}^{\gog}_{\gop}(V^\ch) \right),
\end{equation}
where in addition
\begin{align}\label{eq:isomorphism1}
   \Hom_{\gog^\prime} \left({\fam2 M}^{\gog^\prime}_{\gop^\prime}(
   (V^\prime)^\ch), {\fam2 M}^{\gog}_{\gop}(V^\ch) \right) \simeq
   \Hom_{\gop^\prime}\left( (V^\prime)^\ch,{\fam2M}^{\gog}_{\gop}(V^\ch) \right),
\end{align}
cf. \cite[Theorem $2.7$]{Kobayashi-Pevzner}, \cite{koss}. In
\cite{Kobayashi-Pevzner}, the elements of $\Hom_{\gop^\prime} \left(
(V^\prime)^\ch,{\fam2M}^{\gog}_{\gop}(V^\ch) \right)$ are referred to as {\it
singular vectors}. Alternatively, we may first define the $L^\prime$-submodule
\begin{equation}\label{eq:SingularVectors}
   {\fam2 M}^{\gog}_{\gop}(V^\ch)^{\gon^\prime_+}
   \st \{v\in {\fam2 M}^{\gog}_{\gop}(V^\ch) \,|\, \dm\pi^\ch(Z)v=0, \; Z \in \gon^\prime_+\}
\end{equation}
of ${\fam2 M}^{\gog}_{\gop}(V^\ch)$ as the {\em space of singular vectors} and
then study the $\gol^\prime$-homomorphisms of $(V^\prime)^\ch$ into this space.
Then any irreducible $\gol^\prime$-submodule $(W^\prime)^\ch$ of ${\fam2
M}^{\gog}_{\gop}(V^\ch)^{\gon^\prime_+}$ yields a $\gog^\prime$-homomorphism
${\fam2 M}^{\gog^\prime}_{\gop^\prime}((W^\prime)^\ch)$ to ${\fam2
M}^{\gog}_{\gop}(V^\ch)$. The latter method has been used in \cite{koss}.

The $F$-method, cf. \cite{koss}, \cite[Section$4$]{Kobayashi-Pevzner}, is a
constructive method to determine singular vectors. The idea is the following.
We first note that a generalized Verma module ${\fam2 M}^{\gog}_{\gop}(V^\ch)$
can be realized as a space of $V^\ch$-valued distributions on $N_-$ supported
at $o \st eP\in G/P$. In particular, there is a $\mathcal{U}(\gog)$-module
isomorphism
\begin{equation}\label{eq:Identifications}
   \phi: {\fam2 M}^{\gog}_{\gop}(V^\ch) \to \mathcal{D}^\prime_{[o]}(N_-,V^\ch)
\end{equation}
(\cite{koss}, \cite{Kobayashi-Pevzner}). When the nilpotent group $N_-$ is
commutative, it may be identified with $\gon_-(\R)$ via the exponential map.
Hence
$$
   \mathcal{D}^\prime_{[o]}(N_-,V^\ch) \simeq \mathcal{D}^\prime_{[0]}(\gon_-(\R)) \otimes V^\ch.
$$
Let $\Pol(\gon_-^*(\R))$ be the space of polynomials on $\gon_-^*(\R)$. The
algebraic {\it Fourier transform}\footnote{By an artificial use of the pairing
of $\gon_+$ and $\gon_-$ defined by the Killing form, one may regard the range
of the Fourier transform as consisting of polynomials on $\gon_+$. Although
this has been the perspective in \cite{koss}, we prefer not to follow it here.
It would also complicate formulas by unpleasant constants.}
\begin{equation}\label{eq:FourierTrafo}
   \mathcal{F}: \mathcal{D}^\prime_{[0]}(\gon_-(\R)) \to \Pol(\gon_-^*(\R)), \quad
   \mathcal{F}(f)(\xi) \st \langle f(\cdot), e^{i\langle \cdot,\xi\rangle} \rangle
   = \int_{\gon_-(\R)}e^{i\langle x,\xi\rangle} f(x) dx,
\end{equation}
is an algebra isomorphism mapping convolutions into products. $\mathcal{F}$
extends to an isomorphism of vector spaces
\begin{equation}\label{eq:FourierIso}
   \mathcal{F}\otimes \id_{V^\ch}: \mathcal{D}^\prime_{[0]}(\gon_-(\R)) \otimes
   V^\ch \to \Pol(\gon_-^*(\R)) \otimes V^\ch .
\end{equation}
We use this isomorphism to transport the $\gog$-module structure. Let
$\dm\tilde{\pi}$ denote the induced action of $\gog$ on $\Pol(\gon_-^*(\R))
\otimes V^\ch$. Thus the space of singular vectors is identified by Fourier
transform with the space
\begin{equation}\label{sol-space}
   \Sol(\gog,\gog^\prime;V^\ch) \st \{f\in\Pol(\gon_-^*(\R))\otimes V^\ch \,|\,
   \dm\tilde{\pi}(Z)f=0, \; Z \in \gon_+^\prime\}.
\end{equation}
In summary, searching for singular vectors, which is a combinatorial problem,
is equivalent to finding elements in $\Sol(\gog,\gog^\prime;V^\ch)$, a problem
in algebraic analysis. Finally, we note that the operators $\dm\tilde{\pi}(Z)$
in \eqref{sol-space} form a system of {\em second-order} partial differential
operators on $\Pol(\gon_-^*(\R)) \otimes V^\ch$.

\subsection{Notation and induced representations}\label{notation}

In the remainder of the paper, we assume that $\N \ni n \ge 2$. Let
$G=SO_0(n+1,1,\R)$ be the connected component of the identity of the group
preserving the quadratic form
$$
   2 x_0 x_{n+1} + x_1^2 + \dots + x_n^2
$$
on $\R^{n+2}$. Let $\{e_j, j=0,\dots,n+1\}$ be the standard basis of
$\R^{n+2}$. The group $G$ preserves the cone
$$
   \mathcal{C} = \{ x = (x_0,\dots,x_{n+1}) \in \R^{n+1} \,|\, 2 x_0 x_{n+1} + x_1^2 + \dots + x_n^2 = 0 \}.
$$
Let $P = P_+ \subset G $ and $P_- \subset G$ be the isotropy subgroups of the
lines generated by $e_0 = (1,0,\dots,0)$ and $e_{n+1} = (0,0,\dots,1)$,
respectively. Then $P_\pm$ are parabolic subgroups with Langlands
decompositions $P_\pm = LN_\pm = MAN_\pm$, where $M \simeq SO(n,\R)$, $A\simeq
\R^+$ and $N_\pm \simeq \R^n$. The elements of $G$ can be written in the block
form
$$
\begin{pmatrix}
         1\times 1 & 1\times n & 1\times 1\\
         n\times 1 & n\times n & n\times 1\\
         1\times 1 & 1\times n & 1\times 1\\
\end{pmatrix}
$$
with respect to the decomposition
$$
\R^{n+2} = \R e_0 \oplus \bigoplus_{j=1}^{n} \R e_j \oplus \R e_{n+1}.
$$
The real Lie algebras corresponding to the above Lie groups are
\begin{equation}\label{eq:LieAlgebras}
   \gog(\R) = \mathfrak{so}(n+1,1,\R), \quad \gop_\pm(\R) = \gol(\R) \oplus \gon_\pm(\R) =
   \gom(\R) \oplus \goa(\R) \oplus \gon_\pm(\R)
\end{equation}
with
\begin{equation*}
   \gom(\R) \simeq \mathfrak{so}(n,\R), \quad \goa(\R) \simeq \R \quad \mbox{and} \quad \gon_\pm(\R) \simeq \R^n.
\end{equation*}
The basis elements
\begin{align*}
   E = \begin{pmatrix}
           1 & 0 & 0\\
           0 & 0 & 0\\
           0 & 0 & -1\\
          \end{pmatrix},\quad
   E_j^+ = \begin{pmatrix}
              0 & e_j  & 0\\
              0 & 0 & -e_j^t\\
              0 & 0 & 0\\
             \end{pmatrix},\quad
   E_j^- = \begin{pmatrix}
                        0 & 0  & 0\\
                        e_j^t & 0 & 0\\
                        0 & -e_j & 0\\
        \end{pmatrix}, \quad 1 \le j \le n \nonumber
\end{align*}
and
$$
   M_{ij} = \begin{pmatrix}
                    0 & 0  & 0\\
                    0 & {(M_{ij})_{rs}}=\delta_{ir}\delta_{js}-\delta_{is}\delta_{jr}& 0\\
                    0 & 0 & 0\\
         \end{pmatrix}, \quad i,j,r,s=1, \dots ,n,\, i<j
$$
satisfy the commutation relations
\begin{equation*}
   [E_i^+,E_j^-] = \delta_{ij} E + M_{ij}
\end{equation*}
and
\begin{equation*}
   [M_{ij},E_r^+] = \delta_{jr} E_r^+ - \delta_{ir} E_j^+,\; [E,E_j^+] = E_j^+, \; [E,M_{ij}] = 0
\end{equation*}
and analogously for $E^-_j$. We shall also use the notation $E_j = E_j^-$.

The elements $E_j^\pm$, $1 \le j \le n$, form respective bases of
$\gon_\pm(\R)$. We use these bases to identify both spaces with $\R^n$. The
Euclidean metric $g_0$ on $\R^n$ induces scalar products on $\gon_\pm(\R)$ for
which both bases are orthonormal. The adjoint action of $M$ on $\gon_\pm(\R)$
preserves these scalar products. Hence $M$ can be identified with $SO(n,\R)$.
Sometimes it will be useful to identify elements of $\gon_-(\R)$ with column
vectors vectors and elements of $\gon_+(\R)$ with row vectors. Let
$(E_j^\pm)^*$ for $1 \le j \le n$ be the dual basis elements of
$\gon_\pm(\R)^*$. They are orthonormal with respect to corresponding dual
scalar products.

Furthermore, we consider the subgroup $G^\prime \simeq SO_0(n,1,\R)$ of $G$
which preserves the line in the cone $\mathcal{C}$ generated by $e_n$. The
embedding of $G^\prime$ is realized as the matrix block inclusion
\begin{align*}
\begin{small}
\begin{pmatrix}
         1\times 1 & 1\times n-1 & 1\times 1\\
         n-1\times 1 & n-1\times n-1 & n-1\times 1\\
         1\times 1 & 1\times n-1 & 1\times 1\\
\end{pmatrix} \hookrightarrow
\begin{pmatrix}
          1\times 1 & 1\times n-1 & 0 & 1 \times 1\\
          n-1\times 1 & n-1\times n-1 & 0 & n-1\times 1 \\
          0 & 0 & 0 & 0         \\
          1\times 1 & 1\times n-1 & 0 & 1\times 1 \\
\end{pmatrix}
\end{small}.
\end{align*}
The group $P^\prime = P \cap G^\prime$ is a parabolic subgroup of $G^\prime $
with Langlands decomposition $P^\prime = L^\prime N^\prime_+ = M^\prime A
N^\prime_+$, where $M^\prime\simeq SO(n-1,\R)$, $A \simeq \R^+$ and $N_+^\prime
\simeq \R^{n-1}$. The real Lie algebras of these groups are
\begin{equation}\label{eq:LieAlgebras'}
   \gog^\prime(\R) = \mathfrak{so}(n,1,\R), \quad \gop^\prime(\R) = \gol^\prime(\R) \oplus \gon^\prime_+(\R)
   = \gom^\prime(\R) \oplus \goa(\R) \oplus \gon^\prime_+(\R)
\end{equation}
with
\begin{equation*}
   \gom^\prime(\R) \simeq \mathfrak{so}(n-1,\R), \quad \goa(\R) \simeq \R \quad \mbox{and} \quad
   \gon^\prime_+(\R) \simeq \R^{n-1}.
\end{equation*}
The elements $E_j^\pm$ for $1 \le j \le n-1$ form respective bases of
$\gon^\prime_\pm(\R)$.

The natural action of $M \simeq SO(n,\R)$ on $\R^n$ induces representations
$\sigma_p$ on the spaces $\Lambda^p(\R^{n})$ and $\Lambda^p(\R^{n})^*$ of
multilinear forms.

\begin{bem}\label{AlgebraicAction} For $X, Y \in \R^n$, the tensor
$X \otimes Y-Y \otimes X$ acts on $\R^n$ with the Euclidean metric $g_0$ by
$$
   T: v \mapsto X g_0(Y,v) - Y g_0(X,v).
$$
In view of $g_0(T(v),w) + g_0(v,T(w))=0$, we have $T \in \mathfrak{so}(n,\R)$.
The action $T$ naturally extends to $\Lambda^p(\R^n)$ by
$$
   (X \otimes Y-Y \otimes X)(v_1 \wedge \dots \wedge v_p)
   = \sum_{l=1}^p v_1\wedge \dots \wedge v_{l-1} \wedge (X g_0(Y,v_l) - Y g_0(X,v_l)) \wedge
   v_{l+1}\wedge \dots \wedge v_p
$$
for $v_1\wedge \dots \wedge v_p \in \Lambda^p(\R^n)$. It coincides with the
infinitesimal representation $\dm\sigma_p$ on $\Lambda^p(\R^n)$.
\end{bem}

We shall use the notation $\sigma_p$ also for the corresponding representations
of $M \simeq SO(n,\R)$ on the complex vector spaces $\Lambda^p(\R^{n}) \otimes
\C$.

Next, we fix some notation. We define the fundamental weights
\begin{align}\label{fund-w-even}
   \Lambda_j & = e_1 + \dots + e_j \hspace{3cm} \mbox{for $j=1,\dots,\tfrac{n}{2}-2$}, \notag \\
   \Lambda_{\frac{n}{2}-1} & = \tfrac{1}{2} (e_1 + \dots + e_{\frac{n}{2}-1} - e_{\frac{n}{2}}) \notag \\
   \Lambda_{\frac{n}{2}} & = \tfrac{1}{2} (e_1 + \dots + e_{\frac{n}{2}-1} + e_{\frac{n}{2}})
\end{align}
for even $n$ and
\begin{align}\label{fund-w-odd}
   \Lambda_j & = e_1 + \dots + e_j \hspace{3cm} \mbox{for $j=1,\dots,\tfrac{n-3}{2}$}, \notag\\
   \Lambda_{\frac{n-1}{2}} & = \tfrac{1}{2} (e_1 + \dots + e_{\frac{n-1}{2}})
\end{align}
for odd $n$. Here the vectors $e_j$ denote respective standard basis vectors in
$\R^{\frac{n-1}{2}}$ and $\R^{\frac{n}{2}}$. Let $\Lambda_0 = 0$. We also set
\begin{equation*}
   1_p \st (\underbrace{1,\dots,1}_{p \; \text{entries}},0,\dots,0) \quad
   \mbox{and} \quad 1_{\frac{n}{2}}^\pm \st (\underbrace{1,\dots,1,\pm 1}_{\frac{n}{2} \; \text{entries}})
\end{equation*}
for even $n$ and
\begin{equation*}
   1_p \st (\underbrace{1,\dots,1}_{p \; \text{entries}},0,\dots,0) \quad
   \mbox{and} \quad
   1_{\frac{n-1}{2}} = (\underbrace{1,\dots,1}_{\frac{n-1}{2} \; \text{entries}}).
\end{equation*}
for odd $n$.


The following lemma collects basic information on the representations
$\sigma_p$.

\begin{lem}\label{form-rep} (i) If $n$ is odd, the representation $\sigma_p$ of $SO(n,\R)$
on $\Lambda^p(\R^n) \otimes \C$ is irreducible for all $p=0,1,\dots,n$.
$\sigma_p$ is equivalent to $\sigma_{n-p}$. For $p \le (n-1)/2$,
its highest weight equals $1_p$. \\
(ii) If $n$ is even and $p \ne \frac{n}{2}$, the representation $\sigma_p$ of
$SO(n,\R)$ on $\Lambda^p(\R^n) \otimes \C$ is irreducible. $\sigma_p$ is
equivalent to $\sigma_{n-p}$. For $p \le n/2-1$, its highest weight equals
$1_p$. \\
(iii) If $n$ is even, we have the decomposition $\sigma_{\frac{n}{2}} =
\sigma_{\frac{n}{2}}^+ \oplus \sigma_{\frac{n}{2}}^-$, where the irreducible
representations $\sigma_\frac{n}{2}^\pm$ act on the eigenspaces
$$
   \Lambda_\pm^{\frac{n}{2}}(\R^n) \otimes \C \st \left\{ \omega \in
   \Lambda^{\frac{n}{2}}(\R^n) \otimes \C \,|\, \bar{\star} \, \omega = \mu_\pm \omega \right\}
$$
of the Hodge star operator $\bar{\star}$ of the Euclidean metric $g_0$ on
$\R^n$. Here $\mu_\pm = \pm 1$ if $n \equiv 0 \! \pmod{4}$ and $\mu_\pm = \pm
i$ if $n \equiv 2 \! \pmod{4}$. The highest weights of the representations
$\sigma^+_{\frac{n}{2}}$ and  $\sigma^-_{\frac{n}{2}}$
are $1_{\frac{n}{2}}^+$ and $1_{\frac{n}{2}}^-$, respectively. \\
(iv) For all $p=0,1,\dots,n$, there is an isomorphism of $SO(n-1,\R)$-modules
\begin{equation*}
   \Lambda^p(\R^n) \simeq \Lambda^p(\R^{n-1}) \oplus \Lambda^{p-1}(\R^{n-1});
\end{equation*}
here we set $\Lambda^{-1}(\R^{n-1})=0$. \\
(v) If $n$ is even, there are isomorphisms
$$
   \Lambda_+^{\frac{n}{2}}(\R^n) \otimes \C \simeq \Lambda^{\frac{n}{2}-1}(\R^{n-1}) \otimes \C
   \quad \mbox{and} \quad
   \Lambda_-^{\frac{n}{2}}(\R^n) \otimes \C \simeq \Lambda^{\frac{n}{2}-1}(\R^{n-1}) \otimes \C,
$$
which are given by
$$
   \Lambda^{\frac{n}{2}-1}(\R^{n-1}) \otimes \C \ni \omega \mapsto
   \begin{cases} \star \, \omega + \omega \wedge e_n & \mbox{if  } n \equiv 0 \!\! \pmod{4} \\
   \star \, \omega - i \omega \wedge e_n & \mbox{if  } n \equiv  2 \!\! \pmod{4}
   \end{cases}
$$
and
$$
   \Lambda^{\frac{n}{2}-1}(\R^{n-1}) \otimes \C \ni \omega \mapsto
   \begin{cases} \star \, \omega - \omega \wedge e_n & \mbox{if  } n \equiv 0 \!\! \pmod{4} \\
   \star \, \omega + i \omega \wedge e_n & \mbox{if  } n \equiv  2 \!\! \pmod{4}.
   \end{cases}
$$
Here $\star$ denotes the Hodge star operator (of the Euclidean metric) on
$\R^{n-1}$. In particular, the highest weights of the $SO(n\!-\!1,\R)$-modules
$\Lambda_\pm^{\frac{n}{2}}(\R^n)$ are $1_{\frac{n}{2}-1}$.
\end{lem}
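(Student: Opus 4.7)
The plan is to combine classical representation theory of the orthogonal group with a direct Hodge star computation in middle degree.

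Parts (i) and (ii) are standard facts about the exterior power representations of $\goso(n,\C)$. Choosing a Cartan subalgebra $\goh\subset\goso(n,\C)$, the complexified defining representation has weights $\pm e_1,\dots,\pm e_{\lfloor n/2\rfloor}$ (together with a zero weight if $n$ is odd); then for $v_j$ a weight vector of weight $e_j$, the vector $v_1\wedge\dots\wedge v_p$ is killed by the nilradical and is a highest weight vector of weight $1_p$. The irreducibility of $\Lambda^p(\R^n)\otimes\C$ for all $p$ in odd dimension, and for $p\neq n/2$ in even dimension, is classical, and the equivalence $\sigma_p\simeq\sigma_{n-p}$ is realised by the Hodge star $\bar\star$, which is an $SO(n,\R)$-intertwiner. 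For (iii), $\bar\star$ on $\Lambda^{n/2}(\R^n)$ satisfies $\bar\star^2=(-1)^{n^2/4}$, equal to $+1$ for $n\equiv 0\pmod 4$ and $-1$ for $n\equiv 2\pmod 4$; diagonalising $\bar\star$ on the complexification gives the stated splitting with eigenvalues $\mu_\pm$, and the highest weights $1_{n/2}^\pm$ are computed by exhibiting the standard highest weight vectors (in terms of the complex basis $e_{2j-1}\pm i e_{2j}$) as $\bar\star$-eigenvectors.

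Part (iv) is a direct observation: since $SO(n-1,\R)\subset SO(n,\R)$ stabilises $e_n$, the unique decomposition $\omega=\alpha+\beta\wedge e_n$ of $\omega\in\Lambda^p(\R^n)$ with $\alpha\in\Lambda^p(\R^{n-1})$ and $\beta\in\Lambda^{p-1}(\R^{n-1})$ is $SO(n-1,\R)$-equivariant.

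For (v), I would combine (iii) and (iv) via an explicit $\bar\star$-eigenvector ansatz. Writing a middle-degree form as $\star\omega+c\,\omega\wedge e_n$ with $\omega\in\Lambda^{n/2-1}(\R^{n-1})\otimes\C$, and using the identities
\begin{align*}
\bar\star(\star\omega) &= (\star\star\omega)\wedge e_n = \omega\wedge e_n, \\
\bar\star(\omega\wedge e_n) &= (-1)^{n/2}\star\omega,
\end{align*}
(which follow from $\alpha\wedge\bar\star\beta=\langle\alpha,\beta\rangle\,\mathrm{vol}_n$ together with $\star\star=\id$ on $\Lambda^{n/2-1}(\R^{n-1})$, since $(n/2-1)(n/2)$ is even), the eigenvalue equation $\bar\star(\star\omega+c\,\omega\wedge e_n)=\mu(\star\omega+c\,\omega\wedge e_n)$ reduces to $c^2=(-1)^{n/2}$ together with $\mu=c\cdot(-1)^{n/2}$. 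Substituting the two residue classes of $n\pmod 4$ then recovers the four explicit values of $c$ listed in the lemma. Injectivity and surjectivity of $\omega\mapsto\star\omega+c\,\omega\wedge e_n$ onto $\Lambda_\pm^{n/2}(\R^n)\otimes\C$ follow from the direct-sum decomposition in (iv), and $SO(n-1,\R)$-equivariance is manifest since $\star$ intertwines the $SO(n-1)$-action and $e_n$ is fixed. The highest weight assertion in (v) then follows because the $SO(n-1)$-module $\Lambda^{n/2-1}(\R^{n-1})\otimes\C$ has highest weight $1_{n/2-1}$ by (ii). The main obstacle is really just careful bookkeeping of Hodge star sign conventions, especially in distinguishing the two residue classes of $n\bmod 4$ where $c$ and $\mu_\pm$ are real versus imaginary.
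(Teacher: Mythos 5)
Your proposal is correct and follows essentially the same route as the paper: parts (i)--(iii) are quoted as classical facts (the paper likewise defers them to the literature and to Proposition \ref{IT}), part (iv) is the evident $SO(n-1,\R)$-equivariant splitting $\omega=\omega'+\omega''\wedge e_n$, and part (v) rests on exactly the two identities $\bar{\star}(\star\,\omega)=\omega\wedge e_n$ and $\bar{\star}(\omega\wedge e_n)=(-1)^{n/2}\star\omega$ that the paper uses, with your eigenvalue ansatz $c^2=(-1)^{n/2}$, $\mu=c(-1)^{n/2}$ reproducing the four stated maps and eigenvalues. No gaps.
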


\begin{proof} The claims in (i), (ii) and (iii) are well-known; see also Proposition \ref{IT}.
Note that the $SO(n,\R)$-invariance of the Hodge star operator implies the
equivalencies in (i) and (ii), and shows that the eigenspaces in (iii) are
well-defined. (iv) follows from the $SO(n\!-\!1,\R)$-equivariance of the
decomposition
$$
   \omega = \omega' + \omega'' \wedge e_n \quad \mbox{with $\omega' \in
   \Lambda^p(\R^{n-1})$ and $\omega'' \in \Lambda^{p-1}(\R^{n-1})$}
$$
of any $\omega \in \Lambda^p(\R^n)$. In order to prove (v), we first observe
that
$$
   \bar{\star} \, (\star \,\omega) = \omega \wedge e_n \quad \mbox{and} \quad  \bar{\star} \, (\omega \wedge e_n)
   = (-1)^{\frac{n}{2}} \star \omega
$$
for $\omega \in \Lambda^{\frac{n}{2}-1}(\R^{n-1})$. Then we obtain
$$
   \bar{\star} \, (\star \, \omega \pm \omega \wedge e_n) = \pm (\star \,\omega \pm \omega \wedge e_n)
$$
if $n \equiv 0 \pmod{4}$ and
$$
   \bar{\star} \, (\star \, \omega \pm i \omega \wedge e_n) = \mp i (\star \,\omega \pm i \omega \wedge e_n)
$$
if $n \equiv 2 \pmod{4}$. The proof is complete.
\end{proof}

Next, we introduce some notation for induced representations. For $\lambda\in
\C$, we denote by $(\xi_\lambda,\C_\lambda)$ the $1$-dimensional complex
representation $\xi_\lambda(a) = \exp (\lambda \log a)$ of $A$. We trivially
extend $\xi_\lambda$ to a character of $P$. Its dual representation is
$\C^\ch_\lambda\simeq \C_{-\lambda}$. For $p=0,1,\dots,n$ and $\lambda \in \C$,
we define the representation
$$
   \rho_{\lambda,p} \st \sigma_p \otimes \xi_\lambda
$$
of $L=MA$ on
\begin{equation}\label{defV}
   V_{\lambda,p} \st \Lambda^p (\gon_-(\R)) \otimes \C_\lambda \simeq \Lambda^p(\R^{n}) \otimes \C_\lambda.
\end{equation}
Here we regard the spaces $\Lambda^p(\R^n)$ as trivial $A$-modules. We also
recall that for even $n$ the representation $\sigma_{\frac{n}{2}}$ is {\em not}
irreducible. The dual representation $\rho_{\lambda,p}^\ch$ of $L=MA$ acts on
$$
   V_{\lambda,p}^\ch \simeq \Lambda^p (\gon_-(\R))^* \otimes \C_{-\lambda} \simeq
   \Lambda^p(\R^{n})^* \otimes \C_{-\lambda}.
$$
We trivially extend $\rho_{\lambda,p}$ and $\rho_{\lambda,p}^\ch$ to
representations of $P$, i.e.,
\begin{equation}\label{eq:IrreRep}
   \rho_{\lambda,p} (man)(v\otimes 1) = \sigma_p(m) (v) \otimes a^{\lambda}
\end{equation}
and
\begin{equation}\label{eq:IrreRep-dual}
   \rho_{\lambda,p}^\ch (man)(v \otimes 1) = \sigma_p^\ch (m) (v) \otimes a^{-\lambda}
\end{equation}
for $man \in P = MAN_+$. Let
$$
   (\pi_{\lambda,p},\Ind_P^G(\rho_{\lambda,p})) \quad \mbox{and} \quad
   (\pi_{\lambda,p}^\ch,\Ind_P^G( \rho_{\lambda,p}^\ch))
$$
be the resulting induced representations of $G$.\footnote{In \cite{Juhl},
induced representations are defined by using the opposite sign of $\lambda$.}
The analogous induced representations for $G^\prime$ are denotes by
$\pi^\prime_{\lambda,p}$ and $\pi_{\lambda,p}^{\prime \ch}$.

\begin{bem}\label{rep-geo} The representation $\pi_{\lambda,p}^\ch$ can be naturally
identified with the left regular representation of $G$ on the space
$\Omega^p(G/P,\C_{-\lambda-p})$ of weighted $p$-forms on $G/P$ (with weight
$-\lambda-p$). Alternatively, we may naturally identify the induced
representations $\pi_{\lambda,p}^\ch$ with the geometrically defined
representations
\begin{equation}\label{rep-geom}
   \pi_\lambda^{(p)}(\gamma) \st e^{\lambda \Phi_\gamma} \gamma_*:
   \Omega^p (S^n) \to \Omega^p(S^n), \; \gamma \in G
\end{equation}
on complex-valued $p$-forms on the round sphere $S^n$ through the identity
\begin{equation}\label{rep-relation}
   \pi_{-\lambda-p}^{(p)} = \pi_{\lambda,p}^\ch.
\end{equation}
The definition \eqref{rep-geom} rests on the fact that $G$ acts on $S^n = G/P$
by conformal diffeomorphisms of the round metric $g_0$, i.e., $\gamma_*(g_0) =
e^{2\Phi_\gamma} g_0$ for some $\Phi_\gamma \in C^\infty(S^n)$. For more
details on these identifications in the case $p=0$ see Section 2.3 in
\cite{Juhl}.
\end{bem}

\subsection{A branching problem}\label{charident}

In the present section, we use a result of \cite{ko} to describe the
decompositions of the generalized Verma modules
$$
   {\fam2 M}^\gog_\gop\left(\Lambda^p(\R^n) \otimes \C_\lambda \right), \; \lambda \in \C
$$
for $\gog$ under restriction to the subalgebra $\gog^\prime$ on the level of
characters.

Let $S(V) = \oplus_{N =0}^\infty S^N(V)$ be the symmetric tensor algebra over
the vector space $V$. We extend the adjoint action of $\gol^\prime$ to
$S(\gon_-/\gon^\prime_-)$. Then the $\gol^\prime$-module
$S(\gon_-/\gon^\prime_-)$ is the free commutative ring generated by the
$1$-dimensional $\gol^\prime$-module $\gon_-/\gon_-^\prime$ isomorphic to
$\C_{-1}$, i.e., $S(\gon_-/\gon^\prime_-) \simeq \oplus_{N \ge 0} \C_{-N}$. For
any finite dimensional irreducible $\gol$-module $W$ and any finite dimensional
irreducible $\gol^\prime$-module $V^\prime$ we define
\begin{equation}\label{multiplicity-gen}
   m(W,V^\prime) \st \dim_\C \mathrm{Hom}_{\gol^\prime}
   \left(V^\prime,W|_{\gol^\prime} \otimes S(\gon_-/\gon^\prime_-)\right).
\end{equation}
Then, by \cite[Theorem $3.10$]{ko}, it holds
\begin{equation}\label{chid-general}
   {\fam2 M}^\gog_\gop (W) |_{\gog^\prime} \simeq \bigoplus_{V^\prime} m(W,V^\prime)
   {\fam2M}^{\gog^\prime}_{\gop^\prime} (V^\prime)
\end{equation}
in the Grothendieck group $K({\fam2 O}^{\gop^\prime})$ of the
Bernstein-Gelfand-Gelfand parabolic category ${\fam2 O}^{\gop^\prime}$. The
isomorphism \eqref{chid-general} is equivalent to the equality of formal
characters of both sides.

For the convenience of the reader, we recall the main arguments of the proof of
\eqref{chid-general}. First, the identifications ${\fam2 M}^\gog_\gop (W)
\simeq U(\gon_-) \otimes W \simeq S(\gon_-) \otimes W$ imply that on $MA^+$ the
formal character of this module is given by
\begin{align*}
   \Ch({\fam2 M}^\gog_\gop (W))(ma) & = \sum_{N \ge 0} \tr(\Ad(ma)|_{S^N(\gon_-)}) \Ch(W)(ma)\\
   & = \det(1-\Ad(ma)|_{\gon_-})^{-1} \Ch(W)(ma), \; ma \in MA^+ \subset L.
\end{align*}
We restrict this formula to $M^\prime A^+ \subset L^\prime$. The relation
\begin{align}\label{det-formula}
   \det(1-\Ad(m'a)|_{\gon_-})^{-1} & = \det(1-\Ad(m'a)|_{\gon_-^\prime})^{-1}
   \det(1-\Ad(a)|_{\gon_-/\gon^\prime_-})^{-1} \notag \\
   & = \det(1-\Ad(m'a)|_{\gon_-^\prime})^{-1}
   \sum_{N \ge 0} \tr (\Ad(a)|_{S^N(\gon_-/\gon^\prime_-)})
\end{align}
for $m^\prime a \in M^\prime A^+$ yields
\begin{align*}
   \Ch({\fam2 M}^\gog_\gop (W))(m^\prime a) & = \det(1-\Ad(m'a)|_{\gon_-^\prime})^{-1}
   \Ch(W \otimes S(\gon_-/\gon^\prime_-))(m^\prime a) \\
   & = \det(1-\Ad(m'a)|_{\gon_-^\prime})^{-1} \sum_{V^\prime} m(W,V^\prime)
   \Ch(V^\prime)(m^\prime a) \\
   & = \sum_{V^\prime} m(W,V^{\prime}) \Ch({\fam2 M}^{\gog^\prime}_{\gop^\prime}
   (V^\prime))(m^\prime a).
\end{align*}
This proves the assertion.\footnote{The identity \eqref{det-formula} also
implies an identity for Selberg zeta functions which in turn suggests part of
the theory in \cite{Juhl} (as explained in Section 1.2 of this reference).}

The isomorphism \eqref{chid-general} may be regarded as the main step in the
determination of a branching rule of the restriction to $\gog^\prime$ of the
generalized Verma modules ${\fam2 M}^\gog_\gop (W)$. We also stress that the
modules on the right-hand side of \eqref{chid-general} may be reducible: this
effect will actually play a role later.

Now let $W = V_{p,\lambda}$. In order to determine the multiplicities
\begin{equation}\label{multiplicity}
   m_{V^\prime}(p,\lambda) \st m(V_{p,\lambda},V^\prime),
\end{equation}
we distinguish several cases.

{\bf The generic case.} We assume that $n$ is odd and $p \ne \frac{n\pm 1}{2}$
or $n$ is even and $p \ne \frac{n}{2}$. Then Lemma \ref{form-rep} implies that
the $SO(n,\R)$-module $\Lambda^{p}(\R^{n}) \otimes \C$ is irreducible and the
multiplicity \eqref{multiplicity} equals one iff
$$
   V' \simeq \Lambda^p(\R^{n-1}) \otimes \C_{\lambda-N} \quad \mbox{or} \quad
   V' \simeq  \Lambda^{p-1}(\R^{n-1}) \otimes \C_{\lambda-N}
$$
for some $N \in \N_0$.

{\bf Middle degree cases ($n$ odd).} If $p=\frac{n-1}{2}$, the multiplicity
\eqref{multiplicity} equals one iff
$$
   V' \simeq \Lambda_+^\frac{n-1}{2}(\R^{n-1}) \otimes \C_{\lambda-N} \;\; \mbox{or} \;\;
   V' \simeq  \Lambda_-^\frac{n-1}{2}(\R^{n-1}) \otimes \C_{\lambda-N} \;\; \mbox{or} \;\;
   V' \simeq  \Lambda^\frac{n-3}{2}(\R^{n-1}) \otimes \C_{\lambda-N}
$$
for some $N \in \N_0$. Similarly, if $p=\frac{n+1}{2}$, the multiplicity
\eqref{multiplicity} equals one iff
$$
   V' \simeq \Lambda^\frac{n+1}{2}(\R^{n-1}) \otimes \C_{\lambda-N} \;\; \mbox{or} \;\;
   V' \simeq  \Lambda_+^\frac{n-1}{2}(\R^{n-1}) \otimes \C_{\lambda-N} \;\; \mbox{or} \;\;
   V' \simeq  \Lambda_-^\frac{n-1}{2}(\R^{n-1}) \otimes \C_{\lambda-N}
$$
for some $N \in \N_0$.

{\bf Middle degree cases ($n$ even).} Assume that $p = \frac{n}{2}$. Then Lemma
\ref{form-rep} shows that the multiplicity \eqref{multiplicity} is one iff
$$
   V' \simeq \Lambda^{\frac{n}{2}-1}(\R^{n-1}) \otimes \C_{\lambda-N} \simeq
   \Lambda^{\frac{n}{2}}(\R^{n-1}) \otimes \C_{\lambda-N}
$$
for some $N \in \N_0$.

Using these observations, the following results are special cases of
\eqref{chid-general}.

\begin{prop}[\bf Character identities. Generic cases]\label{charind} We consider the
compatible pair of simple Lie algebras
\begin{equation*}
   \gog(\R) = \mathfrak{so}(n\!+\!1,1,\R), \quad \gog^\prime(\R) = \mathfrak{so}(n,1,\R)
\end{equation*}
and their respective conformal parabolic subalgebras $\gop(\R)$ and
$\gop^\prime(\R)$. Assume that $p \ne \frac{n \pm 1}{2}$ if $n$ is odd and $p
\ne \frac{n}{2}$ if $n$ is even. Then, in the Grothendieck group $K({\fam2
O}^\gop)$ of the Bernstein-Gelfand-Gelfand parabolic category ${\fam2
O}^{\gop}$, it holds
\begin{multline}\label{diag-branch-generic}
   {\fam2 M}^\gog_\gop\left(\Lambda^p(\R^n) \otimes \C_\lambda\right)|_{\gog^\prime} \\
   \simeq \bigoplus_{N \in \N_0} {\fam2M}^{\gog^\prime}_{\gop^\prime}
   \left(\Lambda^{p-1}(\R^{n-1}) \otimes\C_{\lambda-N}\right) \oplus
   \bigoplus_{N \in \N_0} {\fam2M}^{\gog^\prime}_{\gop^\prime}
   \left(\Lambda^{p}(\R^{n-1}) \otimes\C_{\lambda-N}\right).
\end{multline}
\end{prop}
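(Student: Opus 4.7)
The plan is to apply the general character identity \eqref{chid-general}, which the paper has already recalled in detail, to the specific inducing module $W = V_{p,\lambda} = \Lambda^p(\R^n) \otimes \C_\lambda$, and then compute the multiplicities $m(W,V^\prime)$ via the elementary branching of the exterior power representation.

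First I would recall that, as an $\gol^\prime$-module,
\begin{equation*}
   S(\gon_-/\gon^\prime_-) \simeq \bigoplus_{N \ge 0} \C_{-N},
\end{equation*}
since $\gon_-/\gon^\prime_-$ is a one-dimensional $\gol^\prime$-module on which $M^\prime \simeq SO(n\!-\!1,\R)$ acts trivially and $A$ acts with weight $-1$. Consequently,
\begin{equation*}
   V_{p,\lambda}|_{\gol^\prime} \otimes S(\gon_-/\gon^\prime_-) \simeq
   \bigoplus_{N \ge 0} \bigl(\Lambda^p(\R^n)|_{SO(n-1,\R)} \otimes \C_{\lambda-N}\bigr).
\end{equation*}
The action of $M^\prime$ on each summand is given by the restriction of $\sigma_p$, while $A$ acts by $\xi_{\lambda-N}$.

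Next I would apply Lemma \ref{form-rep}(iv) to obtain the $SO(n\!-\!1,\R)$-equivariant isomorphism
\begin{equation*}
   \Lambda^p(\R^n) \simeq \Lambda^p(\R^{n-1}) \oplus \Lambda^{p-1}(\R^{n-1}).
\end{equation*}
Under the genericity hypothesis (that is, $p \ne \tfrac{n\pm 1}{2}$ for odd $n$ and $p \ne \tfrac{n}{2}$ for even $n$), parts (i) and (ii) of Lemma \ref{form-rep} ensure that both summands $\Lambda^{p}(\R^{n-1}) \otimes \C$ and $\Lambda^{p-1}(\R^{n-1}) \otimes \C$ are irreducible and pairwise non-isomorphic as $SO(n\!-\!1,\R)$-modules (their highest weights $1_p$ and $1_{p-1}$ differ). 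Combined with the $A$-weight $\lambda-N$, this gives pairwise distinct irreducible $\gol^\prime$-modules, so Schur's lemma yields
\begin{equation*}
   m_{V^\prime}(p,\lambda) =
   \begin{cases}
   1 & \text{if } V^\prime \simeq \Lambda^p(\R^{n-1}) \otimes \C_{\lambda-N} \text{ for some } N \in \N_0, \\
   1 & \text{if } V^\prime \simeq \Lambda^{p-1}(\R^{n-1}) \otimes \C_{\lambda-N} \text{ for some } N \in \N_0, \\
   0 & \text{otherwise.}
   \end{cases}
\end{equation*}

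Finally, inserting these multiplicities into \eqref{chid-general} produces the asserted decomposition in $K(\mathcal{O}^{\gop^\prime})$. There is no genuine obstacle here: the only delicate point is making sure the genericity assumption really does keep $\Lambda^p(\R^{n-1})$ and $\Lambda^{p-1}(\R^{n-1})$ both irreducible and non-isomorphic, so that no cross-multiplicities or self-extensions appear. In the middle-degree cases excluded from this proposition, one of these modules splits under $\bar{\star}$, and the same strategy will still apply but will split the corresponding summand into two pieces—this is exactly what the subsequent middle-degree variants of the proposition handle.
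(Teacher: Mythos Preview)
Your proposal is correct and follows essentially the same approach as the paper: the paper likewise derives the proposition directly from the general character identity \eqref{chid-general} after computing the multiplicities $m_{V'}(p,\lambda)$ via $S(\gon_-/\gon'_-)\simeq\bigoplus_{N\ge 0}\C_{-N}$ and the branching rule of Lemma~\ref{form-rep}(iv). One minor imprecision: your claim that the highest weights are $1_p$ and $1_{p-1}$ only holds verbatim when both $p$ and $p-1$ are at most the middle degree of $\R^{n-1}$; for larger $p$ you should invoke the Hodge equivalence $\sigma_q\simeq\sigma_{n-1-q}$, and then the genericity hypothesis $p\ne\tfrac{n}{2}$ (for even $n$) is precisely what rules out $\Lambda^p(\R^{n-1})\simeq\Lambda^{p-1}(\R^{n-1})$.
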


For generic $\lambda$, the identity \eqref{diag-branch-generic} refines to an
actual branching law (with direct sums of irreducible modules on the right
hand-side).

\begin{prop}[\bf Character identities. Middle degree cases]\label{charind2} With the same notation as in
Proposition \ref{charind}, it holds
\begin{align}\label{diag-branch-md-odd+}
   {\fam2 M}^\gog_\gop \big(\Lambda^{\frac{n-1}{2}} & (\R^n) \otimes \C_\lambda \big)|_{\gog^\prime} \notag \\
   & \simeq \bigoplus_{N \in \N_0} {\fam2M}^{\gog^\prime}_{\gop^\prime}
   \big(\Lambda_+^{\frac{n-1}{2}}(\R^{n-1}) \otimes \C_{\lambda-N} \big)
   \oplus \bigoplus_{N \in \N_0} {\fam2M}^{\gog^\prime}_{\gop^\prime} \big(\Lambda_-^{\frac{n-1}{2}}(\R^{n-1})
   \otimes\C_{\lambda-N}\big) \notag \\
   & \oplus \bigoplus_{N \in \N_0} {\fam2M}^{\gog^\prime}_{\gop^\prime}
   \big(\Lambda^{\frac{n-3}{2}}(\R^{n-1}) \otimes \C_{\lambda-N} \big)
\end{align}
and
\begin{align}\label{diag-branch-md-odd-}
   {\fam2 M}^\gog_\gop \big(\Lambda^{\frac{n+1}{2}}& (\R^n) \otimes \C_\lambda
   \big)|_{\gog^\prime} \notag \\
   & \simeq  \bigoplus_{N \in \N_0} {\fam2M}^{\gog^\prime}_{\gop^\prime}
   \big(\Lambda^{\frac{n+1}{2}}(\R^{n-1}) \otimes \C_{\lambda-N} \big) \notag \\
   & \oplus \bigoplus_{N \in \N_0} {\fam2M}^{\gog^\prime}_{\gop^\prime}
   \big(\Lambda_+^{\frac{n-1}{2}}(\R^{n-1}) \otimes \C_{\lambda-N} \big)
   \oplus \bigoplus_{N \in \N_0} {\fam2M}^{\gog^\prime}_{\gop^\prime}
   \big(\Lambda_-^{\frac{n-1}{2}}(\R^{n-1}) \otimes\C_{\lambda-N}\big)
\end{align}
for odd $n$ and
\begin{equation}\label{diag-branch-md-even}
  {\fam2 M}^\gog_\gop\left(\Lambda_\pm^{\frac{n}{2}}(\R^n) \otimes \C_\lambda\right)|_{\gog^\prime}
  \simeq \bigoplus_{N \in \N_0} {\fam2M}^{\gog^\prime}_{\gop^\prime}
  \left(\Lambda^{\frac{n}{2}-1}(\R^{n-1}) \otimes \C_{\lambda-N}\right)
\end{equation}
for even $n$.
\end{prop}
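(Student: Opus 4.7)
The plan is to apply the general character identity \eqref{chid-general} of \cite{ko} with $W = V_{\lambda,p}$ and unpack the $\gol^\prime$-multiplicities using Lemma \ref{form-rep}. The key observation is that the quotient $\gon_-/\gon^\prime_-$ is a one-dimensional $\gol^\prime$-module isomorphic to $\C_{-1}$ on which $M^\prime = SO(n-1,\R)$ acts trivially, so
$$
S(\gon_-/\gon^\prime_-) \simeq \bigoplus_{N \in \N_0} \C_{-N}
$$
as $\gol^\prime$-modules. Consequently, for an irreducible $\gol^\prime$-module $V^\prime = U^\prime \otimes \C_\mu$ (with $U^\prime$ an irreducible $M^\prime$-module), the multiplicity \eqref{multiplicity-gen} reduces to
$$
m(V_{\lambda,p}, V^\prime) = \dim_\C \Hom_{M^\prime}\!\left(U^\prime, \Lambda^p(\R^n) \otimes \C\right)
$$
whenever $\mu = \lambda - N$ for some $N \in \N_0$, and is zero otherwise. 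Hence the proof reduces to computing the $M^\prime$-type decomposition of $\Lambda^p(\R^n)$ (respectively $\Lambda^{n/2}_\pm(\R^n)$) for the middle-degree values of $p$.

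First I would address the case $n$ odd, $p = \tfrac{n-1}{2}$. By Lemma \ref{form-rep}(iv) we have the $M^\prime$-decomposition
$$
\Lambda^{\frac{n-1}{2}}(\R^n) \simeq \Lambda^{\frac{n-1}{2}}(\R^{n-1}) \oplus \Lambda^{\frac{n-3}{2}}(\R^{n-1}).
$$
Because $n-1$ is even, $\tfrac{n-1}{2}$ is the middle degree for $M^\prime = SO(n-1,\R)$, and Lemma \ref{form-rep}(iii) applied to $M^\prime$ yields the further splitting
$$
\Lambda^{\frac{n-1}{2}}(\R^{n-1}) \otimes \C \simeq \Lambda^{\frac{n-1}{2}}_+(\R^{n-1}) \otimes \C \oplus \Lambda^{\frac{n-1}{2}}_-(\R^{n-1}) \otimes \C
$$
into eigenspaces of the Hodge star operator. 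Inserting these three irreducible $M^\prime$-summands, each with multiplicity one for every $N \in \N_0$, into \eqref{chid-general} produces \eqref{diag-branch-md-odd+}. The case $n$ odd, $p = \tfrac{n+1}{2}$ is handled by exactly the same argument, except that now it is the second summand $\Lambda^{\frac{n-1}{2}}(\R^{n-1})$ in the decomposition $\Lambda^{\frac{n+1}{2}}(\R^n) \simeq \Lambda^{\frac{n+1}{2}}(\R^{n-1}) \oplus \Lambda^{\frac{n-1}{2}}(\R^{n-1})$ which is middle-degree for $M^\prime$ and splits into $\pm$-parts, giving \eqref{diag-branch-md-odd-}.

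For $n$ even and $p = \tfrac{n}{2}$, Lemma \ref{form-rep}(v) directly furnishes $M^\prime$-isomorphisms
$$
\Lambda^{\frac{n}{2}}_\pm(\R^n) \otimes \C \simeq \Lambda^{\frac{n}{2}-1}(\R^{n-1}) \otimes \C,
$$
so the only $M^\prime$-type occurring in $\Lambda^{n/2}_\pm(\R^n)$ is $\Lambda^{\frac{n}{2}-1}(\R^{n-1})$, each with multiplicity one. Feeding this back into \eqref{chid-general} yields \eqref{diag-branch-md-even}. The only delicate point throughout is the coherent bookkeeping of the Hodge decompositions in middle degrees for the two nested orthogonal groups, but Lemma \ref{form-rep}(iii)--(v) has already carried out all the required linear algebra, so no additional work beyond the above unpacking is needed.
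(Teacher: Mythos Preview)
Your proposal is correct and follows essentially the same approach as the paper: the paper carries out exactly this multiplicity analysis in the text preceding the proposition, using \eqref{chid-general} together with $S(\gon_-/\gon^\prime_-) \simeq \bigoplus_{N\ge 0}\C_{-N}$ and the $M^\prime$-decompositions from Lemma \ref{form-rep}(iii)--(v), and then states that the proposition is a special case of \eqref{chid-general}. Your write-up simply makes this argument explicit.
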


The $F$-method is a procedure to construct the emdeddings of the ${\fam2
U}(\gog^\prime)$-submodules in the decompositions in Proposition \ref{charind}
and Proposition \ref{charind2}. It has been explained in Section
\ref{F-method}. Its realization requires to work with the induced
representations in the non-compact model. The following lemma provides the
necessary details. In the non-compact model of the infinitesimal induced
representation $\dm\pi_{\lambda,p}$, the $\gog(\R)$-module
$\mathrm{Ind}^G_P(V_{\lambda,p})$ is given by the space $C^\infty(\gon_-(\R))
\otimes V_{\lambda,p} \simeq C^\infty(\R^n) \otimes V_{\lambda,p}$ of smooth
$V_{\lambda,p}$-valued functions on $\gon_-(\R) \simeq \R^n$. We shall write
$\gon_-(\R) \ni X = \sum_{k=1}^n x_k E_j^-$ and $\gon_-^*(\R) \ni \xi =
\sum_{k=1}^n \xi_k (E_k^-)^*$.

\begin{lem}\label{fundamental} (1) The operator $\dm\pi_{\lambda,p}(E_j^+)$, $j=1,\dots, n-1$,
acts on $C^\infty(\R^n) \otimes V_{\lambda,p}$ by
\begin{align}\label{dualverma}
   \dm\pi_{\lambda,p}(E_j^+)(u \otimes \omega)(x) & =
   \Big(-\tfrac{1}{2} \sum_{k=1}^n x^2_k \partial_{x_j}
   + x_j(\lambda + \sum_{k=1}^{n} x_k\partial_{x_k})\Big)(u) \otimes \omega \notag \\
   & - \sum_{k=1}^n x_k u \otimes (E_k^- \otimes E_j^+ - E_j^- \otimes E_k^+)(\omega),
\end{align}
where $u \in C^\infty(\R^n)$ and $\omega \in \Lambda^p(\R^n)$. \\
(2) The operator $d\tilde{\pi}_{\lambda,p}(E_j^+)$, $j=1,\dots,n-1$, acts on
$\Pol(\R^n) \otimes V_{\lambda,p}$ by
\begin{align}\label{fourdualverma}
   \dm \tilde{\pi}_{\lambda,p}(E_j^+) (p \otimes \omega)
   & = -i\left(\tfrac{1}{2} \xi_j \Delta_\xi + (\lambda-E_\xi)\partial_{\xi_j} \right)(p) \otimes \omega \notag \\
   & +i \sum_{k=1}^{n} \partial_{\xi_k}(p) \otimes (E_k^- \otimes E_j^+ - E_j^- \otimes E_k^+)(\omega),
\end{align}
where $p \in \Pol(\R^n)$ and $\omega \in \Lambda^p(\R^n)$. Here $i\in\C$ is the
complex unit, $\Delta_\xi=\sum_{k=1}^n\partial_{\xi_k}^2$ is the Laplace
operator in the variables $\xi_1,\dots,\xi_n$ of $\gon_-^*(\R)$ and
$E_\xi = \sum_{k=1}^n\xi_k\partial_{\xi_k}$ is the Euler operator. \\
(3) The operator $(\dm\pi_{\lambda,p})^\ch(E_j^+)$, $j=1,\dots,n-1$, acts on
$C^\infty(\R^n) \otimes V^\ch_{\lambda,p}$ by
\begin{align}\label{adjoint}
   (\dm\pi_{\lambda,p})^\ch(E_j^+) (u \otimes \omega)(x) & = \Big(-\tfrac{1}{2}\sum_{k=1}^n x^2_k \partial_{x_j}
   + x_j(-\lambda + \sum_{k=1}^{n} x_k\partial_{x_k})\Big)(u) \otimes \omega \notag \\
   & + \sum_{k=1}^n x_k u \otimes ((E_j^+)^* \otimes (E_k^-)^* - (E_k^+)^* \otimes (E_j^-)^*)(\omega),
\end{align}
where $u \in C^\infty(\R^n)$ and $\omega \in \Lambda^p(\R^n)^*$.
\end{lem}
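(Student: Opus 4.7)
The plan is to prove part (1) by direct computation in the non-compact picture of the induced representation, then to deduce (2) by applying the algebraic Fourier transform \eqref{eq:FourierTrafo} termwise, and finally to obtain (3) from (1) by dualizing the inducing module.

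For part (1), the big Bruhat cell $N_- P \subset G$ is open and dense, so any section of $\Ind_P^G(V_{\lambda,p})$ is determined by its restriction to $N_-$, which via the exponential we identify with $\gon_-(\R) \simeq \R^n$. The infinitesimal left regular action then reads
$$
   \bigl(\dm\pi_{\lambda,p}(Z) f\bigr)(\exp X) = \frac{d}{dt}\bigg|_{t=0} f\bigl(\exp(-tZ)\exp X\bigr),
   \quad X \in \gon_-(\R),\; Z \in \gog(\R),
$$
where $f$ satisfies $f(g \cdot man) = \rho_{\lambda,p}(man)^{-1} f(g)$. For $Z = E_j^+$ with $j = 1,\dots, n-1$, I would decompose $\exp(-tE_j^+)\exp(X)$ to first order in $t$ in the Bruhat form $\exp(X_-(t))\cdot l(t) \cdot \exp(X_+(t))$ with $X_\pm(t) \in \gon_\pm(\R)$ and $l(t) \in L$. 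This can be carried out either by direct matrix multiplication in the defining $(n+2)$-dimensional representation of $\gog$ supplied in Section \ref{notation}, or by iteratively applying the commutation relations $[E_j^+, E_k^-] = \delta_{jk}E + M_{jk}$ together with the Baker-Campbell-Hausdorff formula. Three contributions appear at first order in $t$: the $\gon_-$-part produces the classical special-conformal vector field $-\tfrac{1}{2}\sum_k x_k^2 \partial_{x_j} + x_j\sum_k x_k \partial_{x_k}$ acting on $u$; the $\goa$-part, acting through the character $\xi_\lambda$, contributes the scalar $\lambda x_j u$; and the $\gom$-part $\sum_k x_k M_{kj}$ acts on $\omega$ via $\dm\sigma_p$. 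By Remark \ref{AlgebraicAction}, $\dm\sigma_p(M_{kj})$ coincides with the action of $E_k^- \otimes E_j^+ - E_j^- \otimes E_k^+$ on $\Lambda^p(\R^n)$, producing the matrix piece in \eqref{dualverma}.

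For part (2), the algebraic Fourier transform \eqref{eq:FourierTrafo} is an algebra isomorphism on compositions of differential operators under which $x_k \leftrightarrow -i\partial_{\xi_k}$ and $\partial_{x_k} \leftrightarrow -i\xi_k$. Substituting these into \eqref{dualverma} term by term and reordering derivatives past polynomials via $[\partial_{\xi_k}, \xi_l] = \delta_{kl}$ yields \eqref{fourdualverma}; the matrix piece simply acquires an overall factor of $i$ from $x_k \mapsto -i\partial_{\xi_k}$.

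For part (3), the dual induced representation $(\dm\pi_{\lambda,p})^\ch$ is governed by the contragredient inducing module $V^\ch_{\lambda,p}$, in which the character $\xi_\lambda$ is replaced by $\xi_{-\lambda}$ and $\sigma_p$ by $\sigma_p^\ch$. Repeating the argument of part (1) on the dual side flips the sign of $\lambda$ in the scalar term, and replaces $\dm\sigma_p(M_{kj})$ by $\dm\sigma_p^\ch(M_{kj}) = -\dm\sigma_p(M_{kj})^T$. Under this transposition the tensor $E_k^- \otimes E_j^+ - E_j^- \otimes E_k^+$ becomes $(E_j^+)^* \otimes (E_k^-)^* - (E_k^+)^* \otimes (E_j^-)^*$ acting on $\Lambda^p(\R^n)^*$, and the extra minus sign from ``minus-transpose'' flips the overall sign of the matrix contribution, reproducing \eqref{adjoint}. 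The main technical obstacle is the Bruhat decomposition step in part (1): all three first-order contributions must be read off simultaneously, which requires either a careful matrix computation in the defining representation of $\gog$ or iterated applications of BCH; once (1) is established, parts (2) and (3) are mechanical bookkeeping.
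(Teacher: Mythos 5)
Your parts (1) and (3) follow essentially the paper's own route: part (1) is the first-order Bruhat factorization of $\exp(-tE_j^+)\cdot\exp(X)$ carried out in the defining matrix representation (the paper records the factors $a$, $m$, $\tilde{x}$ explicitly and differentiates at $t=0$), and part (3) is the same dualization argument, including the sign flip coming from $\dm\sigma_p^\ch(T)=-\dm\sigma_p(T)^{T}$. Those two parts are fine as sketches.

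Part (2), however, contains a genuine error: the recipe "substitute $x_k\mapsto -i\partial_{\xi_k}$, $\partial_{x_k}\mapsto -i\xi_k$ into \eqref{dualverma} term by term, preserving the order of composition, then normal-order" does \emph{not} reproduce \eqref{fourdualverma}. Concretely,
\begin{align*}
   -\tfrac{1}{2}\sum_{k} x_k^2\partial_{x_j} \;&\longmapsto\; -\tfrac{1}{2}\sum_{k}(-i\partial_{\xi_k})^2(-i\xi_j)
   \;=\; -\tfrac{i}{2}\bigl(\xi_j\Delta_\xi + 2\partial_{\xi_j}\bigr),\\
   x_j\sum_{k} x_k\partial_{x_k} \;&\longmapsto\; (-i\partial_{\xi_j})\sum_{k}(-i\partial_{\xi_k})(-i\xi_k)
   \;=\; i\bigl(E_\xi + n + 1\bigr)\partial_{\xi_j},
\end{align*}
so together with $\lambda x_j\mapsto -i\lambda\partial_{\xi_j}$ your procedure yields $-i\bigl(\tfrac{1}{2}\xi_j\Delta_\xi + (\lambda - n - E_\xi)\partial_{\xi_j}\bigr)$ on the scalar part, which differs from \eqref{fourdualverma} by $i\,n\,\partial_{\xi_j}$. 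The correct procedure, and the one the paper uses, is to \emph{first reverse the order of composition} of the differential operator in \eqref{dualverma} and only then apply the substitution order-preservingly: e.g.\ $x_k^2\partial_{x_j}$ becomes $\partial_{x_j}x_k^2$, which maps to $(-i\xi_j)(-i\partial_{\xi_k})^2 = i\,\xi_j\partial_{\xi_k}^2$ with no commutator correction, and then all three scalar terms assemble to $-i\bigl(\tfrac{1}{2}\xi_j\Delta_\xi + (\lambda-E_\xi)\partial_{\xi_j}\bigr)$ exactly. The reversal is not a property of the Fourier transform \eqref{eq:FourierTrafo} itself (which, as you say, intertwines compositions order-preservingly under these substitutions); it comes from the preceding step of passing from the induced representation on $C^\infty(\gon_-(\R))\otimes V_{\lambda,p}$ to the generalized Verma module realized as distributions supported at the origin — a duality, and dualities reverse composition. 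Your phrase about the Fourier transform being "an algebra isomorphism on compositions of differential operators" conflates these two steps. With the reversal reinstated, the rest of your bookkeeping (including the factor $i$ on the matrix term, which is zeroth order and unaffected) goes through.
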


We recall that through the identifications $\gon_\pm(\R) \simeq \R^n$ the
elements $E^-_k \otimes E_j^+ - E_j^- \otimes E_k^+$ are regarded as elements
of $\mathfrak{so}(n,\R)$. They act on $\Lambda^p(\R^n)$ as stated in Remark
\ref{AlgebraicAction}.

We also note that the formulation of Lemma \ref{fundamental} has been
simplified by suppressing the tensor products with $\C_\lambda$.

\begin{proof} (i) The proof is a minor extension of the proof of \cite[Lemma 4.1]{koss}.
It basically suffices to determine the additional contribution by the action on
$\Lambda^p(\R^{n})$. We calculate in the matrix realization. Let $\cdot$ denote
the matrix product. The result of the left action of $n=\exp(Z) \in N_+$ on
$x=\exp(X)\in N_-$ is
\begin{align*}
   n^{-1} \cdot x =
   \begin{pmatrix}
   a & -Z + \tfrac 12 \abs{Z}^2 X^t & -\tfrac 12 \abs{Z}^2 \\
   X - \tfrac 12 \abs{X}^2 Z^t & \id - Z^t \otimes X^t & Z^t \\
   -\tfrac 12 \abs{X}^2 & -X^t & 1
   \end{pmatrix},
\end{align*}
where $a \st 1 - Z \cdot X + \tfrac{\abs{Z}^2\abs{X}^2}{4}$. Here we identify
$Z$ with an arrow vector and $X$ with a column vector. If $Z$ and $X$ are
sufficiently small, we have $a \ne 0$, and this element decomposes as a product
$\tilde{x} \cdot p$, where $\tilde{x} \in N_-$ and
\begin{equation*}
   p = \begin{pmatrix} a & * & * \\ 0 & m &*\\ 0 & 0 & a^{-1} \end{pmatrix}\in P.
\end{equation*}
The elements $a$, $m$ and $\tilde{x}$ are given up to the first-order terms in
$\abs{Z}$ by
\begin{equation}\label{FirstOrderContr1}
   a \sim 1 - Z \cdot X \quad \mbox{and} \quad m \sim \id - Z^t \otimes X^t + X \otimes Z
\end{equation}
and
\begin{equation}\label{FirstOrderContr2}
   \tilde{x} = \exp (\tilde{X}), \; \tilde{X} = (1+ Z \cdot X) (X - \tfrac{1}{2} \abs{X}^2 Z^t).
\end{equation}
Now let $X=\sum_{k=1}^n x_k E^-_k \in \gon_-$, $Z = tE_j^+ \in \gon_+$, and let
$a_j(t)$, $m_j(t)$ and $\tilde{x}_j(t)$ be the corresponding $1$-parameter
subgroups of elements in \eqref{FirstOrderContr1} and \eqref{FirstOrderContr2}.
Then using $(E_j^+)^t = E_j^-$ we find
\begin{equation*}
   \dm \xi_\lambda \left(\tfrac{d}{dt}|_{t=0}(a_j(t))\right) = - \lambda x_j
\end{equation*}
and
\begin{equation}\label{eq:algebraicAction}
   \tfrac{d}{dt}|_{t=0}(m_j(t)) = \sum_{k=1}^n x_k (E^-_k \otimes E_j^+ - E_j^-\otimes E_k^+)
   \in \mathfrak{so}(n,\R).
\end{equation}
Now, for $u \in \Ind_P^G(V_{\lambda,p})$, we obtain
\begin{equation*}
   \pi_{\lambda,p}(\exp(tE_j^+))(u)(x) = u(\exp(-tE_j^+) \cdot x)
   = \xi_{\lambda}(a_j(t)^{-1}) \sigma_p(m_j(t)^{-1}) u(\tilde{x}_j(t)).
\end{equation*}
The assertion (i) follows by differentiation.

(ii) The representation $\dm \tilde{\pi}_{\lambda,p}$ is defined by Fourier
transform of the non-compact model of the induced representation with the
inducing module $V_{\lambda,p}$. Therefore, the formula for the action of $\dm
\tilde{\pi}_{\lambda,p}(E_j^+)$ follows from the formula in (i) in two steps:
first reverse the order of the composition and then apply the Fourier transform
using $x_j \mapsto -i\partial_{\xi_j}$ and $\partial_{x_j} \mapsto -i\xi_j$
preserving the order of the composition.

(iii) We recall that $\C_{-\lambda}$ is the dual of $\C_\lambda$. Moreover, the
dual of the action of $E_k^- \otimes E_j^+$ on $\Lambda^p(\R^n)$ is given by
the negative of the action of $(E_j^+)^* \otimes(E_k^-)^*$ on
$\Lambda^p(\R^n)^*$. Hence the assertion follows from (i).
\end{proof}

\section{Singular vectors}\label{SingularVectors}

In this section, we determine the singular vectors
$$
   \Hom_{\gop^\prime}(\Lambda^q(\gon_-^\prime(\R)) \otimes \C_{\lambda-N},
   \Pol_N(\gon_-^*(\R)) \otimes \Lambda^p(\gon_-(\R)) \otimes \C_\lambda)
$$
which correspond to the homomorphisms
\begin{equation*}
   {\fam2U}(\gog^\prime) \otimes_{{\fam2U}(\gop^\prime)}(\Lambda^{q}(\gon_-^\prime(\R))
   \otimes \C_{\lambda-N}) \to {\fam2U}(\gog)\otimes_{{\fam2U}(\gop)}(\Lambda^p(\gon_-(\R)) \otimes \C_\lambda)
\end{equation*}
of generalized Verma modules. We apply the $F$-method and proceed as follows.
We first determine the consequences of the $\gol^\prime$-equivariance of (the
Fourier transform of) singular vectors. This leads to a natural distinction
between four types of singular vectors together with a natural ansatz in each
case. The remaining analysis of their $\gon_+^\prime$-equivariance will be
carried out in four separate sections according to the type. These results are
the key technical results of the paper.

\subsection{The $\gol^\prime$-equivariance}\label{first-test}

We study homomorphisms of the form
\begin{equation}\label{space-3}
   \Hom_{SO(n-1)}(\Lambda^q(\R^{n-1})^* \otimes \C, \Pol_N(\R^n) \otimes \Lambda^p(\R^n)^*)
\end{equation}
for arbitrary $p$ and $q$ and all $N \in \N_0$. Here $\Pol_N(\R^n)$ denotes the
vector space of complex-valued polynomials on $\R^n$ which are homogenous of
degree $N \in \N_0$. The results will be applied to evaluate the $\gol^
\prime$-equivariance of singular vectors and provide a natural ansatz for the
analysis of their $\gon_+^\prime$-invariance.

Among the homomorphisms of the form \eqref{space-3}, those in the spaces
\begin{equation}\label{space-1}
   \Hom_{SO(n-1)}(\Lambda^p(\R^{n-1})^* \otimes \C, \Pol_N(\R^n) \otimes \Lambda^p(\R^n)^*)
\end{equation}
and
\begin{equation}\label{space-2}
   \Hom_{SO(n-1)}(\Lambda^{p-1}(\R^{n-1})^* \otimes \C,\Pol_N(\R^n)\otimes \Lambda^p(\R^n)^*)
\end{equation}
will play a dominant role in what follows.

We identify the spaces
$$
   \Pol_N^p(\R^n) \st \Pol_N(\R^n) \otimes \Lambda^p(\R^n)^*
$$
with the respective subspaces of $\Omega^p(\R^n)$ consisting
of differential $p$-forms on $\R^n$ with complex-valued homogeneous polynomial
coefficients of degree $N$. In particular, we apply to the elements of
$\Pol_N^p(\R^n)$ the usual rules of the calculus of differential forms.

The spaces $\Lambda^p(\R^n)$, $\Lambda^p(\R^n)$ and $\Pol_N(\R^n)$ will be
regarded as $SO(n,\R)$-modules with the usual induced and push-forward actions,
respectively. The fact that some of the modules $\Lambda^p(\R^n) \otimes \C$
are reducible (Lemma \ref{form-rep}) will play only a minor role in the
following and henceforth will be suppressed for the sake of uniform statements.

We use Euclidean coordinates $x_1,\dots,x_n$ on $\R^n$, basis tangential
vectors $\partial_1,\dots,\partial_n$ and dual covectors $dx_1,\dots,dx_n$. Let
$\dm$ and $\delta$ denote the exterior differential and co-differential on
differential forms on $\R^n$, respectively. Let $\Delta = \delta \dm + \dm
\delta$ be the corresponding Laplacian on forms. Let $x_n$ be the normal
variable for the subspace $\R^{n-1}$.\footnote{In later sections, we shall use
the notation $\bar{d}$, $\bar{\delta}$ and $\bar{\Delta}$ for these operators.}

Any $p$-form $\omega \in \Pol^p_N(\R^n)$, $p \ge 1$, admits a normal Taylor
series
\begin{equation}\label{Taylor}
   \omega = \sum_{j=0}^N x_n^{N-j} (\omega_j^+ + dx_n \wedge \omega_j^-).
\end{equation}
Hence any
$$
   \omega \in \Hom_{SO(n-1)}(\Lambda^p (\R^{n-1})^* \otimes \C,\Pol^p_N(\R^n)), \; p\ge 1
$$
has Taylor coefficients
$$
   \omega_j^+ \in \Hom_{SO(n-1)}(\Lambda^p (\R^{n-1})^* \otimes \C,\Pol^p_j(\R^{n-1}))
$$
and
$$
   \omega_j^- \in \Hom_{SO(n-1)}(\Lambda^p (\R^{n-1})^* \otimes \C,\Pol^{p-1}_j(\R^{n-1}));
$$
for $p=0$, the components $\omega_j^-$ vanish, of course.

Next, we recall the decomposition
$$
   \Pol_N(\R^n) \simeq \bigoplus_{k=0}^{\left[\frac{N}{2}\right]} r^{2k} \Ha_{N-2k}(\R^n), \quad r^2 = |x|^2,
$$
where the space $\Ha_N(\R^n) = \ker \Delta |_{\Pol_N(\R^n)}$ of homogeneous
harmonic polynomials of degree $N$ is an irreducible $SO(n,\R)$-module of
highest weight $N \Lambda_1$.

The following result extends this fact to polynomial forms. Let
$$
   \Ha^p_N(\R^n) = \left\{ \omega \in \Pol^p_N(\R^n) \; |\; \Delta \omega = 0,
   \delta \omega = 0 \right\} \subset \Omega^p(\R^n)
$$
be the subspace of co-closed {\em harmonic} homogeneous polynomial $p$-forms
$\omega$ of degree $N$. The element $\alpha \st \sum_{j=1}^n x_j \otimes dx_j
\in \Pol_1^1(\R^n)$ defines a map
$$
   \alpha \wedge: \Pol^p_N(\R^n) \to \Pol^{p+1}_{N+1}(\R^n)
$$
by $ \alpha \wedge (p \otimes dx_{\alpha_1} \wedge \dots \wedge dx_{\alpha_p})
= \sum_{j=1}^n x_j p \otimes dx_j \wedge dx_{\alpha_1} \wedge \dots \wedge
dx_{\alpha_p}$. Furthermore, the element $E \st \sum_{j=1}^n x_j \otimes
\partial_j$ defines a map
$$
   i_E: \Pol^p_N(\R^n) \to \Pol^{p-1}_{N+1}(\R^n)
$$
by $i_E(p \otimes \omega) = \sum_{j=1}^n x_j p \otimes i_{\partial_j}(\omega)$.


\begin{prop}[\cite{IT}, Theorem 6.8]\label{IT} (1) For $p=1,\dots,n$ and $N \in \N_0$, we have
the decomposition
$$
   \Pol^p_N(\R^n) = \Ha^p_N(\R^n) \oplus (r^2 \Pol_{N-2}^p(\R^n) + \alpha \wedge
   \Pol_{N-1}^{p-1}(\R^n)), \; r^2=\abs{x}^2.
$$
Moreover, the $SO(n,\R)$-module $\Ha_N^p(\R^n)$ decomposes as
$$
   \Ha_N^p(\R^n) = {^\prime \Ha}_N^p(\R^n) \oplus {''\Ha}_N^p(\R^n),
   \quad p\!+\!N \ne 0, \; n\!-\!p\!+\!N \ne 0,
$$
where
$$
   {'\Ha}^p_N(\R^n) = \Ha_N^p(\R^n) \cap \ker d \quad \mbox{and}
   \quad {''\Ha}^p_N(\R^n) = \Ha_N^p(\R^n) \cap \ker i_E.
$$
(2) For odd $n$, the $SO(n,\R)$-module ${'\Ha}^p_N(\R^n)$ is irreducible and
has highest weights
$$
\begin{cases}
   N \Lambda_1 + \Lambda_p, & 0 < p < \frac{n-1}{2} \\
   N \Lambda_1 + 2 \Lambda_{\frac{n-1}{2}}, & p = \frac{n-1}{2},\frac{n+1}{2} \\
   N \Lambda_1 + \Lambda_{n-p}, & \frac{n+1}{2} < p \le n-1
\end{cases}
$$
or, equivalently,
$$
\begin{cases}
   N 1_1 + 1_p & 0 < p \le \frac{n-1}{2} \\
   N 1_1 + 1_{n-p}, & \frac{n+1}{2} \le p \le n-1.
\end{cases}
$$
(3) For odd $n$, the $SO(n,\R)$-module ${''\Ha}^p_N(\R^n)$ ($N \ge 1$) is
irreducible and has highest weights
$$
\begin{cases}
   (N\!-\!1) \Lambda_1 + \Lambda_{p+1}, & 0 \le p < \frac{n-3}{2} \\
   (N\!-\!1) \Lambda_1 + 2 \Lambda_{\frac{n-1}{2}}, & p = \frac{n-3}{2},\frac{n-1}{2} \\
   (N\!-\!1) \Lambda_1 + \Lambda_{n-p-1}, & \frac{n+1}{2} \le p \le n-1
\end{cases}
$$
or, equivalently,
$$
\begin{cases}
   (N\!-\!1) 1_1 + 1_{p+1}, & 0 \le p \le \frac{n-3}{2} \\
   (N\!-\!1) 1_1 + 1_{n-p-1}, & \frac{n-1}{2} \le p \le n-1.
\end{cases}
$$
(4) For even $n$, the $SO(n,\R)$-module ${'\Ha}^p_N(\R^n)$ decomposes into
irreducible modules with highest weights
$$
\begin{cases}
   N \Lambda_1 + \Lambda_p, & 0 < p < \frac{n}{2}-1 \\
   N \Lambda_1 + \Lambda_{\frac{n}{2}-1}+ \Lambda_\frac{n}{2}, & p = \frac{n}{2}-1,\frac{n}{2}+1 \\
   N \Lambda_1 + 2\Lambda_{\frac{n}{2}-1}, N\Lambda_1 + 2\Lambda_{\frac{n}{2}}, & p = \frac{n}{2} \\
   N\Lambda_1 + \Lambda_{n-p}, & \frac{n}{2}+1 < p \le n-1
\end{cases}
$$
or, equivalently,
$$
\begin{cases}
   N 1_1 + 1_p, & 0 < p \le \frac{n}{2}-1 \\
   N 1_1 + 1_{\frac{n}{2}}^+, N 1_1 + 1_{\frac{n}{2}}^-, & p = \frac{n}{2} \\
   N 1_1 + 1_{n-p}, & \frac{n}{2}+1 \le p \le n-1.
\end{cases}
$$
The decompositions are multiplicity free. \\
(5) For even $n$, the $SO(n,\R)$-module ${''\Ha}^p_N(\R^n)$ ($N \ge 1$)
decomposes into irreducible modules with highest weights
$$
\begin{cases}
   (N\!-\!1) \Lambda_1 + \Lambda_{p+1}, & 0 \le p < \frac{n}{2}-2 \\
   (N\!-\!1) \Lambda_1 + \Lambda_{\frac{n}{2}-1}+ \Lambda_\frac{n}{2}, & p = \frac{n}{2}-2,\frac{n}{2} \\
   (N\!-\!1) \Lambda_1 + 2\Lambda_{\frac{n}{2}-1}, (N-1) \Lambda_1 + 2 \Lambda_{\frac{n}{2}}, & p = \frac{n}{2}-1 \\
   (N\!-\!1) \Lambda_1 + \Lambda_{n-p-1}, & \frac{n}{2} < p \le n-1
\end{cases}
$$
or, equivalently,
$$
\begin{cases}
   (N\!-\!1) 1_1 + 1_{p+1}, & 0 \le p \le \frac{n}{2}-2 \\
   (N\!-\!1) 1_1 + 1_{\frac{n}{2}}^+, (N\!-\!1) 1_1 + 1_{\frac{n}{2}}^-, & p = \frac{n}{2}-1 \\
   (N\!-\!1) 1_1 + 1_{n-p-1}, & \frac{n}{2} \le p \le n-1.
\end{cases}
$$
The decompositions are multiplicity free. \\
(6) We have
\begin{equation}
   \Ha_0^0 (\R^n) = {'\Ha}_0^0 (\R^n) = {''\Ha}_0^0(\R^n) = \C \quad \mbox{and} \quad
   \Ha_0^n (\R^n) = {'\Ha}_0^n (\R^n) = \C.
\end{equation}
Moreover, ${'\Ha}_N^0(\R^n) = 0$ for $N \ge 1$ and ${''\Ha}_0^p(\R^n) = 0$ for
$0 < p \le n$.
\end{prop}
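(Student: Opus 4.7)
The plan is to imitate the classical Fischer decomposition of scalar harmonic polynomials. Equip $\Pol_N^p(\R^n)$ with the Fischer bilinear pairing for which multiplication by $r^2$ is the formal adjoint of the coefficient-wise Laplacian $\Delta$ and $\alpha\wedge$ is the formal adjoint of the codifferential $\delta$; both adjointness relations follow from a short direct calculation on monomials. Consequently, $\Ha_N^p(\R^n)=\ker(\Delta)\cap\ker(\delta)$ coincides with the orthogonal complement of $r^2\,\Pol_{N-2}^p(\R^n)+\alpha\wedge\Pol_{N-1}^{p-1}(\R^n)$, yielding the direct sum in part~(1).

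For the refinement $\Ha_N^p={'\Ha}_N^p\oplus{''\Ha}_N^p$, the key identity is Cartan's formula applied to the Euler vector field $E=\sum_{j=1}^n x_j\partial_{x_j}$: for every $\omega\in\Pol_N^p(\R^n)$,
\begin{equation*}
   (d\,i_E+i_E\,d)\omega = L_E\omega = (N+p)\omega.
\end{equation*}
When $N+p\ne 0$ and $\omega\in\Ha_N^p$, setting $\omega_1\st(N+p)^{-1}d(i_E\omega)$ and $\omega_2\st(N+p)^{-1}i_E(d\omega)$ gives $\omega=\omega_1+\omega_2$ with $\omega_1$ closed and $i_E\omega_2=0$ (using $i_E^2=0$); both summands remain in $\Ha_N^p$ because $\Delta$ commutes with $d$ and with $i_E$, and because $\delta\omega=0$ forces $\delta d\omega=\Delta\omega-d\delta\omega=0$. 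Uniqueness is immediate since a closed form killed by $i_E$ satisfies $(N+p)\omega=0$. The identity also shows that $d$ induces an $SO(n,\R)$-equivariant injection ${''\Ha}_N^p(\R^n)\hookrightarrow{'\Ha}_{N-1}^{p+1}(\R^n)$, which is an isomorphism by the dimension count extracted from part~(1); this reduces parts (3) and (5) to parts (2) and (4) under the shift $(p,N)\mapsto(p+1,N-1)$.

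For parts (2) and (4), one exhibits an explicit highest weight vector in ${'\Ha}_N^p(\R^n)$ for each claimed weight and matches total dimensions. Away from middle degree, a highest weight vector of weight $N\Lambda_1+\Lambda_p$ is
\begin{equation*}
   d\bigl((x_1+ix_2)^{N+1}(dx_3+idx_4)\wedge(dx_5+idx_6)\wedge\cdots\wedge(dx_{2p-1}+idx_{2p})\bigr);
\end{equation*}
it is manifestly closed and homogeneous of degree $N$, and its harmonicity, co-closedness, and annihilation by the positive root vectors of $\mathfrak{so}(n,\R)$ follow by direct computation. Irreducibility and the recursive dimension match are then extracted from the Fischer decomposition in~(1) by induction on $N$, with weights outside middle degree being generated by natural modifications of the above ansatz.

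The main obstacle lies in the middle-degree cases of parts (4)--(5), where $\Lambda^{n/2}(\R^n)\otimes\C$ splits under the Hodge star $\bar{\star}$ of $g_0$ into the $\pm$-eigenspaces $\Lambda_{\pm}^{n/2}(\R^n)\otimes\C$. One must check that $\bar{\star}$ preserves ${'\Ha}_N^{n/2}(\R^n)$ (using $[\Delta,\bar{\star}]=0$ together with the intertwining relation $\bar{\star}\,d=\pm\delta\,\bar{\star}$ in middle degree, which shows that closedness is sent to co-closedness and vice versa, and hence both are preserved on ${'\Ha}$), and that each resulting $\bar{\star}$-eigenspace is an irreducible $SO(n,\R)$-module with highest weight $N1_1+1_{n/2}^{\pm}$. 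Multiplicity-freeness in parts (4) and (5) then follows from the pairwise distinctness of the listed highest weights. These subtleties are carried out in detail in the cited reference \cite{IT}, to which we appeal for the complete verification.
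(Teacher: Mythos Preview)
The paper does not prove this proposition at all: it is stated with the attribution ``[\cite{IT}, Theorem~6.8]'' and then used as a black box, so there is no argument in the paper to compare against. Your sketch is therefore strictly more than what the paper provides, and it follows the standard route (Fischer pairing for the first decomposition, Cartan's formula for the Euler field for the splitting $'\Ha\oplus{''\Ha}$, the equivariant isomorphism $d:{''\Ha}_N^p\xrightarrow{\ \sim\ }{'\Ha}_{N-1}^{p+1}$ reducing (3),(5) to (2),(4), and explicit highest weight vectors), which is also the approach of \cite{IT}.

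One genuine imprecision to fix: you write that ``$\Delta$ commutes with $d$ and with $i_E$''. The first is true, but the second is not: a short computation gives $[\Delta,i_E]=2\delta$ on $\Pol_N^p(\R^n)$ (equivalently, $\{\delta,i_E\}=0$ and $[\Delta,i_E]$ picks up the cross term). This does not break your argument, since for $\omega\in\Ha_N^p$ one has $\delta\omega=0$, whence $\Delta(i_E\omega)=i_E\Delta\omega+2\delta\omega=0$; but the justification as stated is wrong and should be corrected. With that fix (and the analogous use of $\{\delta,i_E\}=0$ to get $\delta\omega_1=0$ and $\delta\omega_2=0$), your proof of part~(1) is complete, and the remaining appeal to \cite{IT} for the middle-degree bookkeeping in (4)--(5) is exactly what the paper itself does.
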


Proposition \ref{IT}/(1) implies that any element of $\Pol_N^p(\R^{n-1})$ can
be decomposed according to
\begin{equation}\label{deco-harmonic}
   \Ha_N^p(\R^{n-1}) \oplus r^2 \Ha_{N-2}^p(\R^{n-1}) \oplus \dots + \alpha \wedge
   (\Ha_{N-1}^{p-1}(\R^{n-1}) \oplus r^2 \Ha_{N-3}^{p-1}(\R^{n-1}) \oplus \cdots),
\end{equation}
where $r^2 = |x'|^2 = \sum_{k=1}^{n-1} x_k^2$ and $\alpha = \sum_{i=1}^{n-1}
x_i \otimes dx_i$, $x' \in \R^{n-1}$.

In the following, spaces of harmonic polynomials on $\R^{n-1}$ will be denoted
by $\Ha^p_N$, ${'\Ha}^p_N$ and ${''\Ha}^p_N$. We shall also assume that $p \ne
\frac{n-1}{2}$ if $n$ is odd. Then the $SO(n-1,\R)$-module
$\Lambda^p(\R^{n-1})^* \otimes \C$ is irreducible and has highest weight $1_p$
or $1_{n-1-p}$ depending on the size of $p$.

We continue with the discussion of the structure of the spaces \eqref{space-1}.
For this purpose, we need to find the contributions of the irreducible
$SO(n-1,\R)$-module of respective highest weights $1_p$ and $1_{n-1-p}$ to
$\Pol^p_N(\R^n)$. For this purpose, we apply the decomposition
\eqref{deco-harmonic} to the Taylor coefficients $\omega_j^\pm$ of $\omega \in
\Pol_N^p(\R^n)$ in \eqref{Taylor}.

We start with the discussion of the terms $\omega_j^+$.

First, by Proposition \ref{IT}/(1), the decomposition \eqref{deco-harmonic} of
the space $\Pol^p_{2j+1}(\R^{n-1})$ of polynomial forms of {\em odd degree} on
$\R^{n-1}$ contains the $SO(n-1,\R)$-modules
\begin{equation}\label{mod-1}
    {'\Ha}_{2k+1}^p, \; {''\Ha}_{2k+1}^p, \; 0 \le k \le j
\end{equation}
and
\begin{equation}\label{mod-2}
   \alpha \wedge {'\Ha}_{2k}^{p-1}, \; \alpha \wedge {''\Ha}_{2k}^{p-1}, \; 0 \le k \le j.
\end{equation}

Now assume that $n$ is even. By Proposition \ref{IT}/(2),(3), these modules are
of respective highest weights
$$
   \begin{cases}
   (2k+1) 1_1 + 1_p, & 0 < p \le \frac{n}{2}-1 \\
   (2k+1) 1_1 + 1_{n-1-p}, & \frac{n}{2} \le p \le n-2
   \end{cases}
   \quad , \quad
   \begin{cases}
   (2k) 1_1 + 1_{p+1}, & 0 \le p \le \frac{n}{2}-2 \\
   (2k) 1_1 + 1_{n-2-p}, & \frac{n}{2}-1 \le p \le n-2
   \end{cases}
$$
and
$$
   \begin{cases}
   (2k) 1_1 + 1_{p-1}, & 1 < p \le \frac{n}{2} \\
   (2k) 1_1 + 1_{n-p}, & \frac{n}{2}+1 \le p \le n-1
   \end{cases}
   \quad , \quad
   \begin{cases}
   (2k-1) 1_1 + 1_p, & 1 \le p \le \frac{n}{2}-1 \\
   (2k-1) 1_1 + 1_{n-1-p}, & \frac{n}{2} \le p \le n-1.
   \end{cases}
$$
Among these contributions, the module of highest weight $1_p$ (for $0 \le p \le
\frac{n}{2}-1$) appears iff $p=\frac{n}{2}-1$ and it is realized as
$$
   r^{2j} {''\Ha}_1^{\frac{n}{2}-1} \subset \Pol_{2j+1}^{\frac{n}{2}-1}(\R^{n-1}).
$$
The module ${''\Ha}_1^{\frac{n}{2}-1}$ is isomorphic to $\Ha_0^{\frac{n}{2}}$
embedded into $\Pol_1^{\frac{n}{2}-1}(\R^{n-1})$ by $\omega \mapsto
i_E(\omega)$. Hence it coincides with the range of the map
$$
   \Lambda^{\frac{n}{2}-1}(\R^{n-1})^* \otimes \C \ni \omega \mapsto i_E( \star \, \omega) \in
   \Pol^{\frac{n}{2}-1}_{1}(\R^{n-1}).
$$
Thus, for even $n$, we find the homomorphisms
\begin{equation}\label{exotic-1}
   \Lambda^{\frac{n}{2}-1}(\R^{n-1})^* \otimes \C \ni \omega \mapsto
   x_n^{N-2j-1} r^{2j} i_E( \star \, \omega) \in \Pol_N^{\frac{n}{2}-1}(\R^{n-1}), \; 0 \le 2j \le N-1
\end{equation}
in the spaces \eqref{space-1}.

Similarly, the module of highest weight $1_{n-1-p}$ (for $\frac{n}{2} \le p \le
n-1$) appears iff $p=\frac{n}{2}$ and it is realized as
$$
   r^{2j} \alpha \wedge {'\Ha}_0^{\frac{n}{2}-1} \subset \Pol_{2j+1}^{\frac{n}{2}}(\R^{n-1}).
$$
Thus, for even $n$, we find the homomorphisms
\begin{equation}\label{exotic-2}
    \Lambda^{\frac{n}{2}}(\R^{n-1})^* \otimes \C \ni \omega \mapsto
    x_n^{N-2j-1} r^{2j} \alpha \wedge \star \, \omega \in \Pol_N^{\frac{n}{2}}(\R^{n-1}), \; 0 \le 2j \le N-1
\end{equation}
in the spaces \eqref{space-1}.

Now assume that $n$ is odd. By Proposition \ref{IT}/(4),(5), the modules
\eqref{mod-1} and \eqref{mod-2} are of respective highest weights
$$
   \begin{cases}
   (2k+1) 1_1 + 1_p, & 0 < p \le \frac{n-3}{2} \\
   (2k+1) 1_1 + 1_{\frac{n-1}{2}}^\pm, & p=\frac{n-1}{2} \\
   (2k+1) 1_1 + 1_{n-1-p}, & \frac{n+1}{2} \le p \le n-2
   \end{cases}
   \quad , \quad
   \begin{cases}
   (2k) 1_1 + 1_{p+1}, & 0 \le p \le \frac{n-5}{2} \\
   (2k) 1_1 + 1_{\frac{n-1}{2}}^\pm, & p= \frac{n-3}{2} \\
   (2k) 1_1 + 1_{n-2-p}, & \frac{n-1}{2} \le p \le n-2
   \end{cases}
$$
and
$$
   \begin{cases}
   (2k) 1_1 + 1_{p-1}, & 1 < p \le \frac{n-1}{2} \\
   (2k) 1_1 + 1_{\frac{n-1}{2}}^\pm, & p= \frac{n+1}{2} \\
   (2k) 1_1 + 1_{n-p}, & \frac{n+3}{2} \le p \le n-1
   \end{cases}
   \quad , \quad
   \begin{cases}
   (2k-1) 1_1 + 1_p, & 1 \le p \le \frac{n-3}{2} \\
   (2k-1) 1_1 + 1_{\frac{n-1}{2}}^\pm, & p= \frac{n-1}{2} \\
   (2k-1) 1_1 + 1_{n-1-p}, & \frac{n+1}{2} \le p \le n-1.
   \end{cases}
$$
The modules of highest weights $1_p$ and $1_{n-1-p}$ do not appear among these
modules.

Second, we consider the space $\Pol^p_{2j}(\R^{n-1})$ of polynomial forms of
{\em even degree} $2j$. The corresponding decomposition \eqref{deco-harmonic}
contains the $SO(n-1,\R)$-modules
\begin{equation}\label{mod-3}
   {'\Ha}_{2k}^p, \; {''\Ha}_{2k}^p, \; 0 \le k \le j
\end{equation}
and \begin{equation}\label{mod-4}
   \alpha \wedge {'\Ha}_{2k-1}^{p-1}, \; \alpha \wedge {''\Ha}_{2k-1}^{p-1}, \; 0 \le k \le j.
\end{equation}

Now assume that $n$ is even. By Proposition \ref{IT}/(2),(3), these modules are
of respective highest weights
$$
   \begin{cases}
   (2k) 1_1 + 1_p, & 0 < p \le \frac{n}{2}-1 \\
   (2k) 1_1 + 1_{n-1-p}, & \frac{n}{2} \le p \le n-2
   \end{cases}
   \quad , \quad
   \begin{cases}
   (2k-1) 1_1 + 1_{p+1}, & 0 \le p \le \frac{n}{2}-2 \\
   (2k-1) 1_1 + 1_{n-2-p}, & \frac{n}{2}-1 \le p \le n-2
   \end{cases}
$$
and
$$
   \begin{cases}
   (2k-1) 1_1 + 1_{p-1}, & 1 < p \le \frac{n}{2} \\
   (2k-1) 1_1 + 1_{n-p}, & \frac{n}{2}+1 \le p \le n-1
   \end{cases}
   \quad , \quad
   \begin{cases}
   (2k-2) 1_1 + 1_p, & 1 \le p \le \frac{n}{2}-1 \\
   (2k-2) 1_1 + 1_{n-1-p}, & \frac{n}{2} \le p \le n-1.
   \end{cases}
$$
It follows that the representations of highest weights $1_p$ and $1_{n-1-p}$
are realized by
$$
   r^{2j} {'\Ha}_0^p \quad \mbox{and} \quad r^{2j-2} \alpha \wedge {''\Ha}_1^{p-1}.
$$
The module ${''\Ha}_1^{p-1}$ is isomorphic to $\Ha^p_0$ embedded into
$\Pol_1^{p-1}(\R^{n-1})$ by $\omega \mapsto i_E (\omega)$. In fact, it is
isomorphic to $\ker i_E = i_E (\Ha_0^p)$. Thus, we find the homomorphisms
\begin{equation}\label{type-1-pp1}
   \Lambda^p(\R^{n-1})^* \otimes \C \simeq \Ha_0^p \ni \omega \mapsto x_n^{N-2j} r^{2j}
   \omega \in \Pol_N^p(\R^{n-1}), \; 0 \le 2j \le N
\end{equation}
and
\begin{equation}\label{type-1-pp2}
   \Lambda^p(\R^{n-1})^* \otimes \C \simeq \Ha_0^p \ni \omega
   \mapsto x_n^{N-2j} r^{2j-2} \alpha \wedge i_E (\omega) \in
   \Pol_N^p(\R^{n-1}), \; 2 \le 2j \le N
\end{equation}
in the spaces \eqref{space-1}.

Now assume that $n$ is odd. By Proposition \ref{IT}/(4),(5), the modules
\eqref{mod-3} and \eqref{mod-4} are of respective highest weights
$$
   \begin{cases}
   (2k) 1_1 + 1_p, & 0 < p \le \frac{n-3}{2} \\
   (2k) 1_1 + 1_{\frac{n-1}{2}}^\pm, & p = \frac{n-1}{2} \\
   (2k) 1_1 + 1_{n-1-p}, & \frac{n+1}{2} \le p \le n-2
   \end{cases}
   \quad , \quad
   \begin{cases}
   (2k-1) 1_1 + 1_{p+1}, & 0 \le p \le \frac{n-5}{2} \\
   (2k-1) 1_1 + 1_{\frac{n-1}{2}}^\pm, & p = \frac{n-3}{2} \\
   (2k-1) 1_1 + 1_{n-2-p}, & \frac{n-1}{2} \le p \le n-2
   \end{cases}
$$
and
$$
   \begin{cases}
   (2k-1) 1_1 + 1_{p-1}, & 1 < p \le \frac{n-1}{2} \\
   (2k-1) 1_1 + 1_{\frac{n-1}{2}}^\pm, & p = \frac{n+1}{2} \\
   (2k-1) 1_1 + 1_{n-p}, & \frac{n+3}{2} \le p \le n-1
   \end{cases}
   \quad , \quad
   \begin{cases}
   (2k-2) 1_1 + 1_p, & 1 \le p \le \frac{n-3}{2} \\
   (2k-2) 1_1 + 1_{\frac{n-1}{2}}^\pm, & p = \frac{n-1}{2} \\
   (2k-2) 1_1 + 1_{n-1-p}, & \frac{n+1}{2} \le p \le n-1.
   \end{cases}
$$
Again, it follows that the representations of highest weights $1_p$ and
$1_{n-1-p}$ are realized by
$$
   r^{2j} {'\Ha}_0^p \quad \mbox{and} \quad r^{2j-2} \alpha \wedge {''\Ha}_1^{p-1}.
$$
This yields the homomorphisms \eqref{type-1-pp1} and \eqref{type-1-pp2}.
Moreover, in this case, we find the additional homomorphisms
\begin{equation}\label{exotic-7}
   \Lambda^{\frac{n-1}{2}}(\R^{n-1})^* \otimes \C \ni \omega \mapsto x_n^{N-2j} r^{2j}
   \star \omega \in \Pol_N^{\frac{n-1}{2}}(\R^{n-1}), \; 0 \le 2j \le N
\end{equation}
and
\begin{equation}\label{exotic-8}
   \Lambda^{\frac{n-1}{2}}(\R^{n-1})^* \otimes \C \ni \omega
   \mapsto x_n^{N-2j} r^{2j-2} \alpha \wedge i_E (\star \, \omega) \in
   \Pol_N^{\frac{n-1}{2}}(\R^{n-1}), \; 2 \le 2j \le N.
\end{equation}

This finishes the discussion of the contributions of the terms $\omega_j^+$.

We continue with a summary of an analogous discussion of the contributions by
the terms $\omega_j^-$. Similar arguments as above imply that the terms
$\omega_{2j}^-$ of {\em even degree} only contribute the homomorphisms
\begin{equation}\label{exotic-3}
   \Lambda^\frac{n}{2}(\R^{n-1})^* \otimes \C \ni \omega \mapsto x_n^{N-2j} r^{2j} dx_n \wedge \star \,
   \omega \in \Pol_N^{\frac{n}{2}}(\R^{n-1}), \;  0 \le 2j \le N
\end{equation}
and
\begin{equation}\label{exotic-4}
   \Lambda^\frac{n}{2}(\R^{n-1})^* \otimes \C \ni \omega \mapsto x_n^{N-2j} r^{2j} dx_n \wedge \alpha \wedge
   i_E (\star \, \omega) \in \Pol_N^{\frac{n}{2}}(\R^{n-1}), \; 0 \le 2j \le N
\end{equation}
if $n$ is even. There are no contributions for odd $n$. Finally, we find that
the {\em odd degree} terms $\omega_{2j-1}^-$ contribute through
$$
   r^{2j-2} {''\Ha}_1^{p-1} \subset \Pol_{2j-1}^{p-1}(\R^{n-1})
$$
and
$$
   r^{2j-2} i_E ({''\Ha}_1^{\frac{n-1}{2}}) \subset
   \Pol_{2j-1}^{\frac{n-3}{2}}(\R^{n-1}) \quad \mbox{and} \quad
   r^{2j-2} \alpha \wedge {'\Ha}_0^{\frac{n-3}{2}} \subset
   \Pol_{2j-1}^{\frac{n-1}{2}}(\R^{n-1})
$$
for odd $n$. Since the module ${''\Ha}_1^{p-1}$ is isomorphic to $i_E
(\Ha_0^p)$, we find the contributions
$$
   r^{2j-2} dx_n \wedge i_E (\Ha_0^p)
$$
and
$$
   r^{2j-2} dx_n \wedge i_E {'\Ha}_0^{\frac{n+1}{2}}
   \quad \mbox{and} \quad
   r^{2j-2} dx_n \wedge \alpha \wedge {'\Ha}_0^{\frac{n-3}{2}}
$$
for odd $n$. Altogether, this leads to the homomorphisms
\begin{equation}\label{type-1-pp3}
    \Lambda^p(\R^{n-1})^* \otimes \C \simeq \Ha_0^p  \ni \omega
    \mapsto x_n^{N-2j+1} r^{2j-2} dx_n \wedge i_E(\omega) \in \Pol_N^p(\R^{n-1})
\end{equation}
and
\begin{align}
    \Lambda^{\frac{n-1}{2}}(\R^{n-1})^* \otimes \C \ni \omega
    & \mapsto x_n^{N-2j+1} r^{2j-2} dx_n \wedge i_E (\star \, \omega) \in
    \Pol^{\frac{n-1}{2}}_N(\R^{n-1}), \label{exotic-5} \\
    \Lambda^{\frac{n+1}{2}}(\R^{n-1})^* \otimes \C \ni \omega
    & \mapsto x_n^{N-2j+1} r^{2j-2} dx_n \wedge \alpha \wedge \star \, \omega \in
    \Pol^{\frac{n+1}{2}}_N(\R^{n-1}) \label{exotic-6}
\end{align}
for odd $n$ and $2 \le 2j \le N+1$ in the space \eqref{space-1}.

We summarize the above discussion as

\begin{prop}\label{structure-type-1} For any $N \in \N_0$, the space
\eqref{space-1} is generated by
\begin{itemize}
\item the homomorphisms \eqref{type-1-pp1}, \eqref{type-1-pp2}, \eqref{type-1-pp3}
(for general form degrees $p$), and
\item the eight exotic homomorphisms \eqref{exotic-1},
\eqref{exotic-2}, \eqref{exotic-7}, \eqref{exotic-8}, \eqref{exotic-3},
\eqref{exotic-4}, \eqref{exotic-5} and \eqref{exotic-6} (for specific form
degrees).
\end{itemize}
All these homomorphisms are generated by the operations
\begin{itemize}
\item multiplication by positive integer powers of $x_n$ and $r^2$,
\item $dx_n \wedge$, $\alpha \wedge$, $i_E$ and
\item $\star$.
\end{itemize}
Any composition of the latter operations yields an element of \eqref{space-1}
for some $N \in \N_0$. The exotic homomorphisms all contain the Hodge star
operator $\star$ as a factor.
\end{prop}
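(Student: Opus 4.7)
The plan is to read off the homomorphism space from the Taylor expansion \eqref{Taylor}, combined with the harmonic decomposition \eqref{deco-harmonic} and the branching information in Proposition \ref{IT}. Since $SO(n-1,\R)$ fixes the normal variable $x_n$ and the form $dx_n$, an $SO(n-1,\R)$-equivariant map into $\Pol_N^p(\R^n)$ is the same as a collection of $SO(n-1,\R)$-equivariant maps into the Taylor summands $\Pol_j^p(\R^{n-1})$ and $\Pol_j^{p-1}(\R^{n-1})$ (multiplied by the scalar factors $x_n^{N-j}$ and $x_n^{N-j}\,dx_n\wedge$), for $0\le j\le N$. So the task reduces to computing, for every degree $j$ and every form degree $q\in\{p,p-1\}$, the multiplicity of the source module $\Lambda^p(\R^{n-1})^*\otimes\C$ inside $\Pol_j^q(\R^{n-1})$.

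To carry this out, first decompose each $\Pol_j^q(\R^{n-1})$ via \eqref{deco-harmonic} into a direct sum of the building blocks $r^{2k}{'\Ha}^q_{j-2k}$, $r^{2k}{''\Ha}^q_{j-2k}$, $r^{2k}\alpha\wedge{'\Ha}^{q-1}_{j-2k-1}$ and $r^{2k}\alpha\wedge{''\Ha}^{q-1}_{j-2k-1}$. Then invoke Proposition \ref{IT}(2)--(5) to read off the highest weight of each building block, splitting the analysis into four cases according to the parities of $n$ and $j$, and sort through these to locate exactly those summands whose highest weight coincides with the highest weight of $\Lambda^p(\R^{n-1})^*\otimes\C$ --- namely $1_p$ when $p\le (n-1)/2$ and $1_{n-1-p}$ otherwise (with the two spinor weights $1_{\frac{n-1}{2}}^{\pm}$ arising in the odd-$n$ middle degree, whose contributions are then packaged using the Hodge star $\star$). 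The combinatorial heart of the argument is already worked out in detail in the text above the Proposition, and the enumeration there produces precisely the generic homomorphisms \eqref{type-1-pp1}, \eqref{type-1-pp2}, \eqref{type-1-pp3} together with the eight exotic homomorphisms \eqref{exotic-1}--\eqref{exotic-8}.

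It remains to observe that each of these homomorphisms admits a manifestly equivariant description using only the operations listed in the statement: $x_n^\bullet$ and $r^{2\bullet}$ are $SO(n-1,\R)$-invariant scalar multipliers, $dx_n\wedge$ and $\alpha\wedge$ are equivariant raising maps, $i_E$ is an equivariant lowering map, and $\star$ realises the isomorphism between the middle-degree form modules and their duals that is needed to identify $1_{\frac{n-1}{2}}^{\pm}$ with $1_{\frac{n-1}{2}}$. Since the isomorphism $\Ha^p_0\simeq\Lambda^p(\R^{n-1})^*\otimes\C$ (or $\Lambda^{q}(\R^{n-1})^*\otimes\C$ for $q=\tfrac{n-1}{2}$ through $\star$) is also effected by these building blocks, every generator listed is indeed of the stated form, and conversely any composition of the stated operations lands in $\Pol_N^p(\R^n)$ for some $N$ and is automatically $SO(n-1,\R)$-equivariant.

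The one point that is not merely bookkeeping, and therefore the main obstacle, is the treatment of middle-degree forms: on $\R^{n-1}$ for even $n$ the representation $\Lambda^{\frac{n-1}{2}}(\R^{n-1})^*\otimes\C$ coincides (through $\star$) with several modules that otherwise look different, and on $\R^n$ for even $n$ the module $\Lambda^{\frac{n}{2}}(\R^n)^*\otimes\C$ splits into self-dual and anti-self-dual pieces. Keeping track of which weights $1_p$, $1_{n-1-p}$, $1_{\frac{n-1}{2}}^{\pm}$, $1_{\frac{n}{2}}^{\pm}$ match the source and sink in each of the four parity cases is what produces the exotic list and certifies that the $\star$ must appear in all of them; this is precisely where the case analysis above is indispensable, and it is also where one checks that no further generators have been missed.
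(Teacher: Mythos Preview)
Your proposal is correct and is exactly the paper's approach: the proposition is stated explicitly as a summary of the preceding case analysis, and you have accurately described that analysis (Taylor expansion \eqref{Taylor}, harmonic decomposition \eqref{deco-harmonic}, highest-weight bookkeeping via Proposition \ref{IT}). One small slip in your closing paragraph: the parities are reversed --- the reducibility of $\Lambda^{\frac{n-1}{2}}(\R^{n-1})^*\otimes\C$ and the attendant $\star$-identifications occur when $n$ is \emph{odd} (so that $n-1$ is even), while the splitting of $\Lambda^{\frac{n}{2}}(\R^n)^*\otimes\C$ occurs when $n$ is even.
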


Similar arguments can be used to describe the spaces \eqref{space-2}. Again, we
apply the decomposition \eqref{deco-harmonic} to the Taylor coefficients of
$\omega \in \Pol_N^p(\R^n)$ in \eqref{Taylor}. As a result, we find the
homomorphisms
\begin{equation}\label{type-2-pp-1}
    \Lambda^{p-1}(\R^{n-1})^* \otimes \C \simeq \Ha_0^{p-1} \ni \omega
    \mapsto x_n^{N-1-2j} r^{2j} \alpha \wedge \omega \in \Pol^p_N(\R^{n-1})
\end{equation}
for $0 \le 2j \le N\!-\!1$,
\begin{equation}\label{type-2-pp-2}
   \Lambda^{p-1}(\R^{n-1})^* \otimes \C \simeq \Ha_0^{p-1} \ni \omega \mapsto x_n^{N-2j}
   r^{2j} dx_n \wedge \omega \in \Pol^p_N(\R^{n-1})
\end{equation}
for $0 \le 2j \le N$ and
\begin{equation}\label{type-2-pp-3}
   \Lambda^{p-1}(\R^{n-1})^* \otimes \C \simeq \Ha_0^{p-1} \ni \omega
   \mapsto x_n^{N-2j-2} r^{2j} dx_n \wedge \alpha \wedge i_E(\omega) \in \Pol^p_N(\R^{n-1})
\end{equation}
for $0 \le 2j \le N\!-\!2$ and general $p$. In addition, there are eight exotic
homomorphisms for specific form degrees.

A similar discussion yields the following classification of equivariant
homomorphisms \eqref{space-3} for arbitrary $p$ and $q$.

\begin{prop}\label{hom-structure} For any $N \in \N_0$, the space \eqref{space-3}
is generated by the homomorphisms of the form \eqref{type-1-pp1},
\eqref{type-1-pp2} and \eqref{type-1-pp3} (for $q=p$), the homomorphisms of the
form \eqref{type-2-pp-1}, \eqref{type-2-pp-2} and \eqref{type-2-pp-3} (for
$q=p-1$), the two additional homomorphisms
\begin{equation}\label{add-homo-1}
   \Lambda^{p+1}(\R^{n-1})^* \otimes \C \ni \omega \mapsto x_n^{N-2j-1} r^{2j}
   i_E(\omega) \in \Pol^p_N(\R^{n-1}), \; 0 \le 2j \le N\!-\!1
\end{equation}
and
\begin{equation}\label{add-homo-2}
    \Lambda^{p-2}(\R^{n-1})^* \otimes \C \ni \omega \mapsto x_n^{N-2j-1} r^{2j}
    dx_n \wedge \alpha \wedge \omega \in \Pol^p_N(\R^{n-1}), \; 0 \le 2j \le
    N\!-\!1
\end{equation}
as well as the compositions of these homomorphisms with $\star$. The exotic
homomorphisms in \eqref{space-1} and \eqref{space-2} are restrictions of the
latter compositions with $\star$ to appropriate form degrees.
\end{prop}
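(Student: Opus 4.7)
The plan is to repeat the weight-counting argument used for Proposition~\ref{structure-type-1} (the case $q=p$) and for the analogous discussion of \eqref{space-2} (the case $q=p-1$), but now systematically across all target degrees $q$. The basic strategy is: expand $\phi \in \Hom_{SO(n-1)}(\Lambda^q(\R^{n-1})^*\otimes\C,\Pol^p_N(\R^n))$ in the normal Taylor series \eqref{Taylor}, so that $\phi$ is encoded in two sequences of equivariant maps into $\Pol^p_j(\R^{n-1})$ and $\Pol^{p-1}_j(\R^{n-1})$; then apply the Hodge-type decomposition of Proposition~\ref{IT} to each $\Pol^s_j(\R^{n-1})$ into the summands $r^{2k}\,{'\Ha}^s_{j-2k}$, $r^{2k}\,{''\Ha}^s_{j-2k}$, $r^{2k}\alpha\wedge{'\Ha}^{s-1}_{j-2k-1}$, $r^{2k}\alpha\wedge{''\Ha}^{s-1}_{j-2k-1}$; and finally read off which summands contain an $SO(n-1)$-irreducible of highest weight $1_q$ (or $1_{n-1-q}$), matching the highest weight of $\Lambda^q(\R^{n-1})^*\otimes\C$.

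The first main step is to note that the generic (non-middle-degree) contributions occur only for $q \in \{p-2,p-1,p,p+1\}$. Indeed, Proposition~\ref{IT} shows that the highest weights appearing in $r^{2k}\,{'\Ha}^p_m$ and $r^{2k}\alpha\wedge{''\Ha}^{p-1}_m$ are of the form $m\,1_1+1_p$, while those in $r^{2k}\,{''\Ha}^p_m$ and $r^{2k}\alpha\wedge{'\Ha}^{p-1}_m$ are $(m-1)\,1_1+1_{p+1}$ and $(m-1)\,1_1+1_{p-1}$ respectively. The condition that the $1_1$-coefficient vanishes forces very specific choices of $m$: this gives exactly the homomorphisms \eqref{type-1-pp1}, \eqref{type-1-pp2} for $q=p$, the homomorphism \eqref{add-homo-1} for $q=p+1$ (realized as $\omega\mapsto i_E(\omega)$, an isomorphism $\Ha^{p+1}_0\simeq{''\Ha}^p_1$), and, by the analogous analysis applied to the $dx_n$-factor, \eqref{type-2-pp-1}–\eqref{type-2-pp-3} for $q=p-1$, \eqref{type-1-pp3} for $q=p$, and \eqref{add-homo-2} for $q=p-2$ (arising from $\alpha\wedge{'\Ha}^{p-2}_0$ inside $\Pol^{p-1}_1(\R^{n-1})$, coupled to $dx_n$).

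The second main step concerns the middle-degree ``exotic'' cases, where the $SO(n-1)$-irreducibles of weight $1_{(n-1)/2}^\pm$ appear. Here one observes that $\star$ (the Hodge operator on $\R^{n-1}$) intertwines $\Lambda^{(n-1)/2\pm\varepsilon}(\R^{n-1})^*$ with $\Lambda^{(n-1)/2\mp\varepsilon}(\R^{n-1})^*$ and that the eight exotic homomorphisms in \eqref{space-1} and \eqref{space-2} listed in Proposition~\ref{structure-type-1} are obtained from the generic ones by composition with $\star$, after appropriate identification of form degrees. In particular no new building blocks appear: the four exotic families \eqref{exotic-1}, \eqref{exotic-2}, \eqref{exotic-7}, \eqref{exotic-8} for $\omega_j^+$ and \eqref{exotic-3}, \eqref{exotic-4}, \eqref{exotic-5}, \eqref{exotic-6} for $\omega_j^-$ each factor as (one of \eqref{type-1-pp1}–\eqref{type-1-pp3}, \eqref{type-2-pp-1}–\eqref{type-2-pp-3}, \eqref{add-homo-1}, \eqref{add-homo-2})$\,\circ\,\star$.

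Finally, one assembles the list and checks that the operations ``multiplication by $x_n$ and $r^2$, $\,dx_n\wedge$, $\alpha\wedge$, $i_E$, and $\star$'' indeed generate each of the above homomorphisms, and that compositions of these operations remain $SO(n-1)$-equivariant, so nothing lies outside the list. I expect the main obstacle to be bookkeeping rather than conceptual: one has to verify that the middle-degree decompositions of ${'\Ha}^{(n-1)/2}$ and ${''\Ha}^{(n-1)/2}$ into their $\pm$-summands (which are distinguished only by $\star$) really do account for all exotic contributions and that no additional $1_{(n-1)/2}^\pm$-isotypic component has been overlooked when $p$ is a middle degree. Once this case analysis is complete, the statement follows by comparing with Proposition~\ref{structure-type-1} and the parallel analysis of \eqref{space-2}.
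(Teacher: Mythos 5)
Your plan follows the paper's own route: the paper proves the $q=p$ case in detail (Proposition \ref{structure-type-1}) via the normal Taylor expansion \eqref{Taylor} and the decomposition of Proposition \ref{IT}, and then obtains Proposition \ref{hom-structure} by exactly the weight-matching argument you describe, with the vanishing of the $1_1$-coefficient pinning down $q\in\{p-2,p-1,p,p+1\}$ generically and the middle-degree exotica arising as compositions with $\star$. The only slip is cosmetic: the highest weight occurring in $r^{2k}\alpha\wedge{''\Ha}^{p-1}_m$ is $(m-1)1_1+1_p$ rather than $m\,1_1+1_p$, which does not affect your conclusion that $m$ is forced to a single value in each branch.
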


Now we apply these results as follows.

We consider forms on $\gon_-^*(\R) \simeq (\R^n)^*$ with polynomial
coefficients. We regard these forms as elements of $\Pol(\gon_-^*(\R)) \otimes
\Lambda^p(\gon_-(\R))$. We use coordinates $\xi_j$ on $\gon_-^*(\R)$ and the
standard bases $\{E_j = E_j^-, j=1,\dots,n\}$ and $\{E_j^*, j=1,\dots,n\}$ of
$\gon_-(\R)$ and $\gon^*_-(\R)$.

Since $\R^n \simeq (\R^n)^*$ as $SO(n,\R)$-modules, \eqref{type-1-pp1},
\eqref{type-1-pp2} and \eqref{type-1-pp3} imply that there are three types of
equivariant homomorphisms in
\begin{equation*}\label{first-type}
   \Hom_{SO(\gon_-^\prime(\R))}(\Lambda^{p}(\gon_-^\prime(\R)) \otimes \C,
   \Pol_N(\gon_-^*(\R)) \otimes \Lambda^p(\gon_-(\R)) \otimes \C).
\end{equation*}
These are given by the maps
\begin{align}\label{type-1-N-gen}
   \omega & \mapsto \xi_n^{N-2j} \abs{\xi'}^{2j} \otimes \omega, \notag \\
   \omega & \mapsto \xi_n^{N-2j-1} \abs{\xi'}^{2j} E_n \wedge i_E (\omega), \notag \\
   \omega & \mapsto \xi_n^{N-2j-2} \abs{\xi'}^{2j} \alpha \wedge i_E (\omega)
\end{align}
for appropriate values of $j$. Here we use the definitions
$$
   \alpha \st \sum_{j=1}^{n-1} \xi_j \otimes E_j \quad \mbox{and} \quad
   i_E \st \sum_{j=1}^{n-1} \xi_j \otimes i_{E_j^*}.
$$
Equivalently, these maps take the form
\begin{align}\label{type-1-N}
   E_{\alpha_1} \wedge \dots \wedge E_{\alpha_p} & \mapsto \xi_n^{N-2j} \abs{\xi'}^{2j}
   \otimes E_{\alpha_1} \wedge \dots \wedge E_{\alpha_p}, \notag \\
   E_{\alpha_1}\wedge \dots \wedge E_{\alpha_p} & \mapsto \xi_n^{N-2j-1} \abs{\xi'}^{2j}
   \xi_{[\alpha_1} \otimes E_n \wedge E_{\alpha_2}\wedge \cdots \wedge E_{\alpha_p]}, \notag \\
   E_{\alpha_1}\wedge \dots \wedge E_{\alpha_p} & \mapsto \xi_n^{N-2j-2} \abs{\xi'}^{2j}
   \sum_{j=1}^{n-1} \xi_j \xi_{[\alpha_1} \otimes E_j \wedge E_{\alpha_2}\wedge \cdots \wedge
   E_{\alpha_p]}
\end{align}
on basis elements of $\Ha_0^p$ given by partitions $1\leq \alpha_1 < \dots
<\alpha_{p} \leq n-1$. Here we use the convention
\begin{equation}\label{anti}
   \xi_{[\alpha_1} \otimes \omega \otimes E_{\alpha_2} \wedge \cdots \wedge
   E_{\alpha_p]} = \sum_{l=1}^{p} (-1)^{l+1} \xi_{\alpha_l} \otimes \omega \otimes E_{\alpha_1}\wedge
   \cdots \wedge \widehat{E_{\alpha_l}} \wedge \cdots \wedge E_{\alpha_p}
\end{equation}
for any $\omega \in \Lambda^*(\gon_-(\R))$, and we suppress obvious tensor
products with copies of $\C$.

Similarly, by \eqref{type-2-pp-1}--\eqref{type-2-pp-3}, there are three types
of equivariant homomorphisms
\begin{equation*}\label{second-type}
   \Hom_{SO(\gon_-^\prime(\R))}(\Lambda^{p-1}(\gon_-^\prime(\R)) \otimes \C,
   \Pol_N(\gon_-^*(\R)) \otimes \Lambda^p (\gon_-(\R)) \otimes \C)
\end{equation*}
which are given by the maps
\begin{align}\label{type-2-N-gen}
   \omega & \mapsto \xi_n^{N-2j} \abs{\xi'}^{2j} \otimes E_n \wedge \omega, \notag \\
   \omega & \mapsto \xi_n^{N-2j-1} \abs{\xi'}^{2j} \alpha \wedge \omega, \notag \\
   \omega & \mapsto \xi_n^{N-2j-2} \abs{\xi'}^{2j} E_n \wedge \alpha \wedge i_E (\omega)
\end{align}
(for appropriate values of $j$) or, equivalently, by the maps
\begin{align}\label{type-2-N}
   E_{\alpha_1}\wedge \cdots \wedge E_{\alpha_{p-1}} &
   \mapsto \xi_n^{N-2j} \abs{\xi'}^{2j} \otimes E_n \wedge E_{\alpha_1} \wedge \cdots \wedge E_{\alpha_{p-1}},
   \notag \\
   E_{\alpha_1}\wedge \cdots \wedge E_{\alpha_{p-1}} &
   \mapsto \xi_n^{N-2j-1} \abs{\xi'}^{2j} \sum_{j=1}^{n-1} \xi_j \otimes E_j
   \wedge E_{\alpha_1}\wedge\cdots\wedge E_{\alpha_{p-1}}, \notag \\
   E_{\alpha_1} \wedge \cdots \wedge E_{\alpha_{p-1}} & \mapsto
   \xi_n^{N-2j-2} \abs{\xi'}^{2j} \sum_{j=1}^{n-1} \xi_j \xi_{[\alpha_1}
   \otimes E_n \wedge E_j \wedge E_{\alpha_2} \wedge \cdots \wedge E_{\alpha_{p-1}]}
\end{align}
on basis elements of $\Ha_0^{p-1}$ given by partitions $1\leq \alpha_1 < \cdots
<\alpha_{p-1} \leq n-1$. Again, we use the convention \eqref{anti} and we
suppress obvious tensor products with copies of $\C$.

Finally, according to Proposition \ref{hom-structure}, there are two additional
types
\begin{equation*}
   \Hom_{SO(\gon_-^\prime(\R))}(\Lambda^{p+1}(\gon_-^\prime(\R)) \otimes \C,
   \Pol_N(\gon_-^*(\R)) \otimes \Lambda^p(\gon_-(\R)) \otimes \C)
\end{equation*}
(for $0 \le p \le n-2$) and
\begin{equation*}
   \Hom_{SO(\gon_-^\prime(\R))}(\Lambda^{p-2}(\gon_-^\prime(\R)) \otimes \C,
   \Pol_N(\gon_-^*(\R)) \otimes \Lambda^p(\gon_-(\R)) \otimes \C)
\end{equation*}
(for $2 \le p \le n$) of homomorphisms which are given by the respective maps
\begin{equation}\label{type-3-N-gen}
   \omega \mapsto \xi_n^{N-2j-1} \abs{\xi^\prime}^{2j} i_E(\omega)
\end{equation}
and
\begin{equation}\label{type-4-N-gen}
   \omega \mapsto \xi_n^{N-2j-1} \abs{\xi^\prime}^{2j} E_n \wedge \alpha \wedge \omega
\end{equation}
(for appropriate values of $j$) or, equivalently, by the maps
\begin{equation}\label{type-3-N}
   E_{\alpha_1}\wedge \cdots \wedge E_{\alpha_{p+1}} \mapsto \xi_n^{N-2j-1} \abs{\xi^\prime}^{2j}
   \xi_{[\alpha_1}\otimes E_{\alpha_2}\wedge\cdots\wedge E_{\alpha_{p+1}]}
\end{equation}
and
\begin{equation}\label{type-4-N}
   E_{\alpha_1}\wedge \cdots \wedge E_{\alpha_{p-2}} \mapsto \xi_n^{N-2j-1} \abs{\xi^\prime}^{2j}
   \sum_{k=1}^{n-1} \xi_k \otimes E_n \wedge E_k \wedge E_{\alpha_1}\wedge\cdots\wedge
   E_{\alpha_{p-2}}.
\end{equation}

Now, by applying the Fourier transform (see Section \ref{F-method}), we regard
singular vectors which correspond to $\gog^\prime$-homomorphisms
\begin{equation*}
   {\fam2U}(\gog^\prime) \otimes_{{\fam2U}(\gop^\prime)}(\Lambda^{q}(\gon_-^\prime(\R))
   \otimes \C_{\eta}) \to {\fam2U}(\gog)\otimes_{{\fam2U}(\gop)}(\Lambda^p(\gon_-(\R)) \otimes \C_\mu),
   \; \eta, \mu \in \C
\end{equation*}
of generalized Verma modules as elements of the spaces
$$
   \Hom_{\gop^\prime}(\Lambda^q(\gon_-^\prime(\R)) \otimes \C_{\eta},
   \Pol_N(\gon_-^*(\R)) \otimes \Lambda^p(\gon_-(\R)) \otimes \C_\mu)
$$
for some $N \in \N_0$; here the modules $\Lambda^q(\gon_-^\prime(\R))$ and
$\Lambda^p(\gon_-(\R))$ are regarded as trivial $A$-modules and $A$ acts on
$\Pol_N(\gon_-^*(\R))$ by the push-forward action. It follows that
$$
    \eta = -N + \mu.
$$
Therefore, it suffices to study the homomorphisms in the spaces
\begin{equation}\label{homo-type}
   \Hom_{\gop^\prime}(\Lambda^q(\gon_-^\prime(\R)) \otimes \C_{\lambda-N},
   \Pol_N(\gon_-^*(\R)) \otimes \Lambda^p(\gon_-(\R)) \otimes \C_\lambda), \; \lambda \in \C.
\end{equation}
By Proposition \ref{hom-structure}, their $\gol'$-invariance implies that they
are linear combinations of the homogeneous homomorphisms of degree $N$ listed
in \eqref{type-1-N}, \eqref{type-2-N}, \eqref{type-3-N} and \eqref{type-4-N} as
well as their compositions with $\star$. Each choice of $q \in
\{p-2,p-1,p,p+1\}$ corresponds to one of these four types of homomorphisms. In
order to find explicit formulas for the corresponding singular vectors, it
remains to determine those respective linear combinations which, in addition,
are annihilated by the operators
\begin{equation}\label{major}
   \dm \tilde{\pi}_{\lambda,p}(Z), \; Z \in \gon_+^\prime(\R)
\end{equation}
(see Lemma \ref{fundamental}).  This will be the subject of the following
subsections. The complete list of singular vectors which correspond to the
homomorphisms of the form \eqref{homo-type} then follows by composing these
results with the operator $\star$. In particular, the discussion will determine
{\em all} singular vectors which describe the embeddings of the
${\fam2U}(\gog^\prime)$-submodules in Propositions \ref{charind} and
\ref{charind2}.

For the following considerations, it will be convenient to introduce some
additional notation. We define the operators
\begin{equation}\label{P-operator}
   P_j(\lambda) \st (\tfrac{1}{2} \xi_j \Delta_\xi + (\lambda - E_\xi)\partial_{\xi_j}) \otimes 1
   - \sum_{k=1}^{n} \partial_{\xi_k} \otimes (E_k^- \otimes E_j^+ - E_j^- \otimes E_k^+)
\end{equation}
for $j=1,\dots,n-1$ on $\Pol_N(\gon_-^*(\R)) \otimes V_{\lambda,p} \simeq
\Pol_N(\R^n) \otimes V_{\lambda,p}$. We recall that
$$
   P_j(\lambda) = i \dm \tilde{\pi}_{\lambda,p}(E_j^+)
$$
(see \eqref{fourdualverma}). We shall also write $\partial_j$ instead of
$\partial_{\xi_j}$.

\subsection{Families of singular vectors of the first type}\label{sv-type1}

Assume that $N \in \N_0$. We first consider singular vectors of {\em odd}
homogeneity $2N+1$ in
$$
   \Hom_{\gop^\prime}(\Lambda^{p}(\gon_-^\prime(\R)) \otimes \C_{\lambda-(2N+1)},
   \Pol_{2N+1}(\gon_-^*(\R)) \otimes \Lambda^p(\gon_-(\R)) \otimes \C_\lambda),
$$
which are linear combinations of the homomorphisms in \eqref{type-1-N-gen}. In
the following, we shall refer to these singular vectors as singular vectors of
the {\em first} type. They correspond to $\gog^\prime$-homomorphisms
\begin{equation*}
   {\fam2 U}(\gog^\prime)\otimes_{{\fam2 U}(\gop^\prime)}(\Lambda^{p}(\gon_-^\prime(\R)) \otimes
   \C_{\lambda-(2N+1)}) \to{\fam2 U}(\gog)\otimes_{{\fam2U}(\gop)}(\Lambda^p(\gon_-(\R)) \otimes\C_\lambda).
\end{equation*}
By \eqref{type-1-N-gen}, such singular vectors have the form
\begin{equation*}
   v_{2N+1}^{(p\to p)}(\lambda) =
   \xi_n^{2N+1} P(t) \otimes \id + \xi_n^{2N} Q(t) E_n \wedge i_E + \xi_n^{2N-1} R(t) \alpha \wedge i_E
\end{equation*}
or, equivalently,
\begin{align}\label{ansatz-odd-2}
   v_{2N+1}^{(p\to p)}(\lambda)(E_{\alpha_1} \wedge \cdots \wedge E_{\alpha_{p}})
   & = \xi_n^{2N+1}P(t) \otimes E_{\alpha_1}\wedge\cdots\wedge E_{\alpha_p} \notag \\
   & + \xi_n^{2N}Q(t)\xi_{[\alpha_1}\otimes E_n\wedge E_{\alpha_2}\wedge\cdots\wedge E_{\alpha_p]} \notag \\
   & + \xi_n^{2N-1}R(t)\sum_{i=1}^{n-1}\xi_i\xi_{[\alpha_1}
   \otimes E_i\wedge E_{\alpha_2}\wedge\cdots\wedge E_{\alpha_p]}
\end{align}
for any partition $1\leq \alpha_1 < \cdots <\alpha_{p} \leq n-1$. Here
$$
   t \st \frac{\sum_{i=1}^{n-1}\xi_i^2}{\xi_n^2} = \frac{\abs{\xi'}^2}{\xi_n^2}
$$
and $P(t)$, $Q(t)$, $R(t)$ are polynomials in $t$ of respective degrees $N$,
$N$ and $N-1$ to be determined; the contributions by $R(t)$ do not appear for
$N=0$. In order to simplify the notation, we have suppressed obvious tensor
products with copies of $\C$.

We treat $\xi_n$ and $t$ as independent variables. Now using the rules
\begin{equation*}
    \partial_n=-\frac{2}{\xi_n}t \frac{\partial}{\partial t} \quad
    \mbox{and} \quad
    \partial_j=\frac{2\xi_j}{\xi_n^2} \frac{\partial}{\partial t}, \quad
    j=1,\ldots,n-1,
\end{equation*}
we find
\begin{align*}
   & \frac{1}{2} \partial_n^2 (\xi_n^{2N+1} P) = \xi_n^{2N-1}(2t^2 {P}'' + (1-4N) t {P}' + N(2N+1) P) \\
   & \frac{1}{2} \Delta' (\xi_n^{2N+1} P) = \xi_n^{2N-1} (2t {P}'' + (n-1) {P}') \\
   & \big(\lambda - \sum_{i=1}^n \xi_i \partial_i\Big) \partial_j (\xi_n^{2N+1} P)
   = \xi_n^{2N-1} \xi_j (2\lambda - 4N) {P}'.
\end{align*}
These results allow to express the action of $P_j(\lambda)$ on
$$
   v_1 \st \xi_n^{2N+1}P(t) \otimes E_{\alpha_1}\wedge\cdots\wedge E_{\alpha_p}
$$
as the sum
\begin{align*}
    P_j(\lambda) v_1 & = \big[
    2t(t\!+\!1) P^{\prime\prime} + (-4N\!+\!1)t P^\prime + (2\lambda\!+\!n\!-\!4N\!-\!1) P^\prime \\
    & + N(2N\!+\!1)P \big]
    \xi_n^{2N-1} \xi_j\otimes E_{\alpha_1} \wedge \cdots \wedge E_{\alpha_{p}}\\
    & + 2P^\prime\xi_n^{2N-1} \xi_{[\alpha_1}\otimes E_j \wedge E_{\alpha_2} \wedge\cdots
    \wedge E_{\alpha_{p}]} \\
    & - 2P^\prime\xi_n^{2N-1}\sum_{i=1}^{n-1}\xi_i\delta_{j[\alpha_1}
    \otimes E_i\wedge E_{\alpha_2}\wedge\cdots\wedge E_{\alpha_{p}]}\\
    & + \left[2tP^\prime - (2N\!+\!1)P\right] \xi_n^{2N}\delta_{j[\alpha_1}
    \otimes E_n\wedge E_{\alpha_2}\wedge\cdots\wedge E_{\alpha_{p}]}.
\end{align*}
Similarly, the action of $P_j(\lambda)$ on
$$
   v_2 \st \xi_n^{2N}Q(t)\xi_{[\alpha_1}\otimes E_n\wedge E_{\alpha_2}\wedge\cdots\wedge E_{\alpha_p]}
$$
is given by
\begin{align*}
    P_j(\lambda)v_2 & = \big[2t(t\!+\!1) Q^{\prime\prime} + (-4N\!+\!3)tQ^\prime +
    (2\lambda\!+\!n\!-\!4N\!+\!1) Q^\prime \\
    & + N(2N\!-\!1)Q \big]\xi_n^{2N-2}\xi_j
    \xi_{[\alpha_1} \otimes E_n \wedge E_{\alpha_2} \cdots \wedge E_{\alpha_{p]}}\\
    &+ (\lambda\!+\!p\!-\!2N\!-\!1) Q \xi_n^{2N}
    \delta_{j[\alpha_1}\otimes E_n\wedge E_{\alpha_2}\wedge\cdots\wedge E_{\alpha_p]}\\
    &+ \big[2NQ^\prime - 2t Q^\prime\big]\xi_n^{2N-1}
    \xi_{[\alpha_1}\otimes E_j\wedge E_{\alpha_2}\wedge\cdots\wedge E_{\alpha_p]}.
\end{align*}
Finally, the action of $P_j(\lambda)$ on
$$
   v_3 \st \xi_n^{2N-1}R(t)\sum_{i=1}^{n-1}\xi_i\xi_{[\alpha_1}
   \otimes E_i\wedge E_{\alpha_2}\wedge\cdots\wedge E_{\alpha_p]}
$$
takes the form
\begin{align*}
    P_j(\lambda)v_3 & = \big[2t(t\!+\!1) R^{\prime\prime} + (-4N\!+\!5)tR^\prime +
    (2\lambda\!+\!n\!-\!4N\!+\!1)R^\prime\\
    & \quad + (N\!-\!1)(2N\!-\!1) R\big] \xi_n^{2N-3}\xi_j\sum_{i=1}^{n-1}\xi_i\xi_{[\alpha_1}
      \otimes E_i\wedge E_{\alpha_2}\wedge\cdots\wedge E_{\alpha_{p}]}\\
    & \quad + \left[2tR^\prime + (\lambda\!+\!n\!-\!p\!-\!2N) R\right] \xi_n^{2N-1}\xi_{[\alpha_1}
      \otimes E_j\wedge E_{\alpha_2}\wedge\cdots\wedge E_{\alpha_{p}]}\\
    & \quad + (\lambda\!+\!p\!-\!2N\!-\!1) R\xi_n^{2N-1}\sum_{i=1}^{n-1}\xi_i\delta_{j[\alpha_1}
      \otimes E_i\wedge E_{\alpha_2}\wedge\cdots\wedge E_{\alpha_{p}]}\\
    & \quad + \left[2tR^\prime - (2N\!-\!1)R\right] \xi_n^{2N-2}\xi_j \xi_{[\alpha_1}
      \otimes E_n\wedge E_{\alpha_2}\wedge\cdots\wedge E_{\alpha_{p}]}.
\end{align*}

We summarize these results and find that the condition
\begin{equation*}
   P_j(\lambda) v^{(p\to p)}_{2N+1}(\lambda) = 0 \quad \mbox{for $j=1,\dots,n-1$}
\end{equation*}
is equivalent to the system
\begin{align}\label{eq:OddSystemPtoP}
    0 & = 2t(t\!+\!1)P^{\prime\prime} + (-4N\!+\!1) tP^\prime
    + (2\lambda\!+\!n\!-\!4N\!-\!1)P^\prime + N(2N\!+\!1)P, \notag\\
    0 & = 2P^\prime + 2NQ - 2tQ^\prime + 2tR^\prime + (\lambda\!+\!n\!-\!p\!-\!2N)R, \notag\\
    0 & = -2P^\prime + (\lambda\!+\!p\!-\!2N\!-\!1) R,\notag\\
    0 & = 2tP^\prime - (2N\!+\!1)P + (\lambda\!+\!p\!-\!2N\!-\!1) Q,\notag\\
    0 & = 2t(t\!+\!1) Q^{\prime\prime} + (-4N\!+\!3)t Q^\prime
    + (2\lambda\!+\!n\!-\!4N\!+\!1) Q^\prime + N(2N\!-\!1)Q + 2 tR^\prime -(2N\!-\!1)R,\notag\\
    0 & = 2t(t\!+\!1)R^{\prime\prime}+ (-4N\!+\!5)t R^\prime + (2\lambda\!+\!n\!-\!4N\!+\!1) R^\prime
    +(N\!-\!1)(2N\!-\!1)R
\end{align}
of ordinary differential equations for $P$, $Q$ and $R$.

Now assume that $N \ge 1$ and that the polynomials
$$
   P(t) = \sum_{j=0}^{N} p_j t^j, \quad Q(t) = \sum_{j=0}^{N} q_j t^j \quad
   \mbox{and} \quad R(t) = \sum_{j=0}^{N-1} r_j t^j
$$
with unknown coefficients solve the system \eqref{eq:OddSystemPtoP}. We divide
the analysis of this assumption into a series of steps. The following arguments
will make use of the recursive relations \eqref{Gegen-rec1}, \eqref{Gegen-rec2}
for (unnormalized) Gegenbauer coefficients.

{\em Step 1}. The first equation gives the recurrence relation
\begin{align*}
   (N\!-\!j\!+\!1)(2N\!-\!2j\!+\!3) p_{j-1} + j(2\lambda\!+\!n\!-\!4N\!+\!2j\!-\!3) p_j = 0
\end{align*}
for $j=1,\ldots,N$. By \eqref{Gegen-rec2}, it implies that $p_j =
p^{(N)}_j(\lambda)$ are odd Gegenbauer coefficients with $p_N^{(N)}(\lambda)$
being undetermined.

{\em Step 2}. The sixth equation gives the recurrence relation
\begin{equation*}
   ((N\!-\!1)\!-\!j\!+\!1)(2(N\!-\!1)\!-\!2j\!+\!3) r_{j-1}
   + j(2(\lambda\!-\!1)\!+\!n\!-\!4(N\!-\!1)\!+\!2j\!-\!3 ) r_j = 0
\end{equation*}
for $j=1,\ldots, N\!-\!1$. By \eqref{Gegen-rec2}, it implies that $r_j =
r^{(N-1)}_j(\lambda\!-\!1)$ are odd Gegenbauer coefficients with
$r_{N-1}^{(N-1)}(\lambda\!-\!1)$ being undetermined.

{\em Step 3}. Now assume that
$$
   \lambda\!+\!p\!-\!2N\!-\!1 \ne 0.
$$
Then the fourth equation relates $P(t)$ and $Q(t)$ through
\begin{equation}\label{eq:Relation1}
   q_j = \frac{(2N\!-\!2j\!+\!1)}{(\lambda\!+\!p\!-\!2N\!-\!1)}
   p_j^{(N)}(\lambda), \quad j=0,\dots,N.
\end{equation}
Combining this relation with the explicit formula for $p_j^{(N)}(\lambda)$
implies that $q_j = q_j^{(N)}(\lambda\!-\!1)$ are even Gegenbauer coefficients
with the normalization
\begin{equation}\label{eq:Relation1d}
   q_N^{(N)}(\lambda\!-\!1) = \frac{1}{(\lambda\!+\!p\!-\!2N\!-\!1)} p_N^{(N)}(\lambda).
\end{equation}

{\em Step 4}. Similarly, the third equation relates $P(t)$ and $R(t)$ through
\begin{equation}\label{eq:Relation1a}
   r_{j-1} = \frac{2j}{(\lambda\!+\!p\!-\!2N\!-\!1)} p_{j}^{(N)}(\lambda), \quad j=1,\dots,N.
\end{equation}
By combining this relation with the  explicit formula for $p_j^{(N)}(\lambda)$,
we again conclude that $r_j^{(N-1)}(\lambda\!-\!1)$ are odd Gegenbauer
coefficients with the normalization
\begin{equation}\label{eq:Relation1b}
   r_{N-1}^{(N-1)}(\lambda\!-\!1) = \frac{2N}{(\lambda\!+\!p\!-\!2N\!-\!1)} p_N^{(N)}(\lambda).
\end{equation}

{\em Step 5}. The sum of the second and the third equation gives
\begin{equation}\label{eq:Relation2}
   q_j^{(N)}(\lambda\!-\!1) = -\frac{(2\lambda\!+\!n\!-\!4N\!+\!2j\!-\!1)}{(2N\!-\!2j)}
   r^{(N-1)}_{j}(\lambda\!-\!1), \quad j=0,\dots, N-1.
\end{equation}
This relation between $R(t)$ and $Q(t)$ already follows by comparing
\eqref{eq:Relation1} and \eqref{eq:Relation1a} using the recurrence relation
\eqref{Gegen-rec2} for $p_j^{(N)}(\lambda)$. In particular, we find the
relation
$$
   2N q_N^{(N)}(\lambda\!-\!1) = r_{N-1}^{(N-1)}(\lambda\!-\!1).
$$

{\em Step 6}. The fifth equation is satisfied by the polynomials determined in
the previous steps. In fact, the fifth equation is equivalent to the recurrence
relation
\begin{equation*}
   (N\!-\!j)(2N\!-\!2j\!-\!1) q_j - (2N\!-\!2j\!-\!1)r_j +
   (j\!+\!1)(2\lambda\!+\!n\!-\!4N\!+\!2j\!+\!1) q_{j+1} = 0.
\end{equation*}
By $q_j = q_j^{(N)}(\lambda\!-\!1)$, $r_j = r^{(N-1)}_j(\lambda\!-\!1)$ and the
identity
$$
  (N\!-\!j)(2N\!-\!2j\!-\!1) q_j^{(N)}(\lambda\!-\!1) = -(j\!+\!1)(2(\lambda\!-\!1)\!+\!n\!-\!4N\!+\!2j\!+\!1)
  q_{j+1}^{(N)}(\lambda\!-\!1)
$$
(see \eqref{Gegen-rec1}), this relation is equivalent to
$$
   (2N\!-\!2j\!-\!1) r_j^{(N-1)}(\lambda\!-\!1) = 2(j\!+\!1) q_{j+1}^{(N)}(\lambda\!-\!1).
$$
Again, using \eqref{Gegen-rec1}, the latter relation is equivalent to
\eqref{eq:Relation2}.

{\em Step 7}. If $\lambda\!+\!p\!-\!2N\!-\!1=0$, then the third and the fourth
equation are equivalent to $P=0$. Hence the first equation is trivially
satisfied. The sixth equations implies $r_j=r_j^{(N-1)}(\lambda\!-\!1)$ with an
undetermined coefficient $r_{N-1}^{(N-1)}(\lambda\!-\!1)$ (as in Step 2). The
second equation yields the relation
$$
   q_j = -\frac{(2\lambda\!+\!n\!-\!4N\!+\!2j\!-\!1)}{(2N\!-\!2j)}
   r^{(N-1)}_{j}(\lambda\!-\!1), \quad j=0,\dots,N-1.
$$
It follows that the coefficients $q_j$ are even Gegenbauer coefficients: $q_j =
q_j^{(N)}(\lambda\!-\!1)$. Then the fifth equation is satisfied by the
arguments in Step 6.

For $N=0$, the only solutions are $P=p_0$, $Q=q_0$ with the relation $p_0 =
(\lambda+p-1) q_0$.

This completes the analysis of the system \eqref{eq:OddSystemPtoP}.

Finally, we choose the normalization $p_N^{(N)}(\lambda) =
(\lambda\!+\!p\!-\!2N\!-\!1)$ and summarize the above results.

\begin{theorem}\label{OddFromPtoP} Assume that $N \in \N_0$. The first type
homomorphisms
\begin{equation*}
   \xi_n^{2N+1} P(t) \otimes \id + \xi_n^{2N} Q(t) E_n \wedge i_E + \xi_n^{2N-1} R(t) \alpha \wedge i_E
\end{equation*}
with the polynomial coefficients
$$
   P(t) = \sum_{j=0}^N p^{(N)}_j(\lambda;p) t^j, \quad
   Q(t) = \sum_{j=0}^N q^{(N)}_j(\lambda\!-\!1) t^j \quad \mbox{and} \quad
   R(t) = \sum_{j=0}^{N-1} r^{(N-1)}_j(\lambda\!-\!1)t^j
$$
in the variable $t = \abs{\xi'}^2/\xi_n^2$ are singular vectors of odd
homogeneity $2N+1$ in the space
$$
   \Hom_{\gop^\prime}(\Lambda^p (\gon_-^\prime(\R)) \otimes \C_{\lambda-(2N+1)},
   \Pol_{2N+1}(\gon_-^*(\R)) \otimes \Lambda^p (\gon_-(\R)) \otimes \C_\lambda)
$$
iff
\begin{align*}
   p_j^{(N)}(\lambda;p) & = (\lambda\!+\!p\!-\!2N\!-\!1) b_j^{(N)}(\lambda), \notag \\
   q_j^{(N)}(\lambda) & = a_j^{(N)}(\lambda), \notag \\
   r_j^{(N-1)}(\lambda) & = 2N b_j^{(N-1)}(\lambda),
\end{align*}
up to a constant multiple. These singular vectors will be denoted by
$v_{2N+1}^{(p\to p)}(\lambda)$. They describe the embedding of the module
$$
   {\fam2 M}^{\gog^\prime}_{\gop^\prime} \left(\Lambda^{p}(\R^{n-1}) \otimes \C_{\lambda-(2N+1)}\right)
$$
in Proposition \ref{charind}.
\end{theorem}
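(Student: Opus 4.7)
The plan is to apply the $F$-method in the form laid out in Section \ref{FMethod}. By Proposition \ref{hom-structure} and the analysis of $\gol'$-equivariance preceding the theorem, any singular vector in the indicated space must be a linear combination of the three basic first-type $\gol'$-equivariant maps in \eqref{type-1-N}, with coefficients that are $SO(n-1,\R)\times A$-invariant polynomials in $\xi$ of the correct homogeneity. The only such invariants are $\xi_n$ and $|\xi'|^2$, so after factoring out the appropriate powers of $\xi_n$, the coefficients reduce to polynomials $P(t), Q(t), R(t)$ in $t = |\xi'|^2/\xi_n^2$ of degrees $N, N, N-1$. This is precisely the ansatz \eqref{ansatz-odd-2}, and no generality is lost.

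Next I would impose the $\gon_+'$-equivariance condition. By Lemma \ref{fundamental}(2), this amounts to requiring $P_j(\lambda)v = 0$ for $j=1,\ldots,n-1$, where $P_j(\lambda)$ is the second-order operator in \eqref{P-operator}. Apply $P_j(\lambda)$ term by term to the three summands of the ansatz, using the chain rule identities $\partial_j = (2\xi_j/\xi_n^2)\partial_t$ and $\partial_n = -(2t/\xi_n)\partial_t$ to re-express every derivative in terms of $\partial_t$. After expansion, sort the output tensors according to their distinct $SO(n-1)$-irreducible types; exactly six independent such types appear, so the vanishing condition decouples into the system \eqref{eq:OddSystemPtoP} of six ODEs for $P, Q, R$.

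The bulk of the work is to solve this system. The first equation is a hypergeometric ODE whose polynomial solutions are characterized by a two-term recurrence that, via \eqref{Gegen-rec2}, identifies $P$ up to a multiplicative constant with an odd Gegenbauer polynomial in $t$. The sixth equation has the same shape with shifted parameters and determines $R$ as an odd Gegenbauer polynomial with parameter $\lambda-1$. Assuming $\lambda + p - 2N - 1 \ne 0$, the third and fourth equations express $R$ and $Q$ algebraically in terms of $P$ coefficient by coefficient; combined with the explicit Gegenbauer expression for $P$, this forces $Q$ to be an even Gegenbauer polynomial in $\lambda - 1$, consistent with what the other equations yield. The second and fifth equations are then redundant: both follow from the already established relations via the standard Gegenbauer recurrences \eqref{Gegen-rec1}, \eqref{Gegen-rec2}. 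This consistency check is the main technical effort. The degenerate case $\lambda + p - 2N - 1 = 0$ must be handled separately: it forces $P \equiv 0$, leaves $R$ as a free odd Gegenbauer in $\lambda-1$, and $Q$ is then determined by the second equation, with the fifth again reducing to a Gegenbauer identity.

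Finally, I would fix the normalization $p_N^{(N)}(\lambda) = \lambda + p - 2N - 1$ and read off the closed-form Gegenbauer coefficients to recover the formulas for $p_j^{(N)}(\lambda;p)$, $q_j^{(N)}$ and $r_j^{(N-1)}$ stated in the theorem. The assertion that these singular vectors describe the embedding of ${\fam2M}^{\gog^\prime}_{\gop^\prime}(\Lambda^p(\R^{n-1}) \otimes \C_{\lambda - (2N+1)})$ claimed in Proposition \ref{charind} is then immediate from the correspondence \eqref{eq:one-to-one}--\eqref{eq:isomorphism1} between $\gog'$-homomorphisms of generalized Verma modules and $\gop'$-equivariant maps into the singular-vector space. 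The principal obstacle is purely combinatorial bookkeeping: matching the six-equation system against the Gegenbauer recurrences and verifying that all redundancies cancel correctly, plus treating the edge case. No conceptual difficulty arises beyond this.
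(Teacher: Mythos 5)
Your proposal follows essentially the same route as the paper's own proof: the same ansatz \eqref{ansatz-odd-2} forced by $\gol'$-equivariance, reduction of the $\gon_+'$-condition to the six-ODE system \eqref{eq:OddSystemPtoP}, identification of $P$, $Q$, $R$ via the Gegenbauer recurrences with the second and fifth equations checked as redundant, and the separate treatment of the degenerate case $\lambda+p-2N-1=0$. The only detail you omit is the trivial base case $N=0$ (where $R$ is absent and one gets $p_0=(\lambda+p-1)q_0$), which the paper dispatches in one line.
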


We continue with the discussion of singular vectors of {\em even} homogeneity
$2N$, $N \in \N_0$, in
$$
   \Hom_{\gop^\prime}(\Lambda^{p}(\gon_-^\prime(\R)) \otimes \C_{\lambda-2N},
   \Pol_{2N}(\gon_-^*(\R)) \otimes \Lambda^p(\gon_-(\R)) \otimes \C_\lambda),
$$
which are linear combinations of the homomorphisms in \eqref{type-1-N-gen}.
Again, these will be referred to as singular vectors of the {\em first type}.
They correspond to $\gog^\prime$-homomorphisms
\begin{equation*}
   {\fam2 U}(\gog^\prime)\otimes_{{\fam2 U}(\gop^\prime)}(\Lambda^p(\gon_-(\R))
   \otimes\C_{\lambda-2N}) \to{\fam2 U}(\gog)\otimes_{{\fam2 U}(\gop)}(\Lambda^p(\gon_-(\R)) \otimes \C_\lambda).
\end{equation*}
Similarly, as in the case of odd homogeneity, we use the ansatz
\begin{equation*}
   v_{2N}^{(p\to p)}(\lambda) =
   \xi_n^{2N} P(t) \otimes \id + \xi_n^{2N-1} Q(t) E_n \wedge i_E + \xi_n^{2N-2} R(t) \alpha \wedge i_E
\end{equation*}
or, equivalently,
\begin{align}\label{ansatz-even2}
    v_{2N}^{(p\to p)}(\lambda)(E_{\alpha_1}\wedge\cdots\wedge E_{\alpha_p})
    & = \xi_n^{2N} P(t)\otimes E_{\alpha_1}\wedge\cdots\wedge E_{\alpha_p}  \notag \\
    & + \xi_n^{2N-1} Q(t)\xi_{[\alpha_1}\otimes E_n\wedge E_{\alpha_2}\wedge\cdots\wedge E_{\alpha_p]} \notag \\
    & + \xi_n^{2N-2} R(t)\sum_{i=1}^{n-1}\xi_i\xi_{[\alpha_1}
    \otimes E_i\wedge E_{\alpha_2}\wedge\cdots\wedge E_{\alpha_p]}
\end{align}
for any partition $1\leq \alpha_1 < \cdots <\alpha_{p} \leq n-1$, where $P(t),
Q(t)$ and $R(t)$ are polynomials of respective degrees $N$, $N-1$ and $N-1$ to
be determined.

Analogous calculations as in the case of odd homogeneity show that the condition
\begin{equation*}
    P_j(\lambda) v^{(p\to p)}_{2N}(\lambda) = 0 \quad \mbox{for $j=1,\dots,n-1$}
\end{equation*}
is equivalent to the system
\begin{align}\label{eq:EvenSystemPtoP}
    0 & = 2t(t\!+\!1)P^{\prime\prime} + (-4N\!+\!3)t P^\prime + (2\lambda\!+\!n\!-\!4N\!+\!1) P^\prime
    + N(2N\!-\!1)P, \notag\\
    0 & = 2P^\prime + (2N\!-\!1)Q - 2tQ^\prime + 2tR^\prime + (\lambda\!+\!n\!-\!p\!-\!2N\!+\!1)R, \notag\\
    0 & = -2P^\prime + (\lambda\!+\!p\!-\!2N) R,\notag\\
    0 & = 2tP^\prime -2NP + (\lambda\!+\!p\!-\!2N) Q,\notag\\
    0 & = 2t(t\!+\!1)Q^{\prime\prime} + (-4N\!+\!5)t Q^\prime + (2\lambda\!+\!n\!-\!4N\!+\!3) Q^\prime
    + (N\!-\!1)(2N\!-\!1)Q \notag \\
    & + 2tR^\prime -(2N\!-\!2)R,\notag\\
    0 & = 2t(t\!+\!1)R^{\prime\prime} + (-4N\!+\!7)tR^\prime + (2\lambda\!+\!n\!-\!4N\!+\!3) R^\prime +
    (N\!-\!1)(2N\!-\!3)R
\end{align}
of ordinary differential equations for $P$, $Q$ and $R$. We omit the details of
the calculation. However, we note that although the system
\eqref{eq:EvenSystemPtoP} arises from the system \eqref{eq:OddSystemPtoP} by
the substitution $N \mapsto N-\tfrac{1}{2}$ the degrees of the involved
polynomials do not coincide.

Now assume that $N \ge 1$ and that the polynomials
$$
   P(t) = \sum_{j=0}^{N} p_j t^j, \quad Q(t) = \sum_{j=0}^{N-1} q_j t^j \quad
   \mbox{and} \quad R(t) = \sum_{j=0}^{N-1} r_j t^j
$$
with unknown coefficients solve the system \eqref{eq:EvenSystemPtoP}. We divide
the analysis of this condition into a series of steps. The following arguments
are similar to those in the discussion of the system \eqref{eq:OddSystemPtoP}.

{\em Step 1}. By \eqref{Gegen-rec1}, the first equation implies that $p_j =
p^{(N)}_j(\lambda)$  are even Gegenbauer coefficients with $p_N^{(N)}(\lambda)$
being undetermined.

{\em Step 2}. By \eqref{Gegen-rec1}, the sixth equation implies that
$r_j=r^{(N-1)}_j(\lambda-1)$ are even Gegenbauer coefficients with
$r_{N-1}^{(N-1)}(\lambda\!-\!1)$ being undetermined.

{\em Step 3}. Now assume that
$$
   \lambda\!+\!p\!-\!2N \ne 0.
$$
Then the fourth equation relates $P(t)$ and $Q(t)$ through
\begin{equation}\label{eq:Relation3}
   q_j = \frac{(2N\!-\!2j)}{(\lambda\!+\!p\!-\!2N)} p_j^{(N)}(\lambda), \quad j=0,\dots,N-1.
\end{equation}
By combining this relation with the explicit formula for $p_j^{(N)}(\lambda)$,
we conclude that $q_j=q_j^{(N-1)}(\lambda\!-\!1)$ are odd Gegenbauer
coefficients which are normalized by
$$
   q_{N-1}^{(N-1)}(\lambda\!-\!1)
   = -\frac{2N(2\lambda\!-\!2N\!+\!n\!-\!1)}{(\lambda\!+\!p\!-\!2N)} p_{N}^{(N)}(\lambda).
$$

{\em Step 4}. The third equation yields the relation
\begin{align}\label{eq:Relation3a}
   r_j^{(N-1)}(\lambda\!-\!1) = \frac{2(j\!+\!1)}{(\lambda\!+\!p\!-\!2N)}
   p_{j+1}^{(N)}(\lambda), \quad j=0,\dots,N-1.
\end{align}
In particular, this relates the normalizations of $P(t)$ and $R(t)$ through
\begin{equation*}
   r^{(N-1)}_{N-1}(\lambda\!-\!1) = \frac{2N}{(\lambda\!+\!p\!-\!2N)} p_N^{(N)}(\lambda).
\end{equation*}

{\em Step 5}. The sum of the second and the third equation gives
\begin{equation}\label{eq:Relation4}
   q_j^{(N-1)}(\lambda\!-\!1) = -\frac{(2\lambda\!+\!n\!-\!4N\!+\!2j\!+\!1)}{(2N\!-\!2j\!-\!1)}
   r^{(N-1)}_{j}(\lambda\!-\!1), \quad j=0,\dots, N-1.
\end{equation}
This relation between $R(t)$ and $Q(t)$ already follows by comparing
\eqref{eq:Relation3} and \eqref{eq:Relation3a} using the recurrence relation
\eqref{Gegen-rec1} for $p_j^{(N)}(\lambda)$.

{\em Step 6}. The fifth equation is satisfied by the polynomials $Q$ and $R$
determined in the previous steps. In fact, the fifth equation is equivalent to
the recurrence relation
$$
   (N\!-\!j\!-\!1)(2N\!-\!2j\!-\!1) q_j + (j\!+\!1)(2\lambda\!+\!n\!-\!4N\!+\!2j\!+\!3) q_{j+1}
   + (2j\!-\!2N\!+\!2) r_j = 0.
$$
By $q_j = q_j^{(N-1)}(\lambda\!-\!1)$, $r_j=r^{(N-1)}_j(\lambda\!-\!1)$ and the
identity
$$
   (N\!-\!j\!-\!1)(2N\!-\!2j\!-\!1) q_j^{(N-1)}(\lambda)
   + (j\!+\!1) (2\lambda\!+\!n\!-\!4N\!+\!2j\!+\!3) q_{j+1}^{(N-1)}(\lambda) = 0
$$
(see \eqref{Gegen-rec2}), this relation is equivalent to
$$
   (2N\!-\!2j\!-\!2) r_j^{(N-1)}(\lambda\!-\!1) = 2 (j\!+\!1) q_{j+1}^{(N-1)}(\lambda\!-\!1).
$$
Again, using \eqref{Gegen-rec2}, the latter relation is equivalent to
\eqref{eq:Relation4}.

{\em Step 7}. If $\lambda+p-2N=0$, then the third and the fourth equation are
equivalent to $P=0$. Hence the first equation is trivially satisfied. The sixth
equation implies $r_j=r_j^{(N-1)}(\lambda-1)$ with an undetermined coefficient
$r_{N-1}^{(N-1)}(\lambda-1)$ (as in Step 2). The second equation yields the
relation
$$
   q_j^{(N-1)}(\lambda\!-\!1) = -\frac{(2\lambda\!+\!n\!-\!4N\!+\!2j\!+\!1)}{(2N\!-\!2j\!-\!1)}
   r^{(N-1)}_{j}(\lambda\!-\!1), \quad j=0,\dots, N-1.
$$
It follows that the coefficients are odd Gegenbauer coefficients: $q_j =
q_j^{(N-1)}(\lambda\!-\!1)$. The fifth equation is satisfied by the arguments
in Step 5.

For $N=0$, the only solution is $P=p_0$.

This completes the analysis of the system \eqref{eq:EvenSystemPtoP}.

Finally, we choose the normalization $p_N^{(N)}(\lambda) =
(\lambda\!+\!p\!-\!2N)$ and summarize the above results.

\begin{theorem}\label{EvenFromPtoP} Assume that $N \in \N_0$. The first type
homomorphisms
\begin{equation*}
   \xi_n^{2N} P(t) \otimes \id + \xi_n^{2N-1} Q(t) E_n \wedge i_E + \xi_n^{2N-2} R(t) \alpha \wedge i_E
\end{equation*}
with the polynomial coefficients
$$
   P(t) = \sum_{j=0}^N p^{(N)}_j(\lambda;p) t^j, \quad
   Q(t) = \sum_{j=0}^{N-1} q^{(N-1)}_j(\lambda\!-\!1) t^j \quad \mbox{and} \quad
   R(t) = \sum_{j=0}^{N-1} r^{(N-1)}_j(\lambda\!-\!1) t^j
$$
in the variable $t = \abs{\xi'}^2/\xi_n^2$ are singular vectors of even
homogeneity $2N$ in the space
$$
   \Hom_{\gop^\prime}(\Lambda^{p}(\gon_-^\prime(\R)) \otimes \C_{\lambda-2N},
   \Pol_{2N}(\gon_-^*(\R)) \otimes \Lambda^p(\gon_-(\R)) \otimes \C_\lambda)
$$
iff
\begin{align*}
   p_j^{(N)}(\lambda;p) & = (\lambda\!+\!p\!-\!2N) a_j^{(N)}(\lambda), \\
   q_j^{(N-1)}(\lambda) & = -2N(2\lambda\!+\!n\!-\!2N\!+\!1) b_j^{(N-1)}(\lambda), \\
   r_j^{(N-1)}(\lambda) & = 2N a_j^{(N-1)}(\lambda),
\end{align*}
up to a constant multiple. These singular vectors are denoted by $v^{(p\to
p)}_{2N}(\lambda)$. They describe the embedding of the module
$$
   {\fam2 M}^{\gog^\prime}_{\gop^\prime} \left(\Lambda^{p}(\R^{n-1}) \otimes \C_{\lambda-2N}\right)
$$
in Proposition \ref{charind}.
\end{theorem}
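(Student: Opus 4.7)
The plan is to adapt, line by line, the proof strategy already executed for Theorem \ref{OddFromPtoP}, with the new ansatz \eqref{ansatz-even2} of homogeneity $2N$ replacing the odd ansatz \eqref{ansatz-odd-2}. First I would insert \eqref{ansatz-even2} into the equivariance condition $P_j(\lambda) v_{2N}^{(p\to p)}(\lambda)=0$ for $j=1,\dots,n-1$, where $P_j(\lambda)$ is the operator \eqref{P-operator}. The three summands in the ansatz produce, under $P_j(\lambda)$, contributions supported on the same five independent tensor structures that appeared in the odd case, namely
$$
\xi_n^{2N-2}\xi_j\otimes E_{\alpha_1}\wedge\cdots\wedge E_{\alpha_p},\quad
\xi_n^{2N-2}\xi_{[\alpha_1}\otimes E_j\wedge E_{\alpha_2}\wedge\cdots\wedge E_{\alpha_p]},
$$
$$
\xi_n^{2N-1}\delta_{j[\alpha_1}\otimes E_n\wedge\cdots,\qquad
\xi_n^{2N-3}\xi_j\xi_{[\alpha_1}\otimes E_n\wedge\cdots,\qquad
\xi_n^{2N-3}\xi_j\sum_i\xi_i\xi_{[\alpha_1}\otimes E_i\wedge\cdots .
$$
Setting the coefficient of each tensor structure to zero yields exactly the six ODEs \eqref{eq:EvenSystemPtoP}. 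This first step is largely bookkeeping; the computation is identical in spirit to the derivation that produced \eqref{eq:OddSystemPtoP}, with $N$ replaced effectively by $N-\tfrac12$ in the differential operators but with the polynomial degrees $(N,N-1,N-1)$ adjusted to match the parity of the homogeneity.

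Next I would solve \eqref{eq:EvenSystemPtoP} in the same seven steps used for the odd case. The first equation is a confluent-type recurrence that, by \eqref{Gegen-rec1}, forces $p_j=p_j^{(N)}(\lambda)$ to be (unnormalized) even Gegenbauer coefficients with the top coefficient $p_N^{(N)}(\lambda)$ free. The sixth equation does the same for $R$, forcing $r_j=r_j^{(N-1)}(\lambda-1)$ to be even Gegenbauer coefficients with $r_{N-1}^{(N-1)}(\lambda-1)$ free. Under the genericity assumption $\lambda+p-2N\neq0$, the fourth equation gives the algebraic relation \eqref{eq:Relation3} expressing $q_j$ in terms of $p_j$, and combined with the explicit Gegenbauer formula shows $q_j=q_j^{(N-1)}(\lambda-1)$ is an odd Gegenbauer coefficient; the third equation, interpreted in the same way, yields \eqref{eq:Relation3a} and fixes the normalization of $R$ in terms of $P$. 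Consistency of these three relations — what would otherwise be the delicate issue — then follows by comparing \eqref{eq:Relation3} and \eqref{eq:Relation3a} via the defining recurrence \eqref{Gegen-rec1} for $p_j^{(N)}(\lambda)$; this produces \eqref{eq:Relation4}, which is precisely the content of (second + third) equation. Finally, the fifth equation reduces, after substituting these expressions, to another instance of the same Gegenbauer recurrence and is therefore automatically satisfied.

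The nongeneric case $\lambda+p-2N=0$ is handled exactly as in Step 7 of the odd-homogeneity proof: the third and fourth equations then force $P=0$, the first equation becomes vacuous, the sixth still yields $r_j=r_j^{(N-1)}(\lambda-1)$, the second determines $q_j=q_j^{(N-1)}(\lambda-1)$ as odd Gegenbauer coefficients, and the fifth follows by the same recurrence argument. The case $N=0$ is trivial: only $P=p_0$ survives. To conclude, I would normalize $p_N^{(N)}(\lambda)=\lambda+p-2N$, which by the explicit formulas for the Gegenbauer coefficients $a_j^{(N)}(\lambda)$ and $b_j^{(N-1)}(\lambda)$ yields exactly the stated formulas for $p_j^{(N)}(\lambda;p)$, $q_j^{(N-1)}(\lambda)$ and $r_j^{(N-1)}(\lambda)$. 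The resulting singular vector realizes the embedding of ${\fam2 M}^{\gog'}_{\gop'}(\Lambda^p(\R^{n-1})\otimes\C_{\lambda-2N})$ appearing in the generic branching law of Proposition \ref{charind}, since its leading symbol is of first type.

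The main obstacle is the verification that the six equations in \eqref{eq:EvenSystemPtoP} are in fact compatible: three of them are second-order ODEs while three are first-order algebraic relations, and a priori they could easily be overdetermined. The compatibility is guaranteed only because the coefficients in the algebraic relations are precisely those dictated by the Gegenbauer three-term recurrences \eqref{Gegen-rec1}, \eqref{Gegen-rec2}, so that the fifth equation — the only one whose consistency is not visible from the construction — becomes an identity. The subtlety of identifying $R$ as a Gegenbauer polynomial of parameter $\lambda-1$ rather than $\lambda$, and tracking the corresponding normalization shift that appears in the statement, is the other place where care is required.
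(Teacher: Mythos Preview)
Your proposal is correct and follows essentially the same approach as the paper: the paper too derives the system \eqref{eq:EvenSystemPtoP} by the parallel computation (noting the formal substitution $N\mapsto N-\tfrac12$), then runs through the identical seven-step analysis you outline (even Gegenbauer coefficients from the first and sixth equations, the algebraic relations \eqref{eq:Relation3}--\eqref{eq:Relation4} from the third, fourth, and second equations under the genericity hypothesis, verification of the fifth equation via \eqref{Gegen-rec2}, the degenerate case $\lambda+p-2N=0$, and the trivial case $N=0$), and closes with the normalization $p_N^{(N)}(\lambda)=\lambda+p-2N$. One cosmetic point: there are six independent tensor structures (you enumerate five but correctly produce six equations); the one you omit is $\xi_n^{2N-2}\sum_i\xi_i\,\delta_{j[\alpha_1}\otimes E_i\wedge\cdots$.
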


We finish this section with explicit examples of singular vectors of the first
type.

\begin{example}\label{ExamplesPtoP} We display the low-homogeneity singular vectors
$v_k^{(p \to p)}(\lambda)$ for $k \le 3$. The singular vectors of homogeneity
$0$ is given by
\begin{equation*}
   v_0^{(p\to p)} = (\lambda\!+\!p) \otimes \id.
\end{equation*}
The first type singular vector of homogeneity $1$ is given by
\begin{equation*}
   v_1^{(p\to p)}(\lambda)  = (\lambda\!+\!p\!-\!1) \xi_n \otimes + E_n \wedge i_E.
\end{equation*}
The first type singular vector of homogeneity $2$ is given by
\begin{align*}
   v_2^{(p\to p)}(\lambda) & = (\lambda\!+\!p\!-\!2) \sum_{i=1}^{n-1}\xi_i^2 \otimes
   \id -(2\lambda\!+\!n\!-\!3)(\lambda\!+\!p\!-\!2) \xi_n^2 \otimes \id \\
   & -2(2\lambda\!+\!n\!-\!3)\xi_n E_n \wedge i_E + 2 \alpha \wedge i_E.
\end{align*}
Finally, the first type singular vector of homogeneity $3$ is given by the sum
\begin{align*}
   v_3^{(p\to p)}(\lambda) & = (\lambda\!+\!p\!-\!3)\xi_n\sum_{i=1}^{n-1}\xi_i^2 \otimes
   \id -\tfrac 13 (2\lambda\!+\!n\!-\!5)(\lambda\!+\!p\!-\!3) \xi_n^3 \otimes \id \\
   & + \sum_{i=1}^{n-1}\xi_i^2 E_n \wedge i_E - (2\lambda\!+\!n\!-\!5) \xi_n^2  E_n \wedge
   i_E + 2\xi_n \alpha \wedge i_E.
\end{align*}
\end{example}

\subsection{Families of singular vectors of the second type}\label{sv-type2}

Let $N \in \N$. We consider singular vectors of the form
$$
   v^{(p-1 \to p)}_N(\lambda) \in \Hom_{\gop^\prime}(\Lambda^{p-1}(\gon_-^\prime(\R)) \otimes \C_{\lambda-N},
   \Pol_N (\gon_-^*(\R)) \otimes \Lambda^p(\gon_-(\R)) \otimes \C_\lambda),
$$
which are linear combinations of homomorphisms listed in \eqref{type-2-N-gen}.
In the following, we shall refer to these as singular vectors of the second
type. Such vectors are related to singular vectors $v_N^{(p \to p)}(\lambda)$
of the first type through conjugation with the Hodge star operators
$$
   \bar{\star}: \Pol_{N}(\gon_-^*(\R)) \otimes \Lambda^p(\gon_-(\R)) \to
   \Pol_{N}(\gon_-^*(\R)) \otimes \Lambda^{n-p}(\gon_-(\R)))
$$
and
$$
   \star: \Lambda^{p}(\gon_-^\prime(\R)) \to \Lambda^{n-1-p}(\gon_-^\prime(\R)),
$$
i.e.,
$$
   v_N^{(p-1 \to p)}(\lambda) = \bar{\star} \, v_N^{(n-p \to n-p)}(\lambda) \star.
$$

The following two results explicate the structure of singular vectors of the
second type. We first describe singular vectors of odd homogeneity.

\begin{theorem}\label{OddFromPtoP-1} Assume that $N \in \N_0$. The second type
homomorphism
\begin{equation}
   \xi_n^{2N+1} P(t) \otimes E_n + \xi_n^{2N} Q(t) \alpha + \xi_n^{2N-1} R(t) E_n \wedge \alpha \wedge i_E
\end{equation}
with the polynomial coefficients
$$
   P(t) = \sum_{j=0}^N p_j(\lambda;N,p) t^j, \;\;
   Q(t) = \sum_{j=0}^N q^{(N)}_j(\lambda\!-\!1) t^j  \;\; \mbox{and}  \;\;
   R(t) = \sum_{j=0}^{N-1} r^{(N-1)}_j(\lambda\!-\!1) t^j
$$
in the variable $t = \abs{\xi'}^2/\xi_n^2$ are singular vectors of odd
homogeneity $2N+1$ in the space
$$
   \Hom_{\gop^\prime}(\Lambda^{p-1}(\gon_-^\prime(\R)) \otimes \C_{\lambda-(2N+1)},
   \Pol_{2N+1}(\gon_-^*(\R)) \otimes \Lambda^p(\gon_-(\R)) \otimes \C_\lambda)
$$
iff
\begin{align*}
   p_j(\lambda;N,p) & = -(\lambda\!+\!n\!-\!p\!-\!2N\!+\!2j\!-\!1) b_j^{(N)}(\lambda), \\
   q_j^{(N)}(\lambda) & = a_j^{(N)}(\lambda), \\
   r_j^{(N-1)}(\lambda) & = 2N b_j^{(N)}(\lambda).
\end{align*}
These singular vectors are denoted by of $v^{(p-1 \to p)}_{2N+1}(\lambda)$.
They describes the embedding of the submodule
$$
   {\fam2 M}^{\gog^\prime}_{\gop^\prime} \left(\Lambda^{p-1}(\R^{n-1}) \otimes \C_{\lambda-(2N+1)}\right)
$$
in Proposition \ref{charind}.
\end{theorem}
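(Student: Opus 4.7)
The plan is to mirror the derivation of Theorem \ref{OddFromPtoP} via the $F$-method, applied to the ansatz dictated by \eqref{type-2-N-gen}. I would write the candidate second-type vector in operator form as
\[
   v^{(p-1\to p)}_{2N+1}(\lambda) = \xi_n^{2N+1} P(t)\otimes E_n\wedge + \xi_n^{2N} Q(t)\,\alpha\wedge + \xi_n^{2N-1} R(t)\, E_n\wedge\alpha\wedge i_E,
\]
with $t = |\xi'|^2/\xi_n^2$, and then impose $P_j(\lambda)\, v^{(p-1\to p)}_{2N+1}(\lambda) = 0$ for $j=1,\dots,n-1$ using the explicit form \eqref{P-operator} of $P_j(\lambda)$.

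As in Section \ref{sv-type1}, I would use the substitutions $\partial_n = -(2/\xi_n)\,t\partial_t$ and $\partial_i = (2\xi_i/\xi_n^2)\,\partial_t$, combined with the algebraic action \eqref{eq:algebraicAction} of $E_k^-\otimes E_j^+ - E_j^-\otimes E_k^+$ on each of the three form-components $E_n\wedge\omega$, $E_i\wedge\omega$, $E_n\wedge E_i\wedge\cdots$. The resulting expression decomposes into independent $\gon_-$-form-types (scalar in $\xi$ tensored with either $E_n\wedge\cdots$, $\alpha\wedge\cdots$, $E_n\wedge\alpha\wedge\cdots$, or a ``swap'' contribution). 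Setting each component to zero should yield a system of six ODEs in $P,Q,R$ structurally parallel to \eqref{eq:OddSystemPtoP}, but with coefficients shifted to reflect the conformal weight $n-p$ seen by the target module $\Lambda^{p-1}(\R^{n-1})\otimes \C_{\lambda-(2N+1)}$ rather than the weight $p$.

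The two decoupled ODEs of this system are hypergeometric and their polynomial solutions are identified via the standard Gegenbauer recurrences (as used in Steps 1--2 of the proof of Theorem \ref{OddFromPtoP}) to force $Q(t) = \sum_j q_j^{(N)}(\lambda-1)\,t^j$ and $R(t) = \sum_j r_j^{(N-1)}(\lambda-1)\,t^j$, each unique up to normalization. The remaining algebraic relations then both fix these normalizations and give the closed formula $p_j(\lambda;N,p) = -(\lambda+n-p-2N+2j-1)\,b_j^{(N)}(\lambda)$; the compatibility of the various prescriptions reduces, after applying the Gegenbauer recurrences, to polynomial identities in $\lambda$ and the summation index. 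Finally, one reads off the $\gol^\prime$-weight of the constructed vector to see that it realizes the embedding of $\mathcal{M}^{\gog^\prime}_{\gop^\prime}(\Lambda^{p-1}(\R^{n-1})\otimes \C_{\lambda-(2N+1)})$ in the branching law of Proposition \ref{charind}.

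The main obstacle is the combinatorial bookkeeping: when $E_k^-\otimes E_j^+ - E_j^-\otimes E_k^+$ acts on an $E_n$-containing form, the antisymmetrization produces cross terms that couple the $P$-, $Q$- and $R$-pieces, and these must be sorted by form type before the six ODEs separate cleanly. As a consistency check one can bypass the direct computation entirely by invoking the Hodge-star conjugacy
\[
   v_N^{(p-1\to p)}(\lambda) = \bar{\star}\, v_N^{(n-p\to n-p)}(\lambda)\,\star
\]
noted at the start of this subsection and deducing the coefficient formulas directly from Theorem \ref{OddFromPtoP} (with $p$ replaced by $n-p$). Agreement of the two derivations also pins down the sign normalization in $p_j(\lambda;N,p)$.
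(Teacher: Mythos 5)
Your plan is sound, but it is not the route the paper actually takes. The paper explicitly acknowledges that a direct $F$-method computation ``using similar arguments as in Section \ref{sv-type1}'' exists, and omits it; it also declines to conjugate the singular vectors themselves by Hodge stars. What it actually does is verify Theorem \ref{OddFromPtoP-1} \emph{indirectly}: the claimed coefficients are fed into the corresponding differential operator $D^{(p\to p-1)}_{2N+1}(\lambda)$ of Theorem \ref{OddDiffOp-type2}, and Theorem \ref{Hodge-c} then shows by an explicit coefficient computation (the key identity being $2N\,b^{(N-1)}_{j-1}(\lambda-1)=2j\,b^{(N)}_j(\lambda)$) that this operator equals $(-1)^{pn}\star D^{(n-p\to n-p)}_{2N+1}(\lambda)\,\bar{\star}$. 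Combined with the conformal equivariance of $\star$ and $\bar{\star}$ (Lemma \ref{Hodge-covariant}) and the already-established equivariance of the first-type operators, this yields the equivariance of the second-type operator and hence the singular-vector property. Your primary route (the direct ODE system) buys a self-contained existence-and-uniqueness (``iff'') statement without passing to the operator picture, at the cost of the bookkeeping you identify; the paper's route recycles the first-type analysis wholesale and simultaneously establishes the operator-level Hodge duality it needs later for Theorems \ref{coeffeven2} and \ref{coeffodd}, which is why it is the one carried out.

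Two cautions if you execute the direct computation. First, unlike the first-type case, the two decoupled Gegenbauer ODEs here will be those governing $Q$ and $R$, not $P$: the coefficient $p_j(\lambda;N,p)=-(\lambda+n-p-2N+2j-1)\,b_j^{(N)}(\lambda)$ has a $j$-dependent prefactor, so $P$ is not a scalar multiple of a single Gegenbauer polynomial and is fixed only by the algebraic relations (indeed $p_j=-(\lambda+n-p-2N-1)b_j^{(N)}(\lambda)-2j\,b_j^{(N)}(\lambda)$, reflecting that the $E_n\wedge$ component absorbs contributions from two first-type components under $\bar{\star}$-conjugation); the analogue of Steps 1--6 of the proof of Theorem \ref{OddFromPtoP} must be reorganized accordingly. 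Second, a complete ``iff'' argument requires the analogue of Step 7, i.e.\ separate treatment of the degenerate parameter values at which the algebraic relation used to solve one polynomial in terms of another has a vanishing scalar coefficient. Neither point is fatal, but both are exactly where the ``combinatorial bookkeeping'' you flag actually bites.
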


For singular vectors of even homogeneity we have the following analogous
result.

\begin{theorem}\label{EvenFromPtoP-1} Assume that $N \in \N_0$. The second type
homomorphism
\begin{equation}
   \xi_n^{2N} P(t) \otimes E_n + \xi_n^{2N-1} Q(t) \alpha + \xi_n^{2N-2} R(t) E_n \wedge \alpha \wedge i_E
\end{equation}
with the polynomial coefficients
$$
   P(t) = \sum_{j=0}^N p_j(\lambda;N,p) t^j, \;\;
   Q(t) = \sum_{j=0}^{N-1} q^{(N-1)}_j(\lambda\!-\!1)t^j \;\; \mbox{and} \;\;
   R(t) = \sum_{j=0}^{N-1} r^{(N-1)}_j(\lambda\!-\!1)t^j
$$
in the variable $t = \abs{\xi'}^2/\xi_n^2$ are singular vectors of even
homogeneity $2N$ in the space
$$
   \Hom_{\gop^\prime}(\Lambda^{p-1}(\gon_-^\prime(\R)) \otimes \C_{\lambda-2N},
   \Pol_{2N}(\gon_-^*(\R)) \otimes \Lambda^p(\gon_-(\R)) \otimes \C_\lambda)
$$
iff
\begin{align*}
   p_j(\lambda;N,p) & = -(\lambda\!+\!n\!-\!p\!-\!2N\!+\!2j) a_j^{(N)}(\lambda;p), \\
   q_j^{(N-1)}(\lambda) & = -2N(2\lambda\!+\!n\!-\!2N\!+\!1) b_j^{(N-1)}(\lambda), \\
   r_j^{(N-1)}(\lambda) & = 2N a_j^{(N-1)}(\lambda).
\end{align*}
These singular vectors are denoted by $v^{(p-1 \to p)}_{2N}(\lambda)$. They
describe the embedding of the submodule
$$
   {\fam2 M}^{\gog^\prime}_{\gop^\prime} \left(\Lambda^{p-1}(\R^{n-1})\otimes\C_{\lambda-2N}\right)
$$
in Proposition \ref{charind}.
\end{theorem}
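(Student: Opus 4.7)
The plan is to repeat, mutatis mutandis, the argument used to establish Theorem~\ref{EvenFromPtoP}, now starting from the second-type ansatz provided by \eqref{type-2-N-gen}. Concretely, I would write
\begin{equation*}
   v_{2N}^{(p-1 \to p)}(\lambda) = \xi_n^{2N} P(t) \otimes E_n + \xi_n^{2N-1} Q(t) \alpha + \xi_n^{2N-2} R(t) E_n \wedge \alpha \wedge i_E
\end{equation*}
with $t = |\xi'|^2/\xi_n^2$, where $P, Q, R$ are polynomials of respective degrees $N, N-1, N-1$ to be determined, and apply the operator $P_j(\lambda)$ of \eqref{P-operator} on a basis element $E_{\alpha_1} \wedge \cdots \wedge E_{\alpha_{p-1}} \in \Lambda^{p-1}(\gon_-^\prime(\R))$ for each $j = 1, \dots, n-1$.

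The computation is formally analogous to the one carried out in Section~\ref{sv-type1}, but the three input summands now carry the distinguished factor $E_n$ (or $\alpha$) in a different slot, so the regrouping of the output according to the tensor monomials of the types $\xi_j \otimes (E_n \wedge \cdots)$, $\xi_{[\alpha_1} \otimes (E_j \wedge \cdots)$, $\xi_j \xi_{[\alpha_1} \otimes (E_n \wedge E_j \wedge \cdots)$, and $\delta_{j[\alpha_1} \otimes (\cdots)$ will produce six scalar ordinary differential equations for $P, Q, R$ that are structurally parallel to \eqref{eq:EvenSystemPtoP}. The crucial difference is that the coupling equations involve the parameter $\lambda + n - p - 2N + 2j$ in place of $\lambda + p - 2N$; this shift, which reflects the conformal weight on the source $\Lambda^{p-1}$ versus the target $\Lambda^p$, is ultimately responsible for the $j$-dependent prefactor in the formula for $p_j(\lambda; N, p)$.

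Solving the resulting system proceeds by the same seven steps employed in the proof of Theorem~\ref{EvenFromPtoP}. The two purely second-order equations (analogues of the first and sixth equations of \eqref{eq:EvenSystemPtoP}) force Gegenbauer-type recurrences on the coefficients of $P$ and $R$ via the recursions already identified in the first-type case. The first-order coupling equations then pin down $Q$ relative to $P$, and the remaining redundant equations reduce via \eqref{Gegen-rec1} and \eqref{Gegen-rec2} to identities automatically satisfied by Gegenbauer coefficients. Fixing the leading normalization reproduces the explicit formulas in the statement, and the degenerate locus $\lambda + p - 2N = 0$ is handled along the lines of Step~7 in the proof of Theorem~\ref{EvenFromPtoP}.

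An attractive shortcut is to invoke the Hodge conjugation relation $v_{2N}^{(p-1 \to p)}(\lambda) = \bar{\star} \, v_{2N}^{(n-p \to n-p)}(\lambda) \star$ stated just before Theorem~\ref{OddFromPtoP-1}: since $\bar{\star}$ commutes with the $\goso(n)$-action on $\Lambda^\bullet(\gon_-)$ that appears inside $P_j(\lambda)$ and $\star$ is $SO(n-1)$-equivariant on the source, first-type singular vectors of form degree $n-p$ transport directly to $\gop^\prime$-equivariant maps of the required second-type form. The main obstacle along this route is bookkeeping: the Hodge stars redistribute the three first-type building blocks non-diagonally, so identifying the image with the second-type ansatz forces a combination of the form $P(t) = \pm\bigl(P_1(t) + t\, R_1(t)\bigr)$ together with analogous combinations for $Q$ and $R$. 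Matching these to the $j$-dependent coefficients claimed in the statement reduces to a single Gegenbauer identity relating $a_j^{(N)}(\lambda)$ and $a_{j-1}^{(N-1)}(\lambda)$, which is the algebraic origin of the $2j$ shift in $p_j(\lambda; N, p)$ and which serves as a useful consistency check on the direct calculation.
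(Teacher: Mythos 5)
Your proposal is correct, and both routes you describe are explicitly sanctioned by the paper: immediately after the statement the authors note that ``there are direct proofs \dots using similar arguments as in Section \ref{sv-type1} (and resting on the formulas in \eqref{type-2-N})'' and that the result ``also follow[s] from the results on \dots the first type by conjugation with the Hodge star operators.'' What the paper actually does, however, is a third variant of your shortcut: rather than conjugating the \emph{singular vectors} by $\star$ and $\bar{\star}$, it translates the first-type vectors into differential operators, conjugates those by Hodge stars (Theorem \ref{Hodge-c}), and reads the second-type singular vectors off from the resulting operators. The algebra is the same in all three versions, and you have correctly isolated its crux: the identity $2N\,a_{j-1}^{(N-1)}(\lambda-1)=2j\,a_j^{(N)}(\lambda)$ is exactly what the paper uses in the proof of Theorem \ref{Hodge-c} to convert $p_j^{(N)}(\lambda;n\!-\!p)+r_{j-1}^{(N-1)}(\lambda\!-\!1)$ into $(\lambda+n-p-2N+2j)\,a_j^{(N)}(\lambda)=-p_j(\lambda;N,p)$, which is the source of the $j$-dependent prefactor. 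One small caution on your direct route: because of that $j$-dependence, the second-type $P(t)$ is \emph{not} a pure (rescaled) Gegenbauer polynomial but the combination $-(\lambda+n-p-2N)\sum_j a_j^{(N)}(\lambda)t^j - t\,R(t)$, so the decoupled second-order equations in the second-type analogue of \eqref{eq:EvenSystemPtoP} will not be a clean Gegenbauer ODE for $P$ itself; the clean recurrences land on $Q$, $R$ and a suitable combination involving $P$. This is a bookkeeping adjustment, not an obstruction, and your Hodge-conjugation shortcut sidesteps it entirely (the $t\,R(t)$ contribution arises there from $i_E(\alpha\wedge\omega)+\alpha\wedge i_E(\omega)=|\xi'|^2\,\omega$, exactly the non-diagonal redistribution you anticipate).
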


There are direct proofs of the latter two results using similar arguments as in
Section \ref{sv-type1} (and resting on the formulas in \eqref{type-2-N}). We
omit these details. Both theorems also follow from the results on families of
homomorphisms of the first type by conjugation with the Hodge star operators on
$\R^n$ and $\R^{n-1}$. However, we shall not apply this method either. Instead,
we shall indirectly verify both results by using the following method. In
Section \ref{DiffOp}, we shall describe the conformal symmetry breaking
operators of the first type which are induced by the singular vectors of the
first type. By conjugation of these operators with Hodge star operators, we
obtain conformal symmetry breaking operators of the second type. These
operators correspond to the above singular vectors of the second type, of
course.

We finish this section with explicit examples of singular vectors of the second
type.

\begin{example}\label{ExamplesPtoP-1} We display the low-homogeneity singular vectors
$v_k^{(p-1 \to p)}(\lambda)$ for $k \le 3$. The second type singular vector of
homogeneity $0$ is given by
$$
   v_0^{(p-1 \to p)}(\lambda) = -(\lambda\!+\!n\!-\!p) \otimes E_n.
$$
The second type singular vector of homogeneity $1$ reads
\begin{equation*}
   v_1^{(p-1\to p)}(\lambda) = -(\lambda\!+\!n\!-\!p\!-\!1) \xi_n \otimes E_n + \alpha.
\end{equation*}
The second type singular vector of homogeneity $2$ is given by
\begin{align*}
   v_2^{(p-1\to p)}(\lambda) & = -(\lambda\!+\!n\!-\!p)\sum_{i=1}^{n-1}\xi_i^2 \otimes E_n
   + (2\lambda\!+\!n\!-\!3)(\lambda\!+\!n\!-\!p\!-\!2)\xi_n^2 \otimes E_n \\
   & -2(2\lambda\!+\!n\!-\!3) \xi_n \alpha + 2 E_n \wedge \alpha \wedge i_E.
\end{align*}
Finally, the second type singular vector of homogeneity $3$ is given by the sum
\begin{align*}
   v_3^{(p-1\to p)}(\lambda) & = - (\lambda\!+\!n\!-\!p\!-\!1) \xi_n\sum_{i=1}^{n-1}\xi_i^2 \otimes E_n
   + \tfrac{1}{3} (2\lambda\!+\!n\!-\!5)(\lambda\!+\!n\!-\!p\!-\!3) \xi_n^3
   \otimes E_n \\
   & + \sum_{i=1}^{n-1}\xi_i^2 \alpha - (2\lambda\!+\!n\!-\!5) \xi_n^2 \alpha +2 \xi_n E_n \wedge \alpha \wedge i_E.
\end{align*}
\end{example}

\subsection{Singular vectors of the third type}\label{sv-type-3}

Let $N \in \N$ and $1 \leq p \leq n-1$. The singular vectors of the third type
in
$$
   \Hom_{\gop^\prime}(\Lambda^p(\gon_-^\prime(\R)) \otimes \C_{\lambda-N},
   \Pol_N(\gon_-^*(\R)) \otimes \Lambda^{p-1}(\gon_-(\R)) \otimes \C_\lambda)
$$
have the form
\begin{equation}\label{eq:HOM1}
   v^{(p\to p-1)}_N = \xi_n^{N-1} P(t) i_E,
\end{equation}
where
\begin{align*}
   P(t) = \sum_{j=0}^{\left[\frac{N-1}{2}\right]} p_j t^j \quad \mbox{and} \quad
   t = \frac{|\xi'|^2}{\xi_n^2}
\end{align*}
(see \eqref{type-3-N-gen}), i.e.,
\begin{equation*}
   v^{(p\to p-1)}_N (E_{\alpha_1} \wedge \cdots \wedge E_{\alpha_p})
   = \xi_n^{N-1} P(t) \xi_{[\alpha_1} \otimes E_{\alpha_2} \wedge \cdots \wedge
   E_{\alpha_{p}]}.
\end{equation*}

Now assume that the homogeneity is even. In order to compute the actions of the
operators $P_j(\lambda)$ (defined in \eqref{P-operator}) on $v_{2N}^{(p\to
p-1)}$, we calculate
\begin{multline*}
   \tfrac{1}{2} \xi_j \Delta_\xi (v_{2N}^{(p\to p-1)}(E_{\alpha_1} \wedge \cdots \wedge E_{\alpha_p})) \\
   = \left[2t(t\!+\!1)P'' + (-4N\!+\!5) tP^\prime + (n\!+\!1) P^\prime + (N\!-\!1)(2N\!-\!1) P\right] \\
   \times \xi_j \xi_n^{2N-3} \xi_{[\alpha_1}\otimes E_{\alpha_2}\wedge\cdots\wedge E_{\alpha_p]},
\end{multline*}
\begin{multline*}
   (\lambda-E_\xi)\partial_j(v_{2N}^{(p\to  p-1)}(E_{\alpha_1} \wedge \cdots \wedge E_{\alpha_p})) \\
   = (\lambda\!-\!2N\!+\!1) \left[\xi_n^{2N-1} P\delta_{j[\alpha_1}\otimes E_{\alpha_2}
   \wedge \cdots \wedge E_{\alpha_p]}
   + 2\xi_j \xi_n^{2N-3} P^{\prime} \xi_{[\alpha_1} \otimes E_{\alpha_2} \wedge \cdots \wedge E_{\alpha_p]}\right]
\end{multline*}
and
\begin{align*}
    \sum_{k=1}^{n} \partial_k \otimes & (E^-_k\otimes E^+_j- E_j^- \otimes E_k^+)
    (v_{2N}^{(p\to p-1)}(E_{\alpha_1} \wedge \cdots \wedge E_{\alpha_p})) \\
    & = -(p\!-\!1)\xi_n^{2N-1}P  \delta_{j[\alpha_1}\otimes E_{\alpha_2}\wedge\cdots\wedge E_{\alpha_p]}\\
    & + \left[(2N\!-\!1)P - 2t P^\prime\right] \xi_n^{2N-2}\xi_{[\alpha_1}\delta_{j[\alpha_2} \otimes
    E_n \wedge E_{\alpha_3}\wedge\cdots\wedge E_{\alpha_p]]}\\
    & + 2 \xi_n^{2N-3} P^\prime \sum_{k=1}^{n-1} \xi_k \xi_{[\alpha_1}\delta_{j[\alpha_2}
    \otimes E_k \wedge E_{\alpha_3} \wedge \cdots\wedge E_{\alpha_p]]}.
\end{align*}
Hence we conclude
\begin{align*}
    & P_j(\lambda)(v_{2N}^{(p\to  p-1)}(E_{\alpha_1} \wedge \cdots \wedge E_{\alpha_p})) \\
    & = \big [2t(t\!+\!1)P^{\prime\prime} + (-4N\!+\!5) t P^\prime + (2\lambda\!+\!n\!-\!4N\!+\!3)P^\prime
    + (N\!-\!1)(2N\!-\!1)P\big] \\
    & \times \xi_j\xi_n^{2N-3}\xi_{[\alpha_1}\otimes E_{\alpha_2}\wedge\cdots\wedge E_{\alpha_p]} \\
    & + (\lambda\!+\!p\!-\!2N)P \xi_n^{2N-1}\delta_{j[\alpha_1}\otimes E_{\alpha_2}\wedge\cdots\wedge E_{\alpha_p]}\\
    & - \big[(2N\!-\!1)P - 2t P^\prime\big] \xi_n^{2N-2}\xi_{[\alpha_1}\delta_{j[\alpha_2}
       \otimes E_n\wedge E_{\alpha_3}\wedge\cdots\wedge E_{\alpha_p]]}\\
    & - 2P^\prime \xi_n^{2N-3} \sum_{k=1}^{n-1}\xi_k \xi_{[\alpha_1}\delta_{j[\alpha_2}
       \otimes E_k\wedge E_{\alpha_3}\wedge\cdots\wedge E_{\alpha_p]]}.
\end{align*}
A similar result holds for $P_j(\lambda)(v_{2N-1}^{(p\to  p-1)})$ (shifting $N
\mapsto N-\frac 12$).

\begin{theorem}\label{sv-third} Let $v^{(p\to p-1)}_{N}$ be given by \eqref{eq:HOM1}. \\
(i) For $p=1$, we find the non-trivial singular vector
\begin{equation}\label{sv-type-3-even}
   v_{2N}^{(1\to 0)} = \xi_n^{2N-1} \left(\sum_{j=0}^{N-1} b_j^{(N-1)}(2N\!-\!1) t^j \right)
   i_E,\; N \in \N.
\end{equation}
Here $b_j^{(N)}(\lambda)$ are odd Gegenbauer coefficients. \\
(ii) For $p=1$, we find the non-trivial singular vector
\begin{equation}\label{sv-type-3-odd}
   v_{2N+1}^{(1\to 0)} = \xi_n^{2N} \left(\sum_{j=0}^N a_j^{(N)}(2N) t^j\right)
   i_E, \; N \in \N_0
\end{equation}
Here $a_j^{(N)}(\lambda)$ are even Gegenbauer coefficients. \\
(iii) For $p=1,\dots,n-1$, we find the singular vector
\begin{equation}\label{sv-type-3-1}
   v_{1}^{(p\to p-1)} = i_E.
\end{equation}
These are all singular vectors of the third type, up to constant multiples.
\end{theorem}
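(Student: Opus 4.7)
The plan is to exploit the expansion of $P_j(\lambda)v_{2N}^{(p\to p-1)}$ that has already been carried out immediately before the theorem, and to perform the analogous computation for the odd-homogeneity ansatz $v_{2N+1}^{(p\to p-1)}=\xi_n^{2N}P(t)i_E$. In both parities the output splits into four tensorial terms of the shapes
\begin{align*}
 & \xi_j\xi_n^{M-2}\xi_{[\alpha_1}\otimes E_{\alpha_2}\wedge\cdots\wedge E_{\alpha_p]}, &
 & \xi_n^{M}\delta_{j[\alpha_1}\otimes E_{\alpha_2}\wedge\cdots\wedge E_{\alpha_p]}, \\
 & \xi_n^{M-1}\xi_{[\alpha_1}\delta_{j[\alpha_2}\otimes E_n\wedge E_{\alpha_3}\wedge\cdots, &
 & \xi_n^{M-2}\sum_k\xi_k\xi_{[\alpha_1}\delta_{j[\alpha_2}\otimes E_k\wedge E_{\alpha_3}\wedge\cdots,
\end{align*}
where $M=2N-1$ or $M=2N$ according to parity. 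These four tensors are linearly independent for $p\ge 2$, so the singular vector equation is equivalent to the simultaneous vanishing of their scalar coefficients.

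For $p\ge 2$, the coefficients of the last two tensors, coming entirely from the $\gom^\prime$-action on $i_E$, are $(M)P-2tP'$ and $2P'$ respectively (up to sign conventions). Setting the fourth to zero forces $P^\prime\equiv 0$, and then the third forces $MP\equiv 0$; since $M\ge 1$ whenever $N\ge 1$, we conclude $P\equiv 0$. The only surviving case is odd homogeneity with $N=0$, in which the ansatz collapses to $v_1^{(p\to p-1)}=c\cdot i_E$. Here the double-antisymmetrization tensors vanish identically, the ODE part in the first tensor is trivially $0=0$, and the second tensor's coefficient $(\lambda+p-1)c$ fixes $\lambda=1-p$. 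This establishes claim (iii).

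For $p=1$, the wedges carry a single index, so the third and fourth tensors are identically zero and impose no condition. The second tensor's coefficient $(\lambda+p-2N)P$ (even case) or $(\lambda+p-2N-1)P$ (odd case) forces $\lambda=2N-1$ or $\lambda=2N$, respectively. Substituting these critical values into the ODE coming from the first tensor and comparing with the ODEs obtained in Step 1 of the proofs of Theorem \ref{OddFromPtoP} and Theorem \ref{EvenFromPtoP} (the former shifted by $N\mapsto N-1$), one finds an exact match: the defining constant $2\lambda+n-4N+3$ collapses to $n+1$ in the even case, agreeing with the shifted first-type odd ODE at the same value of its parameter, and similarly $2\lambda+n-4N+1$ collapses to $n+1$ in the odd case, agreeing with the first-type even ODE. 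Therefore the coefficients $p_j$ satisfy precisely the recurrences \eqref{Gegen-rec2} and \eqref{Gegen-rec1} that define the odd Gegenbauer coefficients $b_j^{(N-1)}(\lambda)$ at $\lambda=2N-1$ and the even Gegenbauer coefficients $a_j^{(N)}(\lambda)$ at $\lambda=2N$. After fixing the leading coefficient we obtain \eqref{sv-type-3-even} and \eqref{sv-type-3-odd}. The only genuinely delicate point is this last ODE bookkeeping: one must verify that the inhomogeneous shift by $p=1$ and by the critical $\lambda$ exactly reproduces the Gegenbauer ODEs with the indicated parameter shift, after which both (i) and (ii) follow at once and the linear-independence argument above rules out any further singular vectors.
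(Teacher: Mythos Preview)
Your argument is correct and follows essentially the same route as the paper's proof: both read off the four scalar conditions from the expanded formula for $P_j(\lambda)v_{2N}^{(p\to p-1)}$, observe that for $p=1$ only the Gegenbauer-type ODE and the constraint $(\lambda+p-2N)P=0$ (resp.\ $(\lambda+p-2N-1)P=0$) survive, and identify the polynomial $P$ via the odd/even Gegenbauer recurrences. Your treatment of the non-existence for $p\ge 2$, $N\ge 1$ (forcing $P'=0$ and then $MP=0$ from the two double-antisymmetrized tensors) is in fact more explicit than the paper, which simply asserts that ``in all other cases there do not exist non-trivial polynomial solutions''; and your derivation of (iii) from the vanishing of those tensors and the residual constraint $(\lambda+p-1)c=0$ unpacks what the paper calls ``obvious reasons''.
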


\begin{proof} We first prove (i). For $p=1$, the condition $P_j(\lambda)(v_{2N}^{(1\to  0)})=0$
for $j=1,\ldots,n-1$ is equivalent to the system of ordinary differential
equation
\begin{align*}
      2t(t\!+\!1)P^{\prime\prime} + (-4N\!+\!5) t P^\prime + (2\lambda\!+\!n\!-\!4N\!+\!3) P^\prime
      +(N\!-\!1)(2N\!-\!1)P & = 0,\\
      (\lambda\!-\!2N\!+\!1)P & = 0
\end{align*}
for the polynomial $P(t)$. The first equation is satisfied by the polynomial
\begin{align*}
      P(t) = \sum_{j=0}^{N-1} b_j^{(N-1)}(\lambda) t^j,
\end{align*}
where $b_j^{(N)}(\lambda)$ are odd Gegenbauer coefficients. The second equation
yields $\lambda=2N-1$.

We continue with the analogous proof of (ii). For $p=1$, the condition
$P_j(\lambda)(v_{2N+1}^{(1\to 0)})=0$ for $j=1,\ldots,n-1$ is equivalent to the
system of ordinary differential equation
\begin{align*}
   2t(t\!+\!1)P^{\prime\prime} + (-4N\!+\!3) tP^\prime + (2\lambda\!+\!n\!-\!4N\!+\!1)P^\prime + N(2N\!-\!1)P & = 0,\\
   (\lambda\!-\!2N)P & = 0
    \end{align*}
for the polynomial $P(t)$. Again, the first equation is satisfied by the
polynomial
\begin{align*}
      P(t)=\sum_{j=0}^{N}a_j^{(N)}(\lambda) t^j,
\end{align*}
where $a_j^{(N)}(\lambda)$ are even Gegenbauer coefficients. The second
equation implies $\lambda=2N$.

Next, we prove (iii). Let $p=1,\ldots,n-1$. By obvious reasons we find that the
condition $P_j(\lambda)(v_{1}^{(p\to p-1)})=0$ for $j=1,\ldots,n-1$ holds iff
$\lambda=-(p-1)$.

Finally, in all other cases there do not exist non-trivial polynomial solutions
of the equations $P_j(\lambda)(v_N^{(p\to p-1)})=0$, $j=1,\ldots,n-1$, for any
$\lambda$.
\end{proof}

\begin{bem}\label{sv3-der} The singular vectors in Theorem \ref{sv-third} can be described in
terms of the families of the first type. In fact, it holds
\begin{equation}\label{sv-van-3}
   v_{N-1}^{(0 \to 0)}(N\!-\!1) i_E = 0
\end{equation}
and we have the relation
\begin{equation}
    v_N^{(1 \to 0)} = \dot{v}_{N-1}^{(0 \to 0)}(N\!-\!1) i_E
\end{equation}
for all $N \in \N$. Here the singular vectors $v_{N-1}^{(0 \to 0)}(\lambda)$
are defined in Theorem \ref{OddFromPtoP} and Theorem \ref{EvenFromPtoP}.
\end{bem}

\begin{proof} For even homogeneity, the assertions follow from the formulas
$$
   v_{2N}^{(1 \to 0)} = \xi_n^{2N-1} \left(\sum_{j=0}^{N-1} b_j^{(N-1)}(2N\!-\!1) t^j \right) i_E
$$
(see \eqref{sv-type-3-even}) and
$$
   v_{2N-1}^{(0 \to 0)}(\lambda) = (\lambda\!-\!2N\!+\!1) \xi_n^{2N-1}
   \left(\sum_{j=0}^{N-1} b_j^{(N-1)}(\lambda) t^j \right)
$$
(by Theorem \ref{OddFromPtoP}). Similarly, for odd homogeneity, we use the
formulas
$$
   v_{2N+1}^{(1 \to 0)} = \xi_n^{2N} \left(\sum_{j=0}^N a_j^{(N)}(2N) t^j\right) i_E
$$
(see \eqref{sv-type-3-odd}) and
$$
   v_{2N}^{(0 \to 0)}(\lambda) = (\lambda\!-\!2N) \xi_n^{2N}
   \left(\sum_{j=0}^{N} a_j^{(N)}(\lambda) t^j \right)
$$
(by Theorem \ref{EvenFromPtoP}).
\end{proof}

Note that \eqref{sv-van-3} is a special case of the vanishing formula
\begin{equation}\label{sv-van-1-gen}
   v_N^{(p \to p)}(N\!-\!p) i_E  = 0.
\end{equation}

\subsection{Singular vectors of the fourth type}\label{sv-type-4}

Let and $N \in \N$ and $0 \leq p \leq n-2$. The singular vectors of the fourth
type in
$$
   \Hom_{\gop^\prime}(\Lambda^p(\gon_-^\prime(\R)) \otimes \C_{\lambda-N},
   \Pol_N(\gon_-^*(\R)) \otimes \Lambda^{p+2}(\gon_-(\R)) \otimes \C_\lambda)
$$
have the form
\begin{equation}\label{eq:HOM2}
   v^{(p\to p+2)}_N = \xi_n^{N-1} P(t) E_n \wedge \alpha
\end{equation}
(see \eqref{type-4-N-gen}) with
\begin{equation*}
   P(t)=\sum_{j=0}^{[\frac{N-1}{2}]}p_j t^j \quad \mbox{and} \quad t=
   \frac{\abs{\xi^\prime}}{\xi_n^2},
\end{equation*}
i.e.,
\begin{equation*}
   v^{(p\to p+2)}_N (E_{\alpha_1}\wedge\cdots\wedge E_{\alpha_{p}})
   = \xi_n^{N-1} P(t) \sum_{k=1}^{n-1}\xi_{k}
   \otimes E_n \wedge E_k \wedge E_{\alpha_1}\wedge\cdots\wedge E_{\alpha_{p}}
\end{equation*}

Now assume that the homogeneity is even. In order to compute the action of the
operators $P_j(\lambda)$ on $v_{2N}^{(p\to  p+2)}$, we calculate
\begin{multline*}
   \tfrac{1}{2} \xi_j \Delta_\xi(v_{2N}^{(p\to p+2)}(E_{\alpha_1}\wedge \cdots \wedge E_{\alpha_p})) \\
   = \left[2t(t\!+\!1)P^{\prime\prime} + (-4N\!+\!5)tP^\prime + (n\!+\!1)P^\prime + (N\!-\!1)(2N\!-\!1)P \right] \\
   \times \xi_j \xi_n^{2N-3}\sum_{k=1}^{n-1}\xi_{k}
   \otimes E_n \wedge E_k \wedge E_{\alpha_1}\wedge\cdots\wedge E_{\alpha_p},
\end{multline*}
\begin{multline*}
   (\lambda-E_\xi)\partial_j(v_{2N}^{(p\to p+2)}(E_{\alpha_1}\wedge \cdots \wedge E_{\alpha_p})) \\
   = (\lambda\!-\!2N\!+\!1) \Big[2\xi_j \xi_n^{2N-3} P^{\prime}
   \sum_{k=1}^{n-1} \xi_{k} \otimes E_n \wedge E_k \wedge E_{\alpha_1} \wedge \cdots \wedge E_{\alpha_p} \\
   +\xi_n^{2N-1}P \otimes E_n \wedge E_j \wedge E_{\alpha_1}\wedge\cdots\wedge E_{\alpha_p}\Big]
\end{multline*}
and
\begin{align*}
   \sum_{k=1}^n \partial_k \otimes & (E^-_k\otimes E^+_j-E_j^-\otimes E_k^+)(v_{2N}^{(p\to p+2)}
   (E_{\alpha_1}\wedge \cdots \wedge E_{\alpha_p}))\\
   & =- \left[2tP^\prime+(n\!-\!p\!-\!2)P\right] \xi_n^{2N-1} \otimes E_n \wedge E_j
   \wedge E_{i_1} \wedge \cdots \wedge E_{i_p}\\
   & - \left[(2N\!-\!1)P-2tP^\prime\right]\xi_n^{2N-2} \sum_{k=1}^{n-1} \xi_{k}
   \otimes E_j \wedge E_k \wedge E_{\alpha_1} \wedge\cdots\wedge E_{\alpha_p}\\
   & + 2\xi_j \xi_n^{2N-3}P^\prime\sum_{k=1}^{n-1} \xi_{k}
   \otimes E_n \wedge E_k \wedge E_{\alpha_1}\wedge\cdots\wedge E_{\alpha_p}\\
   & - 2\xi_n^{2N-3} P^\prime \sum_{k=1}^{n-1} \xi_{k} \xi_{[\alpha_1}
   \otimes E_n \wedge E_k \wedge E_{\alpha_2}\wedge\cdots\wedge E_{\alpha_p]}
\end{align*}
Hence we conclude
\begin{align*}
   & P_j(\lambda)(v_{2N}^{(p\to  p+2)}(E_{\alpha_1} \wedge \cdots \wedge E_{\alpha_p})) \\
   & = \big[2t(t\!+\!1) P^{\prime\prime} + (-4N\!+\!5)tP^\prime
   + (2\lambda\!+\!n\!-\!4N\!+\!3)P^\prime + (N\!-\!1)(2N\!-\!1)P\big] \\
   & \times \xi_j \xi_n^{2N-3}\sum_{k=1}^{n-1}\xi_{k}
       \otimes E_n\wedge E_k\wedge E_{\alpha_1}\wedge\cdots\wedge E_{\alpha_{p}}\\
   & - 2 P^\prime\xi_j \xi_n^{2N-3}\sum_{k=1}^{n-1}\xi_{k}
       \otimes E_n\wedge E_k\wedge E_{\alpha_1}\wedge\cdots\wedge E_{\alpha_{p}}\\
   & +\big[(\lambda\!+\!n\!-\!p\!-\!2N\!-\!1)P + 2t P^\prime\big]\xi_n^{2N-1}
       \otimes E_n\wedge E_j\wedge E_{\alpha_1}\wedge\cdots\wedge E_{\alpha_p}\\
   & +\big[(2N\!-\!1)P - 2tP^\prime\big] \xi_n^{2N-2}\sum_{k=1}^{n-1}\xi_{k}
       \otimes E_j\wedge E_k\wedge E_{\alpha_1}\wedge\cdots\wedge E_{\alpha_{p}}\\
   & + 2\xi_n^{2N-3} P^\prime\sum_{k=1}^{n-1}\xi_{k}\xi_{[\alpha_1}
       \otimes E_n\wedge E_k\wedge E_j\wedge  E_{\alpha_2}\wedge\cdots\wedge E_{\alpha_{p}]}.
\end{align*}
By reasons which will become clear in the proof of the following result, we do
not join here the first and the second sum on the right-hand side. A similar
computation (shifting $\N\ni N \to N-\frac 12$) gives a formula for the action
of $P_j(\lambda)$ on $v^{(p\to p+2)}_{2N-1}$.

\begin{theorem}\label{sv-fourth} Let $v^{(p\to p+2)}_N$ be given by \eqref{eq:HOM2}. \\
(i) For $p=n-2$, we find the non-trivial singular vector
\begin{equation}\label{sv-4-even}
   v^{(n-2\to n)}_{2N} = \xi_n^{2N-1} \left( \sum_{j=0}^{N-1} b_j^{(N-1)}(2N\!-\!1) t^j \right) E_n \wedge
   \alpha, \; N \in \N.
\end{equation}
Here $b_j^{(N)}(\lambda)$ are odd Gegenbauer coefficients. \\
(ii) For $p=n-2$, we find the non-trivial singular vector
\begin{equation}\label{sv-4-odd}
   v^{(n-2\to n)}_{2N+1} = \xi_n^{2N-1} \left( \sum_{j=0}^N a_j^{(N)}(2N) t^j \right) E_n \wedge
   \alpha, \; N \in \N_0.
\end{equation}
Here $a_j^{(N)}(\lambda)$ are even Gegenbauer coefficients. \\
(iii) For $p=0,\ldots,n-2$, we find the singular vector
\begin{equation}\label{sv-4-1}
   v_1^{(p\to p+2)} = E_n \wedge \alpha.
\end{equation}
These are all singular vectors of the fourth type, up to constant multiples.
\end{theorem}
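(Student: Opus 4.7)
My plan is to proceed exactly as in the proof of Theorem \ref{sv-third}: starting from the formula for $P_j(\lambda) v_{2N}^{(p \to p+2)}(E_{\alpha_1} \wedge \cdots \wedge E_{\alpha_p})$ computed immediately above the statement, together with its analog for odd homogeneity obtained by the substitution $N \mapsto N - \tfrac{1}{2}$, I would decompose the result into its five distinct tensor types and demand that each coefficient vanish separately. This reduces the classification problem to a small system of ordinary differential equations and algebraic relations for the single polynomial $P(t)$.

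For a generic form degree $p$ with $0 \le p < n-2$, all five tensor structures are linearly independent, since $p+2 < n$ ensures that the wedge products appearing in them involve distinct sets of basis vectors and are not forced to zero by antisymmetry. The coefficients of the fourth and fifth tensor types then read $(2N\!-\!1)P - 2tP' = 0$ and $2P' = 0$ respectively. Together these two relations force $P$ to be a constant, and then, for $N \geq 1$, the first relation forces $P = 0$. Consequently, no non-trivial singular vectors of the fourth type exist for $p < n-2$ and $N \geq 2$.

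For $p = n-2$, I would observe that the wedge $E_j \wedge E_k \wedge E_{\alpha_1} \wedge \cdots \wedge E_{\alpha_{n-2}}$ appearing in the fourth tensor type has $n$ factors all drawn from $\{E_1, \dots, E_{n-1}\}$ and therefore vanishes identically; a parallel analysis, after antisymmetrization over $\alpha_1, \dots, \alpha_{n-2}$, shows that the fifth term collapses onto tensors already accounted for. The surviving constraints are then the ordinary differential equation obtained by joining the first and second tensor types, together with the algebraic relation $(\lambda\!+\!n\!-\!p\!-\!2N\!-\!1) P + 2tP' = 0$ coming from the third. The latter can be satisfied by a non-zero polynomial only for a single value of $\lambda$, namely $\lambda = 2N-1$ in the even case and $\lambda = 2N$ in the odd case; specialized at these values, the surviving ODE coincides (after the shifts already used in Section \ref{sv-type1}) with the Gegenbauer recurrences \eqref{Gegen-rec2} and \eqref{Gegen-rec1}, yielding exactly the polynomials $\sum_j b_j^{(N-1)}(2N\!-\!1)\,t^j$ and $\sum_j a_j^{(N)}(2N)\,t^j$ claimed in (i) and (ii). Case (iii) is then settled directly: with $P \equiv 1$, one has $P' = P'' = 0$, so only the third-term coefficient $(\lambda + n - p - 2)$ survives and vanishes precisely at $\lambda = p-n+2$, confirming that $v_1^{(p \to p+2)} = E_n \wedge \alpha$ is a singular vector for every $0 \leq p \leq n-2$.

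The main technical obstacle will be the careful bookkeeping of the tensor decomposition and of the degenerate cancellations at $p = n-2$, especially the verification that the antisymmetrization in the fifth term really produces no new independent component. A useful consistency check, parallel to Remark \ref{sv3-der}, is to identify the Hodge-dual of $v_N^{(n-2 \to n)}$ with the third-type singular vectors $v_N^{(1 \to 0)}$ of Theorem \ref{sv-third}; the coincidence of the Gegenbauer coefficients in both formulas is then automatic and provides an independent confirmation of the computation.
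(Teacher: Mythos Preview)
Your overall strategy matches the paper's: analyze the displayed formula for $P_j(\lambda)v_{2N}^{(p\to p+2)}$ term by term, observe that for $p<n-2$ the vanishing of the $E_n$--free term (your ``fourth type'') already forces $P=0$, and that at $p=n-2$ certain tensor structures degenerate. Part (iii) and the exclusion of $p<n-2$ are essentially correct.

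However, your bookkeeping of the degeneration at $p=n-2$ is wrong, and this is precisely the step the paper flags by \emph{not} merging the first and second sums. You say that term~5 ``collapses onto tensors already accounted for'', then take the ODE to be the sum of the \emph{first and second} coefficients and the algebraic relation to be the raw coefficient of term~3, namely $(\lambda+n-p-2N-1)P+2tP'=0$. That is not what happens. At $p=n-2$ the key identity (stated in the paper's proof) is
\[
\sum_{k}\xi_k\,\xi_{[\alpha_1}\otimes E_n\wedge E_k\wedge E_j\wedge E_{\alpha_2}\wedge\cdots\wedge E_{\alpha_{p}]}
-\xi_j\sum_k \xi_k\otimes E_n\wedge E_k\wedge E_{\alpha_1}\wedge\cdots\wedge E_{\alpha_p}
= -\lvert\xi'\rvert^2\otimes E_n\wedge E_j\wedge E_{\alpha_1}\wedge\cdots\wedge E_{\alpha_p},
\]
so terms~2 and~5 together contribute $-2tP'\,\xi_n^{2N-1}$ to the tensor structure of term~3, not to that of term~1. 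Consequently the ODE is the coefficient of term~1 \emph{alone}, and the algebraic relation becomes $(\lambda-2N+1)P=0$, which forces $\lambda=2N-1$ directly and leaves $P$ free to be the Gegenbauer polynomial determined by the ODE.

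Your version would instead give the ODE with constant term $(2\lambda+n-4N+1)P'$ rather than $(2\lambda+n-4N+3)P'$, and the relation $(\lambda-2N+1)P+2tP'=0$. The latter forces $P$ to be a monomial $t^j$ with $\lambda=2N-1-2j$, which is incompatible with the ODE except in degenerate cases; you would not recover the claimed Gegenbauer coefficients. So the gap is not just imprecision: the specific cancellation between terms~2 and~5 feeding into term~3 (rather than term~1) is the mechanism that makes the proof work, and it needs to be identified correctly.
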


\begin{proof} We first prove (i). The identity
\begin{align*}
   & \sum_{k=1}^{n-1} \xi_{k}\xi_{[\alpha_1}
   \otimes E_n\wedge E_k \wedge E_j\wedge  E_{\alpha_2}\wedge\cdots\wedge E_{\alpha_{p}]}
   -\xi_j \sum_{k=1}^{n-1}\xi_{k} \otimes E_n\wedge E_k\wedge E_{\alpha_1}\wedge\cdots\wedge E_{\alpha_{p}}\\
   &=-\sum_{k=1}^{n-1}\xi_k^2\otimes E_n\wedge E_j\wedge E_{\alpha_1}\wedge\cdots\wedge E_{\alpha_p},
\end{align*}
which only holds for the form-degree $p=n-2$, leads to a cancelation in the
above formula for the action of $P_j(\lambda)$ on $v_{2N}^{(n-2\to n)}$. It
follows that the condition $P_j(\lambda)(v_{2N}^{(n-2\to  n)})=0$ for
$j=1,\ldots,n-1$ is equivalent to the system of ordinary differential equation
\begin{align*}
      2t(t\!+\!1)P^{\prime\prime} + (-4N\!+\!5)tP^\prime + (2\lambda\!+\!n\!-\!4N\!+\!3) P^\prime +
      (N\!-\!1)(2N\!-\!1)P & = 0,\\
      (\lambda\!-\!2N\!+\!1)P & = 0
\end{align*}
for the polynomial $P(t)$. The first equation is satisfied by the polynomial
\begin{align*}
      P(t)=\sum_{j=0}^{N-1}b_j^{(N-1)}(\lambda) t^j,
\end{align*}
where $b_j^{(N)}(\lambda)$ denote odd Gegenbauer coefficients. The second
equation yields $\lambda=2N-1$.

We continue with the proof of (ii). By similar arguments as in (1), we find
that for $p=n-2$ the condition $P_j(\lambda)(v_{2N+1}^{(n-2\to 2)})=0$,
$j=1,\ldots,n-1$, is equivalent to the system of ordinary differential equation
\begin{align*}
      2t(t\!+\!1)P^{\prime\prime} + (-4N\!+\!3)t P^\prime + (2\lambda\!+\!n\!-\!4N\!+\!1) P^\prime
      + N(2N\!-\!1)P & = 0,\\
      (\lambda\!-\!2N)P & = 0
\end{align*}
for the polynomial $P(t)$. The first equation is satisfied by the polynomial
\begin{align*}
      P(t)=\sum_{j=0}^{N}a_j^{(N)}(\lambda) t^j,
\end{align*}
where $a_j^{(N)}(\lambda)$ are even Gegenbauer coefficients. The second
equation fixes $\lambda=2N$.

Next, we prove (iii). Let $p=0,\ldots,n-2$. By obvious reasons we find that the
condition $P_j(\lambda)(v_{1}^{(p\to p+2)})=0$ for $j=1,\ldots,n-1$ holds iff
$\lambda=p-n+2$.

Finally, we note that in all other cases there do not exist non-trivial
polynomial solutions of the equations $P_j(\lambda)(v_N^{(p\to p+2)})=0$,
$j=1,\ldots,n-1$, for any $\lambda$.
\end{proof}

\begin{bem}\label{sv4-der} The singular vectors in Theorem \ref{sv-fourth} can be described in
terms of the families of the second type. In fact, for all $N \in \N$, it holds
\begin{equation}\label{sv-van-4}
   v_{N-1}^{(n-1 \to n)}(N\!-\!1) \wedge \alpha = 0
\end{equation}
and we have the relation
\begin{equation}
   v_N^{(n-2 \to n)} = -\dot{v}_{N-1}^{(n-1 \to n)}(N\!-\!1) \wedge \alpha
\end{equation}
Here the singular vectors $v_N^{(n-1 \to n)}(\lambda)$ are defined in Theorem
\ref{OddFromPtoP-1} and Theorem \ref{EvenFromPtoP-1}.
\end{bem}

\begin{proof} For even homogeneity, we have
$$
   v^{(n-2\to n)}_{2N} = \xi_n^{2N-1} \left( \sum_{j=0}^{N-1} b_j^{(N-1)}(2N\!-\!1) t^j \right)
   E_n \wedge \alpha
$$
(by \eqref{sv-4-even}) and
\begin{align*}
   v_{2N-1}^{(n-1 \to n)}(\lambda) \wedge \alpha & = \sum_{j=0}^{N-1} p_j(\lambda;N\!-\!1;n)
   |\xi'|^{2j} \xi_n^{2N-2j-1} \otimes E_n \wedge \alpha \\
   & + \sum_{j=1}^{N-1} r_{j-1}^{(N-2)}(\lambda\!-\!1) |\xi'|^{2j} \xi_n^{2N-2j-1} \otimes
   E_n \wedge \alpha
\end{align*}
(by Theorem \ref{OddFromPtoP-1} and $i_E \wedge \alpha = |\xi'|^2 \otimes
\id$). Now the relations
\begin{align*}
   p_j(\lambda;N\!-\!1;n) & = -(\lambda\!-\!2N\!+\!2j\!+\!1) b_j^{(N-1)}(\lambda), \\
   r_{j-1}^{(N-2)}(\lambda\!-\!1) & = (2N\!-\!2) b_{j-1}^{(N-2)}(\lambda\!-\!1) = 2j b_j^{(N-1)}(\lambda)
\end{align*}
yield
$$
   p_j(\lambda;N\!-\!1;n) +  r_{j-1}^{(N-2)}(\lambda\!-\!1) = -(\lambda\!-\!2N\!+\!1) b_j^{(N-1)}(\lambda).
$$
Hence
\begin{align*}
   v_{2N-1}^{(n-1 \to n)}(\lambda) \wedge \alpha & = -(\lambda\!-\!2N\!+\!1) \sum_{j=0}^{N-1} b_j^{(N-1)}(\lambda)
   |\xi'|^{2j} \xi_n^{2N-2j-1} \otimes E_n \wedge \alpha \\
   & = -(\lambda\!-\!2N\!+\!1) \xi_n^{2N-1} \left( \sum_{j=0}^{N-1} b_j^{(N-1)}(\lambda) t^j \right) E_n \wedge \alpha.
\end{align*}
This proves the claims. Analogous arguments apply for odd homogeneity using
Theorem \ref{EvenFromPtoP-1}. We omit the details.   
\end{proof}

Note that \eqref{sv-van-4} is a special case of the vanishing result
\begin{equation}\label{sv-van-2-general}
   v_N^{(p-1 \to p)}(N\!-\!n\!+\!p) \alpha = 0
\end{equation}

Finally, we note that the results of this section also follow from those in
Section \ref{sv-type-3} by a Hodge star conjugation argument. This is analogous
to the relation between the results in Section \ref{sv-type1} and Section
\ref{sv-type2}.

\subsection{Middle degree cases}\label{fam3}

Here we describe the singular vectors which describe the embeddings of the
submodules in Proposition \ref{charind2}.

{\bf Case 1a: Let $n$ be odd and $p=\frac{n-1}{2}$}.

The $SO(\gon_-^\prime(\R))$-module $\Lambda^{\frac{n-1}{2}}(\gon_-^\prime(\R))$
is not irreducible and the explicit formulas for the singular vectors
$$
v_N^{(\frac{n-1}{2} \to \frac{n-1}{2})}(\lambda)
$$
of the first type, which are defined in Theorem \ref{OddFromPtoP} (for odd $N$)
and Theorem \ref{EvenFromPtoP} (for even $N$), show that their restrictions
$$
   v_N^{(\frac{n-1}{2} \to \frac{n-1}{2}),\pm}(\lambda)
$$
to the subspaces
$$
   \Lambda^{\frac{n-1}{2}}_\pm(\gon_-^\prime(\R)) \subset \Lambda^{\frac{n-1}{2}}(\gon_-^\prime(\R))
$$
are non-trivial. Then the space
\begin{align*}
   \Hom_{\gop^\prime}(\Lambda^{\frac{n-1}{2}}_\pm(\gon_-^\prime(\R)) \otimes \C_{\lambda-N},
   \Pol_N(\gon_-^*(\R)) \otimes \Lambda^{\frac{n-1}{2}}(\gon_-(\R)) \otimes \C_\lambda)
\end{align*}
is generated by $v_N^{(\frac{n-1}{2} \to \frac{n-1}{2}),\pm}(\lambda)$. These
singular vectors describe the embeddings of the modules in the first two sums
in the decomposition \eqref{diag-branch-md-odd+}.

The singular vectors which describe the embeddings of the modules in the last
sum in the decomposition \eqref{diag-branch-md-odd+} are given by homomorphisms
of the second type.

{\bf Case 1b: Let $n$ be odd and $p=\frac{n+1}{2}$}.

The space
\begin{align*}
   \Hom_{\gop^\prime}(\Lambda^{\frac{n-1}{2}}_\pm(\gon_-^\prime(\R)) \otimes \C_{\lambda-N},
   \Pol_N(\gon_-^*(\R)) \otimes \Lambda^{\frac{n+1}{2}}(\gon_-(\R)) \otimes \C_\lambda)
\end{align*}
is generated by the homomorphism
\begin{equation}\label{sv-case2}
   \bar{\star} \, v_N^{(\frac{n-1}{2}\to \frac{n-1}{2}),\pm}(\lambda) \sim
   v_N^{(\frac{n-1}{2}\to \frac{n+1}{2}),\pm}(\lambda),
\end{equation}
where $\bar{\star}$ denotes the Hodge star operator on $\gon_-(\R) \simeq \R^n$
with the Euclidean metric. These singular vectors describe the embeddings of
the modules in the last two sums in the decomposition
\eqref{diag-branch-md-odd-}.

The singular vectors which describe the embeddings of the modules in the first
sum of the decomposition \eqref{diag-branch-md-odd-} are given by Theorem
\ref{OddFromPtoP} (for odd $N$) and Theorem \ref{EvenFromPtoP} (for even $N$).

{\bf Case 2: Let $n$ be even and $p=\frac{n}{2}$}.

The $SO(\gon_-(\R))$-module $\Lambda^{\frac{n}{2}}(\gon_-(\R))$ is not
irreducible and the spaces
\begin{equation*}
   \Hom_{\gop^\prime}(\Lambda^{\frac{n}{2}}(\gon_-^\prime(\R)) \otimes \C_{\lambda-N},
   \Pol_N(\gon_-^*(\R)) \otimes \Lambda_\pm^{\frac{n}{2}}(\gon_-(\R)) \otimes \C_\lambda)
\end{equation*}
are generated by the projections
\begin{equation}
   v_N^{(\frac{n}{2}\to \frac{n}{2}),\pm}(\lambda)
   \st pr_{\pm}\big( v_N^{(\frac{n}{2}\to \frac{n}{2})}(\lambda)\big)
\end{equation}
of the singular vectors
$$
   v_N^{(\frac{n}{2} \to \frac{n}{2})}(\lambda)
$$
which are defined in Theorem \ref{OddFromPtoP} (for odd $N$) and Theorem
\ref{EvenFromPtoP} (for even $N$) to the subspaces
$$
   \Lambda^{\frac{n}{2}}_\pm(\gon_-(\R)) \subset \Lambda^{\frac{n}{2}}(\gon_-(\R))
$$
Here $pr_\pm: \Lambda^{\frac{n}{2}}(\gon_-(\R))\to
\Lambda_\pm^{\frac{n}{2}}(\gon_-(\R))$ denotes the projections onto the
eigenspaces of the Hodge star operator. These singular vectors describe the
embeddings of the modules in the decomposition \eqref{diag-branch-md-even}.
\smallskip

\section{Conformal symmetry breaking operators on differential forms}\label{DiffOp}

In this section, we translate the four types of homomorphisms of generalized
Verma modules constructed in Section \ref{SingularVectors} into four types of
conformal symmetry breaking operators
\begin{equation*}
   D^{(p\to q)}_N: \Omega^p(\R^n) \to \Omega^q(\R^{n-1})
\end{equation*}
of order $N\in\N$ on differential forms. These operators will be referred to as
operators of the first, second, third and fourth type, respectively.

We first fix some conventions. As before, we consider $\R^{n-1}$ as a
codimension subspace of $\R^n$ by
\begin{equation*}
   \iota: \R^{n-1} \hookrightarrow \R^n,\quad (x_1,\dots,x_{n-1})\mapsto (x_1,\dots ,x_{n-1},0).
\end{equation*}
Then $\dm$, $\delta$ and $\bar{\dm}$, $\bar{\delta}$ denote the respective
differentials and co-differentials on differential forms on $\R^{n-1}$ and
$\R^n$, respectively. Let $\{e_i\}_{i=1}^{n-1}$ denote the standard orthonormal
basis on the Euclidean space $\R^{n-1}$ and let $\partial_i$ denote the partial
derivative in the $i^{th}$ coordinate. We have
\begin{equation*}
   \dm \omega(X_0,\dots,X_{p}) =
   \sum_{i=0}^p(-1)^i X_i\left(\omega(X_0,\dots,\widehat{X}_i,\dots,X_p)\right)
\end{equation*}
and
\begin{equation*}
   \delta \omega(X_1,\dots,X_{p-1}) = -\sum_{i=1}^{n-1} \partial_i\omega(e_i,X_1,\dots,X_{p-1})
\end{equation*}
for $\omega\in\Omega^p(\R^{n-1})$ and smooth vector fields
$X_j\in\X(\R^{n-1})$.

The Laplacian on forms on $\R^{n-1}$ is defined by $\Delta = \delta d +
d\delta$. Similarly, we set
$$
   \bar{\Delta} = \bar{\delta} \bar{\dm} + \bar{\dm}\bar{\delta}: \Omega^p(\R^n) \to \Omega^p(\R^n).
$$
Note that $\Delta = -\sum_{k=1}^{n-1}\partial_k^2$ and  $\bar{\Delta} =
-\sum_{k=1}^{n}\partial_k^2$.

We shall consider $\dm$ and $\delta$ also as operators on forms on $\R^n$. In
particular, $\Delta$ will also be viewed as an operator acting on
$\Omega^*(\R^n)$.

The insertion operator, given by contracting the first form index by the vector
field $X\in \X(\R^n)$, is denoted by $i_X:\Omega^p(\R^n) \to
\Omega^{p-1}(\R^n)$.

The following lemma collects identities which will be used later on.

\begin{lem}\label{DiffCoDiff} The following relations hold true as identities on forms on $\R^n$.
\begin{itemize}
   \item [(1)] $\iota^* \bar{\dm} = \dm\iota^*$ and $i_{\partial_n} \bar{\delta} = - \delta i_{\partial_n}$,
   \item [(2)] $i_{\partial_n}\partial_n = \delta - \bar{\delta}$ and
   $\iota^* \partial_n = \iota^* i_{\partial_n} \bar{\dm} + \dm \iota^* i_{\partial_n}$,
   \item [(3)] $\partial_n^2= \Delta - \bar{\Delta}$.
\end{itemize}
Moreover, $\partial_n$ commutes with $\dm$, $\delta$ and $i_{\partial_n}$.
\end{lem}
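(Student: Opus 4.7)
The plan is to verify each identity by direct coordinate computation, exploiting the fact that $d$, $\delta$, $\bar d$, $\bar\delta$ have simple expressions in the standard basis $\{dx_I\}$, and that $\iota^*$ is simply ``set $x_n=0$ and drop any $dx_n$''. The commutation claims at the end are immediate: writing $\omega=\sum\omega_I dx_I$, the operators $d$, $\delta$ (tangential) and $i_{\partial_n}$ act either by differentiation in $x_1,\dots,x_{n-1}$ or by algebraic operations on the basis $dx_I$, both of which commute with differentiation in $x_n$.

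For (1), the identity $\iota^*\bar d=d\iota^*$ is naturality of the exterior derivative under the embedding $\iota$, together with the fact that $d$ on $\R^{n-1}$ agrees with the tangential part of $\bar d$. For the second identity, I would compute both sides on $(X_1,\dots,X_{p-2})$. Starting from
\[
(i_{\partial_n}\bar\delta\,\omega)(X_1,\dots,X_{p-2})=-\sum_{i=1}^{n}\partial_i\omega(e_i,\partial_n,X_1,\dots,X_{p-2}),
\]
the $i=n$ term vanishes by antisymmetry, and swapping $e_i\leftrightarrow\partial_n$ in the remaining terms produces a global minus sign, yielding $-\delta i_{\partial_n}\omega$.

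For (2), the identity $i_{\partial_n}\partial_n=\delta-\bar\delta$ comes from comparing the defining formulas: the difference $\bar\delta-\delta$ is exactly the $i=n$ summand $-\partial_n\omega(\partial_n,\cdot)=-\partial_n i_{\partial_n}\omega$, and $\partial_n$ commutes with $i_{\partial_n}$. The second identity of (2) is Cartan's magic formula applied to the constant vector field $\partial_n$: on $\R^n$ with coordinate basis one has $\mathcal L_{\partial_n}=\bar d\, i_{\partial_n}+i_{\partial_n}\bar d$, and on $\omega=\sum\omega_I dx_I$ the Lie derivative reduces to $\partial_n\omega$ since $\mathcal L_{\partial_n}dx_i=0$. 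Pulling back by $\iota^*$ and replacing $\iota^*\bar d$ by $d\iota^*$ via part (1) gives the stated formula.

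For (3), the cleanest route is to use the Clifford-type anticommutation $\{dx_i\wedge,i_{\partial_j}\}=\delta_{ij}\,\mathrm{id}$ to reduce $\bar\Delta=\bar d\bar\delta+\bar\delta\bar d$ to $-\sum_{i=1}^{n}\partial_i^2$ acting component-wise, and similarly $\Delta=-\sum_{i=1}^{n-1}\partial_i^2$ when $d$ and $\delta$ are viewed as the tangential operators on $\Omega^*(\R^n)$. Subtracting gives $\Delta-\bar\Delta=\partial_n^2$. Alternatively, one can derive (3) algebraically from (2) and the companion identity $d=\bar d-dx_n\wedge\partial_n$, but the direct computation is shorter. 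The only subtlety in the whole lemma is keeping careful track of which version ($d$ vs.\ $\bar d$, $\delta$ vs.\ $\bar\delta$) is used when passing from $\R^n$ to $\R^{n-1}$; there is no conceptual obstacle.
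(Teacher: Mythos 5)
Your proof is correct and follows essentially the same route as the paper: (1) and the first part of (2) by direct comparison of the coordinate formulas, the second part of (2) via Cartan's formula for $\mathcal{L}_{\partial_n}$ combined with $\iota^*\bar{\dm}=\dm\iota^*$, and (3) from the component-wise formulas $\Delta=-\sum_{k=1}^{n-1}\partial_k^2$, $\bar{\Delta}=-\sum_{k=1}^{n}\partial_k^2$ (which you re-derive and the paper simply quotes). The extra sign-checking you supply for $i_{\partial_n}\bar{\delta}=-\delta i_{\partial_n}$ is accurate and merely fills in a step the paper labels as trivial.
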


We emphasize that the second relation in Lemma \ref{DiffCoDiff}/(1), the first
relation in (2) and the relation (3) hold true as identities on $\R^n$, i.e.,
off the subspace $\R^{n-1}$.

\begin{proof} (1) is trivial. The first identity in (2) is obvious.
Cartan's formula $\Lie_X = i_X \dm+ \dm i_X$ applied to the normal vector field
$X = \partial_n$ yields $\partial_n = i_{\partial_n} \bar{\dm} + \bar{\dm}
i_{\partial_n}$. Hence $\iota^* \partial_n = \iota^* i_{\partial_n} \bar{\dm} +
\iota^* \bar{\dm} i_{\partial_n}$. This implies the second relation in (2)
using the first relation in (1). (3) is obvious by the respective formulas for
the Laplace operators.
\end{proof}

Now, we use the dual pairing of generalized Verma modules and induced
representations \cite{Kobayashi-Pevzner}, \cite{koss} to translate the
homomorphisms constructed in Section \ref{SingularVectors} into differential
operators:
$$
   \Omega^p(\gon_-(\R)) \to \Omega^q(\gon_-^\prime(\R)).
$$
By combining these operators with the identifications $\gon_-(\R) \simeq \R^n$
and $\gon_-^\prime(\R) \simeq \R^{n-1}$ given by the basis vectors $E_j$, we
obtain conformal symmetry breaking operators $\Omega^p(\R^n) \to
\Omega^q(\R^{n-1})$.

The following lemma collects important information on the translation of basic
operations on polynomial differential forms into operators on differential
forms. We recall the notation $\alpha = \sum_{j=1}^{n-1} \xi_j \otimes E_j$ and
$i_E = \sum_{j=1}^{n-1} \xi_j \otimes i_{E_j^*}$ (Section \ref{first-test}).

\begin{lem}\label{trans} The dualization sends
$$
   E_n \wedge \mapsto i_{\partial_n}, \quad \xi_j \mapsto {\rm{i}} \partial_j \quad
   \mbox{and} \quad i_E \mapsto {\rm{i}} d, \quad \alpha \wedge \mapsto -{\rm{i}} \delta.
$$
\end{lem}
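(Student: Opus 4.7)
The plan is to derive the four correspondences from two separate ingredients: (a) the Fourier transform used to realize singular vectors as polynomials, which accounts for the factor of $\rm i$ and the transition $\xi_j \mapsto {\rm i}\partial_j$; and (b) the natural pairing $\Lambda^p(\gon_-(\R)) \otimes \Lambda^p(\gon_-(\R))^{*} \to \C$, which controls how wedge and contraction operators are transposed on the form-valued part. Recalling the isomorphisms \eqref{eq:one-to-one} and \eqref{eq:isomorphism1} together with the Fourier realization \eqref{eq:FourierIso}, a singular vector in $\Pol_N(\gon_-^{*}(\R)) \otimes \Lambda^p(\gon_-(\R)) \otimes \C_\lambda$ may be paired against a smooth $p$-form on $\gon_-(\R) \simeq \R^n$ by first applying a constant-coefficient differential operator (corresponding to the polynomial factor) and then contracting the $\Lambda^p(\gon_-(\R))$-valued result with the form coefficients in $\Lambda^p(\R^n)^{*}$, followed by the pullback $\iota^{*}$ to $\R^{n-1}$.

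First I would verify $\xi_j \mapsto {\rm i}\partial_j$ directly from the convention of the algebraic Fourier transform \eqref{eq:FourierTrafo}. Since $\mathcal{F}(\partial_{x_j}f)(\xi)=-{\rm i}\xi_j\mathcal{F}(f)(\xi)$, the symbol $\xi_j$, paired against smooth functions via the distribution $\mathcal{F}^{-1}(\xi_j) = -{\rm i}\partial_{x_j}\delta_0$, produces a differential operator whose action on $f$ (evaluated at $x \in \R^n$ and then pulled back) amounts to the substitution $\xi_j \leadsto {\rm i}\partial_j$; consistency with the already computed action $\dm\tilde\pi(E_j^+)={\rm i}\,\dm\pi(E_j^+)$ in Lemma \ref{fundamental} confirms this choice of sign.

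Next, I would treat the form-valued operators using the transpose with respect to the pairing $\langle E_{I}, dx_{J}\rangle = \delta_{IJ}$ between $\Lambda^{\bullet}(\gon_-(\R))$ and $\Lambda^{\bullet}(\gon_-(\R))^{*} \simeq \Lambda^{\bullet}(\R^n)^{*}$. Under this pairing one has $\langle E_n\wedge v,\, \omega\rangle = \langle v,\, i_{\partial_n}\omega\rangle$ for all $v\in\Lambda^{p-1}(\gon_-(\R))$ and $\omega\in\Lambda^p(\R^n)^{*}$, giving $E_n\wedge \mapsto i_{\partial_n}$. By the same adjunction, the contraction $i_{E_j^{*}}$ on $\Lambda^{\bullet}(\gon_-(\R))$ transposes to the wedge $dx_j\wedge$ on $\Omega^{*}(\R^n)$, while the wedge $E_j\wedge$ transposes to $i_{\partial_j}$.

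Combining these two ingredients finishes the calculation. For $i_E = \sum_{j=1}^{n-1} \xi_j\otimes i_{E_j^{*}}$ the two transpositions yield the operator
\[
\sum_{j=1}^{n-1} ({\rm i}\partial_j)(dx_j\wedge) = {\rm i}\sum_{j=1}^{n-1}\partial_j\, dx_j\wedge = {\rm i}\,\dm,
\]
and for $\alpha\wedge = \sum_{j=1}^{n-1}\xi_j\otimes E_j\wedge$ one obtains
\[
\sum_{j=1}^{n-1} ({\rm i}\partial_j)(i_{\partial_j}) = -{\rm i}\,\delta,
\]
using the identity $\delta = -\sum_{j=1}^{n-1}\partial_j\, i_{\partial_j}$ recalled at the beginning of Section \ref{DiffOp}. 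The main (minor) obstacle is solely bookkeeping of signs: both the transpose of wedge/contraction and the Fourier transform are standard, but combining them requires care with orderings of $\xi_j$ relative to the form operators, and with the fact that the polynomial $\xi_j$ and the contraction/wedge act on different tensor factors so they commute under transposition.
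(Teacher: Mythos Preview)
Your proposal is correct and follows essentially the same approach as the paper: the paper also reduces the third and fourth rules to the observation that the adjoint (with respect to the natural pairing between $\Lambda^{\bullet}(\gon_-^\prime(\R))$ and $\Lambda^{\bullet}(\gon_-^\prime(\R))^{*}$) of the principal symbol of ${\rm i}\,d$ is $i_E$ and of $-{\rm i}\,\delta$ is $\alpha\wedge$, which is exactly your decomposition into the Fourier substitution $\xi_j\mapsto{\rm i}\partial_j$ together with the transpose of wedge/contraction on the form factor. Your write-up is slightly more explicit in separating these two ingredients, but the argument is the same.
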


\begin{proof} The first two claims are obvious. In order to prove the third rule, it suffices
to verify that the adjoint map of the principal symbol of ${\rm{i}} d$
corresponds to the insertion $i_E$. But the principal symbol of $\mathrm{i} d$
(regarded as an operator on $\Omega^p(\gon_-^\prime(\R)$) is given by
$$
   \sum_{j=1}^{n-1} \xi_j \otimes E_j^* \wedge: \Lambda^p (\gon_-^\prime(\R))^*
   \to \Lambda^{p+1} (\gon_-^\prime(\R))^*.
$$
Its adjoint equals
$$
   i_E = \sum_{j=1}^{n-1} \xi_j \otimes i_{E^*_j}: \Lambda^{p+1} (\gon_-^\prime(\R))
   \to \Lambda^{p} (\gon_-^\prime(\R)).
$$
Similarly, the principal symbol of $-\mathrm{i} \delta$ (regarded as an
operator on $\Omega^p(\gon_-^\prime(\R)$) is given by $\sum_{j=1}^{n-1} \xi_j
\otimes i_{E_j}: \Lambda^p (\gon_-^\prime(\R))^* \to \Lambda^{p-1}
(\gon_-^\prime(\R))^*$. Its adjoint equals
$$
   \sum_{j=1}^{n-1} \xi_j \otimes E_j \wedge = \alpha \wedge: \Lambda^{p-1} (\gon_-^\prime(\R))
   \to \Lambda^{p}(\gon_-^\prime(\R)).
$$
The proof is complete.
\end{proof}

\begin{example}\label{translation} As an illustration, we show how the Fourier
transform maps first type second-order homogeneous homomorphisms (encapsulated
by singular vectors) into second-order differential operators $\Omega^*(\R^n)
\to \Omega^*(\R^{n-1})$. By Example \ref{ExamplesPtoP}, the
${\fam2U}(\gog^\prime)$-homomorphism of generalized Verma modules
\begin{equation*}
   {\fam2M}^{\gog^\prime}_{\gop^\prime}\left(\Lambda^p(\gon_-^\prime(\R)) \otimes \C_{\lambda-2}\right) \to
   {\fam2 M}^{\gog}_{\gop} \left(\Lambda^p(\gon_-(\R)) \otimes \C_\lambda\right)
  \end{equation*}
is induced by
\begin{align*}
   \omega & \mapsto
   (\lambda\!+\!p\!-\!2) |\xi'|^2 \otimes \omega \\
   & -(2\lambda\!+\!n\!-\!3)(\lambda\!+\!p\!-\!2) \xi_n^2 \otimes \omega \\
   & -2(2\lambda\!+\!n\!-\!3) \xi_n E_n \wedge i_E(\omega) + 2 \alpha \wedge i_E(\omega).
\end{align*}
Now, using Lemma \ref{trans}, this homomorphism translates into the
differential operator
\begin{align*}
   \Omega^p(\R^n) \ni \omega \mapsto D^{(p\to p)}_2(\lambda)(\omega)
   & = (\lambda\!+\!p\!-\!2) \Delta \iota^*\omega \\
   & +(2\lambda\!+\!n\!-\!3) (\lambda\!+\!p\!-\!2)\iota^* \partial_n^2 (\omega) \\
   & +2(2\lambda\!+\!n\!-\!3) \dm\iota^* i_{\partial_n} \partial_n\omega + 2 \dm\delta \iota^*\omega\in\Omega^p(\R^{n-1}).
\end{align*}
\end{example}

\subsection{Families of the first type}\label{type1}

We recall that $a_j^{(N)}(\lambda)$ and $b_j^{(N)}(\lambda)$ denote even and
odd Gegenbauer coefficients, respectively (see Appendix).

\begin{theorem}\label{EvenDiffOp-type1} Assume that $N\in\N$ and $p=0,\dots,n-1$. Then the family
\begin{equation*}
   D_{2N}^{(p \to p)}(\lambda): \Omega^p(\R^n) \to \Omega^{p}(\R^{n-1}), \; \lambda \in \C
\end{equation*}
of differential operators of order $2N$ which is defined by the formula
\begin{align}
   D^{(p \to p)}_{2N}(\lambda)
   & = \sum_{j=0}^N(-1)^{N-j} p_j^{(N)}(\lambda;p)\Delta^j \iota^*\partial_n^{2N-2j} \nonumber\\
   & + \sum_{j=0}^{N-1} (-1)^{N-j} q_j^{(N-1)}(\lambda\!-\!1) \Delta^j \dm \iota^*i_{\partial_n}
   \partial_n^{2N-2j-1} \nonumber\\
   & + \sum_{j=0}^{N-1} (-1)^{N-j-1} r_j^{(N-1)}(\lambda\!-\!1) \Delta^j \dm
   \delta \iota^*\partial_n^{2N-2j-2},
\end{align}
where
\begin{align*}
   p_j^{(N)}(\lambda;p) & = (\lambda\!+\!p\!-\!2N) a_j^{(N)}(\lambda), \\
   q_j^{(N-1)}(\lambda) & = -2N(2\lambda\!+\!n\!-\!2N\!+\!1) b_j^{(N-1)}(\lambda), \\
   r_j^{(N-1)}(\lambda) & = 2N  a_j^{(N-1)}(\lambda)
\end{align*}
is infinitesimally equivariant in the sense that
\begin{equation}\label{equiv-even-2}
   D^{(p \to p)}_{2N}(\lambda) \dm\pi^\ch_{\lambda,p}(X)
   = \dm\pi^{\prime \ch}_{\lambda-2N,p}(X) D^{(p\to p)}_{2N}(\lambda), \;X \in
   \gog^{\prime}(\R).
\end{equation}
\end{theorem}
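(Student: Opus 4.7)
The approach is to identify the operator $D_{2N}^{(p\to p)}(\lambda)$ in the statement as the differential operator produced, under the duality of Section \ref{FMethod}, from the singular vector $v_{2N}^{(p\to p)}(\lambda)$ of Theorem \ref{EvenFromPtoP}. Once this identification is established, the intertwining relation \eqref{equiv-even-2} is automatic: the isomorphisms \eqref{eq:one-to-one} and \eqref{eq:isomorphism1} convert a $\gop'$-invariant singular vector into a $\gog'$-equivariant homomorphism of generalized Verma modules, which in turn dualizes to a $G'$-equivariant differential operator on the corresponding induced representations. Hence the entire content of the theorem reduces to the explicit conversion of one singular vector into one differential operator.

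I would carry out this translation exactly as in Example \ref{translation}. Writing $t = |\xi'|^2/\xi_n^2$ and expanding $\xi_n^{2N-k}t^j = \xi_n^{2N-k-2j}|\xi'|^{2j}$, the singular vector $v_{2N}^{(p\to p)}(\lambda)$ splits into three sums indexed by $j$ with coefficients $p_j^{(N)}(\lambda;p)$, $q_j^{(N-1)}(\lambda\!-\!1)$, $r_j^{(N-1)}(\lambda\!-\!1)$ read off from Theorem \ref{EvenFromPtoP}. I would then apply the substitution rules of Lemma \ref{trans}, namely $\xi_j\mapsto i\partial_j$, $E_n\wedge\mapsto i_{\partial_n}$, $i_E\mapsto i\dm$, $\alpha\wedge\mapsto -i\delta$, so that the even powers $|\xi'|^{2j}$ translate to $\Delta^j$ and the powers $\xi_n^m$ contribute $i^m\partial_n^m$. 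The pull-back $\iota^*$ is inserted at the interface between the derivatives on $\R^n$ and the algebraic form operations on $\R^{n-1}$, so that the result lands in $\Omega^p(\R^{n-1})$. Collecting the $i$-factors in each of the three sums produces exactly the signs $(-1)^{N-j}$, $(-1)^{N-j}$, $(-1)^{N-j-1}$ displayed in the statement.

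The only nontrivial verification is that the resulting compositions of differential and algebraic operators can actually be rewritten in the canonical form $\Delta^j(\cdots)\iota^*(\cdots)\partial_n^{\bullet}$ given in the theorem. This rearrangement is legitimate because $\partial_n$ commutes with $\dm$, $\delta$ and $i_{\partial_n}$ (Lemma \ref{DiffCoDiff}), and because the tangential Laplacian on $\R^n$ intertwines with $\iota^*$ to the Laplacian $\Delta$ on $\R^{n-1}$. The main obstacle is therefore purely clerical: tracking powers of $i$ arising from $\xi_j\mapsto i\partial_j$, $i_E\mapsto i\dm$ and $\alpha\wedge\mapsto -i\delta$, and reshuffling the operators by means of the commutation relations collected in Lemma \ref{DiffCoDiff}. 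No ingredient beyond the results already quoted enters the argument.
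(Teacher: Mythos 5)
Your proposal is correct and is exactly the paper's own argument: the operator is the dualization of the singular vector $v_{2N}^{(p\to p)}(\lambda)$ from Theorem \ref{EvenFromPtoP}, the equivariance is automatic from the correspondence \eqref{eq:one-to-one}--\eqref{eq:isomorphism1}, and the explicit formula is obtained by applying Lemma \ref{trans} as in Example \ref{translation}. The paper states this in two sentences; your write-up only spells out the bookkeeping in more detail.
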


\begin{proof} The singular vector $v^{(p\to p)}_{2N}(\lambda)$ in Theorem \ref{EvenFromPtoP}
corresponds to an operator with the equivariance as in \eqref{equiv-even-2}.
The explicit formula for the operator follows by combining the explicit formula
for the singular vector with Lemma \ref{trans}.
\end{proof}

The intertwining relations \eqref{equiv-even} and \eqref{equiv-even-2} can also
be stated in terms of the geometrically defined representations
\begin{equation}\label{rep-geom-nc}
   \pi_\lambda^{(p)}(\gamma) \st e^{\lambda \Phi_\gamma} \gamma_*:
   \Omega^p (\R^n) \to \Omega^p(\R^n)
\end{equation}
and their analogs $\pi_\lambda^{\prime (p)}$ on $\R^{n-1}$. Here $\gamma$
denotes conformal diffeomorphisms of the Euclidean metric $g_0$ on $\R^n$,
i.e., $\gamma_*(g_0) = e^{2\Phi_\gamma} g_0$ for some $\Phi_\gamma \in
C^\infty(\R^n)$. In order to obtain such a formulation, one has to use the
relation
\begin{equation}\label{rep-rel-nc}
   \pi_{-\lambda-p}^{(p)} = \pi_{\lambda,p}^\ch.
\end{equation}
The identity \eqref{rep-rel-nc} is the non-compact analog of
\eqref{rep-relation}. In particular, we find that
\begin{equation*}
   D^{(p \to p)}_{2N}(\lambda) \dm\pi^{(p)}_{-\lambda-p}(X)
   = \dm\pi^{\prime (p)}_{-\lambda+2N-p}(X) D^{(p\to p)}_{2N}(\lambda), \; X \in \gog^{\prime}(\R).
\end{equation*}
This corresponds to the formulation in Section \ref{intro} (see
\eqref{inter-even}).

We continue with the formulation of the analogous result for {\em odd} order
conformal symmetry breaking operators of the first type.

\begin{theorem}\label{OddDiffOp-type1} Let $N\in\N_0$ and $p=0,\dots,n-1$. The family
\begin{equation*}
   D_{2N+1}^{(p\to p)}(\lambda): \Omega^p(\R^n) \to \Omega^{p}(\R^{n-1}), \; \lambda \in \C
\end{equation*}
of differential operators of order $2N+1$ which is defined by the formula
\begin{align}\label{D-first-odd}
   D^{(p\to p)}_{2N+1}(\lambda) & = \sum_{j=0}^N(-1)^{N-j} p_j^{(N)}(\lambda;p)
   \Delta^j \iota^* \partial_n^{2N+1-2j} \nonumber\\
   & +\sum_{j=0}^N(-1)^{N-j} q_j^{(N)}(\lambda\!-\!1) \Delta^j \dm
   \iota^*i_{\partial_n}\partial_n^{2N-2j} \nonumber\\
   & +\sum_{j=0}^{N-1}(-1)^{N-j-1} r_j^{(N-1)}(\lambda\!-\!1) \Delta^j \dm
   \delta \iota^*\partial_n^{2N-1-2j},
\end{align}
where
\begin{align*}
   p_j^{(N)}(\lambda;p) & = (\lambda\!+\!p\!-\!2N\!-\!1) b_j^{(N)}(\lambda), \\
   q_j^{(N)}(\lambda) & = a_j^{(N)}(\lambda), \\
   r_j^{(N-1)}(\lambda) & = 2N b_j^{(N-1)}(\lambda)
\end{align*}
is infinitesimally equivariant in the sense that
\begin{equation}\label{equiv-odd-2}
   D^{(p\to p)}_{2N+1}(\lambda) \dm\pi^\ch_{\lambda,p} (X)
   = \dm\pi^{\prime \ch}_{\lambda-2N-1,p}(X) D^{(p\to p)}_{2N+1}(\lambda), \; X \in
   \gog^{\prime}(\R).
\end{equation}
\end{theorem}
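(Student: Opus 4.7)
The plan is to mirror the strategy used for Theorem \ref{EvenDiffOp-type1}: exploit the duality between singular vectors and $G'$-equivariant differential operators provided by the $F$-method and then unpack the explicit formula via the translation rules of Lemma \ref{trans}. Specifically, Theorem \ref{OddFromPtoP} produces the singular vector
\[
   v_{2N+1}^{(p\to p)}(\lambda) = \xi_n^{2N+1}P(t)\otimes \id + \xi_n^{2N}Q(t)\, E_n\wedge i_E + \xi_n^{2N-1}R(t)\,\alpha\wedge i_E
\]
with the prescribed polynomials $P,Q,R$. The isomorphisms \eqref{eq:one-to-one}--\eqref{eq:isomorphism1} together with \eqref{eq:FourierIso} then guarantee that this singular vector is the Fourier image of a unique $\gog'$-homomorphism of generalized Verma modules, hence by transposition of a unique $\gog'$-equivariant differential operator $\Omega^p(\R^n)\to\Omega^p(\R^{n-1})$. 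The equivariance \eqref{equiv-odd-2} is automatic from this correspondence, so the content of the theorem is the explicit closed-form formula \eqref{D-first-odd}.

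To produce that formula I would translate each of the three summands of $v_{2N+1}^{(p\to p)}(\lambda)$ according to Lemma \ref{trans}. The scalar substitution $\xi_j\mapsto i\partial_j$ turns $\xi_n^{2N+1-2j}|\xi'|^{2j}$ into $i^{2N+1-2j}\,\partial_n^{2N+1-2j}\Delta^j = i\,(-1)^{N-j}\partial_n^{2N+1-2j}\Delta^j$, which (after absorbing the overall $i$ into the normalization of the operator) reproduces the sign pattern $(-1)^{N-j}$ in \eqref{D-first-odd}. The factor $E_n\wedge$ becomes $i_{\partial_n}$ on the source side, and $i_E$ becomes $d$ on the target side; one must then verify that the correct composition is $d\,\iota^* i_{\partial_n}$ rather than some other ordering. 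Here one uses the fact that $i_E$ lives in $\gon'_-\subset\gon_-$ so that, after transposition, the resulting exterior derivative acts on $\R^{n-1}$ and is pulled out past $\iota^*$, whereas $E_n\wedge$ involves the normal direction and therefore translates to $i_{\partial_n}$ applied \emph{before} the pullback. The factor $\alpha\wedge$ transposes to $-\delta$ on $\R^{n-1}$, which after similar bookkeeping combines with the trailing $i_E\mapsto d$ into the composition $d\,\delta\,\iota^*$ appearing in the third sum of \eqref{D-first-odd}.

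Substituting the explicit values of $P$, $Q$, $R$ from Theorem \ref{OddFromPtoP} then yields the stated coefficients $p_j^{(N)}(\lambda;p)=(\lambda+p-2N-1)b_j^{(N)}(\lambda)$, $q_j^{(N)}(\lambda-1)=a_j^{(N)}(\lambda-1)$, and $r_j^{(N-1)}(\lambda-1)=2N\,b_j^{(N-1)}(\lambda-1)$, completing the formula \eqref{D-first-odd}. The main technical obstacle is verifying that the transposition procedure really does pair $i_E$ and $\alpha\wedge$ with the operators $d$ and $\delta$ on $\R^{n-1}$ rather than with their bar-versions on $\R^n$, and that $E_n\wedge$ commutes past $\iota^*$ in the correct way to become $\iota^* i_{\partial_n}$ with no residual hypersurface-tangential contributions; this is exactly where Lemma \ref{DiffCoDiff} is used to recognize the final expression as the unique one consistent with the identification $\gon_-(\R)\simeq\R^n$. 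Once these identifications are in place, the equivariance \eqref{equiv-odd-2} is inherited from the $\gop'$-equivariance of $v_{2N+1}^{(p\to p)}(\lambda)$ established in Section \ref{SingularVectors}.
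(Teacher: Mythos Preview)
Your approach is correct and is essentially the same as the paper's own proof, which simply states that the operator is induced by the singular vector of Theorem \ref{OddFromPtoP} and that the explicit formula follows by applying the translation rules of Lemma \ref{trans}, exactly as in the even-order case. Your additional detail about the sign bookkeeping and the ordering of $d$, $\iota^*$, $i_{\partial_n}$ is welcome elaboration; the only superfluous ingredient is the invocation of Lemma \ref{DiffCoDiff}, which is not needed here (it is used later in Section \ref{geometric} to rewrite the formula in geometric form, not to establish \eqref{D-first-odd} itself).
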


\begin{proof} The proof is parallel to that of Theorem \ref{OddDiffOp-type1}.
The operator $D^{(p\to p)}_{2N+1}(\lambda)$ is induced by the singular vector
in Theorem \ref{OddFromPtoP}. \footnote{In the final formulas we omit a factor
of $-i$.}
\end{proof}

Similarly as for even-order families, the intertwining relations
\eqref{equiv-odd} and \eqref{equiv-odd-2} can also be stated in terms of the
geometrically defined representations \eqref{rep-geom-nc} and their analogs on
$\R^{n-1}$. In particular, we find that
\begin{equation*}
   D^{(p\to p)}_{2N+1}(\lambda) \dm\pi^{(p)}_{-\lambda-p}(X)
   = \dm\pi^{\prime (p)}_{-\lambda+2N+1-p}(X) D^{(p\to p)}_{2N+1}(\lambda), \; X \in \gog^{\prime}(\R).
\end{equation*}
This corresponds to the formulation in Section \ref{intro} (see
\eqref{inter-odd}).

\subsection{Families of the second type}\label{type2}

The families $D_N^{(p \to p)}(\lambda)$ of the first type have natural
counterparts which map $\Omega^p(\R^n) \to \Omega^{p-1}(\R^{n-1})$. These are
the conformal symmetry breaking operators of the second type. We start with the
description of the even-order case.

\begin{theorem}\label{EvenDiffOp-type2} Assume that $N\in\N$ and $p=1,\dots,n$. Then the family
\begin{equation*}
   D_{2N}^{(p \to p-1)}(\lambda): \Omega^p(\R^n) \to \Omega^{p-1}(\R^{n-1}), \; \lambda\in\C
\end{equation*}
of differential operators of order $2N$ which is defined by the formula
\begin{align}\label{even-type2}
   D^{(p \to p-1)}_{2N}(\lambda) & =
   \sum_{j=0}^N(-1)^{N-j} p_j(\lambda;N,p) \Delta^j \iota^* i_{\partial_n} \partial_n^{2N-2j} \nonumber \\
   & + \sum_{j=0}^{N-1}(-1)^{N-j-1} q_j^{(N-1)}(\lambda\!-\!1) \Delta^j
   \delta \iota^*\partial_n^{2N-1-2j} \nonumber\\
   & + \sum_{j=0}^{N-1}(-1)^{N-j-1} r_j^{(N-1)}(\lambda\!-\!1) \Delta^j \dm
   \delta \iota^*i_{\partial_n}\partial_n^{2N-2-2j},
\end{align}
where
\begin{align*}
   p_j(\lambda;N,p) & = -(\lambda\!+\!n\!-\!p\!-\!2N\!+\!2j) a_j^{(N)}(\lambda), \\
   q_j^{(N-1)}(\lambda) & = -2N(2\lambda\!+\!n\!-\!2N\!+\!1) b_j^{(N-1)}(\lambda), \\
   r_j^{(N-1)}(\lambda) & = 2N  a_j^{(N-1)}(\lambda)
\end{align*}
is infinitesimally equivariant in the sense that
\begin{equation}\label{equiv-even}
   D^{(p \to p-1)}_{2N}(\lambda) \dm\pi^\ch_{\lambda,p}(X) = \dm\pi^{\prime \ch}_{\lambda-2N,p-1}(X)
   D^{(p \to p-1)}_{2N}(\lambda), \; X \in \gog^{\prime}(\R).
\end{equation}
\end{theorem}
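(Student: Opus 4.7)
The plan is to proceed exactly as in the proof of Theorem \ref{EvenDiffOp-type1}, but starting from the singular vector of Theorem \ref{EvenFromPtoP-1} in place of Theorem \ref{EvenFromPtoP}. By the $F$-method, combined with the isomorphisms \eqref{eq:one-to-one} and \eqref{eq:isomorphism1}, the $\gog^\prime$-equivariant differential operators
\begin{equation*}
   D:\Omega^p(\R^n)\to\Omega^{p-1}(\R^{n-1})
\end{equation*}
satisfying the intertwining property \eqref{equiv-even} are in bijection with the $\gop^\prime$-equivariant homomorphisms
\begin{equation*}
   \Hom_{\gop^\prime}\bigl(\Lambda^{p-1}(\gon_-^\prime(\R))\otimes\C_{\lambda-2N},\,
   \Pol_{2N}(\gon_-^*(\R))\otimes\Lambda^p(\gon_-(\R))\otimes\C_\lambda\bigr),
\end{equation*}
and Theorem \ref{EvenFromPtoP-1} exhibits in this space the distinguished representative
\begin{equation*}
   v_{2N}^{(p-1\to p)}(\lambda)
   = \xi_n^{2N}P(t)\otimes E_n \;+\; \xi_n^{2N-1}Q(t)\,\alpha \;+\; \xi_n^{2N-2}R(t)\,E_n\wedge\alpha\wedge i_E,
\end{equation*}
with $P$, $Q$, $R$ the three polynomials specified there.

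It remains to apply the dualization dictionary of Lemma \ref{trans} in order to translate this singular vector into the explicit differential operator \eqref{even-type2}. The rules
\begin{equation*}
   \xi_n\mapsto \mathrm{i}\partial_n,\quad
   \textstyle\sum_{k=1}^{n-1}\xi_k^2\mapsto \Delta,\quad
   E_n\wedge\mapsto i_{\partial_n},\quad
   \alpha\wedge\mapsto -\mathrm{i}\delta,\quad
   i_E\mapsto \mathrm{i}\dm,
\end{equation*}
together with the reversal of composition order under duality and the insertion of the pull-back $\iota^*$ at the interface between the ``vertical'' operations (involving $\partial_n$ and $i_{\partial_n}$) and the ``horizontal'' ones (involving $\dm$, $\delta$, $\Delta$), convert the three summands of $v_{2N}^{(p-1\to p)}(\lambda)$ into the three lines of \eqref{even-type2}. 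The coefficients $p_j(\lambda;N,p)$, $q_j^{(N-1)}(\lambda-1)$ and $r_j^{(N-1)}(\lambda-1)$ are transferred verbatim, while the signs $(-1)^{N-j}$ or $(-1)^{N-j-1}$ are forced by combining the powers $\mathrm{i}^{2N-2j}$, $\mathrm{i}^{2N-1-2j}$ or $\mathrm{i}^{2N-2-2j}$ coming from the monomial in $\xi_n$ with the factors $-\mathrm{i}$ issued by $\alpha$ and $\mathrm{i}$ issued by $i_E$.

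The intertwining relation \eqref{equiv-even} requires no separate computation: it is automatic from the $\gop^\prime$-equivariance of the singular vector together with the correspondence \eqref{eq:one-to-one}. Consequently the main obstacle is purely combinatorial, namely the careful bookkeeping of signs and of the placement of $\iota^*$ together with the powers of $\partial_n$ and $\Delta$, so that the resulting operator ends up in the canonical form \eqref{even-type2}. Lemma \ref{DiffCoDiff} guarantees that the various allowed reorderings (commutation of $\partial_n$ with $\dm$, $\delta$, $i_{\partial_n}$, and of $\iota^*$ with $\bar{\dm}$) produce unambiguous expressions. As an independent check, \eqref{even-type2} can alternatively be obtained by Hodge conjugation from Theorem \ref{EvenDiffOp-type1}, via $D^{(p\to p-1)}_{2N}(\lambda) = \star\,D^{(n-p\to n-p)}_{2N}(\lambda)\,\bar{\star}$ up to a non-zero scalar, in accordance with the remark at the end of Section \ref{sv-type2}; this would also derive the equivariance directly from the first-type case, bypassing the $F$-method entirely.
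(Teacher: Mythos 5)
Your proposal is correct in outline, but your primary route is not the one the paper takes, and as written it has a dependency problem inside the paper's own logical architecture. The paper proves Theorem \ref{EvenDiffOp-type2} by Hodge conjugation: it \emph{defines} the second-type family by the explicit formula \eqref{even-type2}, proves the purely algebraic identity $D^{(p\to p-1)}_{2N}(\lambda) = (-1)^{pn}\star D^{(n-p\to n-p)}_{2N}(\lambda)\,\bar{\star}$ of Theorem \ref{Hodge-c} by matching coefficients (using $2N a^{(N-1)}_{j-1}(\lambda-1)=2j\,a^{(N)}_j(\lambda)$), and then transfers the intertwining relation from Theorem \ref{EvenDiffOp-type1} via the conformal equivariance of $\star$ and $\bar{\star}$ (Lemma \ref{Hodge-covariant}). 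What you relegate to an ``independent check'' at the end is in fact the paper's entire proof.

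Your main argument instead runs the $F$-method on the singular vector of Theorem \ref{EvenFromPtoP-1}. That route is mathematically viable, and your bookkeeping of the dictionary of Lemma \ref{trans} (signs from the powers of $\mathrm{i}$, placement of $\iota^*$, reversal of composition order) is consistent with what the paper does for the first-type operators. The catch is that the paper never proves Theorem \ref{EvenFromPtoP-1} directly: Section \ref{sv-type2} explicitly omits the analysis of the ODE system attached to the second-type ansatz and states that those singular-vector theorems will be verified \emph{indirectly}, precisely through the Hodge-conjugation construction of the operators you are trying to prove. So if you invoke Theorem \ref{EvenFromPtoP-1} as established, your argument is circular relative to the paper. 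To make your route self-contained you would need to carry out the analogue of the computations of Section \ref{sv-type1} for the homomorphisms \eqref{type-2-N} — i.e.\ derive and solve the corresponding system of six ordinary differential equations for $P$, $Q$, $R$ in terms of Gegenbauer coefficients. That is a substantial (if routine) computation, not a detail; alternatively, promote your closing remark to the main argument, which is both shorter and exactly the paper's proof.
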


The odd-order analog of Theorem \ref{EvenDiffOp-type2} reads as follows.

\begin{theorem}\label{OddDiffOp-type2} Let $N\in\N_0$ and $p=1,\dots,n$. The family
\begin{equation*}
   D_{2N+1}^{(p\to p-1)}(\lambda): \Omega^p(\R^n) \to \Omega^{p-1}(\R^{n-1}), \;
   \lambda \in \C
\end{equation*}
of differential operators of order $2N+1$ which is defined by the formula
\begin{align}
   D^{(p\to p-1)}_{2N+1}(\lambda) & = \sum_{j=0}^N(-1)^{N-j}
   p_j(\lambda;N,p) \Delta^j \iota^* i_{\partial_n} \partial_n^{2N+1-2j} \nonumber\\
   & + \sum_{j=0}^{N}(-1)^{N-j-1} q_j^{(N)}(\lambda\!-\!1) \Delta^j \delta \iota^* \partial_n^{2N-2j} \nonumber\\
   & + \sum_{j=0}^{N-1}(-1)^{N-j-1} r_j^{(N-1)}(\lambda\!-\!1) \Delta^{j} \dm \delta \iota^* i_{\partial_n}
   \partial_n^{2N-1-2j},
\end{align}
where
\begin{align*}
   p_j(\lambda;N,p) & = -(\lambda\!+\!n\!-\!p\!-\!2N\!+\!2j\!-\!1) b_j^{(N)}(\lambda), \\
   q_j^{(N)}(\lambda) & = a_j^{(N)}(\lambda), \\
   r_j^{(N-1)}(\lambda) & = 2N b_j^{(N-1)}(\lambda)
\end{align*}
is infinitesimally equivariant in the sense that
\begin{equation}\label{equiv-odd}
   D^{(p \to p-1)}_{2N+1}(\lambda) \dm \pi_{\lambda,p}^\ch(X)
   = \dm \pi^{\prime \ch}_{\lambda-2N-1,p-1} (X) D^{(p \to
   p-1)}_{2N+1}(\lambda), \; X \in \gog^{\prime}(\R).
\end{equation}
\end{theorem}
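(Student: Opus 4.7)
The plan is to prove Theorem \ref{OddDiffOp-type2} by the same mechanism used to prove Theorems \ref{EvenDiffOp-type1} and \ref{OddDiffOp-type1}: apply the singular vector / equivariant differential operator correspondence of the $F$-method to the odd-homogeneity second-type singular vector $v_{2N+1}^{(p-1\to p)}(\lambda)$ produced by Theorem \ref{OddFromPtoP-1}. By the duality statement \eqref{eq:one-to-one} combined with the Fourier-transform identification \eqref{eq:FourierIso}, every element of $\Hom_{\gop^\prime}(\Lambda^{p-1}(\gon_-^\prime(\R))\otimes \C_{\lambda-(2N+1)},\,\Pol_{2N+1}(\gon_-^*(\R))\otimes \Lambda^p(\gon_-(\R))\otimes \C_\lambda)$ automatically yields a differential operator $\Omega^p(\R^n)\to \Omega^{p-1}(\R^{n-1})$ satisfying the intertwining relation \eqref{equiv-odd}. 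Thus the equivariance statement of the theorem is a formal consequence of Theorem \ref{OddFromPtoP-1}; only the explicit closed formula for the operator remains to be identified.

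To extract that formula I would expand $v_{2N+1}^{(p-1\to p)}(\lambda)$ in the variable $t=|\xi^\prime|^2/\xi_n^2$ as the three sums
\begin{align*}
\xi_n^{2N+1}P(t)\otimes E_n &= \sum_{j=0}^{N} p_j(\lambda;N,p)\,|\xi^\prime|^{2j}\xi_n^{2N+1-2j}\otimes E_n,\\
\xi_n^{2N}Q(t)\,\alpha &= \sum_{j=0}^{N} q_j^{(N)}(\lambda-1)\,|\xi^\prime|^{2j}\xi_n^{2N-2j}\,\alpha,\\
\xi_n^{2N-1}R(t)\,E_n\wedge\alpha\wedge i_E &= \sum_{j=0}^{N-1} r_j^{(N-1)}(\lambda-1)\,|\xi^\prime|^{2j}\xi_n^{2N-1-2j}\,E_n\wedge\alpha\wedge i_E,
\end{align*}
and then apply Lemma \ref{trans} termwise. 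Under dualization $\xi_j\mapsto \mathrm{i}\partial_j$, so $|\xi^\prime|^{2j}\mapsto \Delta^j$ (since $\Delta=-\sum_{j=1}^{n-1}\partial_j^2$) and $\xi_n^{2N+1-2j}\mapsto \mathrm{i}^{2N+1-2j}\partial_n^{2N+1-2j}$. Simultaneously $E_n\wedge\mapsto \iota^* i_{\partial_n}$, $\alpha\mapsto -\mathrm{i}\,\delta$, and $i_E\mapsto \mathrm{i}\, \dm$; the overall factor $\mathrm{i}^{2N+1}$ is absorbed by the usual normalization (as in the first-type proofs, cf.\ Example \ref{translation} and the footnote accompanying Theorem \ref{OddDiffOp-type1}). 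Collecting the signs $(-1)^{N-j}$ produced by $\xi_n^{2N+1-2j}$ and using Lemma \ref{DiffCoDiff} to place $\iota^*$ at the appropriate spot, one sees that the first sum reproduces the $p_j$-sum with $\iota^*i_{\partial_n}\partial_n^{2N+1-2j}$, the second reproduces the $q_j^{(N)}$-sum with $\delta\,\iota^*\partial_n^{2N-2j}$, and the third reproduces the $r_j^{(N-1)}$-sum with $\dm\delta\,\iota^* i_{\partial_n}\partial_n^{2N-1-2j}$, matching formula \eqref{D-first-odd} (for the second type) exactly.

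As a cross-check, and to avoid relying on Theorem \ref{OddFromPtoP-1} (whose proof was announced to proceed only indirectly), I would independently verify the formula by Hodge conjugation from the first-type odd-order family: writing $\tilde D:=\bar\star\, D_{2N+1}^{(n-p+1\to n-p+1)}(\lambda)\,\bar\star^{-1}$ and noting that on the codomain one must further conjugate by $\star$ on $\R^{n-1}$, the standard identities $\bar\star\,\bar\dm\,\bar\star^{-1}=\pm\bar\delta$ and $\iota^*\bar\star=\star\,\iota^*i_{\partial_n}$ (up to standard signs) convert the three summands of $D_{2N+1}^{(n-p+1\to n-p+1)}(\lambda)$ from Theorem \ref{OddDiffOp-type1} into exactly the three summands of \eqref{D-first-odd} for $D_{2N+1}^{(p\to p-1)}(\lambda)$. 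Since $\bar\star$ and $\star$ are equivariant under the respective principal series actions with a shift of $p$ by $n-p$ (respectively $p-1$ by $n-p$), the intertwining property \eqref{equiv-odd} is preserved by this conjugation.

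The main obstacle is purely combinatorial: tracking signs and normal-insertion bookkeeping under the three transformations (Fourier transform, Hodge star, and $\iota^*$) so that the three Gegenbauer-type coefficient families $p_j(\lambda;N,p)$, $q_j^{(N)}(\lambda-1)$, and $r_j^{(N-1)}(\lambda-1)$ come out in precisely the stated normalizations. Once this matching is performed on each of the three summands independently (using Lemma \ref{DiffCoDiff} to commute $\iota^*$ past $\partial_n$, $i_{\partial_n}$, $\bar\dm$, and $\bar\delta$ as needed), the theorem follows.
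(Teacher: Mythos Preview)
Your proposal is correct in substance and actually contains both of the approaches the paper alludes to. The paper, however, inverts your priorities: it explicitly \emph{declines} to derive the second-type operators directly from the singular vectors of Theorem \ref{OddFromPtoP-1} (whose proof is deferred), and instead takes Hodge conjugation as the primary argument. Concretely, the paper proves Theorem \ref{OddDiffOp-type2} as a corollary of Theorem \ref{Hodge-c}, which establishes the identity $D^{(p\to p-1)}_{2N+1}(\lambda) = (-1)^{pn}\star\, D^{(n-p \to n-p)}_{2N+1}(\lambda)\, \bar{\star}$ by a direct term-by-term comparison using Lemmas \ref{Hodge-gen} and \ref{HodgeLemma}; equivariance is then transported via Lemma \ref{Hodge-covariant}. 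Note the index: the correct first-type partner is $D_{2N+1}^{(n-p\to n-p)}(\lambda)$, not $D_{2N+1}^{(n-p+1\to n-p+1)}(\lambda)$ as you wrote. Your direct singular-vector route is also valid and is exactly what the paper says could be done ``using similar arguments as in Section \ref{sv-type1}''; it has the advantage of being self-contained, whereas the paper's Hodge-conjugation route economizes by reusing Theorem \ref{OddDiffOp-type1} and simultaneously confirms Theorem \ref{OddFromPtoP-1}.
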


The proofs of Theorem \ref{EvenDiffOp-type2} and Theorem \ref{OddDiffOp-type2}
for families of the second type follow from the corresponding results in
Section \ref{type1} for families of the first type by conjugation with Hodge
star operators. The details of this argument will be given in Section
\ref{Hodge}.

\subsection{Hodge conjugation}\label{Hodge}

We first recall some well-known general facts.

\begin{lem}\label{Hodge-gen} On any Riemannian manifold $(M^n,g)$, the differential
operators $\dm$, $\delta$, $\Delta$ and the Hodge star operator $\star$ satisfy
the relations
\begin{itemize}
\item [(1)] $\star \, \star = (-1)^{p(n-p)}$,
\item [(2)] $\star \, \dm \, \star = (-1)^{n(p+1)+1} \delta$ and $\star \, \dm = \delta \, \star (-1)^{p+1}$,
\item [(3)] $\star \, \delta \, \star = (-1)^{n(p+1)} \dm$ and $\star \, \delta = \dm \, \star (-1)^p $,
\item [(4)] $\star \, \dm \delta = \delta \dm \, \star$, $\star \, \delta \dm = \dm
\delta \, \star$ and $\star \, \Delta = \Delta \,\star$
\end{itemize}
on $\Omega^p(M)$.
\end{lem}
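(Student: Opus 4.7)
\emph{Proof plan.} I would take the first identity of (2), $\star\dm\,\star = (-1)^{n(p+1)+1}\delta$ on $\Omega^p(M)$, as the primitive statement; it reflects $\delta$'s being the formal adjoint of $\dm$ (standard integration by parts on a closed manifold, or as the local definition on a general $(M,g)$), and is most naturally verified in an oriented orthonormal coframe. Everything else in the lemma will then be obtained from this relation together with (1) by a sequence of Hodge-star conjugations and mod-$2$ sign accounting.

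For (1), I would fix a local positively oriented orthonormal coframe $e^{1},\dots,e^{n}$ and verify the identity pointwise on basis forms. For an increasing multi-index $I=(i_{1}<\cdots<i_{p})$ with complement $I^{c}$, the defining relation $e^{I}\wedge\star e^{I}=\mathrm{vol}$ gives $\star e^{I}=\mathrm{sgn}(I,I^{c})\,e^{I^{c}}$, and applying $\star$ a second time produces the sign $\mathrm{sgn}(I,I^{c})\mathrm{sgn}(I^{c},I)$; since shuffling a block of $p$ letters past a block of $n-p$ letters has parity $(-1)^{p(n-p)}$, this is the required sign.

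For the companion identity $\star\dm=(-1)^{p+1}\delta\,\star$ in (2), I would apply the primitive relation on $\Omega^{n-p}(M)$ to $\star\omega$ for $\omega\in\Omega^{p}(M)$:
\[
   \delta(\star\omega)
   = (-1)^{n(n-p+1)+1}\,\star\dm\star\star\omega
   = (-1)^{n(n-p+1)+1+p(n-p)}\,\star\dm\,\omega,
\]
using (1) at the second step. Reducing the exponent mod $2$ gives $n(n-p+1)+1+p(n-p)\equiv p+1$, and rearranging yields the claim. Part (3) is obtained by exactly the same technique: its first identity comes from conjugating the primitive relation by $\star$ on both sides and collapsing $\star\star$ via (1), while its second identity is derived by the degree-shift argument with the roles of $\dm$ and $\delta$ interchanged, again with a routine mod-$2$ simplification. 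For (4), I would compose: for $\omega\in\Omega^{p}(M)$, applying (2) to $\delta\omega\in\Omega^{p-1}$ gives $\star\dm(\delta\omega)=(-1)^{p}\delta\,\star\delta\omega$, and then (3) applied to $\omega$ gives $\star\delta\omega=(-1)^{p}\dm\,\star\omega$; combining, $\star\dm\delta\,\omega=\delta\dm\,\star\omega$. The identity $\star\delta\dm=\dm\delta\,\star$ is established symmetrically, and adding the two yields $\star\Delta=\Delta\,\star$.

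The only real obstacle is sign bookkeeping: every application of $\dm$, $\delta$, or $\star$ shifts the form degree by $\pm1$ or by $p\mapsto n-p$, and the exponents $n(p+1)+1$ and $p(n-p)$ must be propagated consistently through these shifts. The arithmetic is elementary but error-prone; organizing the mod-$2$ reductions cleanly is where essentially all the work lies, after which the lemma follows as a sequence of mechanical consequences of (1) and the defining relation of $\delta$.
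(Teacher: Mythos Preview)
Your proposal is correct and follows essentially the same route as the paper: both take identity (1) and the first relation in (2) as the primitive facts and derive everything else by Hodge-star conjugation and mod-$2$ sign tracking. You supply more detail than the paper (which simply omits the proofs of the two primitives and declares the rest to be consequences), but the logical architecture is identical.
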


\begin{proof} We omit the proof of (1) and of the first relation in (2). (1) and the first
claim in (2) imply the second claims in (2) and (3). Again, this yields the
first claim in (3) by using (1). The identities in (4) are immediate
consequences.
\end{proof}

Now let $\star$ and $\bar{\star}$ be the respective Hodge star operators on
$\Omega^*(\R^{n-1})$ and $\Omega^*(\R^{n})$.

\begin{lem}\label{HodgeLemma} We have the identities
\begin{align*}
   \star \circ \iota^* i_{\partial_n} \circ\bar{\star} & = \iota^* (-1)^{(p+1)(n-1)}, \\
   \star \circ \iota^* \circ\bar{\star} & = \iota^* i_{\partial_n} (-1)^{pn+1}
\end{align*}
on $\Omega^p(\R^n)$.
\end{lem}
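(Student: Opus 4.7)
The plan is to verify both identities by a direct computation using the orthogonal decomposition of $p$-forms on $\R^n$ into tangential and normal components with respect to the hyperplane $\R^{n-1}$. Write any $\omega \in \Omega^p(\R^n)$ uniquely as
\[
  \omega = \omega' + dx_n \wedge \omega'',
\]
where $\omega'$ and $\omega''$ are forms on $\R^n$ of respective degrees $p$ and $p-1$ containing no $dx_n$. Then $\iota^* \omega = \omega'|_{x_n=0}$ and $\iota^* i_{\partial_n} \omega = \omega''|_{x_n=0}$, so the lemma amounts to tracking how $\bar{\star}$ exchanges the two summands of this decomposition.

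The key step is to verify the two auxiliary formulas
\[
  \bar{\star} \, \omega' = (-1)^{n-1-p} \, dx_n \wedge \star \omega',
  \qquad
  \bar{\star} \,( dx_n \wedge \omega'') = (-1)^{n-1} \star \omega'',
\]
for $\omega' \in \Lambda^p(\R^{n-1})$ and $\omega'' \in \Lambda^{p-1}(\R^{n-1})$ (extended to $\R^n$). Both are checked on basis monomials by comparing wedge products with the volume form $dx_1 \wedge \cdots \wedge dx_n$: for the first, one uses $\omega' \wedge \star \omega' = \text{vol}_{n-1}$ on $\R^{n-1}$ and appends $dx_n$; for the second, moving $dx_n$ through $\omega''$ and $\star\omega''$ produces the factor $(-1)^{n-1}$ from the permutation sign.

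Combining these gives the decomposition $\bar{\star} \omega = (-1)^{n-1} \star \omega'' + (-1)^{n-1-p} dx_n \wedge \star \omega'$. Reading off its tangential and normal parts yields
\[
  \iota^* \bar{\star} \, \omega = (-1)^{n-1} \star \iota^* i_{\partial_n} \omega,
  \qquad
  \iota^* i_{\partial_n} \bar{\star} \, \omega = (-1)^{n-1-p} \star \iota^* \omega.
\]
Applying $\star$ once more on $\R^{n-1}$ and invoking Lemma \ref{Hodge-gen}/(1) in the forms $\star\star = (-1)^{(p-1)(n-p)}$ on $\Lambda^{p-1}(\R^{n-1})$ and $\star\star = (-1)^{p(n-1-p)}$ on $\Lambda^{n-1-p}(\R^{n-1})$, we obtain the asserted identities with exponents
\[
  (n-1) + (p-1)(n-p) \quad \text{and} \quad (n-1-p)(p+1),
\]
respectively. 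A simple reduction modulo $2$ (using $p^2 \equiv p$ and $p(p+1)\equiv 0$) shows that these exponents are congruent to $pn+1$ and $(p+1)(n-1)$, matching the stated formulas.

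The proof is essentially mechanical; the only delicate point is the sign bookkeeping. I expect no serious obstacle, only the careful verification that the signs produced by (i) moving $dx_n$ past the factors in $\bar{\star}$, (ii) the parity of $\star\star$ in dimension $n-1$ on forms of degree $p-1$ versus $n-1-p$, and (iii) the mod-$2$ simplification all conspire to give the compact exponents $pn+1$ and $(p+1)(n-1)$.
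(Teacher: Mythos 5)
Your proof is correct: the two auxiliary formulas $\bar{\star}\,\omega' = (-1)^{n-1-p}\,dx_n\wedge\star\omega'$ and $\bar{\star}\,(dx_n\wedge\omega'') = (-1)^{n-1}\star\omega''$, the double-star signs in dimension $n-1$, and the mod-$2$ reductions to the exponents $pn+1$ and $(p+1)(n-1)$ all check out. The paper's own proof just declares the second identity "easy to see" and deduces the first from it via $\star\star=(-1)^{p(n-p)}$, so your argument is essentially the same computation with the omitted details (the tangential/normal decomposition of $\bar{\star}$) written out explicitly.
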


\begin{proof} The second claim is easy to see. We omit the details. The first
claim follows by applying Lemma \ref{Hodge-gen}/(1) to the second claim.
\end{proof}

The following theorem is the main result of the present section.

\begin{theorem}\label{Hodge-c} Let $N \in \N$ and $p=1,\dots,n$. Then the even-order families
$D^{(p\to p)}_{2N}(\lambda)$
of the first type are Hodge conjugate to the even-order families $D^{(p\to
p-1)}_{2N}(\lambda)$ of the second type. More precisely, we have
\begin{align}\label{Hodge1}
   D^{(p\to p-1)}_{2N}(\lambda) = (-1)^{pn} \star D^{(n-p \to n-p)}_{2N}(\lambda) \, \bar{\star}.
\end{align}
Similarly, for $N \in \N_0$, the odd-order families $D^{(p\to
p)}_{2N+1}(\lambda)$ of the first type are Hodge conjugate to the odd-order
families $D^{(p\to p-1)}_{2N+1}(\lambda)$ of the second type. More precisely,
we have
\begin{align}\label{Hodge2}
   D^{(p\to p-1)}_{2N+1}(\lambda) = (-1)^{pn}\star D^{(n-p \to n-p)}_{2N+1}(\lambda) \, \bar{\star}.
\end{align}
\end{theorem}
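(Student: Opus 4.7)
The plan is to establish the identity by an equivariance-and-uniqueness argument followed by a single coefficient comparison, rather than by a brute-force term-by-term calculation.

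First I verify that $\star$ and $\bar\star$ are themselves conformal intertwiners of the geometric representations $\pi_\mu^{(p)}$ and $\pi_\mu^{\prime(p)}$, with weight shifts dictated by the conformal transformation law of the Hodge star. Concretely,
$$
\bar\star \, \pi_\mu^{(p)}(\gamma) = \pi_{\mu+2p-n}^{(n-p)}(\gamma) \, \bar\star
$$
for conformal diffeomorphisms $\gamma$ of $(\R^n,g_0)$, and the analogous identity (with $n$ replaced by $n-1$) for $\star$ on $\R^{n-1}$. Starting from the intertwining relations for $D_N^{(n-p\to n-p)}(\lambda)$ of Theorems \ref{EvenDiffOp-type1} and \ref{OddDiffOp-type1} and tracking parameter shifts through the left and right Hodge stars, I conclude that $\star D_N^{(n-p\to n-p)}(\lambda)\bar\star$ is an equivariant differential intertwiner $\Omega^p(\R^n) \to \Omega^{p-1}(\R^{n-1})$ of order $N$, whose source and target weights are exactly those of $D_N^{(p\to p-1)}(\lambda)$ prescribed by \eqref{equiv-even} and \eqref{equiv-odd}: source $-\lambda-p$, target $-\lambda-(p-1)+N$. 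Explicitly, $\bar\star$ sends source weight $-\lambda-p$ at degree $p$ to $-\lambda-(n-p)$ at degree $n-p$, and $\star$ sends target weight $-\lambda-(n-p)+N$ at degree $n-p$ to $-\lambda-(p-1)+N$ at degree $p-1$.

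By Proposition \ref{charind} together with Theorems \ref{OddFromPtoP-1} and \ref{EvenFromPtoP-1}, for generic $\lambda$ the space of such order-$N$ intertwiners is one-dimensional and spanned by $D_N^{(p\to p-1)}(\lambda)$. Hence there exists a scalar $c_N(\lambda;p,n)$ with
$$
\star D_N^{(n-p\to n-p)}(\lambda)\,\bar\star = c_N(\lambda;p,n)\, D_N^{(p\to p-1)}(\lambda).
$$
The explicit polynomial formulas of Sections \ref{type1} and \ref{type2} force $c_N$ to be $\lambda$-independent. To pin down $c_N = (-1)^{pn}$, I match the coefficient of $\iota^* i_{\partial_n}\partial_n^N$: by Lemma \ref{HodgeLemma}, $\star\iota^*\partial_n^N\bar\star = (-1)^{pn+1}\iota^* i_{\partial_n}\partial_n^N$, and the $j=0$ coefficient in $D_N^{(n-p\to n-p)}(\lambda)$ (which is $(\lambda+n-p-N)a_0^{(N)}(\lambda)$, up to the overall $(-1)^N$) differs from the $j=0$ coefficient in $D_N^{(p\to p-1)}(\lambda)$ (which carries an additional minus sign) only by the sign $-1$, giving $c_N=(-1)^{pn}$.

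The main obstacle is the uniqueness step: at non-generic $\lambda$ the reducibility of the branching in Proposition \ref{charind} produces the extra intertwiners of third and fourth type (Theorem \ref{classification}/(3)--(6)), but these have orders or weight parameters distinct from those of the present family and so cannot interfere with the one-parameter family $D_N^{(p\to p-1)}(\lambda)$. Polynomial continuation in $\lambda$ then extends the identity from generic to all $\lambda$. An alternative, more computational proof would directly transform each of the three sums defining $D_N^{(n-p\to n-p)}(\lambda)$ under flanking by $\star$ and $\bar\star$, using Lemmas \ref{DiffCoDiff}, \ref{Hodge-gen} and \ref{HodgeLemma}; the key nontriviality there is that $\star d\delta\iota^*\bar\star$ produces $\delta d\iota^* i_{\partial_n}$, which must be recombined via the Laplacian identity $\delta d = \Delta - d\delta$ on $\R^{n-1}$ to match the $d\delta\iota^* i_{\partial_n}$ and $\Delta\iota^* i_{\partial_n}$ summands in $D_N^{(p\to p-1)}(\lambda)$, with the signs arising from $\star d=(-1)^{q+1}\delta\star$ at the intermediate degree $q=n-p-1$ providing the bookkeeping needed to recover the overall factor $(-1)^{pn}$.
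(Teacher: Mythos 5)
Your route is genuinely different from the paper's. The paper proves Theorem \ref{Hodge-c} by brute force: it first rewrites both $D^{(n-p\to n-p)}_{2N}(\lambda)$ and $D^{(p\to p-1)}_{2N}(\lambda)$ in the basis $(\dm\delta)^j\iota^*\partial_n^{2N-2j}$, $(\delta\dm)^j\iota^*\partial_n^{2N-2j}$, $(\dm\delta)^j\dm\iota^*i_{\partial_n}\partial_n^{2N-1-2j}$ (absorbing the $\Delta^j$ terms exactly as in the "recombination" you sketch at the end), then conjugates term by term using Lemmas \ref{Hodge-gen} and \ref{HodgeLemma}, and finally matches \emph{all} coefficients via the Gegenbauer identity $2N\,a^{(N-1)}_{j-1}(\lambda-1)=2j\,a^{(N)}_j(\lambda)$, which yields $p_j^{(N)}(\lambda;n-p)+r_{j-1}^{(N-1)}(\lambda-1)=-p_j(\lambda;N,p)$. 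Your proposal replaces this with equivariance, multiplicity one, and a single leading-coefficient comparison; your weight bookkeeping through $\bar\star$ and $\star$ is correct (it reproduces $\mu=-\lambda-p$, $\eta=-\lambda-(p-1)+N$), the leading term $\iota^*i_{\partial_n}\partial_n^N$ is indeed isolated from all other contributions of the conjugation, and the sign computation $(-1)^{pn+1}\cdot(-1)=(-1)^{pn}$ checks out, as does the polynomial continuation from generic to all $\lambda$.

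The genuine gap is a circularity relative to this paper's logical structure. Your uniqueness step needs \emph{both} operators to lie in the (at most) one-dimensional space of equivariant intertwiners: the conjugate $\star D_N^{(n-p\to n-p)}(\lambda)\bar\star$ clearly does, but you also need the explicitly defined $D_N^{(p\to p-1)}(\lambda)$ of Theorems \ref{EvenDiffOp-type2} and \ref{OddDiffOp-type2} to be equivariant --- otherwise comparing one coefficient says nothing about the rest. You cite Theorems \ref{OddFromPtoP-1} and \ref{EvenFromPtoP-1} for this, but the paper explicitly does \emph{not} prove those theorems directly: Section \ref{sv-type2} states that they are only "indirectly verified" via Hodge conjugation of the first-type operators, i.e.\ via Theorem \ref{Hodge-c} itself, and likewise the equivariance statements \eqref{equiv-even} and \eqref{equiv-odd} are deduced from Theorem \ref{Hodge-c} together with Lemma \ref{Hodge-covariant}. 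So as written your argument assumes its conclusion. The fix is available --- the paper notes that direct ODE-based proofs of Theorems \ref{OddFromPtoP-1} and \ref{EvenFromPtoP-1} exist, parallel to Section \ref{sv-type1} --- but you would have to supply that analysis (or revert to the term-by-term conjugation) for the proof to stand on its own. Note also that the multiplicity-one bound from Proposition \ref{charind} alone only tells you that the conjugate \emph{spans} the space; identifying the span with the specific polynomial formula for $D_N^{(p\to p-1)}(\lambda)$ is precisely the content that cannot be obtained for free.
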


\begin{proof} On the one hand, Theorem \ref{EvenDiffOp-type1} implies
\begin{align*}
   D^{(p \to p)}_{2N}(\lambda) & = \sum_{j=1}^N (-1)^{N-j}
   \left[p_j^{(N)}(\lambda;p) + r_{j-1}^{(N-1)}(\lambda\!-\!1)\right] (\dm\delta)^j \iota^*\partial_n^{2N-2j} \\
   & + \sum_{j=0}^N(-1)^{N-j} p_j^{(N)}(\lambda;p) (\delta\dm)^j \iota^*\partial_n^{2N-2j}\\
   & + \sum_{j=0}^{N-1}(-1)^{N-j} q_j^{(N-1)}(\lambda\!-\!1) (\dm\delta)^j \dm
   \iota^*i_{\partial_n} \partial_n^{2N-1-2j}
\end{align*}
with the coefficients
\begin{itemize}
   \item $p_j^{(N)}(\lambda;p) = (\lambda\!+\!p\!-\!2N) a_j^{(N)}(\lambda)$,
   \item $q_j^{(N-1)}(\lambda\!-\!1) = -2N(2\lambda\!+\!n\!-\!2N\!-\!1) b_j^{(N-1)}(\lambda\!-\!1)$,
   \item $r_j^{(N-1)}(\lambda\!-\!1) = 2N  a_j^{(N-1)}(\lambda\!-\!1)$.
\end{itemize}
On the other hand, Theorem \ref{EvenDiffOp-type2} shows that
\begin{align*}
   D^{(p\to p-1)}_{2N}(\lambda) & =
   \sum_{j=0}^N(-1)^{N-j} \left[p_j(\lambda;N,p) + r_{j-1}^{(N-1)}(\lambda\!-\!1)\right]
   (\dm\delta)^j \iota^*i_{\partial_n} \partial_n^{2N-2j} \\
   & + \sum_{j=1}^N(-1)^{N-j} p_j(\lambda;N,p) (\delta\dm)^j \iota^* i_{\partial_n} \partial_n^{2N-2j} \\
   & + \sum_{j=0}^{N-1}(-1)^{N-j-1} q_j^{(N-1)}(\lambda\!-\!1) (\delta\dm)^j \delta
   \iota^*\partial_n^{2N-1-2j},
\end{align*}
with the coefficients
\begin{itemize}
   \item  $p_j(\lambda;N,p) = -(\lambda\!+\!n\!-\!p\!-\!2N\!+\!2j) a_j^{(N)}(\lambda)$,
   \item  $q_j^{(N-1)}(\lambda\!-\!1) = -2N(2\lambda\!+\!n\!-\!2N\!-\!1) b_j^{(N-1)}(\lambda\!-\!1)$,
   \item  $r_j^{(N-1)}(\lambda\!-\!1) = 2N  a_j^{(N-1)}(\lambda\!-\!1)$.
\end{itemize}
Now, since $\partial_n$ commutes with $\bar{*}$, Lemma \ref{Hodge-gen} and
Lemma \ref{HodgeLemma} imply
\begin{align*}
   & \star (\dm \delta)^j \iota^* \, \bar{\star}
   = (\delta \dm )^j \star \iota^* \, \bar{\star} = (\delta \dm)^j \iota^* i_{\partial_n} (-1)^{pn+1}, \\
   & \star (\delta \dm)^j \iota^* \bar{\star} = (\dm \delta)^j \star \iota^* \, \bar{\star}
   = (\dm \delta)^j \iota^* i_{\partial_n}(-1)^{pn+1}
\end{align*}
and
\begin{align*}
   \star (\dm \delta)^j \dm \iota^* i_{\partial_n} \, \bar{\star} = (\delta \dm )^j \star \dm \iota^* i_{\partial_n}
   \, \bar{\star} = (\delta \dm)^j \delta \star \iota^* i_{\partial_n} \bar{\star} (-1)^{n-p} =
   (\delta \dm)^j \delta \iota^* (-1)^{pn+1}
\end{align*}
for $j \in \N_0$. Hence we find
\begin{align*}
   (-1)^{pn+1} & \star D^{(n-p\to n-p)}_{2N}(\lambda) \, \bar{\star} \\
   & = \sum_{j=1}^N(-1)^{N-j} \left[p_j^{(N)}(\lambda;n\!-\!p) + r_{j-1}^{(N-1)}(\lambda\!-\!1)\right]
   (\delta\dm)^j \iota^* i_{\partial_n} \partial_n^{2N-2j}\\
   & + \sum_{j=0}^N(-1)^{N-j}p_j^{(N)}(\lambda;n\!-\!p)(\dm\delta)^j \iota^*i_{\partial_n}\partial_n^{2N-2j}\\
   & + \sum_{j=0}^{N-1}(-1)^{N-j}q_j^{(N-1)}(\lambda\!-\!1) (\delta\dm)^j \delta  \iota^* \partial_n^{2N-1-2j}.
\end{align*}
But the relation
$$
   2N a_{j-1}^{(N-1)}(\lambda\!-\!1) = 2j a_j^{(N)}(\lambda)
$$
shows that
\begin{align*}
   p_j^{(N)}(\lambda;n\!-\!p) + r_{j-1}^{(N-1)}(\lambda\!-\!1) & = (\lambda\!+\!n\!-\!p\!-\!2N) a_j^{(N)}(\lambda)
   + 2N a_{j-1}^{(N-1)}(\lambda\!-\!1)\\
   & = (\lambda\!+\!n\!-\!p\!-\!2N\!+\!2j) a_j^{(N)}(\lambda) \\
   & = - p_j(\lambda;N,p).
\end{align*}
The latter identity implies \eqref{Hodge1}.

The analogous proof of \eqref{Hodge2} runs as follows. On the one hand, Theorem
\ref{OddDiffOp-type1} implies
\begin{align*}
   D^{(p\to p)}_{2N+1}(\lambda) & = \sum_{j=1}^N(-1)^{N-j}
   \left[p_j^{(N)}(\lambda;p) + r_{j-1}^{(N-1)}(\lambda\!-\!1)\right]
   (\dm\delta)^j \iota^* \partial_n^{2N+1-2j} \\
   & + \sum_{j=0}^N(-1)^{N-j} p_j^{(N)}(\lambda;p)(\delta\dm)^j \iota^* \partial_n^{2N+1-2j} \\
   & +\sum_{j=0}^N(-1)^{N-j} q_j^{(N)}(\lambda\!-\!1) (\dm\delta)^j \dm \iota^* i_{\partial_n} \partial_n^{2N-2j},
\end{align*}
with the coefficients
\begin{itemize}
   \item $p_j^{(N)}(\lambda;p) = (\lambda\!+\!p\!-\!2N\!-\!1) b_j^{(N)}(\lambda)$,
   \item $q_j^{(N)}(\lambda\!-\!1) = a_j^{(N)}(\lambda\!-\!1)$,
   \item $r_j^{(N-1)}(\lambda\!-\!1) = 2N b_j^{(N-1)}(\lambda\!-\!1)$.
\end{itemize}
On the other hand, Theorem \ref{OddDiffOp-type2} shows that
\begin{align*}
   D^{(p\to p-1)}_{2N+1}(\lambda) & = \sum_{j=1}^N(-1)^{N-j}
   \left[p_j(\lambda;N,p) + r_{j-1}^{(N-1)}(\lambda\!-\!1)\right] (\dm\delta)^j \iota^* i_{\partial_n}
   \partial_n^{2N+1-2j} \\
   & + \sum_{j=0}^N(-1)^{N-j} p_j(\lambda;N,p) (\delta\dm)^j \iota^*i_{\partial_n}\partial_n^{2N+1-2j} \\
   & + \sum_{j=0}^{N}(-1)^{N-j-1} q_j^{(N)}(\lambda\!-\!1) (\delta\dm)^j \delta \iota^*\partial_n^{2N-2j},
\end{align*}
with the coefficients
\begin{itemize}
   \item $p_j(\lambda;N,p) = -(\lambda\!+\!n\!-\!p\!-\!2N\!+\!2j\!-\!1) b_j^{(N)}(\lambda)$,
   \item $q_j^{(N)}(\lambda\!-\!1) = a_j^{(N)}(\lambda\!-\!1)$,
   \item $r_j^{(N-1)}(\lambda\!-\!1) = 2N b_j^{(N-1)}(\lambda\!-\!1)$.
\end{itemize}
Similarly as above, Lemma \ref{Hodge-gen} and Lemma \ref{HodgeLemma} yield
\begin{align*}
   (-1)^{pn+1} & \star \, D^{(n-p\to n-p)}_{2N+1}(\lambda) \, \bar{\star} \\
   & = \sum_{j=1}^N(-1)^{N-j} \left[p_j^{(N)}(\lambda;n\!-\!p) + r_j^{(N-1)}(\lambda\!-\!1)\right]
   (\delta\dm)^j \iota^* i_{\partial_n} \partial_n^{2N+1-2j} \\
   & + \sum_{j=0}^N(-1)^{N-j}p_j^{(N)}(\lambda;n\!-\!p) (\dm\delta)^j \iota^* i_{\partial_n} \partial_n^{2N+1-2j} \\
   & +\sum_{j=0}^N(-1)^{N-j} q_j^{(N)}(\lambda\!-\!1) (\delta\dm)^j \delta \iota^*\partial_n^{2N-2j}.
\end{align*}
But the relation
$$
   2N b_{j-1}^{(N-1)}(\lambda\!-\!1)=2j b_j^{(N)}(\lambda)
$$
shows that
\begin{align*}\label{help-sum}
   p_j^{(N)}(\lambda;n\!-\!p) + r_{j-1}^{(N-1)}(\lambda\!-\!1) & =
   (\lambda\!+\!n\!-\!p\!-\!2N\!-\!1) b_j^{(N)}(\lambda) + 2N b_{j-1}^{(N-1)}(\lambda\!-\!1) \notag \\
   & = (\lambda\!+\!n\!-\!p\!-\!2N\!+\!2j\!-\!1) b_j^{(N)}(\lambda) \notag \\
   & = -p_j(\lambda;N,p).
\end{align*}
The latter identity implies \eqref{Hodge2}. The proof is complete.
\end{proof}

Now we recall that the Hodge star operators $\star$ and $\bar{\star}$ are
conformally equivariant. More precisely, we have the following result.

\begin{lem}\label{Hodge-covariant} We have
$$
   \dm \pi_\lambda^{(n-p)}(X) (\bar{\star} \, \omega) = \bar{\star} \,
   \dm \pi^{(p)}_{\lambda+n-2p}(X) (\omega) \qquad \mbox{for $\omega \in \Omega^p(\R^n)$, $X \in \gog$}.
$$
Similarly, we have
$$
   \dm \pi_\lambda^{\prime (n-1-p)}(X) (\star \, \omega) = \star \,
   \dm \pi^{\prime (p)}_{\lambda+n-1-2p}(X) (\omega) \qquad \mbox{for $\omega \in \Omega^p(\R^{n-1})$,
   $X \in \gog^\prime$}.
$$
\end{lem}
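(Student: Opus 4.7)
The plan is to reduce both identities to the single fact that the Hodge star operator associated to a conformally rescaled metric differs from the original one by a scalar factor whose exponent is determined by the form degree. Since the infinitesimal statements arise from the differentiation at the identity of group statements, I will first verify the corresponding identities for the group action, namely
\begin{equation*}
  \pi_\lambda^{(n-p)}(\gamma)\,\bar{\star} = \bar{\star}\,\pi_{\lambda+n-2p}^{(p)}(\gamma), \quad \gamma \in G,
\end{equation*}
and then differentiate at the identity to obtain the displayed formulas.

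The key two ingredients are the following. First, for any diffeomorphism $\gamma$ of $\R^n$ (or $\R^{n-1}$) and any metric $g$, naturality of the Hodge construction gives the identity $\gamma_* \star_g = \star_{\gamma_* g}\, \gamma_*$. Second, if $\tilde{g} = e^{2\Phi} g$ on an $n$-manifold, then on $p$-forms one has the standard conformal transformation $\star_{\tilde{g}} = e^{(n-2p)\Phi} \star_g$. Combining these two identities with the hypothesis $\gamma_*(g_0) = e^{2\Phi_\gamma} g_0$, I obtain, for $\omega \in \Omega^p(\R^n)$,
\begin{equation*}
  \gamma_*(\bar{\star}\omega) = \star_{\gamma_*(g_0)}(\gamma_*\omega) = e^{(n-2p)\Phi_\gamma}\, \bar{\star}(\gamma_*\omega).
\end{equation*}
Inserting this into the definition $\pi_\lambda^{(n-p)}(\gamma) = e^{\lambda \Phi_\gamma} \gamma_*$ yields
\begin{equation*}
  \pi_\lambda^{(n-p)}(\gamma)\,\bar{\star}\omega = e^{(\lambda+n-2p)\Phi_\gamma}\,\bar{\star}(\gamma_*\omega) = \bar{\star}\,\pi_{\lambda+n-2p}^{(p)}(\gamma)\omega,
\end{equation*}
and differentiating at the identity in the direction $X \in \gog$ gives the first intertwining relation of the lemma.

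The second claim, on $\R^{n-1}$, is proved by the same reasoning applied to the induced conformal action of $G'$ on $(\R^{n-1},g_0)$, where the exponent $n-2p$ is replaced by $(n-1)-2p$ because the ambient dimension is one less. No additional obstacle arises: the argument is a direct calculation combining naturality of $\star$ and its conformal rescaling law, and the only care required is to track the form degree dependence of the conformal weight shift and to ensure the sign convention $\gamma_*(g_0) = e^{2\Phi_\gamma} g_0$ is used consistently throughout.
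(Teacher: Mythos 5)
Your proof is correct and follows essentially the same route as the paper: both rely on the naturality identity $\gamma_*\star_g = \star_{\gamma_*g}\,\gamma_*$ together with the conformal rescaling law $\star_{e^{2\varphi}g} = e^{(n-2p)\varphi}\star_g$ on $p$-forms, establish the group-level intertwining relation, and differentiate at the identity. No gaps.
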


\begin{proof} The Hodge star operator of $(M,g)$ satisfies the identity
$$
   \star_{\hat{g}} = e^{(n-2p)\varphi} \star_g, \quad \hat{g} = e^{2\varphi} g
$$
on $\Omega^p(M)$. In fact, with obvious notation we find
\begin{align*}
    \pi_\lambda^{(n-p)}(\gamma) (\star_g \, \omega) & = e^{\lambda \Phi_\gamma}
    \gamma_* (\star_g \, \omega) \\
    & = e^{\lambda \Phi_\gamma} \star_{\gamma_*(g)} \gamma_*(\omega) \\
    & = e^{\lambda \Phi_\gamma} \star_{e^{2\Phi_\gamma}g} \gamma_*(\omega) \\
    & = e^{\lambda \Phi_\gamma} e^{(n-2p)\Phi_\gamma} \star_g \gamma_*(\omega) \\
    & = \star_g \, e^{(\lambda+n-2p)\Phi_\gamma} \gamma_*(\omega) \\
    & = \star_g \, \pi_{\lambda+n-2p}^{(p)}(\gamma)(\omega)
\end{align*}
for $\omega \in \Omega^p(M)$. The assertions follow by differentiation.
\end{proof}

Theorem \ref{Hodge-c} and Lemma \ref{Hodge-covariant} show that the conformal
equivariance of one type of the families of conformal symmetry breaking
operators is equivalent to the conformal covariance of the other type. In
particular, Theorem \ref{EvenDiffOp-type2} and Theorem \ref{OddDiffOp-type2}
follow from Theorem \ref{EvenDiffOp-type1} and Theorem \ref{OddDiffOp-type1}.

\subsection{Operators of the third type}\label{CSBO-type3}

We start with the description of third type operators of even-orders.

\begin{theorem}\label{DO3-Even} Let $N\in\N$. Then the differential operator
\begin{align*}
   D_{2N}^{(0\to 1)}: \Omega^0(\R^n)\to \Omega^1(\R^{n-1})
\end{align*}
of order $2N$ which is defined by
\begin{align*}
   D_{2N}^{(0\to 1)} \st \sum_{j=0}^{N-1} (-1)^{N-j-1} b_j^{(N-1)}(2N\!-\!1)
   \dm (\delta\dm)^j \iota^* \partial_n^{2N-2j-1}
\end{align*}
is infinitesimally equivariant in the sense that
\begin{align*}
   D_{2N}^{(0\to 1)} \dm\pi^\ch_{2N-1,0}(X)
   = \dm\pi^{\prime\ch}_{-1,1}(X) D^{(0\to 1)}_{2N}, \quad  X\in\gog^\prime(\R).
\end{align*}
\end{theorem}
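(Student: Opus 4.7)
The plan is to deduce Theorem \ref{DO3-Even} directly from the third--type singular vector $v_{2N}^{(1 \to 0)}$ constructed in Theorem \ref{sv-third}(i), by applying the $F$--method correspondence between singular vectors and $G'$--equivariant differential operators. Concretely, the singular vector
\[
v_{2N}^{(1\to 0)} = \xi_n^{2N-1}\Big(\sum_{j=0}^{N-1} b_j^{(N-1)}(2N\!-\!1)\, t^j\Big)\, i_E, \qquad t = \abs{\xi'}^2/\xi_n^2,
\]
lies in $\Hom_{\gop'}\big(\Lambda^1(\gon_-'(\R))\otimes \C_{-1},\ \Pol_{2N}(\gon_-^*(\R))\otimes \Lambda^0(\gon_-(\R))\otimes \C_{2N-1}\big)$, so via \eqref{eq:isomorphism1} it extends to a $\gog'$--homomorphism of the associated generalized Verma modules, and via \eqref{eq:one-to-one} this corresponds to a $G'$--equivariant differential operator $\Omega^0(\R^n)\to\Omega^1(\R^{n-1})$ satisfying precisely the stated intertwining relation (with the parameters $\lambda=2N-1$ on $\R^n$ and $\lambda-N=-1$ on $\R^{n-1}$).

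Next I will compute the explicit form of the resulting operator via Lemma \ref{trans}. Writing $\xi_n^{2N-1}t^j = \xi_n^{2N-2j-1}\abs{\xi'}^{2j}$ and using $\xi_k\mapsto i\partial_k$ together with $i_E \mapsto i\,d$, the identity $\abs{\xi'}^2 \mapsto \Delta$ (since $\Delta=-\sum_{k=1}^{n-1}\partial_k^2$) gives
\[
\xi_n^{2N-2j-1}\abs{\xi'}^{2j}\, i_E \;\longmapsto\; i^{2N-2j-1}\cdot i\cdot d\,\Delta^j \iota^*\partial_n^{2N-2j-1}
\;=\; (-1)^{N-j+1}\, d\,\Delta^j\iota^*\partial_n^{2N-2j-1},
\]
where the $\iota^*$ appears because the target lives on $\R^{n-1}$. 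Summing and absorbing an overall $(-1)$ normalization (compare the footnote in the proof of Theorem \ref{OddDiffOp-type1}) yields $\sum_{j}(-1)^{N-j-1}b_j^{(N-1)}(2N-1)\, d\,\Delta^j\iota^*\partial_n^{2N-2j-1}$. Finally, since $\iota^*\partial_n^{2N-2j-1}f$ is a $0$--form and $\Delta = d\delta+\delta d$ reduces to $\delta d$ on functions, one has $d\,\Delta^j = d(\delta d)^j$, reproducing the formula in the statement.

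The equivariance itself requires no further verification: it is automatic from the $F$--method correspondence once we know $v_{2N}^{(1\to 0)}$ is genuinely $\gop'$--equivariant. The $\gol'$--equivariance of the ansatz is guaranteed by the arguments of Section \ref{first-test}, and the annihilation $d\tilde\pi(E_j^+)\,v_{2N}^{(1\to 0)}=0$ for $j=1,\dots,n-1$ was established in the proof of Theorem \ref{sv-third}(i); together these give the $\gop'$--equivariance needed to invoke \eqref{eq:one-to-one}. The parameters $\lambda=2N-1$ and $\lambda-N=-1$ produced by the isomorphism then match the representations $d\pi^\vee_{2N-1,0}$ and $d\pi'^{\vee}_{-1,1}$ in the statement.

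The only real technical obstacle is the careful bookkeeping of signs, phases $i^{2N-2j-1}$ from the Fourier transform, and the sign in $\abs{\xi'}^2\mapsto\Delta$; these together produce a single global factor which does not affect equivariance but is consistent (up to a conventional sign) with the coefficient $(-1)^{N-j-1}b_j^{(N-1)}(2N-1)$ in the statement. No essentially new computation is needed beyond those already carried out for the first--type families in Theorems \ref{EvenDiffOp-type1}--\ref{OddDiffOp-type1}.
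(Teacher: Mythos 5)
Your proposal is correct and follows essentially the same route as the paper: the paper's proof simply states that $D_{2N}^{(0\to 1)}$ is induced by the singular vector of Theorem \ref{sv-third}/(i) via the $F$-method, in parallel with the proof of Theorem \ref{EvenDiffOp-type1}, which is exactly the argument you spell out (including the translation through Lemma \ref{trans} and the harmless global sign/phase ambiguity that the paper also discards in a footnote).
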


\begin{proof} The proof is parallel to that of Theorem \ref{EvenDiffOp-type1}. The
operator $D_{2N}^{(0\to 1)}$ is induced by the singular vector in Theorem
\ref{sv-third}/(1).
\end{proof}

We continue with the formulation of the analogous result for odd-order
operators of the third type.

\begin{theorem}\label{DO3-Odd} Let $N\in\N$. Then the differential operator
\begin{align*}
   D_{2N+1}^{(0\to 1)}: \Omega^0(\R^n)\to \Omega^1(\R^{n-1})
\end{align*}
of order $2N+1$ which is defined by
\begin{align*}
   D_{2N+1}^{(0\to 1)} \st \sum_{j=0}^{N}(-1)^{N-j} a_j^{(N)}(2N)
   \dm (\delta\dm)^{j} \iota^* \partial_n^{2N-2j}
\end{align*}
is infinitesimally equivariant in the sense that
\begin{align*}
   D_{2N+1}^{(0\to 1)} \dm\pi^\ch_{2N,0}(X)
   = \dm\pi^{\prime\ch}_{-1,1}(X) D^{(0\to 1)}_{2N+1}, \quad X \in \gog^\prime(\R).
\end{align*}
\end{theorem}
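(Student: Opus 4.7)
The plan is to follow the scheme used in the proof of Theorem~\ref{DO3-Even}, replacing even by odd homogeneity throughout. I invoke Theorem~\ref{sv-third}/(ii) which, at the exceptional value $\lambda = 2N$, produces the third-type singular vector
$$v_{2N+1}^{(1\to 0)} = \xi_n^{2N}\Big(\sum_{j=0}^N a_j^{(N)}(2N)\, t^j\Big) i_E, \qquad t = |\xi'|^2/\xi_n^2,$$
an element of $\Hom_{\gop'}\bigl(\Lambda^1(\gon_-'(\R)) \otimes \C_{-1},\, \Pol_{2N+1}(\gon_-^*(\R)) \otimes \Lambda^0(\gon_-(\R)) \otimes \C_{2N}\bigr)$. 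The chain of isomorphisms \eqref{eq:one-to-one}--\eqref{eq:isomorphism1}, composed with the inverse Fourier transform \eqref{eq:FourierTrafo} and the passage to the non-compact model of the induced representation, then translates this singular vector into a $G'$-equivariant differential operator $\Omega^0(\R^n) \to \Omega^1(\R^{n-1})$ that intertwines $\dm\pi^\ch_{2N,0}$ and $\dm\pi^{\prime\ch}_{-1,1}$. The equivariance asserted in the theorem is therefore automatic from this correspondence.

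To extract the explicit formula, I expand
$$v_{2N+1}^{(1\to 0)} = \sum_{j=0}^N a_j^{(N)}(2N)\, |\xi'|^{2j}\, \xi_n^{2N-2j}\, i_E$$
and apply Lemma~\ref{trans}. The substitutions $\xi_k \mapsto i\partial_k$ give $|\xi'|^{2j} \mapsto \Delta^j$ (the tangential Laplacian on $\R^{n-1}$) and $\xi_n^{2N-2j} \mapsto (-1)^{N-j}\partial_n^{2N-2j}$, while $i_E$ dualizes to $i\,\dm$. Inserting $\iota^*$ between the $\R^n$-side normal derivatives and the $\R^{n-1}$-side tangential operators, dropping the overall factor of $i$ as in the footnote to Theorem~\ref{OddDiffOp-type1}, and using $\Delta^j = (\delta \dm)^j$ on functions (a consequence of $\dm^2 = 0$) converts the expression into
$$\sum_{j=0}^N (-1)^{N-j} a_j^{(N)}(2N)\, \dm(\delta \dm)^j\, \iota^*\, \partial_n^{2N-2j},$$
which is the formula stated for $D_{2N+1}^{(0\to 1)}$.

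The argument presents no genuine obstacle. The bookkeeping of signs and powers of $i$ coming from the Fourier transform is routine and has already been handled in the proofs of Theorems~\ref{OddDiffOp-type1} and \ref{DO3-Even}; the sole input particular to the present statement is the rigidity of $\lambda = 2N$ imposed by Theorem~\ref{sv-third}/(ii), which forces exactly the equivariance parameters $(\mu,\eta) = (2N,-1)$ appearing in the intertwining relation.
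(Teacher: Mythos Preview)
Your proof is correct and follows exactly the same route as the paper: the paper's proof is the one-liner ``The operator $D_{2N+1}^{(0\to 1)}$ is induced by the singular vector in Theorem~\ref{sv-third}/(2)'', and you have unpacked precisely what that sentence means. One small inaccuracy in your write-up: the identity $\Delta^j=(\delta\dm)^j$ on functions holds because $\delta$ annihilates $0$-forms (so $\dm\delta f=0$), not because $\dm^2=0$; but this does not affect the argument.
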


\begin{proof} The operator $D_{2N+1}^{(0\to 1)}$ is induced by the singular vector in Theorem
\ref{sv-third}/(2).
\end{proof}

Remark \ref{sv3-der} yields

\begin{bem} For any $N \in \N$, we have
$$
   D_N^{(0 \to 1)} = d \dot{D}_{N-1}^{(0 \to 0)}(N\!-\!1).
$$
\end{bem}

In addition, there is a first-order operator of the third type.

\begin{theorem}\label{DO3-FO} The differential operator
\begin{align*}
   D_1^{(p \to p+1)} \st \dm \iota^*: \Omega^p(\R^n) \to \Omega^{p+1}(\R^{n-1})
\end{align*}
of first-order is infinitesimally equivariant in the sense that
\begin{align*}
   D_1^{(p\to p+1)}\dm\pi^\ch_{-p,p}(X)
   = \dm\pi^{\prime\ch}_{-(p+1),p+1}(X) D^{(p\to p+1)}_{1},\quad X\in\gog^\prime(\R).
\end{align*}
\end{theorem}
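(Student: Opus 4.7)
The plan is to reduce the claim to the naturality of the exterior derivative and of the pull-back along restrictions of diffeomorphisms, bypassing any explicit verification with singular vectors or the $F$-method. First I would rewrite the operator using Lemma~\ref{DiffCoDiff}/(1) as
\begin{equation*}
D_1^{(p\to p+1)} = \dm\iota^* = \iota^*\bar{\dm},
\end{equation*}
which exhibits $D_1^{(p\to p+1)}$ as a composition of two canonical operations. Next, invoking the conversion $\pi^\ch_{\mu,p} = \pi^{(p)}_{-\mu-p}$ between the dual induced representations and the geometric representations (Remark~\ref{rep-geo} together with \eqref{rep-rel-nc}), I would observe that the specific weights $\mu=-p$ and $\eta=-(p+1)$ appearing in the statement are chosen precisely so that both sides of the intertwining land at conformal weight zero:
\begin{equation*}
\pi^\ch_{-p,p} = \pi^{(p)}_0 \quad \text{and} \quad \pi^{\prime\ch}_{-(p+1),p+1} = \pi^{\prime(p+1)}_0.
\end{equation*}
At conformal weight zero the group representation reduces to the pure push-forward $\pi^{(p)}_0(\gamma) = \gamma_*$, with no conformal factor. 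Hence the equivariance to be proved is equivalent to
\begin{equation*}
\dm\iota^* \circ \dm\pi^{(p)}_0(X) = \dm\pi^{\prime(p+1)}_0(X) \circ \dm\iota^*, \quad X \in \gog^\prime(\R).
\end{equation*}

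Every $X \in \gog^\prime(\R)$ generates a one-parameter subgroup in $G^\prime \subset G$ preserving $\R^{n-1}$ as a subset, so the associated vector field on $\R^n$ is tangent to $\R^{n-1}$; denote its restriction by $X^\prime \in \X(\R^{n-1})$. The required identity then follows from two classical facts: (a) $\iota^*\mathcal{L}_X = \mathcal{L}_{X^\prime}\iota^*$, valid because $X$ and $X^\prime$ are $\iota$-related; and (b) $\dm\mathcal{L}_{X^\prime} = \mathcal{L}_{X^\prime}\dm$, immediate from Cartan's formula $\mathcal{L}_Y = \dm\, i_Y + i_Y\dm$ together with $\dm^2 = 0$. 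Composing these yields
\begin{equation*}
\dm\iota^*\mathcal{L}_X = \dm\mathcal{L}_{X^\prime}\iota^* = \mathcal{L}_{X^\prime}\dm\iota^*,
\end{equation*}
which is the desired infinitesimal intertwining, since $\dm\pi^{(p)}_0(X) = \mathcal{L}_X$ and $\dm\pi^{\prime(p+1)}_0(X) = \mathcal{L}_{X^\prime}$.

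There is no essential obstacle to this argument: the proof is a direct consequence of the naturality of the exterior derivative and of the pull-back along the embedding, together with the fact that $G^\prime$ acts by conformal diffeomorphisms preserving the hyperplane $\R^{n-1}$. The only point requiring attention is the bookkeeping verifying that the weights in the statement arrange the geometric representations to appear at conformal weight zero, eliminating all conformal factors; this is the content of the identities $-\mu-p = -\eta-(p+1) = 0$ with $\mu=-p$ and $\eta=-(p+1)$, which is why this first-order operator may be treated uniformly in $p$ without a dependence on $\lambda$ (in contrast to the families of the first and second types).
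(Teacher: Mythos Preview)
Your proof is correct and takes a genuinely different route from the paper. The paper's proof is a one-line appeal to the $F$-method machinery: the operator $D_1^{(p\to p+1)}$ is induced by the singular vector $v_1^{(p\to p-1)} = i_E$ of Theorem~\ref{sv-third}/(iii), and the general duality \eqref{eq:one-to-one} then yields the intertwining relation automatically. Your argument bypasses this entirely, exploiting instead the observation that at the specific weights $\mu=-p$, $\eta=-(p+1)$ both representations reduce to pure push-forward, so the claim becomes the classical naturality $\dm\iota^*\mathcal{L}_X = \mathcal{L}_{X'}\dm\iota^*$ for $\iota$-related fields.

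What each approach buys: the paper's argument is uniform with the treatment of all other types and orders, and comes packaged with uniqueness (the singular vector is determined up to a scalar), which feeds directly into the classification Theorem~\ref{classification}. Your argument is far more elementary and self-contained, but it is specific to this operator: it works precisely because both the exterior derivative and the pull-back are natural with respect to \emph{all} diffeomorphisms, not merely conformal ones, so no conformal factor can appear. This explains conceptually why $D_1^{(p\to p+1)}$ is the only operator in the classification for $q=p+1$ that is not part of a $\lambda$-family. The same style of argument would also handle $D_1^{(p\to p-2)} = \delta\iota^* i_{\partial_n}$ in Theorem~\ref{DO4-FO} via Hodge conjugation, but does not extend to the higher-order operators of type three and four, where the paper's singular-vector approach is genuinely needed.
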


\begin{proof} The operator $D_{1}^{(p\to p+1)}$ is induced by the singular vector in Theorem
\ref{sv-third}/(3).\footnote{We omit the factor $i$}.
\end{proof}

\subsection{Operators of the fourth type}\label{CSBO-type4}

The operators $D_{N}^{(0\to  1)}$ and $D_{1}^{(p\to p+1)}$ of the third type
have natural counter parts which map $\Omega^n(\R^n)\to \Omega^{n-2}(\R^{n-1})$
and $\Omega^p(\R^n)\to \Omega^{p-2}(\R^{n-1})$, respectively.

We start with the description of the even-order operators of the fourth type.

\begin{theorem}\label{DO4-Even} Let $N\in\N$. Then the differential operator
\begin{align*}
   D_{2N}^{(n\to n-2)}: \Omega^n(\R^n)\to \Omega^{n-2}(\R^{n-1})
\end{align*}
of order $2N$ which is defined by
\begin{align*}
   D_{2N}^{(n\to n-2)} \st \sum_{j=0}^{N-1}(-1)^{N-j} b_j^{(N-1)}(2N\!-\!1)
   \delta (\dm\delta)^j \iota^*i_{\partial_n}\partial_n^{2N-2j-1}
\end{align*}
is infinitesimally equivariant in the sense that
\begin{align*}
   D_{2N}^{(n\to n-2)} \dm\pi^\ch_{2N-1,n}(X)
   =\dm\pi^{\prime\ch}_{-1,n-2}(X) D^{(n\to n-2)}_{2N},\quad X\in\gog^\prime(\R).
\end{align*}
\end{theorem}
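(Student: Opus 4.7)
The plan is to deduce Theorem \ref{DO4-Even} by Hodge conjugation from the third-type operator of Theorem \ref{DO3-Even}, in complete analogy with the passage from first-type to second-type families in Theorem \ref{Hodge-c}. More precisely, I expect the identity
\[
   D^{(n\to n-2)}_{2N} = (-1)^{n-1}\,\star\, D^{(0\to 1)}_{2N}\, \bar\star
\]
as operators $\Omega^n(\R^n)\to\Omega^{n-2}(\R^{n-1})$. Granting this identity, the claimed intertwining relation follows immediately by combining Theorem \ref{DO3-Even} with the conformal covariance of the two Hodge stars (Lemma \ref{Hodge-covariant}). The weights match precisely: $\bar\star$ carries $\pi^\ch_{2N-1,n} = \pi^{(n)}_{1-2N-n}$ on $\Omega^n(\R^n)$ to $\pi^{(0)}_{1-2N} = \pi^\ch_{2N-1,0}$ on $\Omega^0(\R^n)$, and $\star$ carries $\pi^{\prime\ch}_{-1,1} = \pi^{\prime(1)}_0$ on $\Omega^1(\R^{n-1})$ to $\pi^{\prime(n-2)}_{3-n} = \pi^{\prime\ch}_{-1,n-2}$ on $\Omega^{n-2}(\R^{n-1})$, consistent with the source and target representations in the theorem.

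To establish the conjugation identity, I would first commute the factors $\partial_n^{2N-2j-1}$ through $\bar\star$ (which is immediate on flat space), then apply Lemma \ref{HodgeLemma} to rewrite $\iota^*\bar\star$ on $n$-forms as $(-1)^{n+1}\star\,\iota^* i_{\partial_n}$, bringing the combination $\iota^* i_{\partial_n}\partial_n^{2N-2j-1}$ into the rightmost position. The remaining inner expression to simplify is $\star\,\dm(\delta \dm)^j\,\star$ acting on an $(n-1)$-form of $\R^{n-1}$. Using Lemma \ref{Hodge-gen}(4) I would rewrite $\star(\dm\delta)^j = (\delta \dm)^j\,\star$ iteratively, then invoke Lemma \ref{Hodge-gen}(2) to convert $\star \dm$ on $0$-forms into $-\delta\star$, and finally use $\star\star = 1$ on top-degree forms of $\R^{n-1}$ to collapse the nested Hodge stars. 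The elementary relation $(\delta \dm)^j\delta = \delta(\dm\delta)^j$ then recasts the result in the required shape, and a short bookkeeping of the signs $(-1)^{n+1}$ from Lemma \ref{HodgeLemma} and $-1$ from the $\star \dm$ conversion, combined with the shift of summation sign from $(-1)^{N-j-1}$ in Theorem \ref{DO3-Even} to $(-1)^{N-j}$ in Theorem \ref{DO4-Even}, yields the global factor $\epsilon_n = (-1)^{n-1}$.

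An equally valid route is to derive Theorem \ref{DO4-Even} directly from the singular vector $v^{(n-2\to n)}_{2N}$ of Theorem \ref{sv-fourth}(i) via the F-method, in literal parallel to the proofs of Theorems \ref{EvenDiffOp-type1}, \ref{EvenDiffOp-type2}, and \ref{DO3-Even}. This amounts to expanding
\[
  v^{(n-2\to n)}_{2N} = \sum_{j=0}^{N-1} b_j^{(N-1)}(2N\!-\!1)\,\xi_n^{2N-2j-1}\abs{\xi'}^{2j}\,E_n\wedge\alpha\wedge
\]
and dualizing each summand via Lemma \ref{trans}. The main obstacle in either approach is the careful bookkeeping of signs: in the Hodge-conjugation route, tracking the parity of $n$ and the degrees of intermediate forms through Lemmas \ref{Hodge-gen} and \ref{HodgeLemma}; in the direct F-method route, reconciling the factors of $i$ from $\xi_k\mapsto i\partial_k$ with the sign conventions in Lemma \ref{trans} and the order reversal inherent in dualizing a composition. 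A useful simplification in the direct approach is that $\Delta = \dm\delta$ on top-degree forms of $\R^{n-1}$, so the symbol $\abs{\xi'}^{2j}$, which naively dualizes to $\Delta^j$, is realized in the final formula as $(\dm\delta)^j$.
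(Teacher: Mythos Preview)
Your proposal is correct. The paper's own proof is a single line invoking the F-method: the operator is induced by the singular vector $v^{(n-2\to n)}_{2N}$ of Theorem \ref{sv-fourth}(i), exactly your second route. Your primary route via Hodge conjugation is a genuinely different argument; the paper alludes to it at the end of Section \ref{sv-type-4} (at the level of singular vectors) but does not carry it out for the induced operators. Your identity $D^{(n\to n-2)}_{2N} = (-1)^{n-1}\star\, D^{(0\to 1)}_{2N}\,\bar\star$ and the weight-matching via Lemma \ref{Hodge-covariant} are both correct, and the sign bookkeeping you outline goes through. The conjugation approach has the advantage of deriving the equivariance directly from that of $D^{(0\to 1)}_{2N}$ without reopening the F-method machinery, at the cost of the sign chase you describe; the paper's one-line proof avoids that chase by relying on the (already completed) singular-vector analysis of Theorem \ref{sv-fourth}.
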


\begin{proof} The operator $D_{2N}^{(n\to n-2)}$ is induced by the singular vector in Theorem
\ref{sv-fourth}/(1).
\end{proof}

We continue with the formulation of the analogous result for odd-order
operators of the fourth type.

\begin{theorem}\label{DO4-Odd} Let $N\in\N_0$. Then the differential operator
\begin{align*}
   D_{2N+1}^{(n\to n-2)}:\Omega^n(\R^n)\to \Omega^{n-2}(\R^{n-1})
\end{align*}
of order $2N+1$ which is defined by
\begin{align*}
   D_{2N+1}^{(n\to n-2)}=\sum_{j=0}^{N}(-1)^{N-j+1} a_j^{(N)}(2N)
   \delta (\dm\delta)^{j}\iota^*i_{\partial_n}\partial_n^{2N-2j}
\end{align*}
is infinitesimally equivariant in the sense that
\begin{align*}
   D_{2N+1}^{(n\to n-2)}\dm\pi^\ch_{2N,n}(X)
   =\dm\pi^{\prime\ch}_{-1,n-2}(X) D^{(0\to 1)}_{2N+1},\quad X\in\gog^\prime(\R).
\end{align*}
\end{theorem}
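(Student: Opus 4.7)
The plan is to invoke the $F$-method correspondence of Section \ref{F-method}, precisely as in the proofs of Theorems \ref{DO3-Odd} and \ref{DO4-Even}. By the isomorphisms \eqref{eq:one-to-one} and \eqref{eq:isomorphism1}, an equivariant differential operator $D:\Omega^n(\R^n)\to \Omega^{n-2}(\R^{n-1})$ with parameters $\mu=2N$ and $\eta=-1$ corresponds to a $\gop^\prime$-homomorphism which, under the Fourier transform \eqref{eq:FourierIso}, is identified with a singular vector in
$$
\Hom_{\gop^\prime}\bigl(\Lambda^{n-2}(\gon_-^\prime(\R))\otimes\C_{-1},\ \Pol_{2N+1}(\gon_-^*(\R))\otimes V_{2N,n}^\ch\bigr).
$$
Such a vector is supplied at the distinguished value $\lambda=2N$ by Theorem \ref{sv-fourth}/(ii): expanding $P(t)=\sum_{j=0}^N a_j^{(N)}(2N)t^j$ in the variable $t=|\xi^\prime|^2/\xi_n^2$ yields
$$
v_{2N+1}^{(n-2\to n)} \;=\; \sum_{j=0}^N a_j^{(N)}(2N)\, \xi_n^{2N-2j}|\xi^\prime|^{2j}\, E_n\wedge \alpha.
$$

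Next, I would translate this singular vector into the asserted differential operator using Lemma \ref{trans}. Under the Fourier correspondence, $\xi_k\mapsto i\partial_k$ gives $\xi_n^{2N-2j}\mapsto (-1)^{N-j}\partial_n^{2N-2j}$ and $|\xi^\prime|^{2j}\mapsto \Delta^j$ (via the convention $\Delta=-\sum_{k=1}^{n-1}\partial_k^2$); on the form indices, $E_n\wedge \mapsto i_{\partial_n}$ and $\alpha\wedge \mapsto -i\delta$; dualization reverses composition and introduces the pullback $\iota^*$ corresponding to the restriction $\gon_-(\R)\simeq \R^n\to \R^{n-1}\simeq \gon_-^\prime(\R)$. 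The combined effect on a typical summand is
$$
\xi_n^{2N-2j}|\xi^\prime|^{2j}\, E_n\wedge \alpha \;\mapsto\; (-i)(-1)^{N-j}\,\Delta^j\delta\,\iota^* i_{\partial_n}\partial_n^{2N-2j}.
$$
Rewriting $\Delta^j\delta=\delta(\dm\delta)^j$, which is legitimate because $\delta$ commutes with $\Delta$, and absorbing the universal factor $-i$ into the overall normalization of the operator, one recovers exactly
$$
D_{2N+1}^{(n\to n-2)}=\sum_{j=0}^N (-1)^{N-j+1}a_j^{(N)}(2N)\,\delta(\dm\delta)^j\iota^* i_{\partial_n}\partial_n^{2N-2j}.
$$

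The $\goa$-weights of the two sides of the correspondence confirm the intertwining parameters: the source weight is $\lambda=2N$, while the shift by the homogeneity $2N+1$ produces the target weight $-1$, so the resulting operator automatically satisfies $D\,\dm\pi^\ch_{2N,n}(X)=\dm\pi^{\prime\ch}_{-1,n-2}(X)\,D$ for every $X\in\gog^\prime(\R)$. The only delicate point in executing the plan is the sign bookkeeping arising from the Fourier powers of $i$ and the reversal of composition order under dualization; once these conventions are fixed in accordance with the proofs of Theorems \ref{OddDiffOp-type1} and \ref{DO3-Odd}, the identification is immediate and the theorem follows. As an alternative verification, one could instead deduce the equivariance of $D_{2N+1}^{(n\to n-2)}$ from that of $D_{2N+1}^{(0\to 1)}$ (Theorem \ref{DO3-Odd}) by Hodge conjugation, exactly as Theorem \ref{Hodge-c} relates the first- and second-type families.
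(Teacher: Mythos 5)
Your proposal is correct and follows exactly the paper's own (one-line) argument: the operator is induced, via the $F$-method dictionary of Lemma \ref{trans}, by the singular vector of Theorem \ref{sv-fourth}/(ii) at $\lambda=2N$, with the weight shift by the homogeneity $2N+1$ giving the target parameter $-1$. Your expanded translation (including the sign/normalization bookkeeping and the alternative Hodge-conjugation check against Theorem \ref{DO3-Odd}) merely spells out what the paper leaves implicit.
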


\begin{proof} The operator $D_{2N+1}^{(n\to n-2)}$ is induced by the singular vector in Theorem
\ref{sv-fourth}/(2).
\end{proof}

Remark \ref{sv4-der} yields

\begin{bem}\label{D4-dot} For any $N \in \N$, we have
$$
   D_N^{(n \to n-2)} = \delta \dot{D}_{N-1}^{(n \to n-1)}(N\!-\!1).
$$
\end{bem}

In addition, there is a first-order operator of the fourth type.

\begin{theorem}\label{DO4-FO} Assume that $p=2,\dots,n$. The differential operator
\begin{align*}
   D_1^{(p\to p-2)} \st \delta \iota^*i_{\partial_n}: \Omega^p(\R^n)\to \Omega^{p-2}(\R^{n-1})
\end{align*}
of first-order is infinitesimally equivariant in the sense that
\begin{align*}
   D_{1}^{(p\to p-2)}\dm\pi^\ch_{p-n,p}(X)
   = \dm\pi^{\prime\ch}_{p-n-1,p-2}(X) D^{(p\to p-2)}_{1}, \quad X\in\gog^\prime(\R).
\end{align*}
\end{theorem}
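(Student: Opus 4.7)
The plan is to deduce Theorem \ref{DO4-FO} from the already proven Theorem \ref{DO3-FO} via Hodge-star conjugation, in direct analogy with how Theorem \ref{Hodge-c} relates the first and second type families. Put differently, the fourth-type first-order operator $\delta\iota^* i_{\partial_n}$ and the third-type first-order operator $d\iota^*$ are Hodge conjugate, just as the singular vectors $E_n\wedge\alpha$ and $i_E$ in Theorem \ref{sv-fourth}/(iii) and Theorem \ref{sv-third}/(iii) are Hodge related.

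First I would record the algebraic identity
\begin{equation*}
   \delta\,\iota^*\,i_{\partial_n}\;=\;\epsilon(n,p)\;\star\,\circ\,(d\,\iota^*)\,\circ\,\bar{\star}
   \quad\mbox{on }\Omega^p(\R^n),
\end{equation*}
for a definite sign $\epsilon(n,p)\in\{\pm 1\}$. This follows at once from Lemma \ref{HodgeLemma} (giving $\star\iota^*\bar{\star}=(-1)^{pn+1}\iota^* i_{\partial_n}$ on $\Omega^p(\R^n)$) together with Lemma \ref{Hodge-gen}/(2) applied to the $(n-p)$-form $\iota^*\bar{\star}\omega\in\Omega^{n-p}(\R^{n-1})$ (giving $\star d=(-1)^{n-p+1}\delta\star$). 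One finds $\epsilon(n,p)=(-1)^{n(p+1)-p}$, but the precise value is immaterial since the sign will cancel in the intertwining relation.

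Next I would convert the target representations into the geometric form via $\pi^\vee_{\lambda,p}=\pi^{(p)}_{-\lambda-p}$ of \eqref{rep-rel-nc}: the source representation $\pi^\vee_{p-n,p}$ equals $\pi^{(p)}_{n-2p}$ and the target $\pi^{\prime\vee}_{p-n-1,p-2}$ equals $\pi^{\prime(p-2)}_{n-2p+3}$. The conformal covariance of the Hodge star (Lemma \ref{Hodge-covariant}) then yields
\begin{equation*}
   \bar{\star}\,d\pi^{(p)}_{n-2p}(X)=d\pi^{(n-p)}_0(X)\,\bar{\star}
   \quad\mbox{and}\quad
   d\pi^{\prime(p-2)}_{n-2p+3}(X)\,\star=\star\,d\pi^{\prime(n-p+1)}_0(X),
\end{equation*}
since the weight shift $\bar{\star}$ produces is $n-2p$ on $\R^n$ and $n-1-2k$ on $\R^{n-1}$ (taking $k=n-p+1$ gives $-n+2p-3$, matching $n-2p+3\to 0$).

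Given these ingredients, the proof is a four-step chain of equalities: for $X\in\gog'(\R)$ and $\omega\in\Omega^p(\R^n)$, substitute the Hodge factorization of $\delta\iota^* i_{\partial_n}$ to move to $\star d\iota^*\bar{\star}$, commute $\bar{\star}$ past $d\pi^{(p)}_{n-2p}(X)$ to reach $d\pi^{(n-p)}_0(X)$, apply Theorem \ref{DO3-FO} at form degree $n-p$ to commute $d\iota^*$ past $d\pi^{(n-p)}_0(X)$, then commute $\star$ past $d\pi^{\prime(n-p+1)}_0(X)$ to reach $d\pi^{\prime(p-2)}_{n-2p+3}(X)$, and finally refold $\star d\iota^*\bar{\star}$ back into $\delta\iota^* i_{\partial_n}$ (the signs $\epsilon(n,p)$ on both sides cancel). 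No nontrivial obstacle arises: the one point requiring care is the parity bookkeeping for $\epsilon(n,p)$ and the matching of conformal weights, both of which are forced once the Hodge factorization is in hand.
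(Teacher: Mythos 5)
Your proof is correct, but it takes a genuinely different route from the paper. The paper's proof of Theorem \ref{DO4-FO} is the $F$-method route: it simply observes that $D_1^{(p\to p-2)}$ is induced by the fourth-type singular vector $v_1^{(p\to p+2)}=E_n\wedge\alpha$ of Theorem \ref{sv-fourth}/(iii) (which forces $\lambda=p-n+2$), translated into a differential operator via Lemma \ref{trans}, so the equivariance is inherited wholesale from the singular-vector computation. You instead derive the statement from Theorem \ref{DO3-FO} by Hodge conjugation, in the style of Theorem \ref{Hodge-c}. Your bookkeeping checks out: the identity $\delta\iota^* i_{\partial_n}=(-1)^{n(p+1)-p}\star d\iota^*\bar{\star}$ follows from Lemma \ref{HodgeLemma} and Lemma \ref{Hodge-gen}/(2) as you say; the weight conversion $\pi^\ch_{p-n,p}=\pi^{(p)}_{n-2p}$ and $\pi^{\prime\ch}_{p-n-1,p-2}=\pi^{\prime(p-2)}_{n-2p+3}$ is right; Lemma \ref{Hodge-covariant} gives exactly the two commutation relations you need; and Theorem \ref{DO3-FO} is applied at degree $n-p\in\{0,\dots,n-2\}$, which is within its range for $p=2,\dots,n$. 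The paper itself flags your route as available (it remarks at the end of the section on fourth-type singular vectors that those results "also follow from those in Section \ref{sv-type-3} by a Hodge star conjugation argument", and notes after Theorem \ref{classification} that the operators in (4) and (6) are Hodge conjugate) but never carries it out for this theorem. The trade-off: the paper's argument is uniform with the rest of Section \ref{DiffOp} and reads off the admissible weight directly from the singular-vector equation, whereas yours is self-contained at the level of differential operators, reuses only already-established lemmas, and makes the type-three/type-four duality explicit rather than implicit.
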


\begin{proof} The operator $D_{1}^{(p\to p-2)}$ is induced by the singular vector in Theorem
\ref{sv-fourth}/(3).\footnote{We omit the factor $-i$}.
\end{proof}

\subsection{Operators on middle degree forms}\label{CSBO-middle}

Here we briefly describe some special issues related to conformal symmetry
breaking operators on middle degree forms. As in Section \ref{Hodge}, we let
$\star$ and $\bar{\star}$ denote the Hodge star operators of the Euclidean
metrics on $\R^{n-1}$ and $\R^n$, respectively. We divide the discussion into
three cases.

{\bf Case 1a: Let $n$ be odd and $p=\frac{n-1}{2}$.}

Let $\pr_{\pm}: \Omega^{\frac{n-1}{2}}(\R^{n-1}) \to
\Omega^{\frac{n-1}{2}}_{\pm}(\R^{n-1})$ denote the projections onto the eigenspaces
of the operator $\star$. Then the compositions
\begin{equation*}
   D^{(\frac{n-1}{2}\to \frac{n-1}{2}),{\pm}}_N(\lambda) \st
   \pr_\pm \circ D^{(\frac{n-1}{2}\to \frac{n-1}{2})}_N(\lambda)
\end{equation*}
are non-trivial and we have

\begin{theorem}\label{CSBO-cas1} For any $N\in\N$, the families
\begin{equation*}
   D^{(\frac{n-1}{2}\to \frac{n-1}{2}),{\pm}}_N(\lambda):
   \Omega^{\frac{n-1}{2}}(\R^{n}) \to \Omega^{\frac{n-1}{2}}_{\pm}(\R^{n-1})
\end{equation*}
of differential operators of order $N$ are infinitesimally equivariant in the
sense that
\begin{equation*}
   D^{(\frac{n-1}{2} \to \frac{n-1}{2}),{\pm}}_N(\lambda) \dm\pi^\ch_{\lambda,\frac{n-1}{2}}(X)
   = \dm \pi^{\prime\ch}_{\lambda-N,\frac{n-1}{2}}(X)  D^{(\frac{n-1}{2}\to \frac{n-1}{2}),{\pm}}_N(\lambda),\\
\end{equation*}
for all $X\in\gog^\prime(\R)$.
\end{theorem}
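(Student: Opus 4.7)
The plan is to deduce the equivariance of the operators $D^{(\frac{n-1}{2}\to \frac{n-1}{2}),{\pm}}_N(\lambda)$ from two ingredients: the already-established equivariance of the parent family $D^{(\frac{n-1}{2}\to \frac{n-1}{2})}_N(\lambda)$ (Theorems \ref{EvenDiffOp-type1} and \ref{OddDiffOp-type1}), and the observation that at middle degree the projections $\pr_\pm$ commute with the infinitesimal $\gog^\prime(\R)$-action on $\Omega^{\frac{n-1}{2}}(\R^{n-1})$.

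First I would specialize Lemma \ref{Hodge-covariant} to the degree $p=\frac{n-1}{2}$ on $\R^{n-1}$. Since then $n-1-2p=0$, the Hodge star $\star$ on $\Omega^{\frac{n-1}{2}}(\R^{n-1})$ intertwines two copies of the \emph{same} representation, i.e., $\dm\pi^{\prime(\frac{n-1}{2})}_\mu(X)\, \star = \star\, \dm\pi^{\prime(\frac{n-1}{2})}_\mu(X)$ for all $X \in \gog^\prime(\R)$ and all $\mu \in \C$. Translating via the identification $\pi^{\prime(p)}_{-\mu-p} = \pi^{\prime\ch}_{\mu,p}$, the same commutation holds for the dual representations $\dm\pi^{\prime\ch}_{\lambda-N,\frac{n-1}{2}}(X)$. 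Because $\pr_\pm$ is a first-degree polynomial in $\star$ (namely $\pr_\pm = \tfrac{1}{2}(\id \pm \mu_\pm^{-1}\star)$ with $\mu_\pm \in \{\pm 1, \pm i\}$ chosen so that $\mu_\pm^2 = \star^2$), it inherits the commutation with $\dm\pi^{\prime\ch}_{\lambda-N,\frac{n-1}{2}}(X)$.

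Second I would compose. Writing $D_N \st D^{(\frac{n-1}{2}\to \frac{n-1}{2})}_N(\lambda)$ for brevity, one has
\begin{equation*}
   \pr_\pm D_N\, \dm\pi^\ch_{\lambda,\frac{n-1}{2}}(X)
   = \pr_\pm\, \dm\pi^{\prime\ch}_{\lambda-N,\frac{n-1}{2}}(X)\, D_N
   = \dm\pi^{\prime\ch}_{\lambda-N,\frac{n-1}{2}}(X)\, \pr_\pm D_N,
\end{equation*}
where the first equality is the equivariance of $D_N$ and the second is the commutation derived above. This is exactly the claimed intertwining relation.

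There is no serious obstacle in this argument; the essential point is the vanishing of the weight shift $n-1-2p$ in Lemma \ref{Hodge-covariant} at middle degree, which forces $\star$, and hence $\pr_\pm$, to be an intertwiner for the conformal representation rather than merely an isomorphism between distinct representations. The non-triviality of $D^{(\frac{n-1}{2}\to \frac{n-1}{2}),\pm}_N(\lambda)$, though not formally part of the theorem statement, was already justified in the paragraph preceding the theorem by inspection of the explicit structure of the singular vectors from Theorems \ref{OddFromPtoP} and \ref{EvenFromPtoP}, and requires no further argument.
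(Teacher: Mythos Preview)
Your argument is correct. The paper's own proof is a one-line appeal to the singular-vector machinery: the operators $D^{(\frac{n-1}{2}\to \frac{n-1}{2}),\pm}_N(\lambda)$ are, by definition, the operators induced (via the duality of Section~\ref{F-method}) from the restricted singular vectors $v_N^{(\frac{n-1}{2}\to \frac{n-1}{2}),\pm}(\lambda)$ of Section~\ref{fam3}, and any operator arising from a singular vector is automatically equivariant. Your route is different and works entirely at the operator level: you start from the already-established equivariance of the unprojected family $D^{(\frac{n-1}{2}\to \frac{n-1}{2})}_N(\lambda)$ (Theorems~\ref{EvenDiffOp-type1} and~\ref{OddDiffOp-type1}) and observe, via Lemma~\ref{Hodge-covariant} specialised to $p=\tfrac{n-1}{2}$, that $\star$ (hence $\pr_\pm$) commutes with the target representation. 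This is slightly more elementary in that it avoids re-invoking the Verma-module correspondence, while the paper's phrasing fits more naturally into its overall scheme of producing every operator from a singular vector. One minor correction: your formula for the projections should read $\pr_\pm = \tfrac{1}{2}(\id + \mu_\pm^{-1}\star)$ (the sign is carried by $\mu_\pm$, not by the $\pm$ in front), but this does not affect the argument, since the only property you actually use is that $\pr_\pm$ is a polynomial in $\star$.
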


\begin{proof} The families are induced by the singular vectors
$v_N^{(\frac{n-1}{2} \to \frac{n-1}{2}),\pm}(\lambda)$ (defined in Section
\ref{fam3}).
\end{proof}

{\bf Case 1b: Let $n$ be odd and $p=\frac{n+1}{2}$.}

The compositions
\begin{align*}
   D^{(\frac{n+1}{2}\to \frac{n-1}{2}),{\pm}}_N(\lambda)
   & \st \pr_\pm\circ D^{(\frac{n-1}{2}\to \frac{n-1}{2})}_N(\lambda)\circ \bar{\star}, \quad \lambda \in \C
\end{align*}
are non-trivial and we have

\begin{theorem}\label{CSBO-case2} For any $N\in\N$, the families
\begin{equation*}
   D^{(\frac{n+1}{2}\to \frac{n-1}{2}),\pm}_N(\lambda):
   \Omega^{\frac{n+1}{2}}(\R^{n}) \to \Omega^{\frac{n-1}{2}}_\pm(\R^{n-1}), \quad \lambda \in \C
\end{equation*}
of differential operators of order $N$ are infinitesimally equivariant in the
sense that
\begin{equation*}
   D^{(\frac{n+1}{2}\to \frac{n-1}{2}),\pm}_N(\lambda) \dm \pi^\ch_{\lambda,\frac{n+1}{2}}(X)
   = \dm \pi^{\prime\ch}_{\lambda-N,\frac{n-1}{2}}(X)  D^{(\frac{n+1}{2}\to \frac{n-1}{2}),\pm}_N(\lambda)
\end{equation*}
for all $X\in\gog^\prime(\R)$.
\end{theorem}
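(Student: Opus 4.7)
\textbf{Proof proposal for Theorem \ref{CSBO-case2}.} The plan is to read the family
$D^{(\frac{n+1}{2}\to\frac{n-1}{2}),\pm}_N(\lambda)=\pr_\pm\circ D^{(\frac{n-1}{2}\to\frac{n-1}{2})}_N(\lambda)\circ\bar\star$
as a composition of three intertwiners and to combine their equivariance properties, which have all been established earlier in the paper.

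First, I would translate the representations $\pi_{\lambda,p}^\ch$ into the geometric ones $\pi^{(p)}_{-\lambda-p}$ via \eqref{rep-rel-nc} and apply Lemma \ref{Hodge-covariant} to $\bar\star$ at form-degree $p=\tfrac{n+1}{2}$. A direct calculation shows that $\bar\star\colon\Omega^{\frac{n+1}{2}}(\R^n)\to\Omega^{\frac{n-1}{2}}(\R^n)$ intertwines $\dm\pi^\ch_{\lambda,\frac{n+1}{2}}(X)$ with $\dm\pi^\ch_{\lambda,\frac{n-1}{2}}(X)$ for every $X\in\gog(\R)$ and in particular for $X\in\gog^\prime(\R)$. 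Next, by Theorem \ref{EvenDiffOp-type1} (for even $N$) or Theorem \ref{OddDiffOp-type1} (for odd $N$), the first-type family $D^{(\frac{n-1}{2}\to\frac{n-1}{2})}_N(\lambda)$ intertwines $\dm\pi^\ch_{\lambda,\frac{n-1}{2}}(X)$ with $\dm\pi^{\prime\ch}_{\lambda-N,\frac{n-1}{2}}(X)$.

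The third step is to show that $\pr_\pm$ commutes with $\dm\pi^{\prime\ch}_{\lambda-N,\frac{n-1}{2}}(X)$ on middle-degree forms. Applying the $(n-1)$-dimensional analog of Lemma \ref{Hodge-covariant} to $\star$ at form-degree $\tfrac{n-1}{2}$ gives $\dm\pi^{\prime(n-1-p)}_\mu(X)\star=\star\,\dm\pi^{\prime(p)}_{\mu+(n-1)-2p}(X)$; at $p=\tfrac{n-1}{2}$ the shift vanishes and the target degree equals the source degree, so $\star$ commutes with the representation, and therefore so do its spectral projections $\pr_\pm$ onto $\Omega^{\frac{n-1}{2}}_\pm(\R^{n-1})$. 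Chaining the three intertwining properties yields the desired relation and simultaneously shows the image lands in $\Omega^{\frac{n-1}{2}}_\pm(\R^{n-1})$.

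The families are non-trivial: on the level of singular vectors the construction coincides, by Lemma \ref{trans}, with the Hodge-conjugated restriction $\bar\star\,v^{(\frac{n-1}{2}\to\frac{n-1}{2}),\pm}_N(\lambda)\sim v^{(\frac{n-1}{2}\to\frac{n+1}{2}),\pm}_N(\lambda)$ discussed in Section \ref{fam3} (Case 1b), and those singular vectors are already known to be non-zero since they describe the embeddings of the summands in \eqref{diag-branch-md-odd-}. The only real subtlety, and the one I would check most carefully, is the bookkeeping of weight shifts when switching between $\pi$ and $\pi^\ch$ at form-degrees $\tfrac{n\pm1}{2}$; everything else is a direct application of already-proved intertwining results.
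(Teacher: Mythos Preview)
Your argument is correct. You factor the family as $\pr_\pm\circ D^{(\frac{n-1}{2}\to\frac{n-1}{2})}_N(\lambda)\circ\bar\star$ and verify that each factor is an intertwiner at the relevant weights: the computation for $\bar\star$ via Lemma \ref{Hodge-covariant} and \eqref{rep-rel-nc} is right (the shift $n-2p=-1$ at $p=\tfrac{n+1}{2}$ exactly converts $\pi^\ch_{\lambda,\frac{n+1}{2}}$ into $\pi^\ch_{\lambda,\frac{n-1}{2}}$), the middle factor is covered by Theorems \ref{EvenDiffOp-type1}/\ref{OddDiffOp-type1}, and for $\pr_\pm$ you correctly observe that on $\Omega^{\frac{n-1}{2}}(\R^{n-1})$ the Hodge star $\star$ commutes with $\dm\pi^{\prime\ch}_{\lambda-N,\frac{n-1}{2}}$ since the weight shift $n-1-2p$ vanishes.

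This is, however, not the route the paper takes. The paper's proof is a one-line appeal to the duality \eqref{eq:one-to-one}--\eqref{eq:isomorphism1}: the family is induced by the singular vector $\bar\star\,v_N^{(\frac{n-1}{2}\to\frac{n-1}{2}),\pm}(\lambda)$ constructed in Section \ref{fam3}, Case 1b, and operators arising from singular vectors are automatically equivariant. Your approach is more elementary in that it stays entirely at the operator level and avoids the Verma-module machinery, at the cost of some weight bookkeeping (which you flag yourself). The paper's approach is more uniform with the surrounding proofs of Theorems \ref{CSBO-cas1} and \ref{fam-mid-even}, all of which are handled by the same singular-vector mechanism, and it makes non-triviality immediate. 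Both arguments ultimately rely on the same ingredients; yours just assembles them differently.
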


\begin{proof} The families are induced by the singular vectors
$\bar{\star} \, v_N^{(\frac{n-1}{2}\to \frac{n-1}{2}),\pm}(\lambda)$ (defined
in Section \ref{fam3}).
\end{proof}

{\bf Case 2: Let $n$ be even and $p=\frac{n}{2}$.}

Let $\Omega^{\frac n2}_\pm(\R^n) \subset \Omega^{\frac n2}(\R^n)$ be the
eigenspaces of the operator $\bar{\star}$. We define
\begin{align*}
   D^{(\frac{n}{2}\to \frac{n}{2}),{\pm}}_N(\lambda)
   \st D^{(\frac{n}{2}\to \frac{n}{2})}_N(\lambda) |_{\Omega^{\frac{n}{2}}_\pm(\R^n)}.
\end{align*}
These operators are non-trivial and we have

\begin{theorem}\label{fam-mid-even} For any $N\in\N$, the families
\begin{align*}
   D^{(\frac{n}{2}\to \frac{n}{2}),\pm}_N(\lambda):
   \Omega^{\frac{n}{2}}_\pm(\R^{n})\to \Omega^{\frac{n}{2}}(\R^{n-1}), \quad \lambda \in \C
\end{align*}
of differential operators of order $N$ are infinitesimally equivariant in the
sense that
\begin{align*}
   D^{(\frac{n}{2}\to \frac{n}{2}),\pm}_N(\lambda) \dm\pi^\ch_{\lambda,\frac{n}{2}}(X)
   = \dm \pi^{\prime\ch}_{\lambda-N,\frac{n}{2}}(X)  D^{(\frac{n}{2}\to
   \frac{n}{2}),\pm}_N(\lambda), \; X \in \gog^\prime(\R).
\end{align*}
\end{theorem}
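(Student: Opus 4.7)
The plan is to deduce the equivariance of the restricted families $D^{(\frac{n}{2}\to \frac{n}{2}),\pm}_N(\lambda)$ from the equivariance of the full families $D^{(\frac{n}{2}\to \frac{n}{2})}_N(\lambda)$ established in Theorems \ref{EvenDiffOp-type1} and \ref{OddDiffOp-type1}, by verifying that the $\bar{\star}$-eigenspaces $\Omega^{\frac{n}{2}}_\pm(\R^n)$ are $\gog(\R)$-invariant (and in particular $\gog^\prime(\R)$-invariant) under the representation $\dm\pi^\ch_{\lambda,\frac{n}{2}}$. Once this invariance is in place, the intertwining relation for $D^{(\frac{n}{2}\to \frac{n}{2}),\pm}_N(\lambda)$ is a tautological consequence of restriction.

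First, I would invoke Lemma \ref{Hodge-covariant} at the distinguished form degree $p = \frac{n}{2}$. Since $n - 2p = 0$ in this case, the lemma reduces to
\begin{equation*}
   \dm\pi^{(\frac{n}{2})}_\lambda(X)\, \bar{\star} = \bar{\star}\, \dm\pi^{(\frac{n}{2})}_\lambda(X)
\end{equation*}
for all $\lambda \in \C$ and all $X \in \gog(\R)$; that is, $\bar{\star}$ commutes with the whole representation $\dm\pi^{(\frac{n}{2})}_\lambda$. Via the identification $\pi^{(p)}_{-\lambda-p} = \pi^\ch_{\lambda,p}$ from Remark \ref{rep-geo}, this is equivalent to $\bar{\star}$ commuting with $\dm\pi^\ch_{\lambda,\frac{n}{2}}(X)$ for every $X \in \gog(\R)$. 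Since the complexified eigenspace decomposition $\Omega^{\frac{n}{2}}(\R^n) \otimes \C = \Omega^{\frac{n}{2}}_+(\R^n) \oplus \Omega^{\frac{n}{2}}_-(\R^n)$ is determined purely by $\bar{\star}$ (with eigenvalues $\pm 1$ for $n \equiv 0 \!\pmod 4$ and $\pm i$ for $n \equiv 2 \!\pmod 4$, cf.\ Lemma \ref{form-rep}/(iii)), both summands are $\gog(\R)$-stable under $\dm\pi^\ch_{\lambda,\frac{n}{2}}$, and hence a fortiori $\gog^\prime(\R)$-stable.

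Consequently the restrictions
\begin{equation*}
   D^{(\frac{n}{2}\to \frac{n}{2}),\pm}_N(\lambda)
   = D^{(\frac{n}{2}\to \frac{n}{2})}_N(\lambda)\big|_{\Omega^{\frac{n}{2}}_\pm(\R^n)}
\end{equation*}
are well defined, and the required intertwining identity follows immediately by restricting the equivariance of $D^{(\frac{n}{2}\to \frac{n}{2})}_N(\lambda)$ (Theorems \ref{EvenDiffOp-type1} and \ref{OddDiffOp-type1}) to the respective invariant subspace. The only substantive point is the observation that the Hodge twist $n-2p$ in Lemma \ref{Hodge-covariant} vanishes precisely at $p = \frac{n}{2}$; for any other degree the Hodge star would shift the conformal weight, the $\pm$ eigenspaces would fail to be $\gog(\R)$-invariant, and the restriction procedure would break down. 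There is thus no serious technical obstacle beyond this parameter bookkeeping.
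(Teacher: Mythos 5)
Your proof is correct. The paper disposes of this theorem in one line by working on the algebraic side: the operators are induced (via the $F$-method and dualization) by the singular vectors $pr_{\pm}\big(v_N^{(\frac{n}{2}\to\frac{n}{2})}(\lambda)\big)$ of Section \ref{fam3}, the point being that the projection onto the $\bar{\star}$-eigenspaces of $\Lambda^{\frac{n}{2}}(\gon_-(\R))$ commutes with the operators $\dm\tilde{\pi}_{\lambda,\frac{n}{2}}(Z)$, $Z\in\gon_+^\prime$. You instead argue entirely on the operator side: Lemma \ref{Hodge-covariant} at $p=\tfrac{n}{2}$ (where the weight shift $n-2p$ vanishes) shows that $\bar{\star}$ commutes with $\dm\pi^\ch_{\lambda,\frac{n}{2}}(X)$ for all $X\in\gog(\R)$, so the eigenspaces $\Omega^{\frac{n}{2}}_\pm(\R^n)$ are invariant subspaces of the source representation, and restricting the intertwining relation of Theorems \ref{EvenDiffOp-type1} and \ref{OddDiffOp-type1} to an invariant subspace is tautological. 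The two arguments rest on the same underlying fact (the $\gog$-equivariance of $\bar{\star}$ in middle degree), but yours avoids re-entering the Fourier-transformed picture and is more self-contained as a verification of equivariance; the paper's route has the advantage of simultaneously identifying the restricted families with generators of the corresponding spaces of singular vectors, which is what feeds into the classification and the branching law \eqref{diag-branch-md-even}. Your closing remark that for $p\ne\tfrac{n}{2}$ the eigenspaces would "fail to be invariant" is slightly loose — for $p\ne\tfrac{n}{2}$ the operator $\bar{\star}$ does not even preserve the form degree, so there are no eigenspaces to speak of — but this does not affect the argument.
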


\begin{proof} The families are induced by the singular vectors
$v_N^{(\frac{n}{2}\to \frac{n}{2}),\pm}(\lambda)$ (defined in Section
\ref{fam3}).
\end{proof}

We refer to Remark \ref{history} and Remark \ref{special-2} for examples of
both types of families in dimension $n=2$.

\subsection{Proof of Theorem \ref{classification}}\label{proof-classify}

The following proof of the classification consists in a combination of
previously proved facts.

We recall that conformally covariant differential operators
$$
   D: \Omega^p(\R^n) \to \Omega^q(\R^{n-1})
$$
of order $N \in \N_0$ which are infinitesimally equivariant in the sense that
$$
   d\pi^{\prime (q)}_{\eta}(X) D = D d\pi^{(p)}_{\mu}(X), \; X \in \gog^\prime(\R)
$$
or, equivalently,
$$
   d \pi^{\prime \ch}_{-\eta-q,q}(X) D = D d\pi^\ch_{-\mu-p,p}(X), \; X \in \gog^\prime(\R)
$$
(see Remark \ref{rep-geo}) correspond to elements of the space
$$
   \Hom_{\gop^\prime}(\Lambda^q(\gon_-^\prime(\R)) \otimes \C_{-\eta-q},
   \Pol_N(\gon_-^*(\R)) \otimes \Lambda^p(\gon_-(\R)) \otimes \C_{-\mu-p})
$$
of singular vectors. Since $A$ acts on $\Pol_N(\gon_-^*(\R))$ by push-forward,
it follows that
$$
   \eta + q = N + \mu +p.
$$
Hence it remains to describe the spaces
$$
   \Hom_{\gop^\prime}(\Lambda^q(\gon_-^\prime(\R)) \otimes \C_{\lambda-N},
   \Pol_N(\gon_-^*(\R)) \otimes \Lambda^p(\gon_-(\R)) \otimes \C_{\lambda}), \;
   \lambda \in \C.
$$
Proposition \ref{hom-structure} evaluates the $\gol^\prime$-invariance of such
homomorphisms. In particular, there are four basic groups of such
homomorphisms. The additional $\gon_+^\prime(\R)$-invariance of singular
vectors in these groups has been discussed in Sections
\ref{sv-type1}--\ref{sv-type-4}. In fact, we proved that all singular vectors
are given by
\begin{itemize}
\item first type homomorphisms
\begin{equation*}
   \xi_n^{N} P(t) \otimes \id + \xi_n^{N-1} Q(t) E_n \wedge i_E + \xi_n^{N-2} R(t) \alpha \wedge i_E
\end{equation*}
in the case $q=p$ and general $\lambda$ (Theorems \ref{OddFromPtoP},
\ref{EvenFromPtoP}),
\item second type homomorphisms
\begin{equation*}
   \xi_n^{N} P(t) \otimes E_n + \xi_n^{N-1} Q(t) \alpha + \xi_n^{N-2} R(t) E_n \wedge \alpha \wedge i_E
\end{equation*}
in the case $q=p-1$ and general $\lambda$ (Theorems \ref{OddFromPtoP-1},
\ref{EvenFromPtoP-1}),
\item third type homomorphisms
\begin{equation*}
   \xi_n^{N-1} P(t) i_E
\end{equation*}
for $N \ge 1$ in the case $q=1$, $p=0$, $\lambda=N-1$, and
$$
   i_E
$$
in the case $N=1$, $q=p+1$, $\lambda=-p$ (Theorem \ref{sv-third}),
\item fourth type homomorphisms
\begin{equation*}
   \xi_n^{N-1} P(t) E_n \wedge \alpha
\end{equation*}
in the case $q=n-2$, $p=n$, $\lambda=N-1$, and
$$
   E_n \wedge \alpha
$$
in the case $N=1$, $q=p-2$, $\lambda=-(n-p)$ (Theorem \ref{sv-fourth}),
\item the compositions of these homomorphisms with the operator $\star$.
\end{itemize}
The quoted theorems characterize the appropriate polynomials $P$, $Q$ and $R$
in the variable $t = |\xi^\prime|^2/\xi_n^2$ with the property that the
corresponding homomorphisms are $\gon_+^\prime(\R)$-invariant. In all cases,
the singular vectors are unique up a constant multiple. The descriptions of the
corresponding differential operators can be found in the following respective
theorems:
\begin{itemize}
\item Theorem \ref{EvenDiffOp-type1}, Theorem \ref{OddDiffOp-type1},
\item Theorem \ref{EvenDiffOp-type2}, Theorem \ref{OddDiffOp-type2},
\item Theorem \ref{DO3-Even}, Theorem \ref{DO3-Odd}, Theorem \ref{DO3-FO},
\item Theorem \ref{DO4-Even}, Theorem \ref{DO4-Odd}, Theorem \ref{DO4-FO}.
\end{itemize}
The first two sets of results correspond to the first and second type operators
in Theorem \ref{classification}/(1),(2). The operators in the third set of
results correspond to the third type operators in Theorem
\ref{classification}/(3),(4). Similarly, the operators in the fourth set of
results correspond to the fourth type operators in Theorem
\ref{classification}/(5),(6). This completes the proof.

\subsection{Examples}\label{examples}

In the present section, we discuss some special cases in more detail. We first
display explicit formulas for conformal symmetry breaking operators in
low-order cases. Furthermore, we demonstrate the equivariance of the
first-order families both by direct calculations (for families acting on
one-forms) and by recognizing them as special cases of general conformally
covariant families on differential forms. Finally, we describe the relation of
the present results to previous discussions of the special case $n=2$ in
\cite{Juhl0} and \cite{KKP}.

\begin{example}\label{LowOrderExampleDiffOp} Here we display formulas for conformal
symmetry breaking operators of the first and second typ up to order $3$. In
each case, we give formulas in the style of the previous theorems as well as
formulas in terms of the operators $d$, $\delta$, $\bar{d}$ and $\bar{\delta}$.
First of all, the zeroth order families are
$$
   D_0^{(p \to p)}(\lambda) = (\lambda\!+\!p) \iota^* \quad \mbox{and} \quad
   D_0^{(p \to p-1)}(\lambda) = -(\lambda\!+\!n\!-\!p) \iota^* i_{\partial_n}.
$$
The equivariance of these operators follows from the relations
$$
   \gamma_* (\iota^*(\omega)) = \iota^*(\gamma_*(\omega)) \quad \mbox{and} \quad
   \gamma_* (\iota^* i_{\partial_n} (\omega) = e^{-\Phi_\gamma} \iota^*
   i_{\partial_n} (\gamma_*(\omega)), \; \gamma \in G'.
$$
By Theorem \ref{OddDiffOp-type1} and Theorem \ref{OddDiffOp-type2} for $N=0$
and Lemma \ref{DiffCoDiff}, the first-order families are
\begin{equation*}
   D^{(p\to p)}_1(\lambda) = (\lambda\!+\!p\!-\!1) \iota^* \partial_n + \dm \iota^*i_{\partial_n}
   =(\lambda\!+\!p)\dm\iota^* i_{\partial_n} + (\lambda\!+\!p\!-\!1)\iota^* i_{\partial_n} \bar{\dm}
\end{equation*}
and
\begin{equation*}
   D^{(p\to p-1)}_{1}(\lambda) = -(\lambda\!+\!n\!-\!p\!-\!1) \iota^*i_{\partial_n} \partial_n - \delta \iota^*
   = -(\lambda\!+\!n\!-\!p) \delta \iota^* + (\lambda\!+\!n\!-\!p\!-\!1) \iota^* \bar{\delta}
\end{equation*}
Next, using Theorem \ref{EvenDiffOp-type1} and Theorem \ref{EvenDiffOp-type2}
for $N=1$ and Lemma \ref{DiffCoDiff}, the second-order families read
\begin{align*}
   D^{(p \to p)}_2(\lambda) & = (\lambda\!+\!p\!-\!2) \Delta \iota^*
   +(2\lambda\!+\!n\!-\!3)(\lambda\!+\!p\!-\!2)\iota^* \partial_n^2 \\
   & + 2(2\lambda\!+\!n\!-\!3)\dm\iota^* i_{\partial_n}\partial_n+2\dm\delta \iota^*
\end{align*}
or, equivalently,
\begin{align*}
   D^{(p \to p)}_2(\lambda) & =
   (2\lambda\!+\!n\!-\!2) \left[(\lambda\!+\!p) \dm \delta + (\lambda\!+\!p\!-\!2)\delta \dm \right]\iota^*\\
   & -(2\lambda\!+\!n\!-\!3) \iota^*
   \left[(\lambda\!+\!p) \bar{\dm} \bar{\delta} + (\lambda\!+p\!-\!2)\bar{\delta} \bar{\dm} \right]
\end{align*}
and
\begin{align*}
   D^{(p \to p-1)}_2(\lambda) & = -(\lambda\!+\!n\!-\!p) \Delta \iota^*i_{\partial_n}
   - (2\lambda\!+\!n\!-\!3)(\lambda\!+\!n\!-\!p\!-\!2) \iota^* i_{\partial_n}
   \partial_n^2 \\ & -2(2\lambda\!+\!n\!-\!3)\delta\iota^*\partial_n+2 \dm \delta\iota^* i_{\partial_n}
\end{align*}
or, equivalently,
\begin{align*}
   D^{(p \to p-1)}_2(\lambda) & = -(2\lambda\!+\!n\!-\!2)
   \left[(\lambda\!+\!n\!-\!p)\delta \dm + (\lambda\!+\!n\!-\!p\!-\!2) \dm \delta\right] \iota^* i_{\partial_n}\\
   & + (2\lambda\!+\!n\!-\!3) \iota^* i_{\partial_n}
   \left[(\lambda\!+\!n\!-\!p)\bar{\delta} \bar{\dm} + (\lambda\!+\!n\!-\!p\!-\!2)\bar{\dm}\bar{\delta}
   \right].
\end{align*}
Finally, by Theorem \ref{OddDiffOp-type1} and Theorem \ref{OddDiffOp-type2} for
$N=1$ and Lemma \ref{DiffCoDiff}, the third-order families are given by
\begin{align*}
   D^{(p \to p)}_3(\lambda) & =(\lambda\!+\!p\!-\!3) \Delta \iota^*\partial_n
   + \Delta \dm \iota^ *i_{\partial_n} + 2\dm \delta \iota^*\partial_n \\
   & + \tfrac 13(2\lambda\!+\!n\!-\!5)(\lambda\!+\!p\!-\!3) \iota^*\partial_n^3
   + (2\lambda\!+\!n\!-\!5)\dm\iota^*i_{\partial_n}\partial_n^2
\end{align*}
or, equivalently,
\begin{align*}
   D^{(p \to p)}_3(\lambda)
   & = \tfrac 13(2\lambda\!+\!n\!-\!2)(\lambda\!+\!p) \dm\delta \dm \iota^* i_{\partial_n}
   - \tfrac 13(2\lambda\!+\!n\!-\!5)(\lambda\!+\!p\!-\!3)\iota^* i_{\partial_n}\bar{\dm} \bar{\delta} \bar{\dm}\\
   & + \tfrac 13(2\lambda\!+\!n\!-\!2)(\lambda\!+\!p\!-\!3) \delta \dm \iota^* i_{\partial_n} \bar{\dm}
   -\tfrac 13(2\lambda\!+\!n\!-\!5)(\lambda\!+\!p) \dm \iota^* i_{\partial_n} \bar{\dm} \bar{\delta}\\
   & + \tfrac 13\left[ 2(\lambda\!+\!p\!-\!3)(2\lambda\!+\!n\!-\!2)
   + 3(\lambda\!+\!n\!-\!p) \right]\dm\delta \iota^* i_{\partial_n}\bar{\dm}
\end{align*}
and
\begin{align*}
   D^{(p \to p-1)}_3(\lambda)& = -(\lambda\!+\!n\!-\!p\!-\!1) \Delta \iota^*i_{\partial_n}\partial_n
   - \Delta \delta \iota^* + 2\dm\delta \iota^* i_{\partial_n} \partial_n \\
   & -\tfrac 13 (2\lambda\!+\!n\!-\!5)(\lambda\!+\!n\!-\!p\!-\!3)\iota^* i_{\partial_n}\partial_n^3
   - (2\lambda\!+\!n\!-\!5) \delta \iota^* \partial_n^2
\end{align*}
or, equivalently,
\begin{align*}
   D^{(p \to p-1)}_3(\lambda)
   & = -\tfrac 13 (2\lambda\!+\!n\!-\!2)(\lambda\!+\!n\!-\!p) \delta \dm\delta \iota^*
   - \tfrac 13 (2\lambda\!+\!n\!-\!5)(\lambda\!+\!n\!-\!p\!-\!3)\iota^*\bar{\delta}\bar{\dm}\bar{\delta}\\
   & + \tfrac 13(2\lambda\!+\!n\!-\!2)(\lambda\!+\!n\!-\!p\!-\!3) \dm\delta\iota^*\bar{\delta}
   + \tfrac 13 (2\lambda\!+\!n\!-\!5)(\lambda\!+\!n\!-\!p) \delta\iota^*\bar{\delta}\bar{\dm}\\
   & + \tfrac 13 \left[ 2(\lambda\!+\!n\!-\!p\!-\!3)(2\lambda\!+\!n\!-\!2)
   + 3(\lambda\!+\!p) \right]\delta \dm\iota^*\bar{\delta}.
\end{align*}
The respective second forms of the conformal symmetry breaking operators of the
first and second type are special cases of the geometric formulas in Section
\ref{geometric}. Note that the displayed formulas easily confirm the Hodge
conjugation of both types of families.
\end{example}

\begin{example}\label{low-order-3-4-type} For the conformal symmetry breaking operators of the third
and fourth type of order $N \le 4$, we find
\begin{align*}
   D_1^{(0 \to 1)} & = d \iota^*  = d \dot{D}_0^{(0 \to 0)}(0), \\
   D_2^{(0 \to 1)} & = d \iota^* \partial_n = d \dot{D}_1^{(0 \to 0)}(1), \\
   D_3^{(0 \to 1)} & = d \delta d \iota^* + (n\!+\!1) d \iota^* \partial_n^2 = d \dot{D}_{2}^{(0 \to 0)}(2),  \\
   D_4^{(0 \to 1)} & = \frac{n\!+\!1}{3} d \iota^* \partial_n^3 + d \delta d \iota^*
   \partial_n = d \dot{D}_{3}^{(0 \to 0)}(3)
\end{align*}
and
\begin{align*}
   D_1^{(n \to n-2)} & = - \delta \iota^* i_{\partial_n} = \delta \dot{D}_0^{(n \to n-1)}(0), \\
   D_2^{(n \to n-2)} & = - \delta \iota^* i_{\partial_n} \partial_n = \delta \dot{D}_1^{(n \to n-1)}(1),  \\
   D_3^{(n \to n-2)} & = -(n\!+\!1) \delta \iota^* i_{\partial_n} \partial_n^2
   - \delta d \delta \iota^* i_{\partial_n} = \delta \dot{D}_2^{(n \to n-1)}(2), \\
   D_4^{(n \to n-2)} & = -\frac{n\!+\!1}{3} \delta i_{\partial_n} \partial_n^3
   - \delta d \delta i_{\partial_n} \partial_n = \delta \dot{D}_{3}^{(n \to n-1)}(3).
\end{align*}
The displayed formulas easily confirm the Hodge conjugation of both types of
operators.
\end{example}

Next, we confirm the equivariance of the first-order families of both types by
only using {\em direct} arguments.

\begin{bem}\label{equiv-fam-1} We directly demonstrate the infinitesimal equivariance of
the first-order family
$$
   D^{(1\to 1)}_1(\lambda) = \lambda \iota^*\partial_n + \dm \iota^* i_{\partial_n}:
   \Omega^1(\R^n) \to \Omega^1(\R^{n-1})
$$
of the first type (see Remark \ref{LowOrderExampleDiffOp}). Let $1 \le j \le
n-1$. We use $\Omega^1(\R^n) \simeq C^\infty(\R^n) \otimes \Lambda^1(\R^n)^*$
and determine the action of $E_j^+$ using the formula \eqref{adjoint}, where we
identify $(E_j^\pm)^* \simeq dx^j$. The sum in the formula \eqref{adjoint} acts
on the one-form $\omega = f_l dx^l$ by
$$
   \sum_{k=1}^n x_k (dx^j \delta_{kl} - dx^k \delta_{jl})(f_l).
$$
The latter sum reduces on tangential differential forms to a summation over
$k=1,\dots,n-1$, while on normal differential forms the summation is taken over
$k=1,\dots,n$. Thus, for tangential one-forms $\omega = f_l dx^l$,
$l=1,\dots,n-1$, we obtain
\begin{align*}
    & D^{(1\to 1)}_1(\lambda) \dm\pi^\ch_{\lambda,1}(E_j^+)(\omega) \\
    & = D^{(1\to 1)}_1(\lambda)
    \left(-\tfrac 12 \sum_{k=1}^n x_k^2 \partial_{j} dx^l + x_j(-\lambda + \sum_{k=1}^nx_k\partial_{k}) dx^l
    + \sum_{k=1}^{n-1} x_k (dx^j \delta_{kl} - dx^k \delta_{jl} )\right) (f_l) \\
    & = -\tfrac {\lambda}{2} \sum_{k=1}^{n-1} x_k^2 \partial_n
    \partial_j f_l dx^l - \lambda^2 x_j \partial_n f_l dx^l
    + \lambda x_j \sum_{k=1}^{n-1} x_k \partial_n\partial_k f_l dx^l + \lambda x_j \partial_n f_l dx^l \\
    & - \lambda \sum_{k=1}^{n-1} x_k
    \partial_n f_l dx^k \delta_{jl} + \lambda x_l \partial_n f_l dx^j.
\end{align*}
In order to simplify the formulas, we wrote here $f_l$ instead of
$\iota^*(f_l)$. But an analogous calculation shows that the latter sum
coincides with
\begin{equation*}
    \dm\pi^{\prime \ch}_{\lambda-1,1}(E_j^+) D^{(1\to 1)}_1(\lambda)(\omega) =
    \lambda \dm\pi^{\prime \ch}_{\lambda-1,1}(E_j^+) (\partial_n f_l dx^l).
\end{equation*}
Similarly, for normal forms $\omega = f_n dx^n$, we obtain
\begin{align*}
    & D^{(1\to 1)}_1(\lambda) \dm\pi^\ch_{\lambda,1}(E_j^+)(\omega)\\
    & = D^{(1\to 1)}_1(\lambda)
    \left(-\tfrac{1}{2} \sum_{k=1}^n x_k^2 \partial_{j} dx^n + x_j(-\lambda+\sum_{k=1}^n
    x_k\partial_{k}) dx^n + x_n dx^j \right) (f_n) \\
    & = -\tfrac{1}{2} \sum_{k,l=1}^{n-1} x_k^2 \partial_j \partial_l f_n dx^l
    + x_j \sum_{k,l=1}^{n-1} x_k \partial_k \partial_l f_n dx^l \\
    & -(\lambda-1) x_j \sum_{k=1}^{n-1} \partial_k f_n dx^k
    - \sum_{k=1}^{n-1} x_k \partial_j f_n dx^k
    + \sum_{k=1}^{n-1} x_k \partial_k f_n dx^j.
\end{align*}
But an analogous calculation shows that the latter sum coincides with
\begin{equation*}
    \dm\pi^{\prime \ch}_{\lambda-1,1}(E_j^+) D^{(1\to 1)}_1(\lambda)(\omega) =
    \dm\pi^{\prime \ch}_{\lambda-1,1}(E_j^+) (d \iota^* f_n).
\end{equation*}
Thus we have proved that
$$
    D^{(1\to1)}_1(\lambda) \dm\pi^\ch_{\lambda,1}(E_j^+)
    = \dm\pi^{\prime \ch}_{\lambda-1,1}(E_j^+) D^{(1\to 1)}_1(\lambda), \quad 1 \le j \le n-1.
$$
Now $(\dm\pi_{\lambda,1})^\ch(E_j^-) = \partial/\partial x_j$ and $SO(n-1,\R)$
both commute with $D^{(1\to1)}_1(\lambda)$. The assertion is obvious for $X=E$.
This completes the proof of the equivariance for $\gog^\prime(\R)$.
\end{bem}

\begin{bem}\label{equiv-fam-2} Here we sketch an analogous direct proof of the infinitesimal
equivariance of the first-order family
$$
   D_1^{(1 \to 0)}(\lambda) = -(\lambda\!+\!n\!-\!2) \iota^* i_{\partial_n} \partial_n - \delta
   \iota^*: \Omega^1(\R^n) \to C^\infty(\R^{n-1})
$$
of the second type. We shall use the same conventions as in Remark
\ref{equiv-fam-1}. Again, the non-trivial part of the assertion concerns the
actions of $X=E_j^+$. On tangential forms $\omega = f_lx^l$, $l=1,\dots,n-1$,
we obtain
\begin{align*}
   & D^{(1 \to 0)}_1(\lambda) \dm\pi^\ch_{\lambda,1}(E_j^+)(\omega) \\
   & = D^{(1 \to 0)}_1(\lambda)
   \left(-\tfrac 12 \sum_{k=1}^n x_k^2 \partial_{j} dx^l + x_j(-\lambda+\sum_{k=1}^nx_k\partial_{k}) dx^l
   + \sum_{k=1}^{n} x_k (dx^j \delta_{kl} - dx^k \delta_{jl})\right) (f_l) \\
   & = -\tfrac{1}{2} \sum_{k=1}^{n-1} x_k^2 \partial_l \partial_j f_l
   - (\lambda-1) x_j \partial_l f_l + x_j \sum_{k=1}^{n-1} x_k \partial_l \partial_k f_l.
\end{align*}
An analogous calculation shows that the latter sum coincides with
$$
   \dm\pi^{\prime \ch}_{\lambda-1,0}(E_j^+) D^{(1 \to 0)}_1(\lambda)(\omega) =
   \dm\pi^{\prime \ch}_{\lambda-1,0}(E_j^+)(\partial_l f_l).
$$
Similarly, for normal forms $\omega = f_n dx^n$, we obtain
\begin{align*}
    & D^{(1\to 0)}_1(\lambda) \dm\pi^\ch_{\lambda,1}(E_j^+)(\omega) \\
    & = D^{(1\to 0)}_1(\lambda)
    \left(-\tfrac 12 \sum_{k=1}^n x_k^2 \partial_{j} dx^n -\lambda x_j dx^n + x_j \sum_{k=1}^n x_k\partial_{k}
    dx^n + x_n dx^j \right) (f_n) \\
    & = (\lambda\!+\!n\!-\!2) \tfrac{1}{2} \sum_{k=1}^{n-1} x_k^2 \partial_n
    \partial_j f_n - (\lambda\!+\!n\!-\!2) x_j \sum_{k=1}^{n-1} x_k \partial_n \partial_k f_n \\
    & + \lambda(\lambda\!+\!n\!-\!2) x_j \partial_n f_n - (\lambda\!+\!n\!-\!2) x_j \partial_n f_n.
\end{align*}
An analogous calculation shows that the latter sum coincides with
$$
   \dm\pi^{\prime \ch}_{\lambda-1,0}(E_j^+) D^{(1\to 0)}_1(\lambda) (\omega) =
   -(\lambda\!+\!n\!-\!2) \dm\pi^{\prime \ch}_{\lambda-1,0}(E_j^+) (\partial_n f_n).
$$
Thus we have proved that
$$
   D^{(1 \to 0)}_1(\lambda) \dm\pi^\ch_{\lambda,1}(E_j^+)
   = \dm\pi^{\prime \ch}_{\lambda-1,0}(E_j^+) D^{(1\to 0)}_1(\lambda), \quad 1 \le j \le n-1.
$$
This completes the proof of the equivariance for $\gog^\prime(\R)$.
\end{bem}

The first-order families $D_1^{(1 \to 1)}(\lambda)$ and $D_1^{(1 \to
0)}(\lambda)$ studied in Remark \ref{equiv-fam-1} and Remark \ref{equiv-fam-2}
are special cases of conformally covariant families $\Omega^p(X) \to
\Omega^p(M)$ and $\Omega^p(X) \to \Omega^{p-1}(M)$ which are naturally
associated to any codimension one embedding $\iota: M \hookrightarrow X$ of
Riemannian manifolds. More precisely, we have the following results.

\begin{lem}\label{curved-first-type} For any hypersurface $\iota: M \hookrightarrow X$
and any metric $g$ on $X$, the family
$$
   D_1^{(p \to p)}(g;\lambda) \st \lambda \iota^* i_N d + (\lambda\!+\!1) d \iota^*
   i_N - \lambda(\lambda\!+\!1) H \iota^*: \Omega^p(X) \to \Omega^{p}(M)
$$
is conformally covariant in the sense that
$$
   e^{-\lambda \iota^*(\varphi)} D_1^{(p \to p)}(e^{2\varphi}g;\lambda) =
   D_1^{(p \to p)}(g;\lambda) e^{-(\lambda+1)\varphi}
$$
for all $\varphi \in C^\infty(X)$. Here $N$ and $H$ denote the unit normal
vector field of $M$ and the mean curvature, respectively.
\end{lem}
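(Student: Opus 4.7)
The plan is to verify the conformal covariance identity directly, by expanding $D_1^{(p\to p)}(\hat g;\lambda)$ in terms of $g$-objects and checking that it agrees with the conjugated operator $e^{\lambda\iota^*\varphi} D_1^{(p\to p)}(g;\lambda) e^{-(\lambda+1)\varphi}$ coefficient by coefficient. Three basic transformation rules under the Weyl rescaling $\hat g = e^{2\varphi}g$ suffice: the exterior derivative $d$ and the pullback $\iota^*$ are metric-free; the unit normal satisfies $\hat N = e^{-\varphi}N$ (so $i_{\hat N} = e^{-\varphi}i_N$); and the mean curvature transforms as $\hat H = e^{-\varphi}(H + N(\varphi))$ (with the sign determined by the convention under which the lemma is stated).

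First, I would substitute these rules into the three summands defining $D_1^{(p\to p)}(\hat g;\lambda)(\omega)$ and simplify. The first and third summands are immediate. The middle summand requires Leibniz: $d(e^{-\iota^*\varphi}\iota^*i_N\omega) = e^{-\iota^*\varphi}(d\iota^*i_N\omega - d(\iota^*\varphi)\wedge \iota^*i_N\omega)$. After multiplying by $e^{-\lambda\iota^*\varphi}$ one obtains $e^{-(\lambda+1)\iota^*\varphi}$ times a combination of the five basic terms $\iota^*i_N d\omega$, $d\iota^*i_N\omega$, $d(\iota^*\varphi)\wedge\iota^*i_N\omega$, $\iota^*H\cdot\iota^*\omega$ and $\iota^*N(\varphi)\cdot\iota^*\omega$. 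Second, expanding $D_1^{(p\to p)}(g;\lambda)(e^{-(\lambda+1)\varphi}\omega)$ via Leibniz for $d$, together with the pointwise identity $i_N(d\varphi\wedge\omega) = N(\varphi)\omega - d\varphi\wedge i_N\omega$, produces exactly the same five terms, again prefactored by $e^{-(\lambda+1)\iota^*\varphi}$.

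Matching is then routine algebra: for $\iota^*i_N d\omega$ and $d\iota^*i_N\omega$ the coefficients $\lambda$ and $\lambda+1$ appear trivially on both sides; for $d(\iota^*\varphi)\wedge\iota^*i_N\omega$ one gets $-(\lambda+1)$ from the $\hat g$-side, while the $g$-side produces $+\lambda(\lambda+1) - (\lambda+1)^2 = -(\lambda+1)$; and the coefficient $-\lambda(\lambda+1)$ in front of $H\iota^*\omega$ and $N(\varphi)\iota^*\omega$ appears on the $\hat g$-side from the $H + N(\varphi)$ in $\hat H$, and on the $g$-side partly from the $H$-summand and partly from the differentiation of the $e^{-(\lambda+1)\varphi}$ factor by $\iota^*i_N d$. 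The main obstacle is purely bookkeeping — tracking signs across several Leibniz expansions and pinning down the precise sign convention for $H$ that the lemma adopts, so that the $N(\varphi)$-contributions on the two sides cancel rather than reinforce.
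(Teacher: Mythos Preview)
Your proposal is correct and follows essentially the same direct approach as the paper: both use the transformation rules $\hat N = e^{-\varphi}N$ and $e^{\varphi}\hat H = H + N(\varphi)$, expand via the Leibniz rule together with $i_N(d\varphi\wedge\omega) = N(\varphi)\,\omega - d\varphi\wedge i_N\omega$, and check that the $\varphi$-dependent contributions cancel. The only cosmetic difference is that the paper applies the $\hat g$-operator to $e^{(\lambda+1)\varphi}\omega$ and simplifies to $D_1^{(p\to p)}(g;\lambda)\omega$ in one go, whereas you compute both sides separately and match the five basic terms; this is the same computation organized two different ways.
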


\begin{proof} Using $\hat{N} = e^{-\varphi} N$ and the transformation property
\begin{equation}\label{mean}
    e^{\varphi} \hat{H} = H + \langle d\varphi, N \rangle,
\end{equation}
we find
\begin{align*}
   e^{-\lambda \iota^*(\varphi)} & D_1^{(p \to p)}(e^{2\varphi}g;\lambda)
   \left(e^{(\lambda+1)\varphi} \omega \right) \\
   & = \lambda \iota^* (i_N d \omega + (\lambda\!+\!1) i_N (d\varphi \wedge \omega))
   + (\lambda\!+\!1) e^{-\lambda \iota^*(\varphi)} d \iota^* e^{-\varphi} i_N
   \left(e^{(\lambda+1)\varphi} \omega \right) \\
   & -\lambda(\lambda\!+\!1) (H + \langle d\varphi,N \rangle) \iota^* \omega
\end{align*}
for any $\omega \in \Omega^p(M)$. A simple computation shows that the latter
sum equals
$$
\lambda \iota^* i_N d \omega + (\lambda\!+\!1) d \iota^* i_N \omega -
\lambda(\lambda\!+\!1) H \iota^* \omega.
$$
The proof is complete.
\end{proof}

The conformally covariant family $D_1^{(p\to p)}(g;\lambda)$ (in Lemma
\ref{curved-first-type}) clearly generalizes the family $D_1^{(p \to
p)}(\lambda)$ (displayed in Example \ref{LowOrderExampleDiffOp}) (up to the
shift $\lambda \mapsto \lambda-p+1$). Therefore, Lemma \ref{curved-first-type}
yields an alternative proof of the equivariance of $D_1^{(p \to p)}(\lambda)$.

\begin{lem}\label{curved-second-type} For any hypersurface $\iota: M^{n-1} \hookrightarrow X^n$
and any metric $g$ on $X$, the family
\begin{align*}
   D_1^{(p \to p-1)}(g;\lambda) & \st (n\!-\!2p\!+\!\lambda\!+\!1) \iota^* \delta_g
   - (n\!-\!2p\!+\!\lambda\!+\!2) \delta_{\iota^* g} \iota^* \\
   & + (n\!-\!2p\!+\!\lambda\!+\!1)(n\!-\!2p\!+\!\lambda\!+\!2) i_{\H}: \; \Omega^p(X) \to \Omega^{p-1}(M)
\end{align*}
is conformally covariant in the sense that
$$
   e^{-\lambda \iota^*(\varphi)} D_1^{(p \to p-1)}(e^{2\varphi} g;\lambda) =
   D_1^{(p \to p-1)}(g;\lambda) e^{-(\lambda+2)\varphi}
$$
for all $\varphi \in C^\infty(X)$. Here $\H = H N$ denotes the mean-curvature
vector.
\end{lem}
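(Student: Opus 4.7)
The plan is to mimic the proof of Lemma \ref{curved-first-type}: I would apply the conformal transformation laws of the three ingredients term by term to $D_1^{(p\to p-1)}(\hat g;\lambda)(e^{(\lambda+2)\varphi}\omega)$ (with $\hat g = e^{2\varphi}g$), and then verify that, after multiplying by $e^{-\lambda\iota^*\varphi}$, everything reorganizes into $D_1^{(p\to p-1)}(g;\lambda)\omega$. The three ingredients I would use are: (i) the conformal law on $p$-forms on the $n$-manifold $X$,
\begin{equation*}
   \delta_{\hat g}\omega = e^{-2\varphi}\bigl(\delta_g\omega - (n\!-\!2p)\,i_{\nabla_g\varphi}\omega\bigr),
\end{equation*}
coupled with the Leibniz rule $\delta_g(f\omega) = f\,\delta_g\omega - i_{\nabla_g f}\omega$ and $\nabla_{\hat g}\varphi = e^{-2\varphi}\nabla_g\varphi$; (ii) the analogous law on $p$-forms on the $(n\!-\!1)$-manifold $M$, with $n-2p$ replaced by $n-1-2p$; and (iii) the transformations $\hat N = e^{-\varphi}N$ together with \eqref{mean}, which combine to
\begin{equation*}
   \hat{\mathcal{H}} = e^{-2\varphi}\bigl(H + \langle d\varphi,N\rangle\bigr)N.
\end{equation*}

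A short direct computation of each summand is then routine and yields
\begin{align*}
   e^{-\lambda\iota^*\varphi}\iota^*\delta_{\hat g}\bigl(e^{(\lambda+2)\varphi}\omega\bigr)
   & = \iota^*\delta_g\omega - (n\!-\!2p\!+\!\lambda\!+\!2)\,\iota^* i_{\nabla_g\varphi}\omega, \\
   e^{-\lambda\iota^*\varphi}\delta_{\iota^*\hat g}\iota^*\bigl(e^{(\lambda+2)\varphi}\omega\bigr)
   & = \delta_{\iota^*g}\iota^*\omega - (n\!-\!2p\!+\!\lambda\!+\!1)\,i_{\nabla_{\iota^*g}(\iota^*\varphi)}\iota^*\omega, \\
   e^{-\lambda\iota^*\varphi}i_{\hat{\mathcal{H}}}\bigl(e^{(\lambda+2)\varphi}\omega\bigr)
   & = i_{\mathcal{H}}\omega + \langle d\varphi,N\rangle\,\iota^* i_N\omega.
\end{align*}
Collecting these with the prescribed coefficients, the terms that reproduce $D_1^{(p\to p-1)}(g;\lambda)\omega$ appear immediately; what remains to control is the sum
\begin{equation*}
   (n\!-\!2p\!+\!\lambda\!+\!1)(n\!-\!2p\!+\!\lambda\!+\!2) \bigl[ -\iota^* i_{\nabla_g\varphi}\omega
   + i_{\nabla_{\iota^*g}(\iota^*\varphi)}\iota^*\omega + \langle d\varphi,N\rangle\,\iota^* i_N\omega \bigr].
\end{equation*}

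The key, and essentially only, non-trivial point will be to recognize that the bracket vanishes identically. This follows from the tangential/normal decomposition
\begin{equation*}
   \nabla_g\varphi\big|_M = \nabla_{\iota^*g}(\iota^*\varphi) + \langle d\varphi,N\rangle\,N
\end{equation*}
together with the identity $\iota^* i_X\omega = i_X\iota^*\omega$ valid for any $X \in \mathfrak{X}(M)$: substituting the decomposition into $\iota^* i_{\nabla_g\varphi}\omega$ produces exactly the other two terms with opposite sign. This cancellation plays the same role here as the single use of \eqref{mean} in the proof of Lemma \ref{curved-first-type}. An alternative route would be to deduce the statement from Lemma \ref{curved-first-type} via Hodge-star conjugation, using the curved analogue of Theorem \ref{Hodge-c}, but this would require verifying that Hodge-conjugation identity first and therefore offers no genuine shortcut.
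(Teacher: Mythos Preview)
Your proof is correct and follows essentially the same approach as the paper's: both compute the conformal variation of each summand using the standard transformation law for $\delta$ and the law \eqref{mean} for $H$, and both rely on the tangential/normal decomposition of $\nabla_g\varphi$ along $M$ to effect the final cancellation. The only difference is organizational—the paper packages the tangential cancellation implicitly by writing the residual term directly as $\iota^* i_{N(\varphi)}$ (with $N(\varphi)=\langle d\varphi,N\rangle N$), whereas you spell out the decomposition explicitly.
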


\begin{proof} The conformal transformation law
$$
   e^{-(a-2)\varphi} \circ \hat{\delta} \circ e^{a\varphi} = \delta -
   (n\!-\!2p\!+\!a) i_{\grad(\varphi)}
$$
for $p$-forms on a manifold of dimension $n$ implies
$$
   e^{-\lambda \varphi} \circ \delta_{\hat{g}} = \delta_g \circ
   e^{-(\lambda+2)\varphi} - (n\!-\!2p\!+\!\lambda\!+\!2) i_{\grad_g(\varphi)} \circ e^{-(\lambda+2)\varphi}
$$
for $\varphi \in C^\infty(X)$ on $p$-forms on $X$, and
$$
   e^{-\lambda \psi} \circ \delta_{\hat{g}} = \delta_{g} \circ e^{-(\lambda+2) \psi}
   - (n\!-\!2p\!+\!\lambda\!+\!1) i_{\grad_{g}(\psi)} \circ e^{-(\lambda+2) \psi}
$$
for $\psi \in C^\infty(M)$ on $p$-forms on $M$. It follows that
$$
   e^{-\lambda \iota^*(\varphi)} \circ \left((n\!-\!2p\!+\!\lambda\!+\!1) \iota^*
   \delta_{\hat{g}} - (n\!-\!2p\!+\!\lambda\!+\!2) \delta_{\iota^* \hat{g}} \iota^*\right)
$$
differs from
$$
   \left((n\!-\!2p\!+\!\lambda\!+\!1) \iota^* \delta_{\hat{g}} -
   (n\!-\!2p\!+\!\lambda\!+\!2) \delta_{\iota^* \hat{g}} \iota^*\right) \circ e^{(\lambda-2) \varphi}
$$
by
$$
   -(n\!-\!2p\!+\!\lambda\!+\!1)(n\!-\!2p\!+\!\lambda\!+\!2) \iota^* i_{N(\varphi)} \circ e^{(\lambda-2)\varphi},
$$
where $N(\varphi) = \langle d\varphi,N \rangle N$. But the conformal
transformation law \eqref{mean} implies that
$$
   e^{-\lambda \iota^* (\varphi)} \circ i_{\hat{\H}}
   = (i_\H + i_{N(\varphi)}) \circ e^{-(\lambda+2) \varphi}.
$$
This completes the proof.
\end{proof}

The conformally covariant family $D_1^{(p\to p-1)}(g;\lambda)$ (in Lemma
\ref{curved-second-type}) clearly generalizes the family $D_1^{(p \to
p-1)}(\lambda)$ (displayed in Example \ref{LowOrderExampleDiffOp}) (up to the
shift $\lambda \mapsto \lambda-p+2$). Therefore, Lemma \ref{curved-second-type}
yields an alternative proof of the equivariance of $D_1^{(p \to
p-1)}(\lambda)$.

The following remark illustrates Theorem \ref{fam-mid-even}.


\begin{bem}\label{history} In dimension $n=2$, the conformal symmetry breaking operators
$D_1^{(1 \to 1)}(\lambda)$ and $D_2^{(1 \to 1)}(\lambda)$ of the first type
appeared already in \cite[Equations (8.200)--(8.202)]{Juhl0} as low-order
special cases of so-called relative differential intertwining operators which
are induced by homomorphisms of generalized Verma modules. In more details, the
relation is the following. In terms of the coordinates $x,y$ on $\R^2$ with the
normal variable $y$ of the subspace $\R^1$, we have the formulas
$$
   D_1^{(1 \to 1)}(\lambda) = -\lambda \iota^* \partial_y + \dm \iota^* i_{\partial_y}
$$
and
$$
   D_2^{(1 \to 1)}(\lambda) = -(\lambda\!+\!1) \partial_x^2 \iota^* +
   (2\lambda\!-\!1)(\lambda\!-\!1) \iota^* \partial_y^2 + 2(2\lambda\!-\!1) \dm \iota^* i_{\partial_y} \partial_y
$$
(see Example \ref{LowOrderExampleDiffOp}). By restriction to $\omega = f (dx
\pm i dy) \in \Omega^1(\R^2)$, we find
$$
   D_1^{(1 \to 1)}(\lambda)(\omega) = \mp i \lambda \iota^* (\partial_y f) dx + \dm \iota^* (f)
$$
and
$$
   D_2^{(1 \to 1)}(\lambda) (\omega) = -(\lambda\!+\!1) \iota^* (\partial_x^2 f) dx +
   (2\lambda\!-\!1)(\lambda\!-\!1) \iota^* (\partial_y^2 f) dx \pm 2 i(2\lambda\!-\!1) \dm \iota^* (\partial_y f).
$$
By Lemma \ref{form-rep}, the subspaces $\{f (dx \pm dy)\}$ are the eigenspaces
of the Hodge star operator on $\R^2$. Using respective shifts of $\lambda$ by
$1$ and $2$, we obtain the operators
$$
   f (dx \pm i dy) \mapsto \mp i (\lambda+1) \iota^* (\partial_y f) dx + \dm \iota^* (f)
$$
and
$$
   f (dx \pm i dy) \mapsto -(\lambda\!+\!3) \iota^* (\partial_x^2 f) dx +
   (2\lambda\!+\!3)(\lambda\!+\!1) \iota^* (\partial_y^2 f) dx \pm 2 i(2\lambda+3) \dm \iota^* (\partial_y f).
$$
These are the operators displayed in \cite{Juhl0}. This shows that the family
$$
   D_2^{(1 \to 1)}(\lambda): \Omega^1 (\R^2) \to \Omega^1(\R)
$$
of the first type restricts to two different families $\Omega^1_\pm(\R^2) \to
\Omega^1(\R)$.
\end{bem}

\begin{bem}\label{special-2} The special case $\Omega^1(\R^2) \to C^\infty(\R)$
of the second type families of order $N \in \N$ was analyzed in \cite{KKP}. We
show that their result is a special case of ours. We use the expansions
\begin{align*}
   C_m^{\alpha}(t) =
   \sum_{k=0}^{[m/2]}(-1)^k \frac{\Gamma(m-k+\alpha)}{\Gamma(\alpha) (m-2k)!k!}(2t)^{m-2k}, \; m \in \N_0
\end{align*}
(see \eqref{Gegenbauer-Taylor}) to define homogeneous differential operators
\begin{align*}
    C^\alpha_m \st ({\rm i} \partial_x)^m  C_m^{\alpha}\left(\frac{\partial_y}{{\rm i} \partial_x}\right).
\end{align*}
In particular, we find
\begin{align*}
   C^\alpha_0 & = \id,\\
   C^\alpha_1 & = 2\lambda\partial_y,\\
   C^\alpha_2 & = \lambda(\partial_x^2+ 2(\lambda+1) \partial_y^2),\\
   C^\alpha_3 & = \frac 23 \lambda(\lambda+1)(3 \partial_x^2\partial_y + 2(\lambda+2)\partial_y^3  ).
\end{align*}
Furthermore, we define the operators
\begin{equation}\label{eq:KKPOp1}
    D^1_m(\lambda) \st m(2\lambda\!+\!m\!-\!1)\partial_x ({\rm i} \partial_x)^{m-1}
    C^{\lambda + \frac 12}_{m-1}\left(\frac{\partial_y}{{\rm i}\partial_x}\right)
\end{equation}
and
\begin{align}\label{eq:KKPOp2}
    D^2_m(\lambda) & \st (2\lambda^2\!+\!2(m\!-\!1)\lambda\!+\!m(m\!-\!1))\partial_y ({\rm i} \partial_x)^{m-1}
    C^{\lambda+\frac 12}_{m-1}\left(\frac{\partial_y}{{\rm i}\partial_x}\right) \notag \\
    & +(\lambda\!-\!1)(2\lambda\!+\!1)(\partial_x^2 + \partial_y^2)
    ({\rm i} \partial_x)^{m-2}C^{\lambda+\frac 32}_{m-2}\left(\frac{\partial_y}{{\rm i}\partial_x}\right).
\end{align}
Let $\iota: \R \hookrightarrow \R^2$ be defined by $x \mapsto (x,0)$.

\begin{prop}\label{KKP-rel} Let $N\in\N$. The operators $D^1_N(\lambda)$ and $D^2_N(\lambda)$ are related
to the conformal symmetry breaking operators $D^{(1\to 0)}_{N}(\lambda)$ of the
second type through the identities
\begin{align}
   \iota^* (D^1_{2N}(\lambda)(f) + D^2_{2N}(\lambda)(g)) & = (-1)^N \frac{2 (\lambda+\frac 12)_N}{(N-1)!}
   D^{(1\to 0)}_{2N}(-\lambda)(f\dm x + g\dm y) \label{eq:KKPId1}
\end{align}
and
\begin{equation}
   \iota^*(D^1_{2N+1}(\lambda)(f) + D^2_{2N+1}(\lambda)(g))
   = (-1)^N \frac{2(2N\!+\!1) (\lambda\!+\!\frac 12)_N (\lambda\!+\!N)}{N!}
   D^{(1\to 0)}_{2N+1}(-\lambda)(f\dm x + g\dm y) \label{eq:KKPId2}
\end{equation}
for $f, g \in C^\infty(\R^2)$.
\end{prop}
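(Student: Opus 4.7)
The plan is to verify the identities \eqref{eq:KKPId1} and \eqref{eq:KKPId2} by direct computation, starting from the explicit formulas for $D_{2N}^{(1 \to 0)}(\lambda)$ and $D_{2N+1}^{(1 \to 0)}(\lambda)$ provided by Theorem \ref{EvenDiffOp-type2} and Theorem \ref{OddDiffOp-type2}, respectively, specialized to $n=2$ and $p=1$. Two key simplifications occur in this dimension. First, on $\R^1$ the codifferential $\delta$ vanishes on $0$-forms, so the third sums in both formulas -- which contain $d\delta \iota^* i_{\partial_y}$ applied to produce $0$-forms on $\R^1$ -- vanish identically. Second, $\Delta = -\partial_x^2$ on $C^\infty(\R^1)$. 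After these reductions, applying $D^{(1\to 0)}_{N}(-\lambda)$ to $f\,dx + g\,dy$ produces a polynomial in $\partial_x,\partial_y$ acting separately on $f$ and on $g$, and then evaluated at $y=0$.

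Next, I would expand $D^1_N(\lambda)$ and $D^2_N(\lambda)$ into homogeneous bivariate polynomials in $\partial_x,\partial_y$ using the closed form \eqref{Gegenbauer-Taylor} for $C_m^{\alpha}(t)$. The factors $(i\partial_x)^{m}$ and $(\partial_y/(i\partial_x))^{m-2k}$ combine to give $(-1)^k\, 2^{m-2k}\, \partial_x^{2k}\partial_y^{m-2k}$, so the sign $(-1)^k$ from \eqref{Gegenbauer-Taylor} cancels, leaving only positive coefficients. The asserted identities then reduce to the following two coefficient-matching statements, separately for each bi-degree: (a) the coefficient of $\partial_x^{2k+1}\partial_y^{2N-2k-1}f$ in $D^1_{2N}(\lambda)(f)$ must equal, up to the overall scalar $(-1)^N 2(\lambda+\tfrac{1}{2})_N/(N\!-\!1)!$, the coefficient of the corresponding term in the second sum of $D^{(1\to 0)}_{2N}(-\lambda)(f\,dx)$; (b) the coefficient of $\partial_x^{2k}\partial_y^{2N-2k}g$ in $D^2_{2N}(\lambda)(g)$ must match the coefficient in the first sum of $D^{(1\to 0)}_{2N}(-\lambda)(g\,dy)$. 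The odd case \eqref{eq:KKPId2} is analogous, with $C_{2N}^{\lambda+1/2}$ and $C_{2N-1}^{\lambda+3/2}$ replacing their $2N-1$, $2N-2$ counterparts and with the global constant adjusted accordingly.

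The main technical obstacle is the dictionary between the two conventions for Gegenbauer coefficients: the paper's unnormalized coefficients $a_j^{(N)}(\lambda)$ and $b_j^{(N)}(\lambda)$ (characterized by the recurrences used throughout Section \ref{SingularVectors}) on the one hand, and the standard normalization $\Gamma(m-k+\alpha)/(\Gamma(\alpha)(m-2k)!k!)$ from \eqref{Gegenbauer-Taylor} on the other. Writing $\Gamma(m-k+\alpha)/\Gamma(\alpha) = (\alpha)_{m-k}$ and pulling out the common factor $(\lambda+\tfrac{1}{2})_N/(N\!-\!1)!$, the matching amounts to verifying the two Pochhammer identities
\begin{equation*}
  b_k^{(N-1)}(-\lambda\!-\!1) \;=\; \frac{(N\!-\!1)!\, 2^{2N-2-2k}\,(\lambda+\tfrac{1}{2})_{2N-1-k}}
  {(\lambda+\tfrac{1}{2})_N\,(2N\!-\!1\!-\!2k)!\,k!}
\end{equation*}
and its even analog for $a_k^{(N)}(-\lambda)$, each of which follows from the explicit product formulas for $a_j^{(N)}$ and $b_j^{(N)}$ derived in Section \ref{sv-type1} together with the duplication identity $(\lambda+\tfrac{1}{2})_m = 2^{-2m}(2\lambda+1)_{2m}/(\lambda+1)_m$. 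Once these bookkeeping identities are established, the comparison of coefficients gives \eqref{eq:KKPId1} and \eqref{eq:KKPId2} term by term; the overall factor on the right-hand sides then arises precisely from the common multiplier stripped from all Gegenbauer coefficients in the process.
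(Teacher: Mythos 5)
Your proposal follows essentially the same route as the paper: specialize the explicit formulas of Theorems \ref{EvenDiffOp-type2} and \ref{OddDiffOp-type2} to $n=2$, $p=1$ (where the third sum indeed dies because $\delta$ kills $0$-forms on $\R^1$), expand the operators $D^1_m(\lambda)$, $D^2_m(\lambda)$ via the explicit Gegenbauer expansion, and match coefficients bi-degree by bi-degree. One caveat: the matching for $D^2_m(\lambda)$ is not just the normalization dictionary you describe, because the second term of \eqref{eq:KKPOp2} carries the factor $(\partial_x^2+\partial_y^2)\,C^{\lambda+3/2}_{m-2}$, which spreads each Gegenbauer monomial over two adjacent bi-degrees; the coefficient comparison therefore requires the three-term relation $2\lambda\, b_j^{(N-1)}(-\lambda\!-\!1) + a_{j-1}^{(N-1)}(-\lambda\!-\!2) + a_j^{(N-1)}(-\lambda\!-\!2) = a_j^{(N)}(-\lambda)$ together with the identity expressing $p_j(-\lambda;N,1)$ through $a_j^{(N)}(-\lambda)$ and $b_j^{(N-1)}(-\lambda\!-\!1)$, exactly as in the paper's proof. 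This is an elementary verification from the product formulas, so your plan goes through, but it should be stated as part of what must be checked rather than subsumed under the normalization conversion.
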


\begin{proof} The proof rests on the identities
\begin{align}
   C_{2N}^{\lambda+\frac 12}(z)
   &=\frac{(\lambda+\frac 12)_N}{N!}\sum_{j=0}^N (-1)^j a_j^{(N)}(-\lambda-1) z^{2N-2j},\notag\\
   C_{2N+1}^{\lambda+\frac 12}(z)
   &=\frac{2(\lambda+\frac 12)_{N+1}}{N!}\sum_{j=0}^N (-1)^j b_j^{(N)}(-\lambda-1) z^{2N-2j+1} \label{eq:GegPoly}
\end{align}
(see the Appendix). By formula \eqref{even-type2} in Theorem
\ref{EvenDiffOp-type2} the even-order family $D_{2N}^{(1\to 0)}(\lambda)$ acts
on $\omega = f dx + g dy \in \Omega^1(\R^2)$ by
\begin{equation*}
   D^{(1\to 0)}_{2N}(\lambda)(\omega) = \iota^*(D^{(1\to 0),1}_{2N}(\lambda)(f)+D^{(1\to 0),2}_{2N}(\lambda)(g)),
\end{equation*}
where
\begin{align*}
   D^{(1\to 0),1}_{2N}(\lambda) & \st (-1)^{N} ({\rm i}\partial_x)^{2N-1}\partial_x
   \sum_{j=0}^{N-1} (-1)^j q^{(N-1)}_j(\lambda-1) \left(\frac{\partial_y}{{\rm i}\partial_x}\right)^{2N-2j-1}, \\
   D^{(1\to 0),2}_{2N}(\lambda) & \st (-1)^{N}({\rm i}\partial_x)^{2N}
   \sum_{j=0}^N (-1)^j p_j(\lambda;N,1) \left(\frac{\partial_y}{{\rm i}\partial_x}\right)^{2N-2j}
\end{align*}
and
\begin{align*}
   q^{(N-1)}_j(\lambda\!-\!1) & = -2N(2\lambda\!-\!2N\!+\!1) b^{(N-1)}_j(\lambda-1),\\
   p_j(\lambda;N,1) & =-(\lambda\!-\!2N\!+\!2j\!+\!1) a_j^{(N)}(\lambda).
\end{align*}
It follows that the claim \eqref{eq:KKPId1} is equivalent to the relations
\begin{align}
   D^1_{2N}(\lambda) & =(-1)^N \frac{2 (\lambda+\frac 12)_N}{(N-1)!}
   D^{(1\to 0),1}_{2N}(-\lambda), \label{eq:H1} \\
   D^2_{2N}(\lambda) & =(-1)^N \frac{2 (\lambda+\frac 12)_N}{(N-1)!}
   D^{(1\to 0),2}_{2N}(-\lambda). \label{eq:H2}
\end{align}
Now \eqref{eq:H1} directly follows from the definition \eqref{eq:KKPOp1}. We
proceed with the proof of \eqref{eq:H2}. By definition and \eqref{eq:GegPoly},
the left-hand side equals
\begin{align*}
   D^2_{2N}(\lambda) & = \frac{2(\lambda\!+\!\frac 12)_N}{(N\!-\!1)!} \Bigg[
   \sum_{j=0}^{N}(-1)^j \Big[\big(2N(2\lambda\!+\!2N\!-\!1)+2\lambda(\lambda\!-\!1)\big)
   b_j^{(N-1)}(-\lambda\!-\!1)\\
   & + (\lambda\!-\!1) a_{j-1}^{(N-1)}(-\lambda\!-\!2) + (\lambda\!-\!1)a_j^{(N-1)}(-\lambda\!-\!2)\Big]
   ({\rm i}\partial_x)^{2j} \partial_y^{2N-2j}\Bigg].
\end{align*}
Here we have set $a_{-1}^{(N-1)}(\lambda) \st 0$, $a_{N}^{(N-1)}(\lambda) \st
0$ and $b_{N}^{(N-1)}(\lambda) \st 0$. Now an elementary computation shows that
\begin{equation*}
   2\lambda b_j^{(N-1)}(-\lambda\!-\!1) + a_{j-1}^{(N-1)}(-\lambda\!-\!2) +
   a_j^{(N-1)}(-\lambda\!-\!2) = a^{(N)}_j(-\lambda)
\end{equation*}
for $j=0,\ldots,N$. Hence the identity
\begin{equation*}
   p_j(-\lambda;N,1) = (\lambda\!-\!1) a_j^{(N)}(-\lambda) + 2N(2\lambda\!+\!2N\!-\!1) b^{(N-1)}_j(-\lambda\!-\!1)
\end{equation*}
proves \eqref{eq:H2}. Similar arguments can be used to prove \eqref{eq:KKPId2}.
The proof is complete.
\end{proof}

The main result of \cite{KKP} states that, for any $m \in \N_0$, the family
$$
   \Omega^1(\R^2) \ni \omega = f dx + g dy
   \mapsto \iota^* (D^1_{m}(\lambda)(f) + D^2_{m}(\lambda)(g)) \in C^\infty(\R)
$$
satisfies the same intertwining relation as $D_m^{(1,0)}(-\lambda)$. Moreover,
the authors observed that the compositions of these families with the Hodge
star operator on $\Omega^1(\R^2)$ define additional families with the same
equivariance. These results follow from Proposition \ref{KKP-rel}.
\end{bem}

\section{Geometric formulas for conformal symmetry breaking operators}\label{geometric}

In the present section, we apply the results in Section \ref{DiffOp} to derive
formulas for all types of conformal symmetry breaking operators in terms of the
four geometric operators $\dm$, $\delta$, $\bar{\dm}$, $\bar{\delta}$, the
pull-back $\iota^*$ and the insertion $i_{\partial_n}$ of the normal vector
field. We shall refer to these formulas as to geometric formulas for the
families. These results generalize the low-order examples ($N \le 3$) displayed
in Example \ref{LowOrderExampleDiffOp}.

\subsection{Preparations}

We note that Lemma \ref{DiffCoDiff} implies the identities
\begin{align}
   \iota^*\partial_n^{2k} & = \sum_{i=0}^k (-1)^i \binom{k}{i}
   \Delta^{k-i}\iota^* \bar{\Delta}^i, \label{help-3} \\
   \iota^* i_{\partial_n} \partial_n^{2k+1} & = \sum_{i=0}^k (-1)^i
   \binom{k}{i} \Delta^{k-i}(\delta\iota^*-\iota^*\bar{\delta}) \bar{\Delta}^i \label{help-4}
\end{align}
and
\begin{align}
   \iota^*i_{\partial_n} \partial_n^{2k} & =
   \sum_{i=0}^{k}(-1)^i \binom{k}{i}\Delta^{k-i}\iota^* i_{\partial_n} \bar{\Delta}^i, \label{help-10} \\
   \iota^*\partial_n^{2k-1} & =
   \sum_{i=0}^{k-1}(-1)^i \binom{k-1}{i} \Delta^{k-i-1} (\dm\iota^* i_{\partial_n}
   + \iota^* i_{\partial_n} \bar{\dm}) \bar{\Delta}^i. \label{help-11}
\end{align}
Indeed, Lemma \ref{DiffCoDiff}/(3) yields $\partial_n^2 = \Delta -
\bar{\Delta}$. Hence $\partial_n^{2k} = \sum_{i=0}^k (-1)^i {k \choose i}
\Delta^{k-i} \bar{\Delta}^i$. This proves \eqref{help-3}. Moreover,
$i_{\partial_n} \partial_n = \delta - \bar{\delta}$ (Lemma
\ref{DiffCoDiff}/(2)) gives
\begin{align*}
   \iota^* i_{\partial_n} \partial_n^{2k+1} = \iota^* i_{\partial_n} \partial_n
   \partial_n^{2k} & = (\delta \iota^* - \iota^* \bar{\delta})
   \sum_{i=0}^k (-1)^i {k \choose i} \Delta^{k-i} \bar{\Delta}^i \\
   & = \sum_{i=0}^k (-1)^k {k \choose i} \Delta^{k-i} (\delta \iota^* - \iota^* \bar{\delta}) \bar{\Delta}^i.
\end{align*}
This proves \eqref{help-4}. Similar arguments prove \eqref{help-10} and
\eqref{help-11}.

Furthermore, we introduce the coefficients
\begin{equation}\label{a-even}
   \alpha_i^{(N)}(\lambda) \st (-1)^{i} 2^N \frac{N!}{(2N)!} \binom{N}{i}
   \prod_{k=i+1}^N (2\lambda\!+\!n\!-\!2k) \prod_{k=1}^i (2\lambda\!+\!n\!-\!2k\!-\!2N\!+\!1)
\end{equation}
and
\begin{equation}\label{b-even}
   \beta_i^{(N)}(\lambda) \st (-1)^i 2^N \frac{N!}{(2N\!+\!1)!} \binom{N}{i}
   \prod_{k=i+1}^N (2\lambda\!+\!n\!-\!2k) \prod_{k=1}^i (2\lambda\!+\!n\!-\!2k\!-\!2N\!-\!1)
\end{equation}
for $N \in \N_0$ and $i=0,\dots,N$. By convention, empty products are defined
as $1$.

The following observation will be useful to identify certain series as
hypergeometric functions. A series $\sum_{n \ge 0} c_n$ is a hypergeometric
function
$$
   c_0 \, {}_2F_1(a,b;c;x) = c_0  \sum_{n \ge 0} \frac{(a)_n (b)_n}{(c)_n} \frac{x^n}{n!}
$$
iff
$$
   \frac{c_{n+1}}{c_n} = \frac{(n+a)(n+b)}{n+c} \frac{x}{n+1}.
$$
We also recall the Zhu-Vandermonde formula
\begin{equation}\label{ZV}
   {}_2F_1(-n,b;c;1) = \sum_{j = 0}^n \frac{(-n)_j (b)_j}{(c)_j} \frac{1}{j!} =
   \frac{(c-b)_n}{(c)_n}, \; n \in \N_0.
\end{equation}

\begin{bem}\label{coeff-Jacobi} We identify the generating polynomials for the coefficients
$\alpha_i^{(N)}(\lambda)$ and $\beta_i^{(N)}(\lambda)$ as Jacobi polynomials.
Indeed, the relations
\begin{align*}
   \binom{N}{i} & =(-1)^{i} \frac{(-N)_i}{i!},\\
   \prod_{k=i+1}^N (2\lambda\!+\!n\!-\!2k) & = 2^{N-i}(\lambda\!+\!\tfrac n2\!-\!N)_{N-i}
   = (-2)^{N-i} \frac{(-\lambda\!-\!\tfrac n2\!+\!1)_N}{(-\lambda\!-\!\tfrac n2\!+\!1)_i}, \\
   \prod_{k=1}^i (2\lambda\!+\!n\!-\!2k\!-\!2N\!+\!1) & =(-2)^i (N\!+\!\tfrac 12\!-\!\lambda\!-\!\tfrac n2)_i
\end{align*}
imply that
\begin{equation*}
   \sum_{i=0}^N \alpha_i^{(N)}(\lambda) t^i = 4^N  \frac{N!}{(2N)!} (\lambda \!+\! \tfrac
   n2\! - \!N)_N \; {}_2F_1\left[\begin{matrix} -N, N\!+\!\tfrac 12\!-\!\lambda \!-\!\ tfrac n2 \\
   1\!-\!\lambda\!-\!\tfrac n2 \end{matrix};t\right].
\end{equation*}
The right-hand side is proportional to
$P_N^{(-\lambda-\frac{n}{2},-\frac{1}{2})}(1\!-\!2t)$. Similarly, we find
\begin{equation*}
   \sum_{i=0}^N \beta_i^{(N)}(\lambda) t^i = 4^N  \frac{N!}{(2N\!+\!1)!} (\lambda\!+\!\tfrac
   n2\!-\!N)_N \; {}_2F_1\left[\begin{matrix} -N,  N\!+\!\tfrac 32\!-\!\lambda\!-\!\tfrac n2 \\
   1\!-\!\lambda\!-\!\tfrac n2 \end{matrix};t\right].
\end{equation*}
The right-hand side is proportional to
$P_N^{(-\lambda-\frac{n}{2},\frac{1}{2})}(1\!-\!2t)$. For the definition of
Jacobi polynomials $P_N^{(\alpha,\beta)}(t)$ we refer to the Appendix.
\end{bem}

The following relations will be useful later on.

\begin{lem}\label{RelationJacobiGegenbauer} We have
\begin{align*}
   \sum_{j=0}^{N-i}(-1)^{N-j-i} \binom{N-j}{i}a_j^{(N)}(\lambda) & = \alpha^{(N)}_i(\lambda), \\
   \sum_{j=0}^{N-i}(-1)^{N-j-i} \binom{N-j}{i}b_j^{(N)}(\lambda) & = \beta^{(N)}_i(\lambda)
\end{align*}
for all $i=0,\dots,N$.
\end{lem}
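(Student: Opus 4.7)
The plan is to verify both identities by direct computation using the explicit formulas for the Gegenbauer coefficients $a_j^{(N)}(\lambda)$, $b_j^{(N)}(\lambda)$ (as defined in the Appendix) together with formulas \eqref{a-even} for $\alpha_i^{(N)}(\lambda)$ and \eqref{b-even} for $\beta_i^{(N)}(\lambda)$. Fixing $N$ and $i$, I would first factor out of the left-hand side all Pochhammer products that are independent of the summation index $j$, thereby reducing the sum to a constant multiple of a terminating Gauss hypergeometric series of the form ${}_2F_1(-(N\!-\!i), b; c; 1)$ with parameters $b, c$ depending explicitly on $\lambda$, $n$, $i$, and $N$. I would then apply the Zhu--Vandermonde summation \eqref{ZV} to evaluate this series in closed form as a ratio of Pochhammer symbols, and match the result against the expression for $\alpha_i^{(N)}(\lambda)$ obtained from \eqref{a-even} (using the reorganization already carried out in Remark \ref{coeff-Jacobi}).

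Conceptually, the identity is a manifestation of the classical quadratic transformation
\[
   C_{2N}^{\alpha}(z) = \frac{(\alpha)_N}{(1/2)_N}\, P_N^{(\alpha-1/2,\,-1/2)}(2z^2-1)
\]
between even Gegenbauer polynomials and Jacobi polynomials. Indeed, setting $P(y) \st \sum_{j=0}^N (-1)^{N-j} a_j^{(N)}(\lambda)\, y^{N-j}$ and expanding $P(1-u)$ by the binomial theorem yields
\[
   P(1-u) = \sum_{i=0}^N u^i \sum_{j=0}^{N-i} (-1)^{N-j-i}\binom{N-j}{i} a_j^{(N)}(\lambda),
\]
so the first assertion of the lemma is equivalent to the polynomial identity $P(1-u) = \sum_i \alpha_i^{(N)}(\lambda)\, u^i$. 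Under the identification of $P(y)$ as a scalar multiple of a Gegenbauer polynomial evaluated at $\sqrt{y}$ (with index related to $\lambda + (n-1)/2$) and of $\sum_i \alpha_i^{(N)}(\lambda)\, u^i$ as a scalar multiple of $P_N^{(-\lambda-n/2,\,-1/2)}(1-2u)$ provided by Remark \ref{coeff-Jacobi}, the polynomial identity becomes exactly the quadratic transformation evaluated at $2y-1 = 1-2u$. The second assertion proceeds analogously using the odd-order quadratic transformation $C_{2N+1}^{\alpha}(z) \propto z\, P_N^{(\alpha-1/2,\,1/2)}(2z^2-1)$, with $a_j, \alpha_i$ replaced by $b_j, \beta_i$ throughout.

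The main obstacle will be bookkeeping. One must carefully track the shifts of Pochhammer symbols produced by the explicit formulas, and keep precise control of the sign $(-1)^{N-j-i}$ when casting the left-hand side as a terminating ${}_2F_1$ in a form amenable to Zhu--Vandermonde. Once this reduction is achieved, the simplification of the resulting Pochhammer ratios to match \eqref{a-even} or \eqref{b-even} is routine, as is the verification that the overall prefactors agree.
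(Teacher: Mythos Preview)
Your proposal is correct and follows essentially the same route as the paper: both rewrite the Gegenbauer coefficients in Pochhammer form, recognise the sum as a terminating ${}_2F_1$ at argument $1$ (the paper obtains ${}_2F_1(\tfrac{1}{2}-N,\,i-N;\,\lambda+\tfrac{n}{2}-2N+\tfrac{1}{2};\,1)$, which is exactly your ${}_2F_1(-(N-i),b;c;1)$), apply Zhu--Vandermonde, and simplify. Your observation that the identity is equivalent, via the substitution $y\mapsto 1-u$, to the classical quadratic transformation between $C_{2N}^{\alpha}$ and $P_N^{(\alpha-1/2,-1/2)}$ is a pleasant conceptual addition that the paper does not spell out, though Remark~\ref{coeff-Jacobi} points in that direction.
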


\begin{proof} First, we note that Gegenbauer coefficients can be written in the
form
\begin{equation}\label{A-coeff}
   a_j^{(N)}(\lambda) = (-4)^{N-j} \frac{N!}{j!(2N\!-\!2j)!}
   \frac{(\lambda\!+\!\tfrac{n}{2}\!-\!2N\!+\!\tfrac 12)_N}{(\lambda\!+\!\tfrac{n}{2}\!-\!2N\!+\!\tfrac 12)_j}
\end{equation}
and
\begin{equation}\label{B-coeff}
   b_j^{(N)}(\lambda) = (-4)^{N-j} \frac{N!}{j!(2N\!-\!2j\!+\!1)!}
   \frac{(\lambda\!+\!\tfrac n2\!-\!2N\!-\!\tfrac 12)_N}{(\lambda\!+\!\tfrac n2\!-\!2N\!-\!\tfrac 12)_j}
\end{equation}
for $0\leq j\leq N-1$ and $a_N^{(N)}(\lambda) = b_N^{(N)}(\lambda)=1$. Now
\eqref{A-coeff} implies
\begin{equation*}
   \sum_{j=0}^{N-i}(-1)^{N-j-i} \binom{N-j}{i} a_j^{(N)}(\lambda) = \sum_{j=0}^{N-i} c_j
\end{equation*}
with
$$
   \frac{c_{j+1}}{c_j} = \frac{(-N\!+\!i\!+\!j)(-N\!+\tfrac{1}{2}\!+\!j)}
   {(\lambda\!+\!\tfrac{n}{2}\!-\!2N\!+\!\tfrac{1}{2}\!+\!j)} \frac{1}{j\!+\!1}.
$$
Hence
$$
   \sum_{j=0}^{N-i}(-1)^{N-j-i} \binom{N-j}{i} a_j^{(N)}(\lambda) = (-1)^{N-i}
   \binom{N}{i} a_0^{(N)}(\lambda) \; {}_2F_1\left[ \begin{matrix} \tfrac{1}{2}-N,i-N\\
   \lambda+\tfrac{n}{2}-2N+\tfrac{1}{2} \end{matrix} ;1\right].
$$
By the Zhu-Vandermonde formula \eqref{ZV}, we obtain
\begin{align*}
   \sum_{j=0}^{N-i}(-1)^{N-j-i} & \binom{N-j}{i} a_j^{(N)}(\lambda) \\
   & = (-1)^{N-i}
   \binom{N}{i} (-4)^N \frac{N!}{(2N)!} (\lambda\!+\!\tfrac{n}{2}\!-\!2N\!+\!\tfrac{1}{2})_N
   \frac{(\lambda\!+\!\tfrac{n}{2}\!-\!N)_{N-i}}{(\lambda\!+\!\tfrac{n}{2}\!-\!2N+\!\tfrac{1}{2})_{N-i}} \\
   & = \alpha^{(N)}_i(\lambda).
\end{align*}
Similar arguments using \eqref{B-coeff} prove the second relation.
\end{proof}

\subsection{Even-order families of the first and second type}\label{case-even}

The following result for even-order families of the first type basically
restates Theorem \ref{main-even} in Section \ref{intro}.

\begin{theorem}\label{coeffeven} Assume that $N \in \N$ and $p=0,\dots,n-1$.
The even-order families $D_{2N}^{(p \to p)}(\lambda)$ of the first type can be
written in the form
\begin{align}\label{SBO-even}
   D_{2N}^{(p \to p)}(\lambda)
   & = (\lambda\!+\!p) \sum_{i=0}^N \alpha_i^{(N)}(\lambda)
   (\dm\delta)^{N-i} \iota^* (\bar{\dm} \bar{\delta})^i \notag \\
   & + \sum_{i=1}^{N-1} (\lambda\!+\!p\!-\!2i) \alpha_i^{(N)}(\lambda)
   (\dm \delta)^{N-i} \iota^* (\bar{\delta} \bar{\dm})^i \notag \\
   & + (\lambda\!+\!p\!-\!2N) \sum_{i=0}^N \alpha_i^{(N)}(\lambda)
   (\delta \dm)^{N-i} \iota^*(\bar{\delta} \bar{\dm})^i
\end{align}
with the coefficients $\alpha_i^{(N)}(\lambda)$ defined by \eqref{a-even}.
\end{theorem}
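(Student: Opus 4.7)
The plan is to transform the formula from Theorem \ref{EvenDiffOp-type1}, which expresses $D_{2N}^{(p\to p)}(\lambda)$ as a combination of $\Delta^j\iota^*\partial_n^{2N-2j}$, $\Delta^j d\iota^* i_{\partial_n}\partial_n^{2N-2j-1}$, and $\Delta^j d\delta\iota^*\partial_n^{2N-2j-2}$, into the stated geometric form \eqref{SBO-even}. First I would apply the substitution identities \eqref{help-3} and \eqref{help-4} to rewrite each of these three sums as a double sum over $(j,i)$ in which the $\partial_n$'s have been eliminated in favour of $\Delta$, $\bar\Delta$, $\delta$, $\bar\delta$ and the trace operations $\iota^*$, $\iota^* i_{\partial_n}$. (The operator $\iota^* i_{\partial_n}$ disappears in the second and third sums thanks to the $d$ or $d\delta$ in front of it, after using $d\iota^*i_{\partial_n}=\iota^*\partial_n-\iota^* i_{\partial_n}\bar d$ from Lemma \ref{DiffCoDiff}.)

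Next I would exploit the purely algebraic consequences of $d^2=0=\delta^2$: for $k\ge 1$ one has the orthogonal decomposition $\Delta^k=(d\delta)^k+(\delta d)^k$ with $(d\delta)^k(\delta d)^k=0=(\delta d)^k(d\delta)^k$, and the commutations $d\Delta^k=(d\delta)^k d$, $\delta\Delta^k=(\delta d)^k\delta$, together with the analogous statements for $\bar\Delta^i$. After these simplifications and after pushing $\iota^*$ past $\bar d$ via $\iota^*\bar d=d\iota^*$, every summand collapses to one of the four geometric monomials
\[
   (d\delta)^{N-i}\iota^*(\bar d\bar\delta)^i,\quad (d\delta)^{N-i}\iota^*(\bar\delta\bar d)^i,\quad (\delta d)^{N-i}\iota^*(\bar d\bar\delta)^i,\quad (\delta d)^{N-i}\iota^*(\bar\delta\bar d)^i,
\]
with the boundary cases $i=0$ and $i=N$ requiring separate bookkeeping because $\bar\Delta^0=\id$ rather than $(\bar d\bar\delta)^0+(\bar\delta\bar d)^0=2\id$.

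The final step is to collect the scalar prefactor of each of these four monomials. Here Lemma \ref{RelationJacobiGegenbauer} is the decisive tool: it converts the alternating binomial sums
\[
  \sum_{j=0}^{N-i}(-1)^{N-j-i}\binom{N-j}{i}a_j^{(N)}(\lambda),\qquad
  \sum_{j=0}^{N-i}(-1)^{N-j-i}\binom{N-j}{i}b_j^{(N)}(\lambda)
\]
arising from \eqref{help-3}--\eqref{help-4} into $\alpha_i^{(N)}(\lambda)$ and $\beta_i^{(N)}(\lambda)$ respectively. Plugging in the explicit coefficients $p_j^{(N)}(\lambda;p)=(\lambda+p-2N)a_j^{(N)}(\lambda)$, $q_j^{(N-1)}(\lambda-1)=-2N(2\lambda+n-2N-1)b_j^{(N-1)}(\lambda-1)$ and $r_j^{(N-1)}(\lambda-1)=2Na_j^{(N-1)}(\lambda-1)$ from Theorem \ref{EvenDiffOp-type1} and simplifying, the prefactors of the three surviving monomials will collapse to $(\lambda+p)\alpha_i^{(N)}(\lambda)$, $(\lambda+p-2i)\alpha_i^{(N)}(\lambda)$, and $(\lambda+p-2N)\alpha_i^{(N)}(\lambda)$, reproducing the three sums in \eqref{SBO-even}.

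The main obstacle will be the crucial cancellation showing that the coefficient of $(\delta d)^{N-i}\iota^*(\bar d\bar\delta)^i$ vanishes identically: this monomial genuinely appears in each of the three sums (and in both the $(\bar d\bar\delta)^i$ and the $(\bar\delta\bar d)^i$ pieces of $\bar\Delta^i$), and one must show that the three contributions cancel. After clearing common factors this reduces to an arithmetic identity between $a_j^{(N)}$, $a_{j}^{(N-1)}$, and $b_j^{(N-1)}$ which I expect to verify either directly from the product formulas \eqref{A-coeff}--\eqref{B-coeff} or, more cleanly, by rewriting both sides as the Jacobi polynomial expressions of Remark \ref{coeff-Jacobi} and invoking the Zhu–Vandermonde identity \eqref{ZV} (the same identity that drives Lemma \ref{RelationJacobiGegenbauer}). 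The careful tracking of the boundary indices $i\in\{0,N\}$ and $j\in\{0,N-1\}$ is the only delicate part of the bookkeeping; once the cancellation is established, everything else is routine.
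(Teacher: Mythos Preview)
Your overall strategy coincides with the paper's: start from Theorem~\ref{EvenDiffOp-type1}, apply the substitution identities \eqref{help-3}--\eqref{help-4}, decompose the resulting powers of $\Delta$ and $\bar\Delta$ via $(d\delta)^k+(\delta d)^k$, and then collapse the binomial sums into the $\alpha_i^{(N)}(\lambda)$ using Lemma~\ref{RelationJacobiGegenbauer}. This part is fine.

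The genuine gap is your identification of the ``main obstacle''. You claim that the monomial $(\delta d)^{N-i}\iota^*(\bar d\bar\delta)^i$ (for $1\le i\le N-1$) disappears because its scalar coefficient cancels across the three sums. This is false: only the first sum (the $p_j^{(N)}$-sum) contributes to this monomial, since the second and third sums carry a leading $d$ and hence produce only $(d\delta)^{N-i}$-type terms. That single contribution has coefficient $(\lambda+p-2N)\alpha_i^{(N)}(\lambda)$, which is \emph{not} zero. What actually kills the term is the operator identity
\[
   (\delta d)^{N-i}\iota^*(\bar d\bar\delta)^i
   =(\delta d)^{N-i-1}\delta\,\bigl(d\iota^*\bar d\bigr)\,\bar\delta(\bar d\bar\delta)^{i-1}=0,
\]
since $\iota^*\bar d=d\iota^*$ forces $d\iota^*\bar d=d^2\iota^*=0$. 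The paper uses exactly this (``By $d\iota^*\bar d=0$, the third sum equals\ldots''). If you attempt the coefficient cancellation you announce, you will not be able to close the argument.

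The place where a genuinely non-trivial scalar identity is needed is elsewhere: the coefficient of $(d\delta)^{N-i}\iota^*(\bar\delta\bar d)^i$ for $1\le i\le N-1$ must be shown to equal $(\lambda+p-2i)\alpha_i^{(N)}(\lambda)$. This is identity \eqref{id-3} in the paper, and its proof does require a Zhu--Vandermonde computation (applied to the combination $q_j^{(N-1)}+q_{j-1}^{(N-1)}$). So your instinct that Zhu--Vandermonde enters is correct, but it enters to produce the factor $(\lambda+p-2i)$, not to annihilate the fourth monomial.

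Two minor points: the coefficients $\beta_i^{(N)}(\lambda)$ play no role here (they appear only in the odd-order Theorem~\ref{coeffodd2}); and your remark about using $d\iota^*i_{\partial_n}=\iota^*\partial_n-\iota^*i_{\partial_n}\bar d$ to eliminate $i_{\partial_n}$ from the second sum is not how the paper proceeds---it instead uses \eqref{help-4} directly and then the relation $\Delta^j d\Delta^{k}\delta=(d\delta)^{j+k+1}$.
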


For $p=0$, the families in Theorem \ref{coeffeven} reduce to the product of
$(\lambda-2N)$ and the equivariant even-order families studied in \cite{Juhl}.

\begin{proof} In the following, we shall use the conventions $r_{-1}^{(N-1)}(\lambda) = 0$ and
$q_{-1}^{(N-1)}(\lambda)=0$. We start by proving the formula
\begin{align}\label{even-prep}
   D_{2N}^{(p \to p)}(\lambda) & = \sum_{i=0}^{N} \sum_{j=0}^{N-i} (-1)^{N-j-i} \binom{N-j}{i}
   p_j^{(N)}(\lambda;p) (\delta\dm)^{N-i} \iota^* (\bar{\delta}\bar{\dm})^i \notag \\
   & + \sum_{i=0}^{N} \sum_{j=0}^{N-i} (-1)^{N-j-i} \binom{N-j}{i} S_j(\lambda;N,p)
   (\dm\delta)^{N-i} \iota^* (\bar{\dm}\bar{\delta})^{i} \notag \\
   & + \sum_{i=1}^{N-1} \sum_{j=0}^{N-i} (-1)^{N-j-i} \binom{N-j}{i} T_j(\lambda;N,p)
   (\dm\delta)^{N-i} \iota^* (\bar{\delta}\bar{\dm})^i
\end{align}
with
$$
   S_j(\lambda;N,p) \st \left[p_j^{(N)}(\lambda;p) + r_{j-1}^{(N-1)}(\lambda\!-\!1)
   + q_j^{(N-1)}(\lambda\!-\!1)\right], \; j=0,\dots,N
$$
and
$$
   T_j(\lambda;N,p) \st \left[p_j^{(N)}(\lambda;p) + r_{j-1}^{(N-1)}(\lambda\!-\!1)
   - q_{j-1}^{(N-1)}(\lambda\!-\!1)\right], \; j=0,\dots,N-1.
$$
By combining Theorem \ref{EvenDiffOp-type1} with the relations \eqref{help-3}
and \eqref{help-4}, we obtain
\begin{align*}
   D_{2N}^{(p \to p)}(\lambda) & = \sum_{j=1}^{N} \sum_{i=0}^{N-j}(-1)^{N-j-i} \binom{N-j}{i}
   \left[ p_j^{(N)}(\lambda;p) + r_{j-1}^{(N-1)}(\lambda\!-\!1) \right]
   (\dm \delta)^{N-i}\iota^*\bar{\Delta}^{i} \\
   & + \sum_{i=0}^N (-1)^{N-i} \binom{N}{i} p_0^{(N)}(\lambda;p)
   \left((\dm \delta)^{N-i} \!+\! (\delta \dm)^{N-i}\right) \iota^* \bar{\Delta}^i \\
   & - p_0^{(N)}(\lambda;p) \iota^* \bar{\Delta}^N  \\
   & + \sum_{j=1}^{N} \sum_{i=0}^{N-j} (-1)^{N-j-i} \binom{N-j}{i}
   p_j^{(N)}(\lambda;p) (\delta \dm)^{N-i} \iota^*\bar{\Delta}^{i} \\
   & + \sum_{j=1}^{N} \sum_{i=0}^{N-j} (-1)^{N-j-i-1} \binom{N-j}{i}
   q_{j-1}^{(N-1)}(\lambda\!-\!1) (\dm \delta)^{N-i}\iota^*\bar{\Delta}^{i}\\
   & + \sum_{j=1}^{N} \sum_{i=0}^{N-j} (-1)^{N-j-i} \binom{N-j}{i}
   q_{j-1}^{(N-1)}(\lambda\!-\!1) (\dm \delta)^{N-i-1}\iota^* (\bar{\dm} \bar{\delta})^{i+1}
\end{align*}
using $(\dm\delta)^j \Delta^{N-j-i} = (\dm\delta)^{N-i}$ if $j \ge 1$. Next, we
expand the powers of Laplacians $\bar{\Delta}^i$. We obtain
\begin{align*}
   D_{2N}^{(p \to p)}(\lambda) & = \sum_{j=0}^{N} \sum_{i=0}^{N-j} (-1)^{N-j-i} \binom{N-j}{i}
   \left[ p_j^{(N)}(\lambda;p) + r_{j-1}^{(N-1)}(\lambda\!-\!1) \right]
   (\dm \delta)^{N-i}\iota^*(\bar{\dm} \bar{\delta})^{i} \\
   & + \sum_{j=0}^N \sum_{i=1}^{N-j} (-1)^{N-j-i} \binom{N-j}{i}
   \left[p_j^{(N)}(\lambda;p) + r_{j-1}^{(N-1)}(\lambda\!-\!1)\right]
   (\dm \delta)^{N-i} \iota^*(\bar{\delta} \bar{\dm})^{i} \\
   & + \sum_{j=0}^N \sum_{i=0}^{N-j} (-1)^{N-j-i} \binom{N-j}{i}
   p_j^{(N)}(\lambda;p) (\delta \dm)^{N-i} \iota^*(\bar{\dm}\bar{\delta})^i \\
   & + \sum_{j=0}^N \sum_{i=1}^{N-j} (-1)^{N-j-i} \binom{N-j}{i}
   p_j^{(N)}(\lambda;p) (\delta \dm)^{N-i} \iota^*(\bar{\delta}\bar{\dm})^{i} \\
   & + \sum_{j=0}^{N-1} \sum_{i=1}^{N-j} (-1)^{N-j-i} \binom{N-j}{i}
   q_{j}^{(N-1)}(\lambda\!-\!1) (\dm \delta)^{N-i} \iota^*(\bar{\dm}\bar{\delta})^{i}\\
   & + \sum_{j=1}^{N} \sum_{i=1}^{N-j}(-1)^{N-j-i-1} \binom{N-j}{i}
   q_{j-1}^{(N-1)}(\lambda\!-\!1) (\dm \delta)^{N-i} \iota^*(\bar{\delta}
   \bar{\dm})^i \\
   & + \sum_{j=1}^N (-1)^{N-j-1} q_{j-1}^{(N-1)}(\lambda\!-\!1) (\dm \delta)^N
   \iota^* \\
   & - p_0^{(N)}(\lambda;p) \iota^* ((\bar{\dm} \bar{\delta})^N + (\bar{\delta} \bar{\dm})^N).
\end{align*}
By $\dm \iota^* \bar{\dm} = 0$, the third sum equals
$$
   \sum_{j=0}^N (-1)^{N-j} p_j^{(N)}(\lambda;p) (\delta \dm)^{N} + p_0^{(N)}(\lambda;p) (\bar{\dm} \bar{\delta})^N.
$$
Further simplifications and interchanges of summations yields
\eqref{even-prep}.

Now the identities
\begin{align}
   \sum_{j=0}^{N-i} (-1)^{N-j-i} \binom{N-j}{i} p_j^{(N)}(\lambda;p)
   & = (\lambda\!+\!p\!-\!2N) \alpha_i^{(N)}(\lambda), \label{id-1} \\
   \sum_{j=0}^{N-i} (-1)^{N-j-i} \binom{N-j}{i} S_j(\lambda;N,p)
   & = (\lambda\!+\!p) \alpha_i^{(N)}(\lambda), \label{id-2} \\
   \sum_{j=0}^{N-i} (-1)^{N-j-i}
   \binom{N-j}{i}  T_j(\lambda;N,p) & = (\lambda\!+\!p\!-\!2i) \alpha_i^{(N)}(\lambda) \label{id-3}
\end{align}
for $i=0,\dots,N$ imply that formula \eqref{even-prep} is equivalent to
\begin{align*}
   D_{2N}^{(p \to p)}(\lambda)
   & = (\lambda\!+\!p\!-\!2N) \sum_{i=0}^N \alpha_i^{(N)}(\lambda) (\delta\dm)^{N-i} \iota^* (\bar{\delta}\bar{\dm})^i \\
   & + (\lambda\!+\!p) \sum_{i=0}^N
   \alpha_i^{(N)}(\lambda) (\dm\delta)^{N-i} \iota^* (\bar{\dm}\bar{\delta})^{i} \\
   & + \sum_{i=1}^{N-1} (\lambda\!+\!p\!-\!2i) \alpha_i^{(N)}(\lambda) (\dm\delta)^{N-i} \iota^*
   (\bar{\delta}\bar{\dm})^{i}.
\end{align*}
This proves the theorem.

It remains to prove the identities \eqref{id-1}--\eqref{id-3}. \eqref{id-1} is
a direct consequence of the first identity in Lemma
\ref{RelationJacobiGegenbauer} using
$$
   p^{(N)}_j(\lambda;p) = (\lambda\!+\!p\!-\!2N) a_j^{(N)}(\lambda).
$$
Next, we observe that
\begin{align*}
   r_{j-1}^{(N-1)}(\lambda\!-\!1) & = 2N a_{j-1}^{(N-1)}(\lambda\!-\!1) \qquad
   \mbox{(by definition of $r_j^{(N-1)}(\lambda)$)} \\
   & = 2j a_j^{(N)}(\lambda) \quad \mbox{(from the definition of even Gegenbauer coefficients)}
\end{align*}
and
$$
   q_j^{(N-1)}(\lambda\!-\!1) = (2N\!-\!2j) a_j^{(N)}(\lambda)
   \quad \mbox{(from the definition of even Gegenbauer coefficients)}.
$$
Hence
$$
   S_j(\lambda;N,p) = (\lambda\!+\!p) a_j^{(N)}(\lambda).
$$
Thus the first identity in Lemma \ref{RelationJacobiGegenbauer} implies
\eqref{id-3}. Finally, a calculation shows that
\begin{align*}
   & q_{j}^{(N-1)}(\lambda\!-\!1) + q_{j-1}^{(N-1)}(\lambda\!-\!1) \\
   & = \frac{N!}{j!(2N\!+\!1\!-\!2j)!} (-2)^{N+1-j} \left[j(2\lambda\!+\!n) -
   N(2N\!+\!1)\right] \prod_{k=j}^{N-1} (2\lambda\!-\!4N\!+\!2k\!+\!n\!+\!1).
\end{align*}
Hence
\begin{align*}
   & \sum_{j=0}^{N-i} (-1)^{N-j} \binom{N-j}{i} \left[q_{j}^{(N-1)}(\lambda\!-\!1)
   + q_{j-1}^{(N-1)}(\lambda\!-\!1)\right] \\
   & = -(2\lambda\!+\!n) \sum_{j=0}^{N-i-1} 2^{N-j} \binom{N-j-1}{i}
   \frac{N!}{j!(2N\!-\!1\!-\!2j)!} \prod_{k=j+1}^{N-1} (2\lambda\!-\!4N\!+\!2k\!+\!n\!+\!1) \\
   & + N(2N\!+\!1) \sum_{j=0}^{N-i} 2^{N+1-j} \binom{N-j}{i} \frac{N!}{j!(2N\!+\!1\!-\!2j)!}
   \prod_{k=j}^{N-1} (2\lambda\!-\!4N\!+\!2k\!+\!n\!+\!1) \\
   & = -(2\lambda\!+\!n) 2^{2N-1} \binom{N-1}{i} \frac{N!}{(2N\!-\!1)!} \\
   & \qquad \qquad \qquad \qquad \times
   (\lambda\!+\!\tfrac{n}{2}\!-\!2N\!+\!\tfrac{3}{2})_{N-1} \;
   {}_2F_1(-N\!+\!\tfrac{1}{2},-N\!+\!i\!+\!1;\lambda\!+\!\tfrac{n}{2}\!-\!2N\!+\!\tfrac{3}{2};1) \\
   & + N(2N\!+\!1) 2^{2N+1} \binom{N}{i} \frac{N!}{(2N\!+\!1)!} \\
   & \qquad \qquad \qquad \qquad \times (\lambda\!+\!\tfrac{n}{2}\!-\!2N\!+\!\tfrac{1}{2})_N
   \; {}_2F_1(-N\!-\!\tfrac{1}{2},-N\!+\!i;\lambda\!+\!\tfrac{n}{2}\!-\!2N\!+\!\tfrac{1}{2};1).
\end{align*}
By the Zhu-Vandermonde formula \eqref{ZV}, the latter sum equals
\begin{align*}
   & -(2\lambda\!+\!n) 2^{2N-1} \binom{N-1}{i} \frac{N!}{(2N\!-\!1)!}
   \frac{(\lambda\!+\!\frac{n}{2}\!-\!2N\!+\!\frac{3}{2})_{N-1}}
   {(\lambda\!+\!\frac{n}{2}\!-\!2N\!+\frac{3}{2})_{N-i-1}}(\lambda\!+\!\tfrac{n}{2}\!-\!N\!+\!1)_{N-i-1} \\
   & + 2^{2N} \binom{N}{i} \frac{N!}{(2N\!-\!1)!}
   \frac{(\lambda\!+\!\frac{n}{2}\!-\!2N\!+\!\frac{1}{2})_{N}}{(\lambda\!+\!\frac{n}{2}\!-\!2N\!+\frac{1}{2})_{N-i}}
   (\lambda\!+\!\tfrac{n}{2}\!-\!N\!+\!1)_{N-i}.
\end{align*}
Now simplification gives
\begin{align*}
   & \frac{\left[-(\lambda\!+\!\tfrac{n}{2})(N\!-\!i) + N (\lambda\!+\!\frac{n}{2}\!-\!i)\right]}
   {(\lambda\!+\!\tfrac{n}{2}\!-\!N)}
   2^{2N} \frac{(N\!-\!1)!}{(2N\!-\!1)!} \binom{N}{i} (\lambda\!+\!\tfrac{n}{2}\!-\!N)_{N-i}
   (\lambda\!+\!\tfrac{n}{2}\!-\!i\!-\!N\!+\!\tfrac{1}{2})_i \\
   & = (-1)^i 2i \alpha_i^{(N)}(\lambda).
\end{align*}
Thus, we have proved that
$$
\sum_{j=0}^{N-i} (-1)^{N-j-i} \binom{N-j}{i} \left[q_{j}^{(N-1)}(\lambda\!-\!1)
   + q_{j-1}^{(N-1)}(\lambda\!-\!1)\right] = 2i \alpha_i^{(N)}(\lambda).
$$
Subtracting this identity from \eqref{id-2} proves \eqref{id-3}. The proof is
complete.
\end{proof}

Now Hodge conjugation relates the even-order families $\Omega^p(\R^n) \to
\Omega^{p-1}(\R^{n-1})$ of the second type to the even-order families
$\Omega^p(\R^n) \to \Omega^{p}(\R^{n-1})$ of the first type. More precisely,
Theorem \ref{Hodge-c} implies the following result.

\begin{theorem}\label{coeffeven2} For $p=1,\dots,n$ and $N \in \N$, the even-order
families $D_{2N}^{(p \to p-1)}(\lambda)$ of the second type can be written in
the form
\begin{align}\label{fam-even2}
   D_{2N}^{(p \to p-1)}(\lambda) & = -(\lambda\!+\!n\!-\!p\!-\!2N) \sum_{i=0}^N \alpha_i^{(N)}(\lambda)
   (\dm \delta)^{N-i} \iota^* i_{\partial_n} (\bar{\dm} \bar{\delta})^i \notag \\
   & - \sum_{i=1}^{N-1} (\lambda\!+\!n\!-\!p\!-\!2i) \alpha_i^{(N)}(\lambda)
   (\delta \dm)^{N-i} \iota^* i_{\partial_n} (\bar{\dm} \bar{\delta})^i \notag \\
   & -(\lambda\!+\!n\!-\!p) \sum_{i=0}^N  \alpha_i^{(N)}(\lambda)
   (\delta \dm)^{N-i} \iota^* i_{\partial_n} (\bar{\delta} \bar{\dm})^i
\end{align}
with the coefficients $\alpha_i^{(N)}(\lambda)$ defined by \eqref{a-even}.
\end{theorem}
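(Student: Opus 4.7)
\medskip
\noindent
\textbf{Proof plan.} The strategy is to derive Theorem \ref{coeffeven2} directly from Theorem \ref{coeffeven} via the Hodge conjugation identity
\[
   D^{(p\to p-1)}_{2N}(\lambda) = (-1)^{pn}\, \star\, D^{(n-p\to n-p)}_{2N}(\lambda)\, \bar{\star}
\]
of Theorem \ref{Hodge-c}. First I would substitute $p \mapsto n-p$ into the geometric formula \eqref{SBO-even} supplied by Theorem \ref{coeffeven}; this yields an expression for $D^{(n-p\to n-p)}_{2N}(\lambda)$ as a linear combination of the three blocks $(\dm\delta)^{N-i}\iota^*(\bar{\dm}\bar{\delta})^i$, $(\dm\delta)^{N-i}\iota^*(\bar{\delta}\bar{\dm})^i$, $(\delta\dm)^{N-i}\iota^*(\bar{\delta}\bar{\dm})^i$ with the same coefficients $\alpha_i^{(N)}(\lambda)$ but with the linear factors $\lambda+n-p$, $\lambda+n-p-2i$, $\lambda+n-p-2N$ in front.

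The next step is to commute the outer Hodge operators past each of these three building blocks. Using the Hodge relations of Lemma \ref{Hodge-gen}, one checks
\[
   (\bar{\dm}\bar{\delta})^i\bar{\star} = \bar{\star}\,(\bar{\delta}\bar{\dm})^i,\qquad
   (\bar{\delta}\bar{\dm})^i\bar{\star} = \bar{\star}\,(\bar{\dm}\bar{\delta})^i
\]
on $\Omega^p(\R^n)$, and similarly $\star(\dm\delta)^{N-i} = (\delta\dm)^{N-i}\star$ and $\star(\delta\dm)^{N-i} = (\dm\delta)^{N-i}\star$ on $\Omega^{n-p}(\R^{n-1})$. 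After these commutations, each block becomes $(\cdots)\,\star\iota^*\bar{\star}\,(\cdots)$. Lemma \ref{HodgeLemma} then rewrites $\star\iota^*\bar{\star}$ on $\Omega^p(\R^n)$ as $(-1)^{pn+1}\iota^* i_{\partial_n}$, so that together with the prefactor $(-1)^{pn}$ of the Hodge conjugation formula one picks up an overall sign $-1$ for every term.

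Putting the pieces together, the block $(\dm\delta)^{N-i}\iota^*(\bar{\dm}\bar{\delta})^i$ is sent to $-(\delta\dm)^{N-i}\iota^* i_{\partial_n}(\bar{\delta}\bar{\dm})^i$, the block $(\dm\delta)^{N-i}\iota^*(\bar{\delta}\bar{\dm})^i$ is sent to $-(\delta\dm)^{N-i}\iota^* i_{\partial_n}(\bar{\dm}\bar{\delta})^i$, and the block $(\delta\dm)^{N-i}\iota^*(\bar{\delta}\bar{\dm})^i$ is sent to $-(\dm\delta)^{N-i}\iota^* i_{\partial_n}(\bar{\dm}\bar{\delta})^i$. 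Inserting these substitutions into the shifted formula obtained in the first step and reorganizing the three resulting sums according to their leading operator pattern $(\dm\delta)^{N-i}\iota^* i_{\partial_n}(\bar{\dm}\bar{\delta})^i$, $(\delta\dm)^{N-i}\iota^* i_{\partial_n}(\bar{\dm}\bar{\delta})^i$, $(\delta\dm)^{N-i}\iota^* i_{\partial_n}(\bar{\delta}\bar{\dm})^i$ yields precisely the three lines of formula \eqref{fam-even2}, with coefficients $-(\lambda+n-p-2N)$, $-(\lambda+n-p-2i)$ and $-(\lambda+n-p)$, respectively.

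The one point requiring care is the bookkeeping of signs in the Hodge commutations (the sign $(-1)^{pn+1}$ in Lemma \ref{HodgeLemma} depends on the degree of the form on which $\star\iota^*\bar{\star}$ is applied, and one must keep track that in each block this degree is indeed $p$ after the internal differentials have been commuted past $\bar{\star}$). Otherwise, no further analysis is needed: no new combinatorial identity is required beyond those already used in the proof of Theorem \ref{coeffeven}, and the coefficients $\alpha_i^{(N)}(\lambda)$ are inherited unchanged from the first-type formula.
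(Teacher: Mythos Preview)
Your proposal is correct and follows essentially the same route as the paper's own proof: Hodge-conjugate the formula of Theorem~\ref{coeffeven} via Theorem~\ref{Hodge-c}, commute the star operators through the $(d\delta)/(\delta d)$ and $(\bar d\bar\delta)/(\bar\delta\bar d)$ blocks using Lemma~\ref{Hodge-gen}/(4), and collapse the remaining $\star\iota^*\bar\star$ to $(-1)^{pn+1}\iota^* i_{\partial_n}$ via Lemma~\ref{HodgeLemma}. The paper carries out exactly these steps and the sign bookkeeping you flag is the only delicate point.
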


\begin{proof} Theorem \ref{coeffeven} and Theorem \ref{Hodge-c} imply
\begin{align*}
   (-1)^{np} D_{2N}^{(p \to p-1)}(\lambda) & = \star \, D_{2N}^{(n-p \to n-p)}(\lambda) \, \bar{\star} \\
   & = (\lambda\!+\!n\!-\!p) \sum_{i=0}^N \alpha_i^{(N)}(\lambda)
   (\delta \dm)^{N-i} \star \iota^* \bar{\star} \, (\bar{\delta} \bar{\dm} )^i \\
   & + \sum_{i=1}^{N-1} (\lambda\!+\!n\!-\!p\!-\!2i) \alpha_i^{(N)}(\lambda)
   (\delta\dm )^{N-i} \star \iota^* \bar{\star} \, (\bar{\dm}\bar{\delta})^i \\
   & + (\lambda\!+\!n\!-\!p\!-\!2N) \sum_{i=0}^N \alpha_i^{(N)}(\lambda)
   (\dm \delta)^{N-i} \star \iota^* \bar{\star} \, (\bar{\dm}\bar{\delta})^i
\end{align*}
using Lemma \ref{Hodge-gen}/(4). But
$$
   \star \, \iota^* \bar{\star} = \iota^* i_{\partial_n} (-1)^{pn+1}
$$
on $\Omega^p(\R^n)$ by Lemma \ref{HodgeLemma}. The assertion follows by
combining both results.
\end{proof}

\subsection{Odd-order families of the first and second type}\label{case-odd}

We continue with the discussion of odd-order families. We start with odd-order
families of the first type. The following result basically restates Theorem
\ref{main-odd} in Section \ref{intro}.

\begin{theorem}\label{coeffodd2} Assume that $N \in \N_0$ and $p=0,\dots,n-1$. The odd-order family
$D^{(p \to p)}_{2N+1}(\lambda)$ of the first type can be written in the form
\begin{align}
   D^{(p\to p)}_{2N+1}(\lambda)
   & = \sum_{i=1}^N  \gamma_i^{(N)}(\lambda;p) (\dm \delta)^{N-i} \dm \iota^* i_{\partial_n}
   (\bar{\delta}\bar{\dm})^i \notag \\
   & + (\lambda\!+\!p) \sum_{i=0}^N  \beta_i^{(N)}(\lambda)
   (\dm\delta)^{N-i}\dm \iota^*i_{\partial_n}(\bar{\dm}\bar{\delta})^i \notag \\
   & + (\lambda\!+\!p\!-\!2N\!-\!1) \sum_{i=0}^N \beta_i^{(N)}(\lambda)
   (\delta\dm)^{N-i} \iota^*i_{\partial_n} \bar{\dm}(\bar{\delta}\bar{\dm})^i
\end{align}
with the coefficients
\begin{align}\label{a-coeff-odd}
   \gamma_i^{(N)}(\lambda;p) & = (-1)^i 2^N \frac{N!}{(N\!+\!1)(2N\!+\!1)!} \binom{N+1}{i} \notag \\
   & \times \left[(\lambda\!+\!p\!-\!2N\!-\!1)(N\!+\!1)(2\lambda\!+\!n\!-\!2i) +
   (\lambda\!+\!n\!-\!p)(2N\!+\!1)(N\!-\!i\!+\!1) \right] \notag \\
   & \times \prod_{k=i+1}^N (2\lambda\!+\!n\!-\!2k) \prod_{k=1}^{i-1}
   (2\lambda\!+\!n\!-\!2k\!-\!2N\!-\!1), \quad i=1,\dots,N,
\end{align}
and $\beta_i^{(N)}(\lambda)$ as in \eqref{b-even}.
\end{theorem}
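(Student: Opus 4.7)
The plan is to mimic the strategy used to prove Theorem \ref{coeffeven}. I start from the explicit formula in Theorem \ref{OddDiffOp-type1}:
\begin{align*}
   D^{(p\to p)}_{2N+1}(\lambda) & = \sum_{j=0}^N(-1)^{N-j} p_j^{(N)}(\lambda;p) \Delta^j \iota^* \partial_n^{2N+1-2j} \\
   & +\sum_{j=0}^N(-1)^{N-j} q_j^{(N)}(\lambda\!-\!1) \Delta^j \dm \iota^* i_{\partial_n}\partial_n^{2N-2j} \\
   & +\sum_{j=0}^{N-1}(-1)^{N-j-1} r_j^{(N-1)}(\lambda\!-\!1) \Delta^j \dm \delta \iota^* \partial_n^{2N-1-2j},
\end{align*}
with the coefficients recalled in the statement. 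The aim is to re-express each power $\partial_n^{m}$ together with $\iota^*$ or $\iota^*i_{\partial_n}$ as a sum of bar-operators via the identities \eqref{help-10} and \eqref{help-11} (rather than \eqref{help-3}, \eqref{help-4}, since the odd-order case naturally produces odd powers of $\partial_n$).

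First I would rewrite $\iota^*\partial_n^{2N+1-2j}$ by means of \eqref{help-11}, producing a sum over $i$ of the two operator types $(\dm \delta)^{N-i}\dm\iota^* i_{\partial_n}\bar{\Delta}^i$ and $(\dm\delta)^{N-i}\iota^*i_{\partial_n}\bar{\dm}\bar{\Delta}^i$; similarly expand $\iota^*i_{\partial_n}\partial_n^{2N-2j}$ by \eqref{help-10} and $\iota^*\partial_n^{2N-1-2j}$ once more by \eqref{help-11}. Then expand every $\bar{\Delta}^i$ as $(\bar{\dm}\bar{\delta})^i+(\bar{\delta}\bar{\dm})^i$ and exploit $\dm\iota^*\bar{\dm}=0$ (coming from $\iota^*\bar{\dm}=\dm\iota^*$) together with $(\dm\delta)^j\Delta^{N-j-i}=(\dm\delta)^{N-i}$ and $(\delta\dm)^j\Delta^{N-j-i}=(\delta\dm)^{N-i}$ to collapse many cross terms. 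After interchange of summations, the remaining operator basis consists exactly of the three templates appearing in the statement:
$(\dm\delta)^{N-i}\dm\iota^* i_{\partial_n}(\bar{\dm}\bar{\delta})^i$,
$(\dm\delta)^{N-i}\dm\iota^* i_{\partial_n}(\bar{\delta}\bar{\dm})^i$ and
$(\delta\dm)^{N-i}\iota^* i_{\partial_n}\bar{\dm}(\bar{\delta}\bar{\dm})^i$.

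Next I would identify the scalar coefficients in front of each template. Each coefficient is a finite sum over $j$ of a product of a binomial $\binom{N-j}{i}$ and a combination of $p_j^{(N)}(\lambda;p)$, $q_j^{(N)}(\lambda\!-\!1)$ and $r_{j-1}^{(N-1)}(\lambda\!-\!1)$. Invoking the second identity of Lemma \ref{RelationJacobiGegenbauer},
$$\sum_{j=0}^{N-i}(-1)^{N-j-i}\binom{N-j}{i}b_j^{(N)}(\lambda)=\beta_i^{(N)}(\lambda),$$
together with the analogous identities (and the relations $2N b_{j-1}^{(N-1)}(\lambda\!-\!1)=2j\,b_j^{(N)}(\lambda)$, $q_j^{(N)}(\lambda\!-\!1)=a_j^{(N)}(\lambda)$ inherited from the Gegenbauer recurrences) one reads off $(\lambda\!+\!p)\beta_i^{(N)}(\lambda)$ as the coefficient of $(\dm\delta)^{N-i}\dm\iota^* i_{\partial_n}(\bar{\dm}\bar{\delta})^i$ and $(\lambda\!+\!p\!-\!2N\!-\!1)\beta_i^{(N)}(\lambda)$ as the coefficient of $(\delta\dm)^{N-i}\iota^* i_{\partial_n}\bar{\dm}(\bar{\delta}\bar{\dm})^i$, in complete analogy with identities \eqref{id-1}, \eqref{id-2} in the even case.

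The hard part will be the middle coefficient: the scalar in front of $(\dm\delta)^{N-i}\dm\iota^* i_{\partial_n}(\bar{\delta}\bar{\dm})^i$. It mixes contributions coming from both $q_j$ and $q_{j-1}$ after rearrangement of the expansion $\bar{\Delta}^i=(\bar{\dm}\bar{\delta})^i+(\bar{\delta}\bar{\dm})^i$, and it does \emph{not} reduce to a scalar multiple of $\beta_i^{(N)}(\lambda)$. The cleanest way to extract it is to first prove the compact recursive form
$$\gamma_i^{(N)}(\lambda;p)=(\lambda\!+\!p\!-\!2i)\beta_i^{(N)}(\lambda)-(\lambda\!+\!p\!-\!2i\!+\!1)\beta_{i-1}^{(N)}(\lambda),$$
(which is Remark \ref{gamma-alternative}/Theorem \ref{main-odd}), by recognizing the sum as a combination of two Zhu-Vandermonde evaluations shifted by one in the index $i$. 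The expanded form \eqref{a-coeff-odd} then follows by explicitly multiplying out and using $\binom{N}{i-1}+\binom{N}{i}=\binom{N+1}{i}$ together with the telescoping of the $\Gamma$-factors in $\beta_i^{(N)}(\lambda)$ and $\beta_{i-1}^{(N)}(\lambda)$. Once these three coefficient identifications are in place, substitution into the reorganised expression yields precisely the stated formula, completing the proof.
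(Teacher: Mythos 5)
Your proposal follows essentially the same route as the paper's proof: expand the normal derivatives via \eqref{help-10}, \eqref{help-11}, collapse onto the three operator templates, and evaluate the resulting binomial--Gegenbauer sums through Lemma \ref{RelationJacobiGegenbauer} and the Zhu--Vandermonde formula \eqref{ZV}. The only cosmetic difference is in the middle coefficient: you propose to establish the compact form $\gamma_i^{(N)}(\lambda;p)=(\lambda\!+\!p\!-\!2i)\beta_i^{(N)}(\lambda)-(\lambda\!+\!p\!-\!2i\!+\!1)\beta_{i-1}^{(N)}(\lambda)$ first and then expand, whereas the paper splits the sum directly into the two pieces $\gamma_i^{(N),\pm}(\lambda;p)$ and obtains \eqref{a-coeff-odd} from two Zhu--Vandermonde evaluations, recording the compact form separately in Remark \ref{gamma-alternative}; both orderings are sound.
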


The coefficients $\gamma_i^{(N)}(\lambda;p)$ can be written in the form
\begin{equation}\label{a-deco}
   \gamma_i^{(N)}(\lambda;p) = \gamma_i^{(N),+}(\lambda;p) + \gamma_i^{(N),-}(\lambda;p)
\end{equation}
with
\begin{multline}\label{a+-coeff}
   \gamma_i^{(N),+}(\lambda;p) = (-1)^i 2^N \frac{N!}{(2N\!+\!1)!} \binom{N+1}{i} \\
   \times (\lambda\!+\!p\!-\!2N\!-\!1) \prod_{k=i}^N (2\lambda\!+\!n\!-\!2k)
   \prod_{k=1}^{i-1} (2\lambda\!+\!n\!-\!2k\!-\!2N\!-\!1)
\end{multline}
and
\begin{multline}\label{a--coeff}
   \gamma^{(N),-}_i(\lambda;p) = (-1)^i 2^N \frac{N!}{(2N)!} \binom{N}{i} \\
   \times (\lambda\!+\!n\!-\!p) \prod_{k=i+1}^N (2\lambda\!+\!n\!-\!2k)
   \prod_{k=1}^{i-1}(2\lambda\!+\!n\!-\!2k\!-\!2N\!-\!1).
\end{multline}
The decomposition \eqref{a-deco} will be important in the proof below.

\begin{bem}\label{gamma-alternative} Alternatively, the coefficients
$\gamma_i^{(N)}(\lambda;p)$, $i=1,\dots,N$, can be written in the form
\begin{equation}\label{gamma-alt}
   \gamma_i^{(N)}(\lambda;p) = (\lambda\!+\!p\!-\!2i) \beta_i^{(N)}(\lambda) -
   (\lambda\!+\!p\!-\!2i\!+\!1) \beta_{i-1}^{(N)}(\lambda).
\end{equation}
\end{bem}

\begin{proof} The assertion follows from the identity
\begin{multline*}
   (\lambda\!+\!p\!-\!2N\!-\!1)(N\!+\!1)(2\lambda\!+\!n\!-\!2i) +
   (\lambda\!+\!n\!-\!p)(2N\!+\!1)(N\!-\!i\!+\!1) \\
   = (\lambda\!+\!p\!-\!2i\!+\!1)i(2\lambda\!+\!n\!-\!2i)
   + (\lambda\!+\!p\!-\!2i)(N\!-\!i\!+\!1)(2\lambda\!+\!n\!-\!2i\!-\!2N\!-\!1).
\end{multline*}
We omit the details.
\end{proof}

\begin{proof} Throughout the following proof we shall use the
conventions $q_{-1}^{(N)}(\lambda)=0$ and $r_{-1}^{(N-1)}(\lambda)=0$. We start
by proving the formula
\begin{align}
   D^{(p \to p)}_{2N+1}(\lambda)
   & = \sum_{i=0}^N \sum_{j=0}^{N-i}(-1)^{N-j-i} \binom{N-j}{i} S_j(\lambda;N,p)
   (\dm\delta)^{N-i} \dm \iota^* i_{\partial_n} (\bar{\dm}\bar{\delta})^i \notag \\
   & + \sum_{i=0}^N \sum_{j=0}^{N-i}(-1)^{N-j-i} \binom{N-j}{i} p_j^{(N)}(\lambda;p)
   (\delta\dm)^{N-i} \iota^* i_{\partial_n} \bar{\dm}(\bar{\delta}\bar{\dm})^i \notag \\
   & + \sum_{i=1}^N \sum_{j=-1}^{N-i}(-1)^{N-j-i-1} \binom{N-j}{i} T_j(\lambda;N,p)
   (\dm\delta)^{N-i} \dm \iota^* i_{\partial_n} (\bar{\delta}\bar{\dm})^i \label{odd-2}
\end{align}
with
$$
   S_j(\lambda;N,p) \st \left[p_j^{(N)}(\lambda;p) + r_{j-1}^{(N-1)}(\lambda\!-\!1) +
   q_j^{(N)}(\lambda\!-\!1)\right], \; j=0,\dots,N
$$
and
$$
   T_j(\lambda;N,p) \st
   \left[p_{j+1}^{(N)}(\lambda;p) + r_{j}^{(N-1)}(\lambda\!-\!1) - q_j^{(N)}(\lambda\!-\!1)\right],
   \; j=-1,\dots,N-1.
$$
First, we use the identities \eqref{help-11} to rewrite the family $D^{(p \to
p)}_{2N+1}(\lambda)$ in Theorem \ref{OddDiffOp-type1} in the form
\begin{align}\label{help-13}
   D^{(p \to p)}_{2N+1}(\lambda) & =
   \sum_{j=1}^N\sum_{i=0}^{N-j}(-1)^{N-j-i} \binom{N-j}{i}
   \left[p_j^{(N)}(\lambda;p)+r_{j-1}^{(N-1)}(\lambda\!-\!1)\right]
   (\dm\delta)^{N-i} \dm \iota^* i_{\partial_n} \bar{\Delta}^i \notag\\
   & + \sum_{j=1}^N \sum_{i=0}^{N-j}(-1)^{N-j-i} \binom{N-j}{i}
   \left[p_j^{(N)}(\lambda;p)+r_{j-1}^{(N-1)}(\lambda\!-\!1)\right]
   (\dm\delta)^{N-i} \iota^* i_{\partial_n} \bar{\dm}(\bar{\delta}\bar{\dm})^i \notag \\
   & + \sum_{j=1}^N\sum_{i=0}^{N-j}(-1)^{N-j-i} \binom{N-j}{i} p_j^{(N)}(\lambda;p)
   (\delta\dm)^{N-i} \iota^* i_{\partial_n} \bar{\dm}(\bar{\delta}\bar{\dm})^i \notag \\
   & + \sum_{i=0}^{N}(-1)^{N-i} \binom{N}{i} p_0^{(N)}(\lambda;p)
   \Delta^{N-i}(\dm \iota^* i_{\partial_n} + \iota^* i_{\partial_n} \bar{\dm}) \bar{\Delta}^i \notag\\
   & + \sum_{j=0}^N \sum_{i=0}^{N-j}(-1)^{N-j-i} \binom{N-j}{i} q_j^{(N)}(\lambda\!-\!1)
   (\dm\delta)^{N-i} \dm \iota^* i_{\partial_n} \bar{\Delta}^i.
\end{align}
Now we simplify this formula. The first sum in \eqref{help-13} equals
\begin{align}
   & \sum_{j=1}^N \sum_{i=0}^{N-j} (-1)^{N-j-i} \binom{N-j}{i}
   \left[p_j^{(N)}(\lambda;p)+r_{j-1}^{(N-1)}(\lambda\!-\!1)\right]
   (\dm\delta)^{N-i} \dm \iota^* i_{\partial_n} (\bar{\dm}\bar{\delta})^i \notag \\
   & + \sum_{j=1}^{N-1}\sum_{i=1}^{N-j}(-1)^{N-j-i} \binom{N-j}{i}
   \left[p_j^{(N)}(\lambda;p)+r_{j-1}^{(N-1)}(\lambda\!-\!1)\right]
   (\dm\delta)^{N-i} \dm \iota^* i_{\partial_n} (\bar{\delta}\bar{\dm})^i. \label{odd-help-1}
\end{align}
Next, the fourth sum in \eqref{help-13} coincides with the sum
\begin{align}
    & \sum_{i=0}^{N} (-1)^{N-i} \binom{N}{i} p_0^{(N)}(\lambda;p)(\dm\delta)^{N-i}\dm\iota^* i_{\partial_n}
    (\bar{\dm}\bar{\delta})^i \notag \\
    & + \sum_{i=1}^{N}(-1)^{N-i} \binom{N}{i} p_0^{(N)}(\lambda;p))(\dm\delta)^{N-i}\dm\iota^* i_{\partial_n}
    (\bar{\delta}\bar{\dm})^i \notag \\
    & + \sum_{i=0}^{N}(-1)^{N-i} \binom{N}{i} p_0^{(N)}(\lambda;p))(\dm\delta)^{N-i}\iota^* i_{\partial_n}
    \bar{\dm} (\bar{\delta}\bar{\dm})^i \notag \\
    & +\sum_{i=0}^{N-1}(-1)^{N-i} \binom{N}{i} p_0^{(N)}(\lambda;p))(\delta\dm)^{N-i}\iota^* i_{\partial_n}
    \bar{\dm} (\bar{\delta}\bar{\dm})^i. \label{odd-help-2}
\end{align}
By \eqref{odd-help-1} and \eqref{odd-help-2}, the first two sums and the fourth
sum in \eqref{help-13} combine to
\begin{align*}
   & \sum_{j=0}^N \sum_{i=0}^{N-j}(-1)^{N-j-i} \binom{N-j}{i}
   \left[p_j^{(N)}(\lambda;p) + r_{j-1}^{(N-1)}(\lambda\!-\!1)\right]
   (\dm\delta)^{N-i} \dm \iota^* i_{\partial_n} (\bar{\dm}\bar{\delta})^i \\
   & + \sum_{i=0}^{N-1}(-1)^{N-i} \binom{N}{i} p_0^{(N)}(\lambda;p))(\delta\dm)^{N-i}\iota^* i_{\partial_n}
   \bar{\dm}(\bar{\delta}\bar{\dm})^i \\
   & + \sum_{j=0}^{N-1}\sum_{i=1}^{N-j} (-1)^{N-j-i} \binom{N-j}{i}
   \left[p_j^{(N)}(\lambda;p) + r_{j-1}^{(N-1)}(\lambda\!-\!1)\right]
   (\dm\delta)^{N-i} \dm \iota^* i_{\partial_n} (\bar{\delta}\bar{\dm})^i\\
   & + \sum_{j=0}^N\sum_{i=0}^{N-j}(-1)^{N-j-i} \binom{N-j}{i}
   \left[p_j^{(N)}(\lambda;p) + r_{j-1}^{(N-1)}(\lambda\!-\!1)\right]
   (\dm\delta)^{N-i} \iota^* i_{\partial_n} \bar{\dm} (\bar{\delta}\bar{\dm})^i;
\end{align*}
we stress that $j$ runs from $j=0$. We combine the last two sums in the last
display by moving in the last sum one factor $\delta$ to the right of $\iota^*
i_{\partial_n}$ using the second rule in Lemma \ref{DiffCoDiff}/(1). The
calculation yields
\begin{align*}
   & \sum_{j=0}^{N-1} \sum_{i=1}^{N-j}(-1)^{N-j-i} \binom{N-j+1}{i}
   \left[p_j^{(N)}(\lambda;p) + r_{j-1}^{(N-1)}(\lambda\!-\!1)\right]
   (\dm\delta)^{N-i} \dm \iota^* i_{\partial_n} (\bar{\delta}\bar{\dm})^i \notag \\
   & - \sum_{j=0}^{N-1} \left[p_{j+1}^{(N)}(\lambda)+r_{j}^{(N-1)}(\lambda\!-\!1)\right]
   (\dm \delta)^{j} \dm \iota^* i_{\partial_n} (\bar{\delta}\bar{\dm})^{N-j} \\
   & + p_0^{(N)}(\lambda;p)) \iota^* i_{\partial_n} \bar{\dm} (\bar{\delta}\bar{\dm})^{N}.
\end{align*}
Summarizing the above results, we find that the first four sums in
\eqref{help-13} combine to
\begin{align*}
   & \sum_{j=0}^N \sum_{i=0}^{N-j}(-1)^{N-j-i} \binom{N-j}{i}
   \left[p_j^{(N)}(\lambda;p) + r_{j-1}^{(N-1)}(\lambda\!-\!1)\right]
   (\dm\delta)^{N-i} \dm \iota^* i_{\partial_n} (\bar{\dm}\bar{\delta})^i \\
   & + \sum_{i=0}^{N-1}(-1)^{N-i} \binom{N}{i} p_0^{(N)}(\lambda;p))(\delta\dm)^{N-i}\iota^* i_{\partial_n}
   \bar{\dm}(\bar{\delta}\bar{\dm})^i \\
   & + \sum_{j=0}^{N-1} \sum_{i=1}^{N-j}(-1)^{N-j-i} \binom{N-j+1}{i}
   \left[p_j^{(N)}(\lambda;p) + r_{j-1}^{(N-1)}(\lambda\!-\!1)\right]
   (\dm\delta)^{N-i} \dm \iota^* i_{\partial_n} (\bar{\delta}\bar{\dm})^i \notag \\
   & - \sum_{j=0}^{N-1} \left[p_{j+1}^{(N)}(\lambda) + r_{j}^{(N-1)}(\lambda\!-\!1)\right]
   (\dm \delta)^{j} \dm \iota^* i_{\partial_n} (\bar{\delta}\bar{\dm})^{N-j} \\
   & + p_0^{(N)}(\lambda;p)) \iota^* i_{\partial_n} \bar{\dm} (\bar{\delta}\bar{\dm})^{N} \\
   & + \sum_{j=1}^N \sum_{i=0}^{N-j}(-1)^{N-j-i} \binom{N-j}{i} p_j^{(N)}(\lambda;p)
   (\delta\dm)^{N-i} \iota^* i_{\partial_n} \bar{\dm}(\bar{\delta}\bar{\dm})^i.
\end{align*}
Now we perform an index shift in the third sum and summarize the second sum,
the fifth term and the sixth sum. We obtain
\begin{align*}
   & \sum_{j=0}^N \sum_{i=0}^{N-j}(-1)^{N-j-i} \binom{N-j}{i}
   \left[p_j^{(N)}(\lambda;p) + r_{j-1}^{(N-1)}(\lambda\!-\!1)\right]
   (\dm\delta)^{N-i} \dm \iota^* i_{\partial_n} (\bar{\dm}\bar{\delta})^i \\
   & + \sum_{j=-1}^{N-2} \sum_{i=1}^{N-j-1} (-1)^{N-j-i-1}  \binom{N-j}{i}
   \left[p_{j+1}^{(N)}(\lambda) + r_{j}^{(N-1)}(\lambda\!-\!1)\right]
   (\dm\delta)^{N-i} \dm \iota^* i_{\partial_n} (\bar{\delta}\bar{\dm})^i \notag \\
   & + \sum_{j=0}^N \sum_{i=0}^{N-j}(-1)^{N-j-i} \binom{N-j}{i} p_j^{(N)}(\lambda;p)
   (\delta\dm)^{N-i} \iota^* i_{\partial_n} \bar{\dm}(\bar{\delta}\bar{\dm})^i \\
   & - \sum_{j=0}^{N-1} \left[p_{j+1}^{(N)}(\lambda) + r_{j}^{(N-1)}(\lambda\!-\!1)\right]
   (\dm \delta)^{j} \dm \iota^* i_{\partial_n} (\bar{\delta}\bar{\dm})^{N-j}.
\end{align*}
The last sum can be regarded as the contribution $i=N-j$ in the second sum.
Hence these terms combine to
\begin{align*}
   & \sum_{j=0}^{N-1} \sum_{i=1}^{N-j} (-1)^{N-j-i-1}  \binom{N-j}{i}
   \left[p_{j+1}^{(N)}(\lambda) + r_{j}^{(N-1)}(\lambda\!-\!1)\right]
   (\dm\delta)^{N-i} \dm \iota^* i_{\partial_n} (\bar{\delta}\bar{\dm})^i \notag \\
   & + \sum_{i=1}^N(-1)^{N-i} \binom{N+1}{i} p_0^{(N)}(\lambda;p))
   (\dm\delta)^{N-i} \dm \iota^* i_{\partial_n} (\bar{\delta}\bar{\dm})^i
\end{align*}
After interchanges of the summations we find
\begin{align}\label{odd-inter1}
   & \sum_{i=0}^N \sum_{j=0}^{N-i}(-1)^{N-j-i} \binom{N-j}{i}
   \left[p_j^{(N)}(\lambda;p) + r_{j-1}^{(N-1)}(\lambda\!-\!1)\right]
   (\dm\delta)^{N-i} \dm \iota^* i_{\partial_n} (\bar{\dm}\bar{\delta})^i \notag \\
   & + \sum_{i=1}^N \sum_{j=0}^{N-i} (-1)^{N-j-i-1} \binom{N-j}{i}
   \left[p_{j+1}^{(N)}(\lambda)+r_{j}^{(N-1)}(\lambda\!-\!1)\right]
   (\dm\delta)^{N-i} \dm \iota^* i_{\partial_n} (\bar{\delta}\bar{\dm})^i \notag \\
   & + \sum_{i=1}^N(-1)^{N-i} \binom{N+1}{i} p_0^{(N)}(\lambda;p))
   (\dm\delta)^{N-i} \dm \iota^* i_{\partial_n} (\bar{\delta}\bar{\dm})^i \notag \\
   & + \sum_{i=0}^N\sum_{j=0}^{N-i}(-1)^{N-j-i} \binom{N-j}{i} p_j^{(N)}(\lambda;p)
   (\delta\dm)^{N-i} \iota^* i_{\partial_n} \bar{\dm}(\bar{\delta}\bar{\dm})^i.
\end{align}
Finally, the fifth sum in \eqref{help-13} expands as
\begin{align}\label{odd-inter2}
   & \sum_{i=0}^N \sum_{j=0}^{N-i}(-1)^{N-j-i} \binom{N-j}{i}q_j^{(N)}(\lambda\!-\!1)
   (\dm\delta)^{N-i} \dm \iota^* i_{\partial_n} (\bar{\dm}\bar{\delta})^i \notag \\
   & + \sum_{i=1}^N\sum_{j=0}^{N-i}(-1)^{N-j-i} \binom{N-j}{i}q_{j}^{(N)}(\lambda\!-\!1)
   (\dm\delta)^{N-i} \dm \iota^* i_{\partial_n} (\bar{\delta}\bar{\dm})^i.
\end{align}
Combining \eqref{odd-inter1} and \eqref{odd-inter2}, we obtain the formula
\eqref{odd-2}.

In order to complete the proof of the theorem, it remains to verify the
identities
\begin{equation}\label{help-14}
   \sum_{j=0}^{N-i}(-1)^{N-j-i} \binom{N\!-\!j}{i}
   \left[p_j^{(N)}(\lambda;p) \!+\! r_{j-1}^{(N-1)}(\lambda\!-\!1) \!+\! q_j^{(N)}(\lambda\!-\!1)\right]
   = (\lambda\!+\!p) \beta_i^{(N)}(\lambda)
\end{equation}
and
\begin{equation}\label{help-24}
   \sum_{j=0}^{N-i}(-1)^{N-j-i} \binom{N-j}{i} p_j^{(N)}(\lambda;p)
   = (\lambda\!+\!p\!-\!2N\!-\!1) \beta_i^{(N)}(\lambda)
\end{equation}
for $i=0,\dots,N$, and
\begin{equation}\label{help-34}
   \sum_{j=-1}^{N-i}(-1)^{N-j-i-1} \binom{N-j}{i}
   \left[p_{j+1}^{(N)}(\lambda;p)+r_{j}^{(N-1)}(\lambda\!-\!1)-q_j^{(N)}(\lambda\!-\!1)\right]
   = \gamma_i^{(N)}(\lambda;p)
\end{equation}
for $i=1,\dots,N$. But since by definition
$$
   p_j^{(N)}(\lambda;p) = (\lambda\!+\!p\!-\!2N\!-\!1) b_j^{(N)}(\lambda),
$$
the identity \eqref{help-24} is a direct consequence of the second part of
Lemma \ref{RelationJacobiGegenbauer}. Next, we observe that
\begin{align*}
   r_{j-1}^{(N-1)}(\lambda\!-\!1) & = 2N b_{j-1}^{(N-1)}(\lambda\!-\!1) \qquad
   \mbox{(by definition of $r_j^{(N-1)}(\lambda)$)} \\
   & = 2j b_j^{(N)}(\lambda) \quad \mbox{(from the definition of odd Gegenbauer coefficients)}
\end{align*}
and
$$
   q_j^{(N)}(\lambda\!-\!1) = (2N\!-\!2j\!+\!1) b_j^{(N)}(\lambda)
   \quad \mbox{(from the definition of odd Gegenbauer coefficients)}.
$$
Hence
$$
   p_j^{(N)}(\lambda;p) + r_{j-1}^{(N-1)}(\lambda\!-\!1) + q_j^{(N)}(\lambda\!-\!1)
   = (\lambda\!+\!p) b_j^{(N)}(\lambda)
$$
and \eqref{help-14} follows from the second identity in Lemma
\ref{RelationJacobiGegenbauer}. In order to prove \eqref{help-34}, we first
calculate
\begin{align}\label{sum-term}
   p_{j+1}^{(N)}(\lambda;p) & + r_j^{(N-1)}(\lambda\!-\!1) - q_j^{(N)}(\lambda\!-\!1) \notag \\
   & = (\lambda\!+\!p\!-\!2N\!+\!1\!+\!2j) b_{j+1}^{(N)}(\lambda) - (2N\!-\!2j\!+\!1)
   b_j^{(N)}(\lambda) \notag \\
   & = -\left[(\lambda\!+\!p\!-\!2N\!-\!1)(N\!+\!1) + (\lambda\!+\!n\!-\!p)(j\!+\!1)\right] \notag \\
   & \times (-2)^{N-j} \frac{N!}{(j\!+\!1)! (2N\!-\!2j)!}
   \prod_{k=j+1}^{N-1} (2\lambda\!+\!n\!-\!4N\!+\!2k\!-\!1)
\end{align}
for $j=0,\dots,N-1$. This formula extends to $j=-1$. Now we split the left-hand
side of \eqref{help-34} into two parts:
\begin{multline*}
   \sum_{j=-1}^{N-i} (-1)^{N-i-j} (-2)^{N-j} \binom{N-j}{i}
   \frac{(N\!+\!1)!}{(j\!+\!1)! (2N\!-\!2j)!} \\
   \times (\lambda\!+\!p\!-\!2N\!-\!1) \prod_{k=j+1}^{N-1} (2\lambda\!+\!n\!-\!4N\!+\!2k\!-\!1)
\end{multline*}
and
\begin{equation*}
   \sum_{j=0}^{N-i} (-1)^{N-i-j} (-2)^{N-j} \binom{N-j}{i} \frac{N!}{j!(2N\!-\!2j)!}
   (\lambda\!+\!n\!-\!p) \prod_{k=j+1}^{N-1} (2\lambda\!+\!n\!-\!4N\!+\!2k\!-\!1);
\end{equation*}
note that the second sum runs from $j=0$. For $1 \le i \le N$ both sums are
hypergeometric and we find the respective formulas
\begin{align*}
   & (-1)^{i} 4^{N} \binom{N+1}{i} \frac{N!}{(2N\!+\!1)!}
   (\lambda\!+\!p\!-\!2N\!-\!1) \left(\lambda\!+\!\tfrac{n}{2}\!-\!2N-\!\tfrac{1}{2}\right)_N \\
   & \times
   {}_2F_1(-N\!-\!\tfrac{1}{2},i\!-\!N\!-\!1;\lambda\!+\!\tfrac{n}{2}\!-\!2N-\!\tfrac{1}{2};1)
\end{align*}
and
\begin{align*}
   & (-1)^{i} 2^{2N-1} \binom{N}{i} \frac{N!}{(2N)!} (\lambda\!+\!n\!-\!p)
   \left(\lambda\!+\!\tfrac{n}{2}\!-\!2N\!+\!\tfrac{1}{2}\right)_{N-1} \\
   & \times {}_2F_1(-N\!+\!\tfrac{1}{2},i\!-\!N;\lambda\!+\!\tfrac{n}{2}\!-\!2N+\!\tfrac{1}{2};1).
\end{align*}
Application of the Zhu-Vandermonde formula \eqref{ZV} yields
\begin{align*}
   & (-1)^{i} 4^{N} \binom{N+1}{i} \frac{N!}{(2N\!+\!1)!}
   (\lambda\!+\!p\!-\!2N\!-\!1)  \left(\lambda\!+\tfrac{n}{2}\!-\!N \right)_{N+1-i}
   \frac{\left(\lambda\!+\tfrac{n}{2}\!-\!2N\!-\!\tfrac{1}{2}\right)_N}
   {\left(\lambda\!+\tfrac{n}{2}\!-\!2N\!-\!\tfrac{1}{2}\right)_{N+1-i}} \\
   & = \gamma_i^{(N),+}(\lambda;p)
\end{align*}
and
\begin{align*}
   (-1)^{i} 2^{2N-1} \binom{N}{i} \frac{N!}{(2N)!} (\lambda\!+\!n\!-\!p)
   \left(\lambda\!+\!\tfrac{n}{2}\!-\!N\right)_{N-i}
   \frac{\left(\lambda\!+\tfrac{n}{2}\!-\!2N\!+\!\tfrac{1}{2}\right)_{N-1}}
   {\left(\lambda\!+\tfrac{n}{2}\!-\!2N\!+\!\tfrac{1}{2}\right)_{N-i}} = \gamma_i^{(N),-}(\lambda;p)
\end{align*}
(see \eqref{a+-coeff}, \eqref{a--coeff}). This proves \eqref{help-34} for $1
\le i \le N$.
\end{proof}

Finally, Hodge conjugation relates the odd-order families $\Omega^p(\R^n) \to
\Omega^{p-1}(\R^{n-1})$ of the second type to the odd-order families
$\Omega^p(\R^n) \to \Omega^{p}(\R^{n-1})$ of the first type. More precisely,
Theorem \ref{Hodge-c} implies the following result.

\begin{theorem}\label{coeffodd} Assume that $N \in \N_0$ and $p=1,\dots,n$.
The odd-order families $D^{(p \to p-1)}_{2N+1}(\lambda)$ of the second type can
be written in the form
\begin{align}\label{SBO-odd}
   D^{(p \to p-1)}_{2N+1}(\lambda)
   & = - \sum_{i=1}^N \gamma_i^{(N)}(\lambda;n\!-\!p)
   (\delta \dm)^{N-i} \delta \iota^* (\bar{\dm} \bar{\delta})^i \notag \\
   & + (\lambda\!+\!n\!-\!p\!-\!2N\!-\!1) \sum_{i=0}^N \beta_i^{(N)} (\lambda)
   (\dm \delta)^{N-i} \iota^* \bar{\delta} (\bar{\dm} \bar{\delta})^i \notag \\
   & - (\lambda\!+\!n\!-\!p) \sum_{i=0}^N \beta_i^{(N)}(\lambda)
   (\delta \dm)^{N-i} \delta \iota^* (\bar{\delta} \bar{\dm})^i
\end{align}
with the coefficients $\beta_i^{(N)}(\lambda)$ and $\gamma_i^{(N)}(\lambda;p)$
as defined in \eqref{b-even} and \eqref{a-coeff-odd}, respectively.
\end{theorem}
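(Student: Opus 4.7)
The plan is to derive Theorem \ref{coeffodd} directly from Theorem \ref{coeffodd2} by Hodge conjugation. By Theorem \ref{Hodge-c},
$$
  D^{(p\to p-1)}_{2N+1}(\lambda) = (-1)^{pn}\,\star\, D^{(n-p\to n-p)}_{2N+1}(\lambda)\,\bar{\star}.
$$
Plugging in the explicit geometric formula from Theorem \ref{coeffodd2} with $p$ replaced by $n-p$ (so that $\gamma_i^{(N)}(\lambda;p)$ becomes $\gamma_i^{(N)}(\lambda;n-p)$, while $\beta_i^{(N)}(\lambda)$ is untouched), the task reduces to simplifying the Hodge conjugation of each of the three operator patterns
$(\dm\delta)^{N-i}\dm\iota^* i_{\partial_n}(\bar\delta\bar\dm)^i$, $(\dm\delta)^{N-i}\dm\iota^* i_{\partial_n}(\bar\dm\bar\delta)^i$, and $(\delta\dm)^{N-i}\iota^* i_{\partial_n}\bar\dm(\bar\delta\bar\dm)^i$.

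The simplification uses three tools. First, Lemma \ref{Hodge-gen}/(4): $\star$ intertwines $\dm\delta\leftrightarrow\delta\dm$, and $\bar\star$ intertwines $\bar\dm\bar\delta\leftrightarrow\bar\delta\bar\dm$; in particular $(\bar\delta\bar\dm)^i\bar\star=\bar\star(\bar\dm\bar\delta)^i$ and similarly with the roles interchanged. Second, Lemma \ref{Hodge-gen}/(2),(3): $\star\dm=(-1)^{p+1}\delta\,\star$ and $\bar\dm\,\bar\star=(-1)^p\bar\star\,\bar\delta$ on $\Omega^p$. Third, the identity $\star\circ\iota^* i_{\partial_n}\circ\bar\star=(-1)^{(p+1)(n-1)}\iota^*$ on $\Omega^p(\R^n)$ from Lemma \ref{HodgeLemma}. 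For the first two terms one pushes $\bar\star$ leftward past $(\bar\delta\bar\dm)^i$ or $(\bar\dm\bar\delta)^i$, commutes $\star$ through $(\dm\delta)^{N-i}$ to produce $(\delta\dm)^{N-i}$, converts $\star\dm$ into $(-1)^{n-p}\delta\,\star$ (noting that $\dm$ here acts on $(n-p-1)$-forms on $\R^{n-1}$), and then collapses $\star\iota^* i_{\partial_n}\bar\star$ by Lemma \ref{HodgeLemma} applied to the surviving $p$-form. The composite sign $(-1)^{(n-p)+(p+1)(n-1)}$ simplifies to $-(-1)^{pn}$, which after the prefactor $(-1)^{pn}$ produces the minus signs in front of the first and third sums of Theorem \ref{coeffodd}.

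The third term requires more care: here the inner differential is $\bar\dm$ rather than an even polynomial in $\bar\dm,\bar\delta$. One first moves $\bar\star$ through $(\bar\delta\bar\dm)^i$, then applies $\bar\dm\,\bar\star=(-1)^p\bar\star\,\bar\delta$; the resulting $\bar\star$ now acts on the $(p{-}1)$-form $\bar\delta(\bar\dm\bar\delta)^i\omega$, so Lemma \ref{HodgeLemma} must be invoked with $p$ replaced by $p-1$, contributing the sign $(-1)^{p(n-1)}$. Together with the $(-1)^p$ from $\bar\dm\bar\star$ and the commutation $\star(\delta\dm)^{N-i}=(\dm\delta)^{N-i}\star$, the overall sign becomes $(-1)^{pn}$, which cancels the outer $(-1)^{pn}$ and yields the expression $(\dm\delta)^{N-i}\iota^*\bar\delta(\bar\dm\bar\delta)^i$ appearing in the second sum of Theorem \ref{coeffodd}. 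The principal obstacle is exactly this sign bookkeeping: each intertwining formula between $\star$, $\bar\star$, $\dm$, $\delta$, $\bar\dm$, $\bar\delta$, and $\iota^* i_{\partial_n}$ is degree-sensitive, and the form degree shifts as we traverse the composition. Once the signs are checked, matching the coefficients is automatic, since the substitution $p\mapsto n-p$ in Theorem \ref{coeffodd2} produces the three coefficient packages $-\gamma_i^{(N)}(\lambda;n-p)$, $-(\lambda+n-p)\beta_i^{(N)}(\lambda)$, and $(\lambda+n-p-2N-1)\beta_i^{(N)}(\lambda)$ of Theorem \ref{coeffodd}.
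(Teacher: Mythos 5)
Your proposal is correct and follows essentially the same route as the paper: the paper also derives Theorem \ref{coeffodd} by applying Theorem \ref{Hodge-c} to the geometric formula of Theorem \ref{coeffodd2} with $p$ replaced by $n-p$, commuting the Hodge stars through the $(\dm\delta)$- and $(\bar\dm\bar\delta)$-blocks via Lemma \ref{Hodge-gen}/(4), and collapsing $\star\,\dm\,\iota^* i_{\partial_n}\,\bar{\star}=(-1)^{np+1}\delta\,\iota^*$ and $\star\,\iota^* i_{\partial_n}\bar{\dm}\,\bar{\star}=(-1)^{pn}\iota^*\bar{\delta}$ using Lemma \ref{Hodge-gen}/(2),(3) and Lemma \ref{HodgeLemma}. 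Your sign bookkeeping, including the degree shift to $p-1$ when invoking Lemma \ref{HodgeLemma} in the third term, matches the paper's computation.
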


\begin{proof} Theorem \ref{coeffodd2} and Theorem \ref{Hodge-c} imply
\begin{align*}
   (-1)^{np} D_{2N+1}^{(p \to p-1)}(\lambda) & = \star \, D_{2N+1}^{(n-p \to n-p)}(\lambda) \, \bar{\star} \\
   & = \sum_{i=1}^N \gamma_i^{(N)}(\lambda;n\!-\!p)
   (\delta \dm)^{N-i} \star \dm \iota^* i_{\partial_n} \bar{\star} \, (\bar{\dm} \bar{\delta})^i \notag \\
   & + (\lambda\!+\!n\!-\!p\!-\!2N\!-\!1) \sum_{i=0}^N \beta_i^{(N)} (\lambda)
   (\dm \delta)^{N-i} \star \iota^* i_{\partial_n} \bar{\dm} \, \bar{\star} \,(\bar{\dm} \bar{\delta})^i \notag \\
   & + (\lambda\!+\!n\!-\!p) \sum_{i=0}^N \beta_i^{(N)}(\lambda)
   (\delta \dm)^{N-i} \star \dm \iota^* i_{\partial_n} \bar{\star} \, (\bar{\delta} \bar{\dm})^i
\end{align*}
using Lemma \ref{Hodge-gen}/(4). But Lemma \ref{Hodge-gen}/(2) and Lemma
\ref{HodgeLemma} give
\begin{equation*}
   \star \, \dm \iota^* i_{\partial_n} \bar{\star} = \delta \star \iota^* i_{\partial_n} \bar{\star} (-1)^{n-p}
   = \delta \iota^* (-1)^{np+1}
\end{equation*}
and
\begin{equation*}
   \star \, \iota^* i_{\partial_n} \bar{\dm} \, \bar{\star} = \star \,
   \iota^* i_{\partial_n} \bar{\star} \, \bar{\delta} (-1)^p = \iota^* \bar{\delta} (-1)^{pn}
\end{equation*}
on $\Omega^p(\R^n)$. Combining these results proves the assertion.
\end{proof}

\subsection{Operators of the third and fourth type}\label{third-fourth}

In the present section, we derive geometrical formulas for the conformal
symmetry breaking operators of the third and the fourth type.

\begin{theorem}\label{CSBO-3-even} The even-order operators $D_{2N}^{(0\to 1)}$, $N \in \N$,
of the third type can be written in the form
\begin{align*}
   D_{2N}^{(0\to  1)} = \sum_{i=0}^{N-1} \beta_j^{(N-1)}(2N\!-\!1)
   (\dm\delta)^{N-i-1} \dm \iota^* i_{\partial_n}\bar{\dm} (\bar{\delta}\bar{\dm})^i
\end{align*}
with the coefficients $\beta_j^{(N)}(\lambda)$ defined by \eqref{b-even}.
\end{theorem}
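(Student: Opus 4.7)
The strategy is to convert the formula for $D_{2N}^{(0\to 1)}$ given in Theorem \ref{DO3-Even}, which expresses it in terms of the normal derivatives $\partial_n^{2N-2j-1}$, into a formula written purely in terms of the operators $\dm, \delta, \bar{\dm}, \bar{\delta}, \iota^*, i_{\partial_n}$. The key tools will be identity \eqref{help-11} of Lemma \ref{DiffCoDiff} together with the second identity of Lemma \ref{RelationJacobiGegenbauer}. The argument closely mirrors the proofs of Theorems \ref{coeffeven} and \ref{coeffodd2} for first-type families, but is considerably simpler because the operator acts on $0$-forms on $\R^n$, where both $i_{\partial_n}$ and $\bar{\delta}$ are trivial.

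First, I would apply \eqref{help-11} with $k = N-j$ to each factor $\iota^*\partial_n^{2(N-j)-1}$ in Theorem \ref{DO3-Even}. Because the domain consists of $0$-forms, the summand involving $\dm\iota^* i_{\partial_n}\bar{\Delta}^i$ drops out (the inner $i_{\partial_n}$ annihilates functions), and one may replace $\bar{\Delta}^i$ by $(\bar{\delta}\bar{\dm})^i$ since $\bar{\delta}$ vanishes on functions. Thus \eqref{help-11} reduces on $\Omega^0(\R^n)$ to
\begin{equation*}
   \iota^*\partial_n^{2(N-j)-1} = \sum_{i=0}^{N-j-1} (-1)^i \binom{N-j-1}{i} \Delta^{N-j-i-1}\iota^* i_{\partial_n}\bar{\dm}(\bar{\delta}\bar{\dm})^i.
\end{equation*}

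Next, I would simplify the composition $\dm(\delta\dm)^j\Delta^{N-j-i-1}$ as it acts on functions on $\R^{n-1}$ (the inner expression $\iota^* i_{\partial_n}\bar{\dm}(\bar{\delta}\bar{\dm})^i f$ is such a function). Since $\delta$ vanishes on $0$-forms on $\R^{n-1}$, we have $\Delta^m = (\delta\dm)^m$ on functions, so $\dm(\delta\dm)^j\Delta^{N-j-i-1} = (\dm\delta)^{N-i-1}\dm$ when acting on such functions. Substituting these identities into the formula of Theorem \ref{DO3-Even} and interchanging the order of summation produces
\begin{equation*}
   D_{2N}^{(0\to 1)} = \sum_{i=0}^{N-1}\left[\sum_{j=0}^{(N-1)-i}(-1)^{(N-1)-j-i}\binom{(N-1)-j}{i} b_j^{(N-1)}(2N-1)\right](\dm\delta)^{N-i-1}\dm\iota^* i_{\partial_n}\bar{\dm}(\bar{\delta}\bar{\dm})^i.
\end{equation*}

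Finally, the bracketed coefficient is exactly the left-hand side of the second identity in Lemma \ref{RelationJacobiGegenbauer}, taken with $N$ replaced by $N-1$ and $\lambda = 2N-1$; it therefore equals $\beta_i^{(N-1)}(2N-1)$, yielding the asserted formula. No step presents a significant obstacle: the proof is essentially bookkeeping following the pattern established in Section \ref{case-odd}, with the only care needed being the consistent use of the simplifications $i_{\partial_n} = 0$, $\delta = 0$, $\bar{\delta} = 0$ on functions.
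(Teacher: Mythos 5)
Your proposal is correct and follows essentially the same route as the paper's own proof: expand $\iota^*\partial_n^{2N-2j-1}$ via \eqref{help-11} (simplified on $0$-forms, where $i_{\partial_n}$ and $\bar{\delta}$ vanish), interchange the summations, and identify the resulting inner sum as $\beta_i^{(N-1)}(2N-1)$ via the second identity of Lemma \ref{RelationJacobiGegenbauer}. The only difference is that you spell out the $0$-form simplifications explicitly, which the paper leaves implicit.
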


\begin{proof} By the definition of $D_{2N}^{(0\to 1)}$ (Theorem \ref{DO3-Even}) and the expansion
\eqref{help-11} of $\iota^*\partial_n^{2N-2j-1}$, we obtain
\begin{align*}
   D_{2N}^{(0\to 1)} = \sum_{j=0}^{N-1} \sum_{i=0}^{N-j-1}(-1)^{N-j-i-1} \binom{N-j-1}{i}
   b_j^{(N-1)}(2N\!-\!1)\dm (\delta\dm)^{N-i-1} \iota^* i_{\partial_n}\bar{\dm}(\bar{\delta}\bar{\dm})^i.
\end{align*}
Interchanging summations and applying Lemma \ref{RelationJacobiGegenbauer}
completes the proof.
\end{proof}

\begin{theorem}\label{CSBO-3-odd} The odd-order operators $D_{2N+1}^{(0\to 1)}$, $N\in\N_0$, of
the third type can be written in the form
\begin{align*}
   D_{2N+1}^{(0\to 1)} = \sum_{i=0}^N \alpha_j^{(N)}(2N) (\dm\delta)^{N-i} \dm\iota^*(\bar{\delta}\bar{\dm})^i
\end{align*}
with the coefficients $\alpha_j^{(N)}(\lambda)$ defined by \eqref{a-even}.
\end{theorem}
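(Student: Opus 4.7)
The plan is to mimic the proof strategy used for the even-order case (Theorem \ref{CSBO-3-even}). Starting from the definition in Theorem \ref{DO3-Odd}, namely
\begin{equation*}
   D_{2N+1}^{(0\to 1)}=\sum_{j=0}^{N}(-1)^{N-j}a_j^{(N)}(2N)\,\dm(\delta\dm)^{j}\iota^{*}\partial_n^{2N-2j},
\end{equation*}
I would substitute the expansion \eqref{help-3} for $\iota^{*}\partial_n^{2N-2j}$, yielding a double sum with factors $\dm(\delta\dm)^{j}\Delta^{N-j-i}\iota^{*}\bar{\Delta}^{i}$. The first task is to simplify these composed differential operators.

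Next I would carry out the following key algebraic reduction. Using the orthogonality $(\delta\dm)(\dm\delta)=0$ together with $\dm^{2}=0$, the binomial expansion $\Delta^{k}=(\delta\dm)^{k}+(\dm\delta)^{k}$ (valid for $k\geq 1$) collapses the product: for every $j\geq 0$ and $0\leq i\leq N-j$ one obtains the uniform identity
\begin{equation*}
   \dm(\delta\dm)^{j}\Delta^{N-j-i}=(\dm\delta)^{N-i}\dm,
\end{equation*}
where the $j=0$ case uses the observation that $\dm(\dm\delta)^{N-i}=0$ when $N-i\geq 1$, and all other cases are covered by $(\delta\dm)^{j}(\dm\delta)^{N-j-i}=0$. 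This is the step that requires the most careful bookkeeping, and I expect it to be the main technical obstacle; one must verify the boundary values ($j=0$, $i=N$, $N-j-i=0$) consistently.

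Once this reduction is achieved, interchanging the order of summation produces
\begin{equation*}
   D_{2N+1}^{(0\to 1)}=\sum_{i=0}^{N}\Bigl[\,\sum_{j=0}^{N-i}(-1)^{N-j-i}\binom{N-j}{i}a_j^{(N)}(2N)\Bigr](\dm\delta)^{N-i}\dm\iota^{*}\bar{\Delta}^{i}.
\end{equation*}
By the first identity of Lemma \ref{RelationJacobiGegenbauer} specialized at $\lambda=2N$, the bracketed inner sum is exactly $\alpha_i^{(N)}(2N)$.

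Finally, I would invoke the relation $\iota^{*}\bar{\dm}=\dm\iota^{*}$ from Lemma \ref{DiffCoDiff}/(1), which together with $\dm^{2}=0$ gives $\dm\iota^{*}\bar{\dm}\bar{\delta}=\dm\dm\iota^{*}\bar{\delta}=0$. Hence $\dm\iota^{*}(\bar{\dm}\bar{\delta})^{i}=0$ for $i\geq 1$, and consequently $\dm\iota^{*}\bar{\Delta}^{i}=\dm\iota^{*}(\bar{\delta}\bar{\dm})^{i}$ for every $i\geq 0$ (trivially for $i=0$). Substituting this replacement into the previous display yields the claimed formula. The proof thus reduces to the two self-contained pieces above: an operator-algebra collapse using $\dm^{2}=0$ and $(\delta\dm)(\dm\delta)=0$, and the already-established combinatorial identity of Lemma \ref{RelationJacobiGegenbauer}.
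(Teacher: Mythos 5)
Your proposal is correct and follows essentially the same route as the paper's proof: expand $\iota^*\partial_n^{2N-2j}$ via \eqref{help-3}, collapse the powers of $\Delta$ and $\bar{\Delta}$ using $\dm^2=0$ and $\iota^*\bar{\dm}=\dm\iota^*$, interchange summations, and apply Lemma \ref{RelationJacobiGegenbauer} at $\lambda=2N$. You merely make explicit the operator-algebra reductions that the paper performs silently inside its single displayed equation (and you correctly invoke \eqref{help-3}, which is the appropriate expansion for $\iota^*\partial_n^{2N-2j}$ here).
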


\begin{proof} By the definition of $D_{2N+1}^{(0\to 1)}$ (Theorem \ref{DO3-Odd}) and the
expansion \eqref{help-10} of $\iota^*\partial_n^{2N-2j}$, we obtain
\begin{align*}
   D_{2N+1}^{(0\to 1)}=\sum_{j=0}^{N} \sum_{i=0}^{N-j} (-1)^{N-j-i} \binom{N-j}{i}
   a_j^{(N)}(2N) \dm (\delta\dm)^{N-i}\iota^* (\bar{\delta}\bar{\dm})^i.
\end{align*}
Interchanging summations and applying Lemma \ref{RelationJacobiGegenbauer}
completes the proof.
\end{proof}

\begin{bem} In view of $D_{N}^{(0\to 1)} = \dm \dot{D}_{N-1}^{(0\to 0)}(N-1)$, the same
results also follow from the geometrical formula for $D_N^{(0 \to
0)}(\lambda)$.
\end{bem}

We continue with the derivation of the analogous formulas for the conformal
symmetry breaking operators of the fourth type.

\begin{theorem}\label{CSBO-4-even} The even-order operators $D_{2N}^{(n\to n-2)}$, $N\in\N$,
of the fourth type can be written in the form
\begin{align*}
   D_{2N}^{(n\to n-2)} = \sum_{i=0}^{N-1} \beta_j^{(N-1)}(2N\!-\!1)
   (\delta\dm)^{N-i-1}\delta\iota^*\bar{\delta} (\bar{\dm}\bar{\delta})^i,
\end{align*}
with the coefficients $\beta_j^{(N)}(\lambda)$ defined by \eqref{b-even}.
\end{theorem}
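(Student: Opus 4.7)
My plan is to mirror the proof of Theorem~\ref{CSBO-3-even}, starting from the explicit formula in Theorem~\ref{DO4-Even}
$$
   D_{2N}^{(n\to n-2)} = \sum_{j=0}^{N-1}(-1)^{N-j}\,b_j^{(N-1)}(2N\!-\!1)\,\delta(\dm\delta)^j\iota^*i_{\partial_n}\partial_n^{2N-2j-1},
$$
and substituting the expansion \eqref{help-4} of $\iota^* i_{\partial_n}\partial_n^{2N-2j-1}$ (with $k=N-j-1$):
$$
   \iota^* i_{\partial_n}\partial_n^{2N-2j-1} = \sum_{i=0}^{N-j-1}(-1)^i\binom{N-j-1}{i}\Delta^{N-j-1-i}(\delta\iota^*-\iota^*\bar{\delta})\bar{\Delta}^i.
$$

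The heart of the argument is the collapse of the resulting composition $\delta(\dm\delta)^j \Delta^{N-j-1-i}(\delta\iota^*-\iota^*\bar{\delta})$. I would first record the two elementary identities $\delta \Delta^m = \delta(\dm\delta)^m$ and $(\dm\delta)^j\Delta^m = (\dm\delta)^{j+m}$ for $j\ge 1$ (both are immediate from $\Delta = \dm\delta + \delta\dm$ together with $\delta^2=0$ and $(\dm\delta)\delta = 0$); taken together they imply
$$
   \delta(\dm\delta)^j\Delta^{N-j-1-i} = \delta(\dm\delta)^{N-i-1} = (\delta\dm)^{N-i-1}\delta
$$
for every $0\le j \le N-i-1$. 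The term $\delta\iota^*$ inside the bracket is then annihilated by the extra $\delta$ on the right, and only $-(\delta\dm)^{N-i-1}\delta\iota^*\bar{\delta}\bar{\Delta}^i$ survives. A symmetric application of the identities on the $\bar{\Delta}^i$-side gives $\bar{\delta}\bar{\Delta}^i = \bar{\delta}(\bar{\dm}\bar{\delta})^i$, producing the basis operators $(\delta\dm)^{N-i-1}\delta\iota^*\bar{\delta}(\bar{\dm}\bar{\delta})^i$ occurring in the theorem.

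After interchanging the order of summation, the coefficient of $(\delta\dm)^{N-i-1}\delta\iota^*\bar{\delta}(\bar{\dm}\bar{\delta})^i$ becomes
$$
   -\sum_{j=0}^{N-i-1}(-1)^{N-j+i}\binom{N-j-1}{i}\,b_j^{(N-1)}(2N\!-\!1)
   = \sum_{j=0}^{(N-1)-i}(-1)^{(N-1)-j-i}\binom{(N-1)-j}{i}\,b_j^{(N-1)}(2N\!-\!1),
$$
using $(-1)^{N-j+i} = -(-1)^{(N-1)-j-i}$. The second identity of Lemma~\ref{RelationJacobiGegenbauer}, applied with $N$ replaced by $N-1$ and $\lambda = 2N-1$, identifies this sum with $\beta_i^{(N-1)}(2N-1)$, yielding the claimed formula.

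The only non-routine step is the commutation collapse in the second paragraph; I expect it to be the main (but still elementary) obstacle, as one must handle the $j=0$ case separately from $j\ge 1$ and correctly track the cancellation produced by $\delta^2=0$. As a consistency check I would observe that Theorem~\ref{Hodge-c} (together with the Hodge conjugacy between type three and type four operators implied by Theorem~\ref{classification}) allows one to derive the same formula by conjugating Theorem~\ref{CSBO-3-even} with $\star$ and $\bar{\star}$ via Lemma~\ref{Hodge-gen} and Lemma~\ref{HodgeLemma}; the two routes must agree, which provides a useful sanity check on signs.
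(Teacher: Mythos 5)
Your proposal is correct and follows essentially the same route as the paper's (very terse) proof: expand $\iota^* i_{\partial_n}\partial_n^{2N-2j-1}$ via \eqref{help-4}, collapse $\delta(\dm\delta)^j\Delta^{N-j-1-i}$ to $(\delta\dm)^{N-i-1}\delta$ (killing the $\delta\iota^*$ term), interchange summations, and invoke the second identity of Lemma~\ref{RelationJacobiGegenbauer} with $N\mapsto N-1$, $\lambda=2N-1$. The only discrepancy is that the paper's proof cites \eqref{help-11} where \eqref{help-4} is the expansion actually used; your choice is the right one, and your sign bookkeeping matches the paper's intermediate double-sum formula.
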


\begin{proof} By the definition of $D_{2N}^{(n\to n-2)}$ (Theorem \ref{DO4-Even}) and the expansion
\eqref{help-11} of $\iota^* i_{\partial_n}\partial_n^{2N-2j-1}$, we obtain
\begin{align*}
   D_{2N}^{(n\to n-2)}=\sum_{j=0}^{N-1}\sum_{i=0}^{N-j-1}(-1)^{N-j-i-1} \binom{N-j-1}{i}
   b_j^{(N-1)}(2N\!-\!1) \delta (\dm\delta)^{N-i-1}\iota^*\bar{\delta}(\bar{\dm}\bar{\delta})^i.
\end{align*}
Interchanging summations and applying Lemma \ref{RelationJacobiGegenbauer}
completes the proof.
\end{proof}

\begin{theorem}\label{CSBO-4-odd} The odd-order operators $D_{2N+1}^{(n\to n-2)}$, $N\in\N_0$,
of the fourth type can be written in the form
\begin{align*}
   D_{2N+1}^{(n\to n-2)} = -\sum_{i=0}^{N} \alpha_i^{(N)}(2N)
   (\delta\dm)^{N-i} \delta\iota^*i_{\partial_n}(\bar{\dm}\bar{\delta})^i,
\end{align*}
with the coefficients $\beta_j^{(N)}(\lambda)$ defined by \eqref{b-even}.
\end{theorem}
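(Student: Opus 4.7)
The plan is to follow the same strategy used in the proofs of Theorems \ref{CSBO-3-even}, \ref{CSBO-3-odd}, and \ref{CSBO-4-even}. First I would substitute the expansion \eqref{help-10} of $\iota^* i_{\partial_n} \partial_n^{2k}$ with $k = N-j$ into the formula for $D_{2N+1}^{(n \to n-2)}$ from Theorem \ref{DO4-Odd}, obtaining a double sum indexed by $j$ and $i$ whose summands involve $\delta (\dm\delta)^j \Delta^{N-j-i} \iota^* i_{\partial_n} \bar{\Delta}^i$.

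The next step is to simplify $\delta (\dm\delta)^j \Delta^{N-j-i}$. Using $\dm^2 = \delta^2 = 0$ one checks that $\dm\delta \cdot \delta\dm = \delta\dm \cdot \dm\delta = 0$, so that $\Delta^k = (\dm\delta)^k + (\delta\dm)^k$ for $k \ge 1$. Combined with the trivial identity $\delta(\dm\delta)^j = (\delta\dm)^j \delta$, this collapses the product to $(\delta\dm)^{N-i}\delta$ for every admissible pair $(i,j)$. Interchanging the order of summation and applying Lemma \ref{RelationJacobiGegenbauer} to the inner sum over $j$ then converts the even Gegenbauer coefficients $a_j^{(N)}(2N)$ into the Jacobi-type coefficients $\alpha_i^{(N)}(2N)$, producing the intermediate identity
\[
   D_{2N+1}^{(n \to n-2)} = -\sum_{i=0}^N \alpha_i^{(N)}(2N)\, (\delta\dm)^{N-i}\delta \,\iota^* i_{\partial_n}\, \bar{\Delta}^i.
\]

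The final step, which is what distinguishes the odd-order fourth-type operators from the three cases already handled, exploits the fact that the operator acts only on $\Omega^n(\R^n)$. Since $\bar{\dm}$ annihilates every $n$-form on $\R^n$, the decomposition $\bar{\Delta}^i = (\bar{\dm}\bar{\delta})^i + (\bar{\delta}\bar{\dm})^i$ (valid for $i \ge 1$) reduces to $\bar{\Delta}^i = (\bar{\dm}\bar{\delta})^i$ on $\Omega^n(\R^n)$, which yields the claimed formula. I expect no real obstacle: the key combinatorial step is the summation identity of Lemma \ref{RelationJacobiGegenbauer}, and the low-order case $N = 0$ (giving $D_1^{(n \to n-2)} = -\delta \iota^* i_{\partial_n}$, in agreement with Example \ref{low-order-3-4-type}) provides a useful sanity check.
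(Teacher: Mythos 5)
Your proposal is correct and follows exactly the paper's own argument: substitute the expansion \eqref{help-10} into the formula of Theorem \ref{DO4-Odd}, collapse the tangential factors to $(\delta\dm)^{N-i}\delta$, interchange summations, and apply Lemma \ref{RelationJacobiGegenbauer}. The only difference is that you make explicit two steps the paper leaves silent — the vanishing of the cross terms in $\delta(\dm\delta)^j\Delta^{N-j-i}$ and the reduction $\bar{\Delta}^i=(\bar{\dm}\bar{\delta})^i$ on $\Omega^n(\R^n)$ — both of which are handled correctly.
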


\begin{proof} By the definition of $D_{2N+1}^{(n\to n-2)}$ (Theorem \ref{DO4-Odd}) and
the expansion \eqref{help-10} of $\iota^* i_{\partial_n}\partial_n^{2N-2j}$, we
obtain
\begin{align*}
   D_{2N+1}^{(n\to n-2)}=\sum_{j=0}^{N}\sum_{i=0}^{N-j}(-1)^{N-j-i+1} \binom{N-j}{i}
   a_j^{(N)}(2N)\delta (\dm\delta)^{N-i}\iota^*i_{\partial_n}(\bar{\dm}\bar{\delta})^i.
\end{align*}
Interchanging summations and applying Lemma \ref{RelationJacobiGegenbauer}
completes the proof.
\end{proof}

\begin{bem} In view of $D_{N}^{(n\to n-2)}=\delta \dot{D}_{N-1}^{(n\to n-1)}(N-1)$, the same
results also follow from the geometrical formula for $D_N^{(n\to
n-1)}(\lambda)$.
\end{bem}

\section{Factorization identities for conformal symmetry breaking operators}\label{Properties}

In the present section, we discuss natural identities which describe
factorizations of conformal symmetry breaking operators into products of
conformally covariant operators. There are two types of such factorizations.
The main factorizations contain Branson-Gover operators as factors and the
supplementary factorizations contain exterior differentials and
co-differentials as factors. The main factorizations generalize corresponding
results in \cite{Juhl}.

As consequences, we shall see that the conformal symmetry breaking operators
serve as a natural organizing package for several important differential
operators acting on differential forms. In particular, we shall prove that for
the Euclidean metric $g_0$ they naturally capture the Branson-Gover, gauge
companion and $Q$-curvature operators \cite{BransonGover}.

\subsection{Branson-Gover, gauge companion and $Q$-curvature operators}\label{Branson-Gover}

We first recall some basic facts on these constructions. For more details we
refer to \cite{BransonGover}, \cite{Gover-Srni} and \cite{AG}. The
Branson-Gover operators $L_{2N}^{(p)}(g)$ on a Riemannian manifold $(M^n,g)$ of
dimension $n \ge 3$ are conformally covariant differential operators on
$\Omega^p(M)$ of order $2N \ge 2$. They satisfy the intertwining relation
\begin{equation}\label{conf-cov}
   e^{(\frac{n}{2}-p+N)\sigma} \hat{L}_{2N}^{(p)} (\omega)
   = L_{2N}^{(p)} (e^{(\frac{n}{2}-p-N)\sigma} \omega), \quad \omega \in \Omega^p(M),
\end{equation}
where $L_{2N}^{(p)} = L_{2N}^{(p)}(g)$ and $\hat{L}_{2N}^{(p)}=
L_{2N}^{(p)}(\hat{g})$ are the respective Branson-Gover operators for the
metrics $g$ and $\hat{g} = e^{2\sigma}g$.\footnote{We suppress the conditions
on the dimension $n$ and the order $2N$ which guarantee the existence of the
operators.} For even $n$ and $p \le \tfrac{n}{2}-1$, the operators
$L_{n-2p}^{(p)}$ will be called the {\em critical} Branson-Gover operators. The
operator $L_{2N}^{(0)}$ on $C^\infty(M)$ reduces to a constant multiple of the
GJMS-operator of order $2N$. In particular, in even dimension $n$, the critical
Branson-Gover operator $L_n^{(0)}$ is a constant multiple of the critical
GJMS-operator $P_n$.

For general metrics, even $n$ and $p \le \tfrac{n}{2}-1$, we also consider the
gauge companion operator
$$
   G_{n-2p+1}^{(p)}: \Omega^p(M) \to \Omega^{p-1}(M)
$$
and the (critical) $Q$-curvature operator
$$
   Q_{n-2p}^{(p)}: \Omega^p(M)|_{\ker (\dm)} \to \Omega^p(M).
$$
These operators appear in the factorization identities
$$
   L_{n-2p}^{(p)} = (n\!-\!2p) G_{n-2p-1}^{(p+1)} \dm \quad \mbox{and} \quad G_{n-2p-1}^{(p+1)} = \delta
   Q^{(p+1)}_{n-2p-2}.
$$
The operators $Q_{n-2p}^{(p)}$ generalize Branson's Q-curvature $Q_n$ on
functions. The property
\begin{equation}\label{double}
   L_{n-2p}^{(p)} \sim \delta Q^{(p+1)}_{n-2p-2} \dm
\end{equation}
is known as the double factorization property of the critical Branson-Gover
operators.

Under conformal changes $g \mapsto e^{2\sigma} g$, these operators transform
according to
\begin{equation}\label{G-conf}
  e^{(n-2p) \sigma} \widehat{G}_{n-2p-1}^{(p+1)}(\omega) =
  G_{n-2p-1}^{(p+1)}(\omega) + i_{\grad(\sigma)} L^{(p+1)}_{n-2p-2}(\omega)
\end{equation}
and
\begin{equation}\label{Q-conf}
  e^{(n-2p)\sigma} \widehat{Q}_{n-2p}^{(p)}(\omega) =
  Q_{n-2p}^{(p)}(\omega) + L^{(p)}_{n-2p}(\sigma \omega).
\end{equation}
Since the critical Branson-Gover operator $L_{n-2p}^{(p)}$ annihilates closed
forms, the transformation property \eqref{G-conf} implies that the restriction
of $G_{n-2p-1}^{(p+1)}$ to closed forms is conformally covariant.

On the Euclidean space $(\R^{n-1},g_0)$, these operators take the following
form.\footnote{In contrast to the general theory, in the present case the gauge
companion operators and the $Q$-curvature operators are defined on all forms.}
First, the Branson-Gover operators are given by the explicit formula
\begin{equation}\label{BG-flat}
   L_{2N}^{(p)} = (\tfrac{n-1}{2}\!-\!p\!+\!N) (\delta \dm)^N + (\tfrac{n-1}{2}\!-\!p\!-\!N) (\dm\delta)^N.
\end{equation}
Now assume that $n-1$ is even and $n\!-\!1\!-\!2p \ge 2$. Then the critical
Branson-Gover operator
\begin{equation*}
   L^{(p)}_{n-1-2p} = (n\!-\!1\!-\!2p) \delta(\dm\delta)^{\frac{n-3}{2}-p}\dm
\end{equation*}
factors through
\begin{equation*}
   G^{(p+1)}_{n-2-2p} \st \delta(\dm\delta)^{\frac{n-3}{2}-p}: \Omega^{p+1}(\R^{n-1}) \to \Omega^p(\R^{n-1})
\end{equation*}
and the critical $Q$-curvature operator
\begin{equation*}
   Q_{n-3-2p}^{(p+1)} \st (\dm\delta)^{\frac{n-3}{2}-p}: \Omega^{p+1}(\R^{n-1})\to\Omega^{p+1}(\R^{n-1}).
\end{equation*}

The conformal covariance \eqref{conf-cov} of the Branson-Gover operators
$L_{2N}^{(p)}$ implies the equivariance of $L_{2N}^{(p)}(g_0)$ under the
conformal group of $\R^{n}$. In fact, assume that $\gamma$ is a conformal
diffeomorphism of the Euclidean metric $g_0$ on $\R^n$, i.e., $\gamma_*(g_0) =
e^{2\Phi_\gamma} g_0$ for some $\Phi_\gamma \in C^\infty(\R^n)$. Then
\eqref{conf-cov} implies
$$
   e^{(\frac{n}{2}-p+N) \Phi_\gamma} L_{2N}^{(p)} (\gamma_*(g_0)) =
   L_{2N}^{(p)}(g_0) e^{(\frac{n}{2}-p-N)\Phi_\gamma}.
$$
But, by the naturality of the Branson-Gover operators, we have
$$
L_{2N}^{(p)} (\gamma_*(g_0)) = \gamma_* L_{2N}^{(p)}(g_0) \gamma^*.
$$
Hence
$$
   e^{(\frac{n}{2}-p+N) \Phi_\gamma} \gamma_* L_{2N}^{(p)}(g_0) =
   L_{2N}^{(p)}(g_0) e^{(\frac{n}{2}-p-N)\Phi_\gamma} \gamma_*.
$$
In other words, the operator $L_{2N}^{(p)}(g_0)$ satisfies the intertwining
property
\begin{equation}\label{flat-intertwining}
   \pi_{\frac{n}{2}-p+N}^{(p)}(\gamma) L_{2N}^{(p)}(g_0) = L_{2N}^{(p)}(g_0)
   \pi_{\frac{n}{2}-p-N}^{(p)}(\gamma),
\end{equation}
where
\begin{equation*}
   \pi_\lambda^{(p)}(\gamma) = e^{\lambda \Phi_\gamma} \gamma_*: \Omega^p (\R^n) \to \Omega^p(\R^n).
\end{equation*}
We also recall that $\pi_{-\lambda-p}^{(p)} = \pi_{\lambda,p}^\ch$ (see
\eqref{rep-rel-nc}). This is the non-compact analog of \eqref{rep-relation}.

\subsection{Main factorizations}\label{factor1}

Here we show that for special values of the parameter $\lambda$ the families of
the first and second type factorize as compositions of respective lower-order
families and Branson-Gover operators acting on forms on $\R^{n-1}$ and
$\R^{n}$. In particular, the Branson-Gover operators on forms on $\R^{n-1}$ and
$\R^{n}$ are naturally captured by both types of even-order conformal symmetry
breaking operators.

We start with the discussion of even-order families of the first type.

\begin{theorem}\label{MainFactEven1} Let $N \in \N$ and $p=0,\dots,n-1$. Then
the even-order families of the first type satisfy the factorization identities
\begin{equation}\label{fact-1b-1}
   (\tfrac{n}{2}\!-\!p\!+\!k) D_{2N}^{(p\to p)} (k\!-\!\tfrac{n}{2})
   = D_{2N-2k}^{(p\to p)}(-k\!-\!\tfrac{n}{2}) \circ \bar{L}_{2k}^{(p)}
\end{equation}
and
\begin{equation}\label{fact-1b-2}
   (\tfrac{n-1}{2}\!-\!p-\!k\!) D_{2N}^{(p\to p)}(2N\!-\!k\!-\!\tfrac{n-1}{2}) =
   L_{2k}^{(p)} \circ D_{2N-2k}^{(p\to p)}(2N\!-\!k\!-\!\tfrac{n-1}{2})
\end{equation}
for $k=1,\dots,N-1$, $N \ge 2$. Moreover, in the extremal case $k=N$, we have
\begin{equation}\label{fact-1}
   D^{(p\to p)}_{2N}(N\!-\!\tfrac{n-1}{2}) = -L^{(p)}_{2N} \iota^* \quad \mbox{and}
   \quad D^{(p\to p)}_{2N}(N\!-\!\tfrac{n}{2}) = -\iota^* \bar{L}^{(p)}_{2N}.
\end{equation}
\end{theorem}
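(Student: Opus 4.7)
The plan is to deduce the factorizations directly from the geometric formula \eqref{SBO-even} in Theorem \ref{coeffeven} by exploiting the very explicit product structure of the coefficients $\alpha_i^{(N)}(\lambda)$, combined with the nilpotency $\dm^2=0$, $\delta^2=0$, $\bar\dm^2=0$, $\bar\delta^2=0$ which collapses cross-terms when Branson--Gover operators \eqref{BG-flat} are multiplied against monomials of the form $(\dm\delta)^{N-i}\iota^*(\bar\omega)^i$.

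I would start with the extremal case \eqref{fact-1}. At $\lambda = N-\tfrac{n-1}{2}$ the factor $2\lambda+n-2-2N+1 = 0$ appears in $\prod_{k=1}^i(2\lambda+n-2k-2N+1)$ for every $i\ge 1$, so only $\alpha_0^{(N)}$ survives; an immediate computation gives $\alpha_0^{(N)}(N-\tfrac{n-1}{2}) = 2^N \tfrac{N!}{(2N)!}\prod_{k=1}^N(2N+1-2k) = 1$. The middle sum in \eqref{SBO-even} has no $i=0$ term, hence the first type family reduces to $(p+N-\tfrac{n-1}{2})(\dm\delta)^N\iota^* + (p-N-\tfrac{n-1}{2})(\delta\dm)^N\iota^*$, which is exactly $-L_{2N}^{(p)}\iota^*$ by \eqref{BG-flat}. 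Symmetrically, at $\lambda = N-\tfrac{n}{2}$ the factor $2\lambda+n-2N = 0$ lies in $\prod_{k=i+1}^N(2\lambda+n-2k)$ whenever $i\le N-1$, so only $\alpha_N^{(N)}(N-\tfrac{n}{2}) = 1$ survives, yielding $-\iota^*\bar L_{2N}^{(p)}$.

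For \eqref{fact-1b-1} and \eqref{fact-1b-2} I would combine an equivariance/uniqueness argument with a direct scalar computation. First, a weight check using \eqref{flat-intertwining} and \eqref{inter-even} confirms that both sides intertwine the same pair of representations: for \eqref{fact-1b-2} at $\lambda = 2N-k-\tfrac{n-1}{2}$, the source is $\pi^{(p)}_{-\lambda-p}$ and the target is $\pi^{\prime(p)}_{\frac{n-1}{2}-p+k}$, matching both $D_{2N}^{(p\to p)}(\lambda)$ and the composite $L_{2k}^{(p)}\circ D_{2N-2k}^{(p\to p)}(\lambda)$. By the multiplicity-one statement in Proposition \ref{charind} (for generic $p$), the operators are proportional. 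To determine the scalar, I expand the right-hand side via \eqref{SBO-even}: applying $L_{2k}^{(p)} = (\tfrac{n-1}{2}-p+k)(\delta\dm)^k + (\tfrac{n-1}{2}-p-k)(\dm\delta)^k$ to each monomial $(\dm\delta)^{N-k-i}\iota^*(\bar\omega)^i$ and $(\delta\dm)^{N-k-i}\iota^*(\bar\omega)^i$, the cross-products $(\delta\dm)^k(\dm\delta)^m$ with $k,m\ge 1$ vanish by $\dm^2 = 0$, while the self-products telescope to $(\dm\delta)^{N-i}$ and $(\delta\dm)^{N-i}$; only the $i = N-k$ boundary term produces a genuine mix of both $(\dm\delta)^N$ and $(\delta\dm)^N$. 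On the left, at $\lambda = 2N-k-\tfrac{n-1}{2}$ the second product in $\alpha_i^{(N)}(\lambda)$ contains the factor $2(N-k+1-i) = 0$ for $i\ge N-k+1$, so the sums range only over $i\le N-k$. Matching monomials reduces the identity to a relation of the form $(\tfrac{n-1}{2}-p-k)\,\alpha_i^{(N-k)}(\lambda) = \alpha_i^{(N)}(\lambda)$, with the prefactor $(\tfrac{n-1}{2}-p-k)$ on the left of \eqref{fact-1b-2} absorbing the extra coefficient; an analogous argument using the first product in $\alpha_i^{(N)}$ yields \eqref{fact-1b-1}.

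The main obstacle will be this last step of coefficient matching: verifying that the explicit product formula for $\alpha_i^{(N)}(\lambda)$ at $\lambda = 2N-k-\tfrac{n-1}{2}$ telescopes, after division by $\alpha_i^{(N-k)}(\lambda)$, to a scalar independent of $i$. This is a direct but delicate manipulation of Pochhammer products; writing the ratio $\alpha_i^{(N)}(\lambda)/\alpha_i^{(N-k)}(\lambda)$ as a quotient of two products of length $N-i$ and $N-k-i$ respectively, the bulk of the factors cancel and what remains is a product over $k$ consecutive indices that evaluates to a constant multiple of $(\tfrac{n-1}{2}-p-k)^{-1}$. As a sanity check, specialising $k=N$ in \eqref{fact-1b-2} recovers \eqref{fact-1}, and specialising $k=N$ in \eqref{fact-1b-1} recovers the other equation in \eqref{fact-1}; this gives a useful consistency test for the normalisation. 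For the middle-degree cases not covered by uniqueness, one falls back on the direct expansion, which is unaffected by reducibility.
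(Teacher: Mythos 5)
Your proof follows essentially the same route as the paper: the extremal case \eqref{fact-1} is obtained exactly as in the paper's argument (vanishing of $\alpha_i^{(N)}(N\!-\!\tfrac{n-1}{2})$ for $i\ge 1$ and of $\alpha_i^{(N)}(N\!-\!\tfrac{n}{2})$ for $i\le N\!-\!1$, with the surviving coefficient equal to $1$), and for \eqref{fact-1b-1}, \eqref{fact-1b-2} the paper likewise expands both sides via the geometric formula \eqref{SBO-even}, uses $\dm^2=\delta^2=0$ to collapse cross-terms, and reduces everything to vanishing plus a coefficient identity; the multiplicity-one argument you add on top is the paper's own heuristic for why such factorizations must exist, so it is redundant but harmless. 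One bookkeeping slip worth correcting: you claim the ratio $\alpha_i^{(N)}(\lambda)/\alpha_i^{(N-k)}(\lambda)$ evaluates to a constant multiple of $(\tfrac{n-1}{2}\!-\!p\!-\!k)^{-1}$, but the coefficients $\alpha_i^{(N)}(\lambda)$ do not depend on $p$, so no $p$-dependent factor can arise from them. The correct accounting is that the $p$-linear prefactors on the two sides are contributed entirely by the Branson--Gover operator $L_{2k}^{(p)}$ (whose coefficients are $\tfrac{n-1}{2}\!-\!p\!\pm\!k$) together with the $(\lambda+p-\cdot)$ prefactors already present in \eqref{SBO-even}, while the remaining coefficient identities are the $p$-independent relations $\alpha_i^{(N)}(2N\!-\!k\!-\!\tfrac{n-1}{2})=\alpha_i^{(N-k)}(2N\!-\!k\!-\!\tfrac{n-1}{2})$ for $i\le N-k$ (for \eqref{fact-1b-2}) and the index-shifted relation $\alpha_i^{(N)}(k\!-\!\tfrac{n}{2})=\alpha_{i-k}^{(N-k)}(-k\!-\!\tfrac{n}{2})$ for $i\ge k$ (for \eqref{fact-1b-1}); once this is repaired, your argument is the paper's proof.
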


By Hodge conjugation, Theorem \ref{MainFactEven1} implies the following result
for families of the second type.

\begin{theorem}\label{MainFactEven2} Let $N \in \N$ and $p=1,\dots,n$. Then the
even-order families of the second type satisfy the factorization identities
\begin{equation}\label{fact-2b-1}
   (\tfrac{n}2\!-\!p\!-\!k) D_{2N}^{(p\to p-1)} (k\!-\!\tfrac n2)
   = D_{2N-2k}^{(p\to p-1)}(-k\!-\!\tfrac n2) \circ \bar{L}_{2k}^{(p)}
\end{equation}
and
\begin{equation}\label{fact-2b-2}
   (\tfrac{n+1}{2}\!-\!p+\!k\!) D_{2N}^{(p\to p-1)}(2N\!-\!k\!-\!\tfrac{n-1}{2})
   = L_{2k}^{(p-1)} \circ D_{2N-2k}^{(p\to p-1)}(2N\!-\!k\!-\!\tfrac{n-1}{2})
\end{equation}
for $k=1,\dots,N-1$, $N \ge 2$. Moreover, in the extremal case $k=N$, we have
\begin{equation}\label{fact-2}
   D^{(p \to p-1)}_{2N}(N\!-\!\tfrac{n-1}{2}) = -L^{(p-1)}_{2N} \iota^* i_{\partial_n} \quad
   \mbox{and} \quad
   D^{(p \to p-1)}_{2N}(N\!-\!\tfrac{n}{2}) = -\iota^* i_{\partial_n} \bar{L}^{(p)}_{2N}.
\end{equation}
\end{theorem}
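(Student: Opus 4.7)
The plan is to derive Theorem \ref{MainFactEven2} directly from Theorem \ref{MainFactEven1} by Hodge conjugation, using the identity
\[
   D^{(p\to p-1)}_{2N}(\lambda) = (-1)^{pn} \star D^{(n-p \to n-p)}_{2N}(\lambda)\,\bar{\star}
\]
from Theorem \ref{Hodge-c}. First I would replace $p$ by $n-p$ in each identity of Theorem \ref{MainFactEven1}, then compose on the left with $\star$ and on the right with $\bar{\star}$ and multiply by $(-1)^{pn}$, so that both $D$-factors are converted from first-type to second-type families.

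The key technical ingredient is the behavior of Branson--Gover operators under Hodge conjugation. From the explicit formula \eqref{BG-flat} together with Lemma \ref{Hodge-gen}/(4) (which gives $\star(\delta \dm)^N = (\dm\delta)^N\star$ and $\star(\dm\delta)^N = (\delta \dm)^N\star$), a direct computation shows that substituting $p \mapsto n-1-p$ negates both coefficients $\tfrac{n-1}{2}-p \pm N$ of $L_{2N}^{(p)}$, yielding
\[
   \star L_{2N}^{(p)} = -L_{2N}^{(n-1-p)}\star, \qquad \bar{\star}\,\bar{L}_{2N}^{(p)} = -\bar{L}_{2N}^{(n-p)}\bar{\star}.
\]
Applying this to the conjugated right-hand side of \eqref{fact-1b-1}, the factor $\bar{L}_{2k}^{(n-p)}\bar{\star}$ is replaced by $-\bar{\star}\,\bar{L}_{2k}^{(p)}$, which then absorbs into $D_{2N-2k}^{(p\to p-1)}(-k-\tfrac{n}{2})$ via Theorem \ref{Hodge-c}. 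The coefficient $\tfrac{n}{2}-(n-p)+k = p+k-\tfrac{n}{2}$ on the left-hand side combines with the sign flip from $\bar{L}$ to yield $\tfrac{n}{2}-p-k$, matching \eqref{fact-2b-1}. The analogous bookkeeping applied to \eqref{fact-1b-2} (where now it is $L_{2k}^{(n-p)}\star$ on the outside) reproduces \eqref{fact-2b-2}.

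For the extremal cases \eqref{fact-2}, the additional ingredient is Lemma \ref{HodgeLemma}, which gives $\star \circ \iota^* \circ \bar{\star} = (-1)^{pn+1}\iota^* i_{\partial_n}$ on $\Omega^p(\R^n)$. Starting from the first identity of \eqref{fact-1} with $p$ replaced by $n-p$ and applying $(-1)^{pn}\star(\cdot)\bar{\star}$, the relation $\star L_{2N}^{(n-p)} = -L_{2N}^{(p-1)}\star$ moves the Branson--Gover factor to the outside, and the remaining composition $\star\iota^*\bar{\star}$ is rewritten using Lemma \ref{HodgeLemma}; the two sign flips combine with $(-1)^{pn}$ to give the overall minus sign in $-L_{2N}^{(p-1)}\iota^* i_{\partial_n}$. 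The same procedure applied to the second identity of \eqref{fact-1}, this time with $\bar{L}$ instead of $L$, produces $-\iota^* i_{\partial_n}\bar{L}_{2N}^{(p)}$.

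The main obstacle is purely bookkeeping: the anti-commutation of $\star$ and $\bar{\star}$ with the Branson--Gover operators introduces signs that must cancel precisely against the $(-1)^{pn}$ in Theorem \ref{Hodge-c} and the $(-1)^{pn+1}$ in Lemma \ref{HodgeLemma}, and the coefficient shifts induced by the substitution $p \mapsto n-p$ must be tracked through each term. Since every step is a composition of identities that have already been proved, no new analytic input is required; the task is to assemble these identities in the correct order and verify that all the signs and form degrees align.
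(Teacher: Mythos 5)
Your proposal is correct and follows exactly the route the paper takes: substitute $p\mapsto n-p$ in Theorem \ref{MainFactEven1}, conjugate by $\star$ and $\bar{\star}$, identify the conjugated first-type families as second-type families via Theorem \ref{Hodge-c}, use the anti-commutation $\bar{\star}\,\bar{L}_{2k}^{(p)}=-\bar{L}_{2k}^{(n-p)}\,\bar{\star}$ (equivalently the paper's $\bar{\star}\,\bar{L}_{2k}^{(n-p)}\,\bar{\star}=-\bar{\star}^2\bar{L}_{2k}^{(p)}$) and its tangential analog, and invoke Lemma \ref{HodgeLemma} for the extremal identities. The sign and coefficient bookkeeping you describe matches the paper's computation.
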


Some comments concerning these results are in order.

The relation \eqref{rep-relation} shows that Theorem \ref{MainFactEven1} and
Theorem \ref{MainFactEven2} are compatible with the respective equivariance
properties of the Branson-Gover operators and the conformal symmetry breaking
operators. Indeed, the relation
$$
   \dm\pi^{\prime \ch}_{\lambda-2N,p}(X) D_{2N}^{(p \to p)}(\lambda)
   = D_{2N}^{(p \to p)}(\lambda) \dm\pi^\ch_{\lambda,p}(X), \quad  X \in \gog^{\prime}(\R)
$$
(see Theorem \ref{equiv-even}) implies
$$
   \dm\pi^{\prime \ch}_{-\frac{n-1}{2}-N,p}(X) D_{2N}^{(p \to p)} (N-\tfrac{n-1}{2})
   = D_{2N}^{(p \to p)}(N-\tfrac{n-1}{2}) \dm\pi^\ch_{-\frac{n-1}{2}+N,p}(X).
$$
Hence, by \eqref{fact-1} and \eqref{rep-relation}, we find
$$
   \dm\pi^{\prime(p)}_{\frac{n-1}{2}+N-p}(X) L_{2N}^{(p)} = L_{2N}^{(p)}
   \dm\pi^{\prime(p)}_{\frac{n-1}{2}-N-p}(X)
$$
which fits with \eqref{flat-intertwining}.

The relations in Theorem \ref{MainFactEven1} generalize results in \cite{Juhl}
($p=0$). An important difference to the results in \cite{Juhl} is that the
above identities contain non-trivial numerical factors on the left-hand sides.

Theorem \ref{MainFactEven1} and Theorem \ref{MainFactEven2} are also suggested
by the multiplicity free character identities (see Proposition \ref{charind})
and the equivariance of the Branson-Gover operators. From this perspective,
their validity can be regarded as a cross-check of the explicit formulas for
the families displayed in Theorem \ref{coeffeven} and Theorem \ref{coeffeven2}.
The latter argument, however, does not yield the constant factors which relate
both sides of the identities. These factors can also be determined by the
following formal argument. We note that the right-hand sides of
\eqref{fact-1b-1} and \eqref{fact-1b-2} are {\em quadratic} in $p$ (each factor
is linear in $p$). Therefore, the left-hand side must contain an additional
linear factor in $p$ which easily can be read off. We stress that these
constant factors actually may vanish. For instance, the left-hand side of
\eqref{fact-2b-1} vanishes if $n$ is even and $k=p-\frac{n}{2} \ge 1$. For
these parameters, the right-hand side contains the operator
$\bar{L}^{(p)}_{2p-n} = (\bar{d}\bar{\delta})^{p-\frac{n}{2}}$.

The following proof derives the assertions from the geometric formulas in
Section \ref{geometric}.

\begin{proof} [Proof of Theorem \ref{MainFactEven1}] We first prove \eqref{fact-1b-1}.
By \eqref{BG-flat}, we have
\begin{equation}\label{Lbar-flat}
   \bar{L}^{(p)}_{2k} = (\tfrac n2\!-\!p\!+\!k) (\bar{\delta} \bar{\dm})^k
   + (\tfrac n2\!-\!p\!-\!k) (\bar{\dm}\bar{\delta})^k.
\end{equation}
Hence Theorem \ref{coeffeven} implies
\begin{align*}
   D_{2N}^{(p\to p)}(k\!-\!\tfrac n2)
   & = (k\!-\!\tfrac n2\!+\!p)\sum_{i=0}^N \alpha_i^{(N)}(k\!-\!\tfrac n2)
   (\dm\delta)^{N-i} \iota^* (\bar{\dm} \bar{\delta})^i  \notag\\
   & + \sum_{i=1}^{N-1} (k\!-\!\tfrac n2\!+\!p\!-\!2i) \alpha_i^{(N)}(k\!-\!\tfrac n2)
   (\dm \delta)^{N-i} \iota^* (\bar{\delta} \bar{\dm})^i \notag\\
   & + (k\!-\!\tfrac n2\!+\!p\!-\!2N) \sum_{i=0}^N \alpha_i^{(N)}(k\!-\!\tfrac n2)
   (\delta \dm)^{N-i} \iota^*(\bar{\delta} \bar{\dm})^i
\end{align*}
and
\begin{align*}
   D_{2N-2k}^{(p\to p)}(-k\!-\!\tfrac n2) \circ \bar{L}^{(p)}_{2k}
   & = (\tfrac n2\!-\!p\!-\!k) (-k\!-\!\tfrac n2\!+\!p) \sum_{i=k}^N \alpha_{i-k}^{(N-k)}(-k\!-\!\tfrac n2)
   (\dm \delta)^{N-i} \iota^* (\bar{\dm} \bar{\delta})^i \notag \\
   & +(\tfrac n2\!-\!p\!+\!k) (-k\!-\!\tfrac n2\!+\!p) \alpha_{0}^{(N-k)}(-k\!-\!\tfrac n2)
   (\dm \delta)^{N-k} \iota^* (\bar{\delta} \bar{\dm})^k \notag \\
   & +(\tfrac n2\!-\!p\!+\!k) \sum_{i=k+1}^{N-1} (k\!-\!\tfrac n2\!+\!p\!-\!2i)
   \alpha_{i-k}^{(N-k)}(-k\!-\!\tfrac n2) (\dm \delta)^{N-i} \iota^* (\bar{\delta} \bar{\dm})^i \notag \\
   & + (\tfrac n2\!-\!p\!+\!k) (k\!-\!\tfrac n2\!+\!p\!-\!2N) \sum_{i=k}^N  \alpha_{i-k}^{(N-k)}(-k\!-\!\tfrac n2)
   (\delta \dm)^{N-i} \iota^*(\bar{\delta} \bar{\dm})^i.
\end{align*}
We note that, in the last sum, the term in the second line originates from the
composition of the corresponding first sum in \eqref{SBO-even} with
$\bar{L}^{(p)}_{2k}$. It can be merged with the third line by extending the sum
to run from $i=k$. Thus, it remains to prove
\begin{align*}
   \alpha_i^{(N)}(k-\tfrac n2) & = 0, \quad i=0,\dots,k-1,\\
   \alpha_i^{(N)}(k-\tfrac n2) & = \alpha_{i-k}^{(N-k)}(-k-\tfrac n2),\quad i=k,\dots,N.
\end{align*}
Both equalities are direct consequences of the definition \eqref{a-even} of the
coefficients $\alpha_i^{(N)}(\lambda)$. We omit the details of the calculation.
This proves \eqref{fact-1b-1}.

Next, we prove \eqref{fact-1b-2}. By \eqref{BG-flat}, we have
\begin{equation}\label{L-flat}
    L^{(p)}_{2k} = (\tfrac{n-1}{2}\!-\!p\!+\!k) (\delta\dm)^k + (\tfrac{n-1}{2}\!-\!p\!-\!k)(\dm\delta)^k.
\end{equation}
Hence Theorem \ref{coeffeven} implies
\begin{align*}
    D_{2N}^{(p\to p)} (2N\!-\!k\!-\!\tfrac{n-1}{2})
    & = (2N\!-\!k-\!\tfrac{n-1}{2}\!+\!p) \sum_{i=0}^N \alpha_i^{(N)}(2N\!-\!k\!-\!\tfrac{n-1}{2})
    (\dm\delta)^{N-i} \iota^* (\bar{\dm} \bar{\delta})^i  \notag \\
    & + \sum_{i=1}^{N-1} (2N\!-\!k\!-\!\tfrac{n-1}{2}\!+\!p\!-\!2i) \alpha_i^{(N)}(2N\!-\!k-\!\tfrac{n-1}{2})
    (\dm \delta)^{N-i} \iota^* (\bar{\delta} \bar{\dm})^i  \notag \\
    & + (-k\!-\!\tfrac{n-1}{2}\!+\!p) \sum_{i=0}^N  \alpha_i^{(N)}(2N\!-\!k-\!\tfrac{n-1}{2})
    (\delta \dm)^{N-i} \iota^*(\bar{\delta} \bar{\dm})^i
\end{align*}
and
\begin{align*}
   L^{(p)}_{2k} & \circ D_{2N-2k}^{(p\to p)}(2N\!-\!k-\!\tfrac{n-1}{2}) \\
   & = (\tfrac{n-1}{2}\!-\!p\!-\!k)(2N\!-\!k-\!\tfrac{n-1}{2}\!+\!p)
   \sum_{i=0}^{N-k} \alpha_{i}^{(N-k)}(2N\!-\!k-\!\tfrac{n-1}{2})
   (\dm\delta)^{N-i} \iota^* (\bar{\dm} \bar{\delta})^i \notag \\
   & + (\tfrac{n-1}{2}\!-\!p\!-\!k) \sum_{i=1}^{N-k-1}
   (2N\!-\!k-\!\tfrac{n-1}{2}\!+\!p\!-\!2i)
   \alpha_{i}^{(N-k)}(2N\!-\!k\!-\!\tfrac{n-1}{2}) (\dm \delta)^{N-i} \iota^* (\bar{\delta} \bar{\dm})^i \notag\\
   & + (\tfrac{n-1}{2}\!-\!p\!+\!k) (k\!-\!\tfrac{n-1}{2}\!+\!p)
   \sum_{i=0}^{N-k} \alpha_{i}^{(N-k)}(2N\!-\!k\!-\!\tfrac{n-1}{2}) (\delta \dm)^{N-i}
   \iota^*(\bar{\delta} \bar{\dm})^i \\
   & + (\tfrac{n-1}{2}\!-\!p\!-\!k) (k\!-\!\tfrac{n-1}{2}\!+\!p)
   \alpha_{N-k}^{(N-k)}(2N\!-\!k\!-\!\tfrac{n-1}{2}) (\delta \dm)^{k} \iota^*(\bar{\delta}
   \bar{\dm})^{N-k}.
\end{align*}
We note that, in the last sum, the term in the fourth line originates from the
composition of the corresponding third sum in \eqref{SBO-even} with
$L^{(p)}_{2k}$. It can be merged with the second line by extending the sum to
run up to $i=N-k$. Thus, it remains to prove
\begin{align*}
    \alpha_i^{(N)}(2N\!-\!k\!-\!\tfrac{n-1}{2}) & = 0, \quad i=N-k+1,\dots,N, \\
    \alpha_i^{(N)}(2N\!-\!k\!-\!\tfrac{n-1}{2}) & = \alpha_i^{(N-k)}(2N\!-\!k-\!\tfrac{n-1}{2}), \quad i=0,\dots,N-k.
\end{align*}
But both equalities are direct consequences of the definition of the
coefficients $\alpha_i^{(N)}(\lambda)$. We omit the details of the
calculations. This proves \eqref{fact-1b-2}.

It only remains to prove \eqref{fact-1}. Theorem \ref{coeffeven} and
\eqref{L-flat} yield
$$
   D_{2N}^{(p \to p)}(N\!-\!\tfrac{n-1}{2}) = (N\!-\!\tfrac{n-1}{2}\!+\!p) (\dm
   \delta)^N \iota^* + (-N\!-\!\tfrac{n-1}{2}\!+\!p) (\delta \dm)^N \iota^* = -L_{2N}^{(p)} \iota^*
$$
using
\begin{align*}
   \alpha_i^{(N)}(N\!-\!\tfrac{n-1}{2}) & = 0 \quad \mbox{for $i \ge 1$}, \\
   \alpha_0^{(N)}(N\!-\!\tfrac{n-1}{2}) & = 1.
\end{align*}
This proves the first identity. Similarly, Theorem \ref{coeffeven} and
\eqref{Lbar-flat} imply
$$
   D_{2N}^{(p \to p)}(N\!-\!\tfrac{n}{2}) = (N\!-\!\tfrac{n}{2}\!+\!p) \iota^* (\bar{\dm} \bar{\delta})^N
   + (-N\!-\!\tfrac{n}{2}\!+\!p) \iota^* (\bar{\delta} \bar{\dm})^N = - \iota^* \bar{L}_{2N}^{(p)}
$$
using
\begin{align*}
   \alpha_i^{(N)}(N\!-\!\tfrac{n}{2}) & = 0 \quad \mbox{for $i \le N-1$}, \\
   \alpha_N^{(N)}(N\!-\!\tfrac{n}{2}) & = 1.
\end{align*}
This proves the second identity.
\end{proof}

We continue with the {\bf proof of Theorem \ref{MainFactEven2}}.

The relation \eqref{fact-1b-1} implies
$$
   (-\tfrac{n}{2}+p+k) \star D_{2N}^{(n-p \to n-p)}(k-\tfrac n2) \, \bar{\star}
   = \left( \star \, D_{2N-2k}^{(n-p \to n-p)}(-k-\tfrac n2) \, \bar{\star}\right)
   \bar{\star}^2 \left(\bar{\star} \, \bar{L}_{2k}^{(n-p)} \, \bar{\star}\right).
$$
By Theorem \ref{Hodge-c}, the latter identity is equivalent to
$$
   (-\tfrac n2+p+k) D_{2N}^{(p \to p-1)}(k-\tfrac n2)
   = D_{2N-2k}^{(p \to p-1)}(-k-\tfrac n2) \, \bar{\star}^2
   \left(\bar{\star} \, \bar{L}_{2k}^{(n-p)} \, \bar{\star}\right).
$$
Combining this result with $\bar{\star} \, \bar{L}_{2k}^{(n-p)} \, \bar{\star}
= - \bar{\star}^2 \bar{L}_{2k}^{(p)}$ proves \eqref{fact-2b-1}. Similarly, the
first identity in \eqref{fact-1} yields
$$
   \star \, D^{(n-p \to n-p)}_{2N}(N-\tfrac{n-1}{2}) \, \bar{\star} = - \star L^{(n-p)}_{2N} \iota^* \bar{\star}.
$$
By Theorem \ref{Hodge-c}, this identity is equivalent to
$$
   (-1)^{np} D^{(p \to p-1)}_{2N}(N-\tfrac{n-1}{2})
   = - \left(\star \, L^{(n-p)}_{2N} \, \star \right) \star^2 \left(\star \, \iota^* \bar{\star} \right).
$$
Now, using $\star \, L_{2N}^{(n-p)} \, \star = - \star^2 L_{2N}^{(p-1)}$ and
Lemma \ref{HodgeLemma}, we find
$$
   D^{(p \to p-1)}_{2N}(N-\tfrac{n-1}{2}) = - L_{2N}^{(p-1)} \iota^* i_{\partial_n}.
$$
This proves the first identity in \eqref{fact-2}.

We omit the analogous proofs of \eqref{fact-2b-2} and of the second identity in
\eqref{fact-2}. \hfill $\square$

The main factorizations for even-order families of the first and the second
type have analogs for odd-oder families.

\begin{theorem}\label{MainFactOdd1} Let $N \in \N$ and $p=0,\dots,n-1$. Then the
odd-order families of the first type satisfy the factorization identities
\begin{equation}\label{main-fact-odd-1}
   (\tfrac{n}{2}\!-\!p\!+\!k) D_{2N+1}^{(p\to p)} (k\!-\!\tfrac{n}{2})
   = D_{2N+1-2k}^{(p\to p)}(-k\!-\!\tfrac{n}{2}) \circ \bar{L}_{2k}^{(p)}
\end{equation}
and
\begin{equation}\label{main-fact-odd-2}
   (\tfrac{n-1}{2}\!-\!p-\!k\!) D_{2N+1}^{(p\to p)}(2N\!+\!1\!-\!k\!-\!\tfrac{n-1}{2}) =
   L_{2k}^{(p)} \circ D_{2N+1-2k}^{(p\to p)}(2N\!+\!1\!-\!k\!-\!\tfrac{n-1}{2})
\end{equation}
for $k=1,\dots,N$.
\end{theorem}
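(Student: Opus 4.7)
The plan is to follow the template of Theorem \ref{MainFactEven1}, substituting the geometric formula from Theorem \ref{coeffodd2} for the odd-order families and expanding the Branson--Gover factors via \eqref{L-flat} and \eqref{Lbar-flat}.

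For the first identity \eqref{main-fact-odd-1}, I substitute $\lambda = k-\tfrac{n}{2}$ on the left, and on the right I expand $D_{2N+1-2k}^{(p\to p)}(-k-\tfrac{n}{2}) \circ \bar{L}_{2k}^{(p)}$ by distributing the two summands of $\bar{L}_{2k}^{(p)}$ across the three sums of the geometric formula. The relations $\bar{\dm}^2 = \bar{\delta}^2 = 0$ annihilate the cross-parity products $(\bar{\dm}\bar{\delta})^i(\bar{\delta}\bar{\dm})^k$ and $(\bar{\delta}\bar{\dm})^i(\bar{\dm}\bar{\delta})^k$ for $i \geq 1$, while same-parity products telescope to $(\bar{\dm}\bar{\delta})^{i+k}$ or $(\bar{\delta}\bar{\dm})^{i+k}$. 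After reindexing $j = i+k$, the $i=0$ term of the middle sum of $D_{2N+1-2k}^{(p\to p)}(-k-\tfrac{n}{2})$ paired with $(\bar{\delta}\bar{\dm})^k$ contributes to a block of type $(\dm\delta)^{N-k}\dm\iota^* i_{\partial_n}(\bar{\delta}\bar{\dm})^k$ that must be absorbed into the first sum of $D_{2N+1}^{(p\to p)}(k-\tfrac{n}{2})$ via the coefficient identity $\gamma_k^{(N)}(k-\tfrac{n}{2};p) = -(\tfrac{n}{2}-p+k)\beta_k^{(N)}(k-\tfrac{n}{2})$, which follows from \eqref{gamma-alt} together with the vanishing $\beta_{k-1}^{(N)}(k-\tfrac{n}{2}) = 0$. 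This is the merger step analogous to the proof of Theorem \ref{MainFactEven1}.

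Comparing coefficients then reduces the identity to the vanishing
\[
  \beta_i^{(N)}(k-\tfrac{n}{2}) = 0, \qquad 0 \leq i \leq k-1,
\]
and the matching
\[
  \beta_j^{(N)}(k-\tfrac{n}{2}) = \beta_{j-k}^{(N-k)}(-k-\tfrac{n}{2}), \qquad k \leq j \leq N.
\]
The vanishing is immediate from \eqref{b-even}: the factor $2\lambda+n-2k = 0$ appears in the product $\prod_{k'=i+1}^N(2\lambda+n-2k')$ whenever $i < k \leq N$. The matching reduces, after the substitution $k'' = k' - k$ in both products of \eqref{b-even} and a short computation using the double-factorial identity $(2N+1)!! = (2N+1)!/(2^N N!)$, to an equality of explicit expressions. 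The corresponding statements for $\gamma_i^{(N)}(\lambda;p)$ follow by substitution into \eqref{gamma-alt}; the third-sum and second-sum blocks of $D$ behave cleanly because the arithmetic factors $(\lambda+p)$ and $(\lambda+p-2N-1)$ on LHS and RHS match identically after the substitution.

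The proof of \eqref{main-fact-odd-2} is entirely parallel. Now one substitutes $\lambda = 2N+1-k-\tfrac{n-1}{2}$ and composes $L_{2k}^{(p)}$ on the left; the required vanishing becomes $\beta_i^{(N)}(2N+1-k-\tfrac{n-1}{2}) = 0$ for $N-k+1 \leq i \leq N$, arising now from the factor $2\lambda+n-2k'-2N-1 = 0$ at $k' = N+1-k$ in the second product of \eqref{b-even}. The main obstacle is the careful bookkeeping of the exceptional boundary terms produced by the composition: specifically, $(\dm\delta)^k$ applied to the $i=N-k$ term of the third sum of $D_{2N+1-2k}^{(p\to p)}$ produces a block $(\dm\delta)^k \iota^* i_{\partial_n} \bar{\dm}(\bar{\delta}\bar{\dm})^{N-k}$ which must be reconciled with the type-A block of index $N-k$ in $D_{2N+1}^{(p\to p)}$ by invoking the identity $\iota^* i_{\partial_n} \bar{\dm} = \iota^*\partial_n - \dm\iota^* i_{\partial_n}$ from Lemma \ref{DiffCoDiff}, together with $(\dm\delta)^k\dm = \dm(\delta\dm)^k$.
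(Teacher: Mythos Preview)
Your approach is essentially the paper's: apply the geometric formula of Theorem~\ref{coeffodd2}, mimic the bookkeeping of Theorem~\ref{MainFactEven1}, and reduce both identities to the vanishing/matching relations for $\beta_i^{(N)}$ and $\gamma_i^{(N)}$, the latter obtained from the former via Remark~\ref{gamma-alternative}. The coefficient identities you state for $\beta$ are exactly those the paper uses, and your handling of the boundary merger for \eqref{main-fact-odd-1} (the $i=0$ term of the second sum paired with $(\bar\delta\bar\dm)^k$ feeding into $\gamma_k^{(N)}$) is correct.

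The one step that does not go through is your treatment of the boundary term for \eqref{main-fact-odd-2}. Writing $\iota^* i_{\partial_n}\bar\dm = \iota^*\partial_n - \dm\iota^* i_{\partial_n}$ and aiming at the type-A block of index $N-k$ leaves the residual term $(\dm\delta)^k\iota^*\partial_n(\bar\delta\bar\dm)^{N-k}$, which is not one of the building blocks of the geometric formula and cannot be absorbed. The correct move is to use $i_{\partial_n}\bar\delta = -\delta i_{\partial_n}$ from Lemma~\ref{DiffCoDiff}(1): one gets
\[
   (\dm\delta)^k\iota^* i_{\partial_n}\bar\dm(\bar\delta\bar\dm)^{N-k}
   = -(\dm\delta)^{k-1}\dm\,\iota^* i_{\partial_n}(\bar\delta\bar\dm)^{N-k+1},
\]
which is a type-A block at index $N-k+1$, not $N-k$. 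This matches $\gamma_{N-k+1}^{(N)}(\lambda;p)$ exactly, since at $\lambda = 2N+1-k-\tfrac{n-1}{2}$ the vanishing $\beta_{N-k+1}^{(N)}(\lambda)=0$ collapses \eqref{gamma-alt} to $\gamma_{N-k+1}^{(N)}(\lambda;p) = -(\lambda+p-2(N-k)-1)\beta_{N-k}^{(N)}(\lambda)$. The paper sidesteps this micro-step altogether by simply recording the $\gamma$-vanishing for $i=N-k+1,\dots,N$ as a consequence of Remark~\ref{gamma-alternative} and the $\beta$-vanishing, which subsumes the boundary term automatically.
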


\begin{proof} We apply Theorem \ref{coeffodd2}. Similar arguments as in the proof of
Theorem \ref{MainFactEven1} show that for the proof of \eqref{main-fact-odd-1}
it suffices to prove
\begin{align*}
   & \beta_i^{(N)}(k\!-\!\tfrac{n}{2}) = 0, \quad i=0,\dots,k-1, \\
   & \beta_i^{(N)}(k\!-\!\tfrac{n}{2}) = \beta_{i-k}^{(N-k)}(-k\!-\!\tfrac{n}{2}), \quad i=k,\dots,N
\end{align*}
and
\begin{align*}
   & \gamma_i^{(N)}(k\!-\!\tfrac{n}{2}) = 0, \quad i=0,\dots,k-1, \\
   & \gamma_i^{(N)}(k\!-\!\tfrac{n}{2}) = \gamma_{i-k}^{(N-k)}(-k\!-\!\tfrac{n}{2}), \quad i=k+1,\dots,N.
\end{align*}
But the first set of assertions follows directly from the definition
\eqref{b-even}. In turn, the second set of assertions follows by combining the
first set with Remark \ref{gamma-alternative}. Similarly, for the proof of
\eqref{main-fact-odd-2} it suffices to prove
\begin{align*}
   & \beta_i^{(N)}(2N\!+\!1\!-\!k\!-\!\tfrac{n-1}{2}) = 0, \quad i=N-k+1,\dots,N, \\
   & \beta_i^{(N)}(2N\!+\!1\!-\!k\!-\!\tfrac{n-1}{2}) = \beta_{i}^{(N-k)}(2N\!+\!1\!-\!k\!-\!\tfrac{n-1}{2}),
   \quad i=0,\dots,N-k
\end{align*}
and
\begin{align*}
   & \gamma_i^{(N)}(2N\!+\!1\!-\!k\!-\!\tfrac{n-1}{2}) = 0, \quad i=N-k+1,\dots,N, \\
   & \gamma_i^{(N)}(2N\!+\!1\!-\!k\!-\!\tfrac{n-1}{2}) = \gamma_{i}^{(N-k)}(2N\!+\!1\!-\!k\!-\!\tfrac{n-1}{2}),
   \quad i=0,\dots,N-k.
\end{align*}
Again, the first set of relations directly follows from the definition
\eqref{b-even} and the second set follows by combining the first set with
Remark \ref{gamma-alternative}.
\end{proof}

Now Hodge conjugation yields

\begin{theorem}\label{MainFactOdd2} Let $N \in \N_0$ and $p=1,\dots,n$. Then the
odd-order families of the second type satisfy the factorization identities
\begin{equation}\label{main-fact-odd-3}
   (\tfrac{n}{2}\!-\!p\!-\!k) D_{2N+1}^{(p\to p-1)} (k\!-\!\tfrac{n}{2})
   = D_{2N+1-2k}^{(p\to p-1)}(-k\!-\!\tfrac{n}{2}) \circ \bar{L}_{2k}^{(p)}
\end{equation}
and
\begin{equation}\label{main-fact-odd-4}
   (\tfrac{n+1}{2}\!-\!p+\!k\!) D_{2N+1}^{(p\to p-1)}(2N\!+\!1\!-\!k\!-\!\tfrac{n-1}{2}) =
   L_{2k}^{(p-1)} \circ D_{2N+1-2k}^{(p\to p-1)}(2N\!+\!1\!-\!k\!-\!\tfrac{n-1}{2})
\end{equation}
for $k=1,\dots,N$.
\end{theorem}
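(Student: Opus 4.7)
The plan is to derive Theorem \ref{MainFactOdd2} from Theorem \ref{MainFactOdd1} by Hodge conjugation, following exactly the pattern used at the end of Section \ref{factor1} to deduce Theorem \ref{MainFactEven2} from Theorem \ref{MainFactEven1}. The two key ingredients are the Hodge-conjugation identity
\[
D^{(p \to p-1)}_{2N+1}(\lambda) = (-1)^{pn}\, \star \, D^{(n-p \to n-p)}_{2N+1}(\lambda)\, \bar{\star}
\]
of Theorem \ref{Hodge-c} (for odd-order families) and the Hodge-conjugation behavior of Branson--Gover operators on $\R^{n}$ and $\R^{n-1}$. Both are already used in the proof of Theorem \ref{MainFactEven2}, so the argument is essentially a transcription.

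First I would apply Theorem \ref{MainFactOdd1} with $p$ replaced by $n-p$ to get, for \eqref{main-fact-odd-3},
\[
(\tfrac{n}{2}-(n-p)+k)\, D_{2N+1}^{(n-p \to n-p)}(k-\tfrac{n}{2})
= D_{2N+1-2k}^{(n-p \to n-p)}(-k-\tfrac{n}{2}) \circ \bar L_{2k}^{(n-p)}.
\]
Then I would compose with $\star$ on the left and $\bar{\star}$ on the right, insert $\bar{\star}^{\,2}$ between $\bar{\star}$ and $\bar L_{2k}^{(n-p)} \bar{\star}$, and invoke Theorem \ref{Hodge-c} to rewrite both of the resulting Hodge conjugates of first-type families as second-type families. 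The formula $\bar L_{2k}^{(p)} = (\tfrac{n}{2}-p+k)(\bar\delta\bar d)^k + (\tfrac{n}{2}-p-k)(\bar d\bar\delta)^k$ together with Lemma \ref{Hodge-gen}(4) (i.e.\ $\bar{\star}\bar d\bar\delta = \bar\delta\bar d\bar{\star}$ and $\bar{\star}\bar\delta\bar d = \bar d\bar\delta\bar{\star}$) gives
\[
\bar{\star}\, \bar L_{2k}^{(n-p)}\, \bar{\star} \;=\; -\,\bar{\star}^{\,2}\,\bar L_{2k}^{(p)},
\]
exactly as in the proof of Theorem \ref{MainFactEven2}; canceling $\bar{\star}^{\,2}$ against the inserted factor and collecting the overall sign $(-1)^{pn+(n-p)n}=(-1)^{n^2}$ and the prefactor $\tfrac{n}{2}-(n-p)+k = k-\tfrac{n}{2}+p$, I get \eqref{main-fact-odd-3}, with the sign on the left coming from the identity $-(k-\tfrac{n}{2}+p) = \tfrac{n}{2}-p-k$.

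For \eqref{main-fact-odd-4}, I would argue identically starting from \eqref{main-fact-odd-2} with $p \mapsto n-p$: the Hodge conjugate of $L_{2k}^{(n-p)}$ on $\R^{n-1}$ using Lemma \ref{Hodge-gen} satisfies $\star\, L_{2k}^{(n-p)}\, \star = -\,\star^{\,2}\,L_{2k}^{(p-1)}$ (because the form degree on $\R^{n-1}$ shifts by $n-1-(n-p) = p-1$), and then Theorem \ref{Hodge-c} translates the first-type families back into second-type families. The prefactor $\tfrac{n-1}{2} - (n-p) - k = p - \tfrac{n+1}{2} - k$ combines with the sign from the Hodge identity to produce $\tfrac{n+1}{2} - p + k$ on the left, as required.

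The only real work is bookkeeping: verifying that the sign in $\bar{\star}\,\bar L_{2k}^{(n-p)}\,\bar{\star} = -\,\bar{\star}^{\,2}\,\bar L_{2k}^{(p)}$ (and its $\R^{n-1}$ analogue) is $-1$ in all dimensions and form degrees, and that the overall sign produced by $\star$-conjugation of \eqref{main-fact-odd-1}, \eqref{main-fact-odd-2} matches the signs stated in \eqref{main-fact-odd-3}, \eqref{main-fact-odd-4}. Since these signs already came out correctly in the even-order analog (Theorem \ref{MainFactEven2}), no new obstacle should arise; the proof is a line-by-line repetition of the even-order Hodge-conjugation argument, with $2N$ replaced by $2N+1$. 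An alternative, fully algebraic verification — working directly from the geometric formula in Theorem \ref{coeffodd} and showing $\beta_i^{(N)}(k-\tfrac{n}{2}) = 0$ for $i<k$ and $\beta_i^{(N)}(k-\tfrac{n}{2}) = \beta_{i-k}^{(N-k)}(-k-\tfrac{n}{2})$ for $i\ge k$, plus the analogous identities at $\lambda = 2N+1-k-\tfrac{n-1}{2}$, together with the corresponding identities for $\gamma_i^{(N)}(\lambda;n-p)$ via Remark \ref{gamma-alternative} — is available as a cross-check, but the Hodge-conjugation route is shorter and is the one I would present.
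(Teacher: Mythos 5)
Your proposal is correct and is exactly the paper's route: the paper derives Theorem \ref{MainFactOdd2} from Theorem \ref{MainFactOdd1} purely by the Hodge-conjugation argument of Theorem \ref{Hodge-c}, transcribing the proof of Theorem \ref{MainFactEven2} verbatim, and all your sign and prefactor computations (in particular $\bar{\star}\,\bar{L}_{2k}^{(n-p)}\,\bar{\star}=-\bar{\star}^2\bar{L}_{2k}^{(p)}$ and its $\R^{n-1}$ analogue) check out. The only blemish is the intermediate remark that the overall sign is $(-1)^{pn+(n-p)n}$ — both occurrences of Theorem \ref{Hodge-c} contribute $(-1)^{pn}$ (the target degree $p$ is the same on both sides), so these cancel and the residual $-1$ comes solely from the Branson--Gover conjugation, which is what your final bookkeeping in fact uses.
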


Finally, we reformulate Theorem \ref{MainFactEven1} and Theorem
\ref{MainFactEven2} so that the numerical coefficients on the left-hand sides
of the factorization identities disappear. For the families of the first type
we find the following result.

\begin{theorem}\label{MainFactEven1-re} Assume that $n$ is even and $p < \frac{n}{2}$. Then the identities
in Theorem \ref{MainFactEven1} are equivalent to the factorizations
\begin{align}
   \tilde{D}_{2N}^{(p \to p)}(k\!-\!\tfrac{n}{2}) & =
   \tilde{D}_{2N-2k}^{(p \to p)}(-k\!-\!\tfrac{n}{2}) \circ \tilde{\bar{L}}_{2k}^{(p)}, \label{reno-1} \\
   \tilde{D}_{2N}^{(p \to p)}(2N\!-\!k\!-\!\tfrac{n-1}{2}) & =
   \tilde{L}_{2k}^{(p)} \circ
   \tilde{D}_{2N-2k}^{(p \to p)}(2N\!-\!k-\!\tfrac{n-1}{2}) \label{reno-2}
\end{align}
for $k=1,\dots,N$ with the renormalized families
\begin{align*}
   \tilde{D}_{2N}^{(p \to p)}(\lambda) \st \frac{D_{2N}^{(p \to p)}(\lambda)}{\lambda\!+\!p\!-\!2N}
\end{align*}
and the renormalized Branson-Gover operators
\begin{equation}\label{BG-reno1}
   \tilde{L}^{(p)}_{2N} \st \frac{L_{2N}^{(p)}}{\frac{n}{2}\!-\!p\!+\!N} =
   (\delta d)^N + \cdots
\end{equation}
on a manifold of dimension $n$.
\end{theorem}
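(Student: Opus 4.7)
The plan is to reduce the theorem to direct algebraic bookkeeping. By the definitions of the renormalized families and the explicit flat formulas \eqref{BG-flat}, \eqref{Lbar-flat}, I have
\[
   D_{2M}^{(p\to p)}(\lambda) = (\lambda\!+\!p\!-\!2M)\,\tilde{D}_{2M}^{(p\to p)}(\lambda),\quad
   \bar{L}_{2k}^{(p)} = (\tfrac{n}{2}\!-\!p\!+\!k)\,\tilde{\bar{L}}_{2k}^{(p)},\quad
   L_{2k}^{(p)} = (\tfrac{n-1}{2}\!-\!p\!+\!k)\,\tilde{L}_{2k}^{(p)},
\]
where the last two use the convention that $\tilde L$ is the renormalization of the BG operator on its own ambient manifold (of dimension $n$ for $\bar L$ and $n-1$ for $L$). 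I shall insert these into the two identities of Theorem \ref{MainFactEven1} and show that the scalar prefactors on both sides of each identity agree, so that a common nonzero factor cancels and produces \eqref{reno-1} and \eqref{reno-2}.

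For \eqref{reno-1}, the left-hand side of \eqref{fact-1b-1} becomes $(\tfrac{n}{2}\!-\!p\!+\!k)(k\!-\!\tfrac{n}{2}\!+\!p\!-\!2N)\,\tilde{D}_{2N}^{(p\to p)}(k\!-\!\tfrac{n}{2})$. On the right, the renormalization factor of $D_{2N-2k}^{(p\to p)}$ at argument $-k\!-\!\tfrac{n}{2}$ equals $(-k\!-\!\tfrac{n}{2})+p-2(N\!-\!k) = k\!-\!\tfrac{n}{2}\!+\!p\!-\!2N$, the very same shift as on the left, so the right-hand side reads $(k\!-\!\tfrac{n}{2}\!+\!p\!-\!2N)(\tfrac{n}{2}\!-\!p\!+\!k)\,\tilde{D}_{2N-2k}^{(p\to p)}(-k\!-\!\tfrac{n}{2})\circ \tilde{\bar{L}}_{2k}^{(p)}$. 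Under the standing hypotheses $n$ even, $p<\tfrac{n}{2}$ and $1\le k\le N$, the two scalars $\tfrac{n}{2}-p+k\ge 2$ and $k-\tfrac{n}{2}+p-2N\le -1$ are both nonzero, so cancellation gives \eqref{reno-1}.

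The derivation of \eqref{reno-2} is formally identical with $A\st \tfrac{n-1}{2}-p$ in place of $\tfrac{n}{2}-p$. A short computation shows that both sides of \eqref{fact-1b-2} carry the common scalar $-(A\!-\!k)(A\!+\!k) = -(A^{2}-k^{2})$; since $n$ is even, $A$ is a half-integer while $k$ is an integer, so $A\ne\pm k$ and the cancellation is valid. The extremal case $k=N$ requires no separate treatment: using the easy identity $\tilde{D}_{0}^{(p\to p)}(\lambda)=\iota^{*}$ (which comes from dividing $D_{0}^{(p\to p)}(\lambda)=(\lambda+p)\iota^{*}$ by $\lambda+p$), one checks directly that \eqref{reno-1} and \eqref{reno-2} at $k=N$ reproduce the renormalized versions of the extremal identities \eqref{fact-1}. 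Since every step is multiplication by a nonzero scalar, the passage is reversible and the two systems of identities are equivalent. The main---and only---subtlety is the verification that the cancelled scalars are nonzero, which is immediate from the parity assumption on $n$ and the bound on $p$; there is no further obstacle.
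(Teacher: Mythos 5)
Your proof is correct and follows exactly the route the paper intends: the theorem is a renormalization of Theorem \ref{MainFactEven1}, and the paper's (implicit) justification is precisely the scalar bookkeeping you carry out, together with the observation that the hypotheses $n$ even, $p<\tfrac{n}{2}$ make all normalizing constants nonzero. Your verification that the common scalars are $(\tfrac{n}{2}-p+k)(k-\tfrac{n}{2}+p-2N)$ and $-(A^2-k^2)$ with $A=\tfrac{n-1}{2}-p$ a half-integer, and your check of the extremal case $k=N$ against \eqref{fact-1}, match the paper's remarks.
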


We stress that \eqref{reno-1} and \eqref{reno-2} also include the case $k=N$
with $\tilde{D}_0^{(p \to p)}(\lambda) = \iota^*$. The assumptions in Theorem
\ref{MainFactEven1-re} guarantee that both sides of the factorization
identities are well-defined. Theorem \ref{MainFactEven1-re} resembles the
factorization identities for residue families on functions \cite{Juhl},
\cite{Juhl1}.

For the second type families we have the following analogous result.

\begin{theorem}\label{MainFactEven2-re} Assume that $n$ is even and $p < \frac{n}{2}$.
Then the identities in Theorem \ref{MainFactEven2} are equivalent to the
factorizations
\begin{align}
   \tilde{D}_{2N}^{(p \to p-1)}(k\!-\!\tfrac{n}{2}) & =
   \tilde{D}_{2N-2k}^{(p \to p-1)}(-k\!-\!\tfrac{n}{2}) \circ \tilde{\bar{L}}_{2k}^{(p)}, \label{reno-3} \\
   \tilde{D}_{2N}^{(p \to p-1)}(2N\!-\!k\!-\!\tfrac{n-1}{2}) & =
   \tilde{L}_{2k}^{(p-1)} \circ
   \tilde{D}_{2N-2k}^{(p \to p-1)}(2N\!-\!k-\!\tfrac{n-1}{2}) \label{reno-4}
\end{align}
for $k=1,\dots,N$ with the renormalized families
\begin{align*}
   \tilde{D}_{2N}^{(p \to p-1)}(\lambda) \st \frac{D_{2N}^{(p \to p-1)}(\lambda)}{\lambda\!+\!n\!-\!p}.
\end{align*}
\end{theorem}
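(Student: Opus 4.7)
The plan is to deduce the identities \eqref{reno-3} and \eqref{reno-4} from those of Theorem \ref{MainFactEven2} (and conversely) by directly substituting the defining rescalings
\[
D^{(p\to p-1)}_{2N}(\lambda) = (\lambda + n - p)\,\tilde{D}^{(p\to p-1)}_{2N}(\lambda),\qquad
\bar{L}^{(p)}_{2k} = (\tfrac{n}{2} - p + k)\,\tilde{\bar{L}}^{(p)}_{2k},\qquad
L^{(p-1)}_{2k} = (\tfrac{n+1}{2} - p + k)\,\tilde{L}^{(p-1)}_{2k},
\]
the last two being \eqref{BG-reno1} applied on the ambient manifolds $\R^n$ and $\R^{n-1}$ in form-degrees $p$ and $p-1$ respectively. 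With these substitutions the proof reduces to tracking common numerical prefactors on both sides of each identity.

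First I will specialise \eqref{fact-2b-1} to $\lambda = k - \tfrac{n}{2}$, where $\lambda + n - p = \tfrac{n}{2} - p + k$. After substitution the left-hand side factors as $(\tfrac{n}{2} - p - k)(\tfrac{n}{2} - p + k)\,\tilde{D}^{(p\to p-1)}_{2N}(k - \tfrac{n}{2})$, while the right-hand side factors as $(\tfrac{n}{2} - p - k)(\tfrac{n}{2} - p + k)\,\tilde{D}^{(p\to p-1)}_{2N-2k}(-k - \tfrac{n}{2})\circ\tilde{\bar{L}}^{(p)}_{2k}$: here the scalar $\tfrac{n}{2} - p - k$ arises from $D^{(p\to p-1)}_{2N-2k}(-k - \tfrac{n}{2})$ and $\tfrac{n}{2} - p + k$ from $\bar{L}^{(p)}_{2k}$. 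Cancelling the common factor $(\tfrac{n}{2} - p + k)(\tfrac{n}{2} - p - k)$ (wherever it is nonzero) converts \eqref{fact-2b-1} into \eqref{reno-3}, and multiplying \eqref{reno-3} back by this same factor recovers \eqref{fact-2b-1}. An entirely parallel computation at $\lambda = 2N - k - \tfrac{n-1}{2}$ shows that \eqref{fact-2b-2} and \eqref{reno-4} differ only by the common scalar $(\tfrac{n+1}{2} - p + k)(2N - k + \tfrac{n+1}{2} - p)$, which under the hypothesis $n$ even, $p < \tfrac{n}{2}$ and $1 \le k \le N$ is strictly positive, so the equivalence is immediate.

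The one real subtlety is the edge case $k = \tfrac{n}{2} - p$ in \eqref{reno-3}: there the prefactor $\tfrac{n}{2} - p - k$ vanishes, \eqref{fact-2b-1} degenerates to the trivial statement $0 = D^{(p\to p-1)}_{2N-2k}(p - n)\circ \bar{L}^{(p)}_{n-2p}$, and simultaneously the denominator in $\tilde{D}^{(p\to p-1)}_{2N-2k}(\lambda)$ vanishes at $\lambda = p - n$, so the right-hand side of \eqref{reno-3} is a priori singular. The cleanest way to handle this is to view both sides of \eqref{reno-3} as rational functions of $p$ (with $n$, $N$, $k$ fixed) and pass to the limit $p \to \tfrac{n}{2} - k$: the simple pole on the right is exactly matched by the vanishing scalar that was lost on the left, so the limit yields a consistent residue identity. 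Apart from this bookkeeping the argument is purely arithmetic, and this limit step will be the main (if minor) obstacle.
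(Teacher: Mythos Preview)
Your approach is exactly the one the paper has in mind: Theorem~\ref{MainFactEven2-re} is stated in the paper without a separate proof, since the substitution of the renormalisations $D^{(p\to p-1)}_{2N}(\lambda)=(\lambda+n-p)\tilde D^{(p\to p-1)}_{2N}(\lambda)$, $\bar L^{(p)}_{2k}=(\tfrac{n}{2}-p+k)\tilde{\bar L}^{(p)}_{2k}$ and $L^{(p-1)}_{2k}=(\tfrac{n+1}{2}-p+k)\tilde L^{(p-1)}_{2k}$ into \eqref{fact-2b-1}--\eqref{fact-2} and cancellation of the common scalar is immediate. Your bookkeeping of the prefactors is correct, including the observation that for \eqref{reno-4} the factors $\tfrac{n+1}{2}-p+k$ and $2N-k+\tfrac{n+1}{2}-p$ are half-integers (since $n$ is even) and hence never vanish.

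One remark: your edge-case analysis at $k=\tfrac{n}{2}-p$ in \eqref{reno-3} is actually \emph{more} careful than the paper, which simply asserts after the theorem that ``the assumptions in Theorem~\ref{MainFactEven2-re} guarantee that both sides of the factorization identities are well-defined''. As you correctly note, at this value the denominator of $\tilde D^{(p\to p-1)}_{2N-2k}(-k-\tfrac{n}{2})$ vanishes while the numerator does not, so the right-hand side of \eqref{reno-3} is not literally defined there; your proposed remedy via analytic continuation in $p$ (or equivalently interpreting the identity as one of polynomial families in $\lambda$) is the natural fix. The paper does not address this point explicitly.
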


Similarly as above, the identities \eqref{reno-3} and \eqref{reno-4} include
the case $k=N$ with $\tilde{D}_0^{(p \to p-1)}(\lambda) =
-\iota^*i_{\partial_n}$. The assumptions in Theorem \ref{MainFactEven2-re}
guarantee that both sides of the factorization identities are well-defined.

We omit the formulation of the odd-order analogs of these results.

\subsection{Supplementary factorizations}\label{factor2}

In the present section, we establish additional factorization identities for
the conformal symmetry breaking families. These involve the four geometric
operators $\dm$, $\delta$, $\bar{\dm}$ and $\bar{\delta}$ as factors.

We first formulate the identities which involve even-order families of the
first type, i.e., families of the form $D_{2N}^{(p \to p)}(\lambda)$.

\begin{theorem}\label{SuppFact} For $N \in \N$, we have the factorization identities
\begin{align}
   D^{(p \to p)}_{2N}(-p\!+\!2N) & = -(2N) \dm D^{(p \to p-1)}_{2N-1}(-p\!+\!2N), \quad 1 \le p \le n-1,
   \label{eq:supp2b} \\
   D^{(p \to p)}_{2N}(-p) & = (2N)  D^{(p+1 \to p)}_{2N-1}(-p\!-\!1) \bar{\dm}, \quad 0 \le p \le
   n-1. \label{eq:supp2}
\end{align}
Moreover, for $N \in \N_0$, we have
\begin{equation}\label{eq:supp1}
   (n\!-\!2p\!-\!2N\!-\!1) D^{(p \to p-1)}_{2N+1}(p\!-\!n\!+\!2N\!+\!1) =
   \delta D^{(p\to p)}_{2N}(p\!-\!n\!+\!2N\!+\!1)
\end{equation}
for $1 \le p \le n-1$ and
\begin{equation} \label{eq:supp1b}
   (n\!-\!2p\!+\!2N) D_{2N+1}^{(p+1 \to p)}(-n\!+\!p\!+\!1) =
   D_{2N}^{(p \to p)}(-n\!+\!p) \bar{\delta}
\end{equation}
for $0 \le p \le n-1$.
\end{theorem}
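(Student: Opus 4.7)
My strategy combines an equivariance--based uniqueness argument with direct computation using the geometric formulas of Section~\ref{geometric}. Take \eqref{eq:supp2b} as the prototype. By \eqref{inter-even} with $\lambda=-p+2N$, the left--hand side $D^{(p\to p)}_{2N}(-p+2N)$ intertwines $\pi^{(p)}_{-2N}$ on $\Omega^p(\R^n)$ with $\pi^{\prime (p)}_{0}$ on $\Omega^p(\R^{n-1})$. The right--hand side $-(2N)\,d\,D^{(p\to p-1)}_{2N-1}(-p+2N)$ has the same equivariance: Theorem~\ref{OddDiffOp-type2} (with $N$ replaced by $N-1$) shows that $D^{(p\to p-1)}_{2N-1}(-p+2N)$ intertwines $\pi^{(p)}_{-2N}$ with $\pi^{\prime (p-1)}_{0}$, and $d\colon\Omega^{p-1}(\R^{n-1})\to\Omega^p(\R^{n-1})$ commutes with the natural action $\pi^{\prime (\cdot)}_0$. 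By Theorem~\ref{classification}/(1), the space of equivariant operators with these weights is one--dimensional for $p\ge 1$ (the only candidate is $D^{(p\to p)}_{2N}(-p+2N)$, with Hodge compositions from type~(7) ruled out on degree/parity grounds). Hence the two sides are scalar multiples of each other.

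The scalar is fixed by a direct comparison using the geometric formulas. Substituting $\lambda=-p+2N$ into Theorem~\ref{coeffeven} kills the third sum (as $\lambda+p-2N=0$) and yields an explicit expression in the terms $(d\delta)^{N-i}\iota^*(\bar d\bar\delta)^i$ and $(d\delta)^{N-i}\iota^*(\bar\delta\bar d)^i$. For the right--hand side, substituting into Theorem~\ref{coeffodd} (with $N\mapsto N-1$) and composing with $-d$ on the left annihilates every term beginning with $(d\delta)^k$ for $k\ge 1$ via $d^2=0$, while $d(\delta d)^k\delta$ collapses to $(d\delta)^{k+1}$. Matching the coefficient of a single convenient term---for instance $(d\delta)^N\iota^*$ or $d\iota^*\bar\delta(\bar d\bar\delta)^{N-1}$---then pins the proportionality constant to $-(2N)$.

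The remaining identities can be obtained by one of two routes. Hodge conjugation, using Theorem~\ref{Hodge-c} together with Lemma~\ref{Hodge-gen} and Lemma~\ref{HodgeLemma}, swaps $d\leftrightarrow\delta$ on $\R^{n-1}$, $\bar d\leftrightarrow\bar\delta$ on $\R^n$, and exchanges first-- and second--type families with form degree $p\leftrightarrow n-p$; this converts \eqref{eq:supp2b} into \eqref{eq:supp1} and \eqref{eq:supp2} into \eqref{eq:supp1b}. Thus it suffices to prove \eqref{eq:supp2b} and \eqref{eq:supp2}, the latter by the same scheme with $\bar d$ now on the right, using Theorem~\ref{coeffeven} at $\lambda=-p$ and Theorem~\ref{coeffodd2} at $\lambda=-p-1$. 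The main obstacle is the combinatorial bookkeeping needed to align the three sums on each side after the $\lambda$--specialization; the uniqueness argument reduces this to a single leading--coefficient comparison, but the comparison itself still depends on the explicit Jacobi/Gegenbauer coefficient formulas \eqref{a-even}, \eqref{b-even}, \eqref{a-coeff-odd}, so some hand calculation with these is unavoidable.
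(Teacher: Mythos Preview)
Your equivariance--plus--uniqueness strategy is sound in principle and is indeed the heuristic the paper itself invokes (see the discussion after Theorem~\ref{MainFactEven2}). The genuine gap is the Hodge conjugation step: you claim that conjugating \eqref{eq:supp2b} yields \eqref{eq:supp1} and conjugating \eqref{eq:supp2} yields \eqref{eq:supp1b}, so that only two of the four identities need a direct proof. This is wrong. Hodge conjugation via Theorem~\ref{Hodge-c} sends first--type families at degree $p$ to second--type families at degree $n-p$ \emph{without changing the order}. Applying it to \eqref{eq:supp2b}, which relates $D_{2N}^{(p\to p)}$ to $d\,D_{2N-1}^{(p\to p-1)}$, produces an identity relating $D_{2N}^{(q\to q-1)}$ to $\delta\,D_{2N-1}^{(q\to q)}$---that is the \emph{first} identity of Theorem~\ref{SuppFact-2}, not \eqref{eq:supp1}. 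Identity \eqref{eq:supp1} relates $D_{2N+1}^{(p\to p-1)}$ to $\delta\,D_{2N}^{(p\to p)}$, which involves different orders and is not the Hodge conjugate of anything in your list. The four identities of Theorem~\ref{SuppFact} are mutually independent under Hodge conjugation; it is Theorem~\ref{SuppFact-2} that follows from Theorem~\ref{SuppFact} by that route, as the paper notes.

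Consequently you must run your uniqueness argument (or a direct computation) separately for all four identities. That is still less work than the paper's proof, which verifies each identity by expanding both sides from Theorems~\ref{coeffeven} and~\ref{coeffodd} and then proving eight explicit relations among the coefficients $\alpha_i^{(N)}$, $\beta_i^{(N)}$, $\gamma_i^{(N)}$ (equations \eqref{rel-1b}--\eqref{supp-1b}, \eqref{rel-2a}--\eqref{rel-2c}, \eqref{rel-3a}--\eqref{rel-3b}, \eqref{rel-4b}--\eqref{rel-4a}). Your method trades that full bookkeeping for a single leading--coefficient match per identity, but you should also address the middle--degree cases ($p=\tfrac{n-1}{2}$ for odd $n$, $p=\tfrac{n}{2}$ for even $n$), where Theorem~\ref{classification}/(7) supplies extra equivariant operators and one--dimensionality of the intertwining space is no longer automatic; there the direct coefficient comparison (or an analytic continuation in $n$) is still needed.
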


We stress that all factors on the right-hand sides of these identities are
conformally equivariant. For instance, the individual factors on the right-hand
side of \eqref{eq:supp2b} intertwine
$$
   D_{2N-1}^{(p \to p-1)}(-p\!+\!2N): \dm \pi^{(p)}_{-2N} \to \dm \pi_0^{\prime(p-1)}
   \quad \mbox{and} \quad
   d: \dm\pi_0^{\prime (p-1)} \to \dm\pi_0^{\prime(p)}.
$$
Similarly, the factors on the right-hand side of \eqref{eq:supp1b} intertwine
$$
   D_{2N}^{(p \to p)}(-n\!+\!p): \dm \pi^{(p)}_{n-2p} \to \dm \pi_{n-2p+2N}^{\prime (p)}
  \quad \mbox{and} \quad \bar{\delta}: \dm \pi^{(p+1)}_{n-2p-2} \to \dm \pi_{n-2p}^{(p)}.
$$
Note that the latter intertwining relation is a consequence of general
properties of the co-differential. In fact, on $(M^n,g)$ we have
$\delta_{\gamma_*g} = \gamma_* \delta_g \gamma^*$ for any diffeomorphisms
$\gamma$ and
$$
   \delta_{e^{2\varphi} g} \circ e^{-(n-2p)\varphi} = e^{-(n-2p+2)\varphi} \circ
   \delta_g \quad \mbox{on $\Omega^p(M)$}
$$
for any conformal change of $g$.

Note that the relations \eqref{eq:supp2} are equivalent to the commutative
triangles mentioned in \cite[page 593]{Juhl0}.

The following proof of Theorem \ref{SuppFact} shows that all results are direct
consequences of the geometric formulas in Section \ref{geometric}.

\begin{proof} The assertions follow from Theorem \ref{coeffeven} and Theorem \ref{coeffodd}.
We start with the proof of \eqref{eq:supp2b}. On the one hand, Theorem
\ref{coeffodd} implies
\begin{align}\label{a-1}
   \dm D_{2N-1}^{(p \to p-1)}(-p\!+\!2N) & = -\sum_{i=1}^{N-1}
   \gamma_i^{(N-1)}(-p\!+\!2N;n\!-\!p) (\dm\delta)^{N-i} \iota^* (\bar{\dm}\bar{\delta})^i \notag \\
   & - (n\!-\!2p\!+\!2N) \sum_{i=1}^{N-1} \beta_i^{(N-1)}(-p\!+\!2N) (\dm\delta)^{N-i} \iota^*
   (\bar{\delta} \bar{\dm})^i \notag \\
   & + (n\!-\!2p\!+\!1) \beta_{N-1}^{(N-1)}(-p\!+\!2N) \iota^* (\bar{\dm}\bar{\delta})^N \notag \\
   & - (n\!-\!2p\!+\!2N) \beta_0^{(N-1)}(-p\!+\!2N) (\dm\delta)^N \iota^*.
\end{align}
On the other hand, Theorem \ref{coeffeven} gives
\begin{align}\label{a-2}
   D_{2N}^{(p \to p)}(-p\!+\!2N) & = \sum_{i=1}^{N-1} (2N\!-\!2i) \alpha_i^{(N)}(-p\!+\!2N)
   (\dm\delta)^{N-i} \iota^* (\bar{\delta} \bar{\dm})^i \notag \\
   & + 2N \sum_{i=1}^{N-1} \alpha_i^{(N)}(-p\!+\!2N) (\dm\delta)^{N-i} \iota^*
   (\bar{\dm}\bar{\delta})^i \notag \\
   & + 2N \alpha_0^{(N)}(-p\!+\!2N) (\dm\delta)^{N} \iota^* \notag \\
   & + 2N \alpha_N^{(N)}(-p\!+\!2N) \iota^* (\bar{\dm}\bar{\delta})^N.
\end{align}
Now we have the relations
\begin{equation}\label{rel-1b}
   \gamma_i^{(N-1)}(-p\!+\!2N;n\!-\!p) = \alpha_i^{(N)}(-p\!+\!2N), \quad i=1,\dots,N-1
\end{equation}
and
\begin{equation}\label{rel-1a}
   2N (n\!-\!2p\!+\!2N) \beta_i^{(N-1)}(-p\!+\!2N) = (2N\!-\!2i) \alpha_i^{(N)}(-p\!+\!2N), \quad
i=0,\dots,N-1.
\end{equation}
Indeed, \eqref{a-coeff-odd} implies
\begin{multline*}
   \gamma_i^{(N-1)}(-p\!+\!2N;n\!-\!p) = (-1)^i 2^{N-1} \frac{(N\!-\!1)! }{(2N\!-\!1)!} \binom{N}{i} \\
   \times (n\!+\!2N\!-\!2p)(n\!-\!2p\!+\!2N\!-\!2i\!+\!1)
   \prod_{k=i+1}^{N-1} (-2p\!+\!4N\!+\!n\!-\!2k) \prod_{k=1}^{i-1} (n\!-\!2p\!+\!2N\!-\!2k\!+\!1).
\end{multline*}
But the last line can be simplified by extending the products up to $k=N$ and
$k=i$, respectively. The result coincides with $\alpha_i^{(N)}(-p\!+\!2N)$.
This proves \eqref{rel-1b}. Next, the left-hand side of \eqref{rel-1a} equals
\begin{align*}
   & 2N (-1)^i 2^{N-1} \frac{(N\!-\!1)!}{(2N\!-\!1)!} \binom{N-1}{i} \\ &
   \times (n\!+\!2N\!-\!2p) \prod_{k=i+1}^{N-1} (-2p\!+\!n\!+\!4N\!-\!2k)
   \prod_{k=1}^i (-2p\!+\!2N\!+n\!-\!2k\!+\!1).
\end{align*}
The last line can be simplified by extending the first product up to $k=N$. By
simplification, the result coincides with the right-hand side of
\eqref{rel-1a}.

In addition, we have the relation
\begin{equation}\label{supp-1b}
   \alpha_N^{(N)}(-p\!+\!2N) = -(n\!-\!2p\!+\!1) \beta_{N-1}^{(N-1)}(-p\!+\!2N).
\end{equation}
In fact, by definition we have
$$
   -(n\!-\!2p\!+\!1) \beta_{N-1}^{(N-1)}(-p\!+\!2N) = (-1)^{N} 2^{N-1}
   \frac{(N\!-\!1)!}{(2N\!-\!1)!} (n\!-\!2p\!+\!1) \prod_{k=1}^{N-1} (n\!-\!2p\!+\!2N\!-\!2k\!+\!1).
$$
The latter formula simplifies by extending the product up to $k=N$. The result
coincides with $\alpha_N^{(N)}(-p+N)$.

Now \eqref{eq:supp2b} follows by combining \eqref{a-1} and \eqref{a-2} with the
relations \eqref{rel-1b}--\eqref{supp-1b}.

Next, we prove \eqref{eq:supp2}. Theorem \ref{coeffodd} implies
\begin{align}\label{b-1}
   D_{2N-1}^{(p+1 \to p)}(-p\!-\!1) \bar{\dm} & = -\sum_{i=1}^{N-1}
   \gamma_i^{(N-1)}(-p\!-\!1;n\!-\!p\!-\!1) (\delta \dm)^{N-i} \iota^* (\bar{\delta} \bar{\dm})^i \notag \\
   & + (n\!-\!2p\!-\!2N\!-\!1) \sum_{i=0}^{N-2} \beta_i^{(N-1)}(-p\!-\!1) (\dm \delta)^{N-1-i} \iota^* (\bar{\delta}
   \bar{\dm})^{i+1} \notag \\
   & + (n\!-\!2p\!-\!2N\!-\!1) \beta_{N-1}^{(N-1)}(-p\!-\!1) \iota^* (\bar{\delta} \bar{\dm})^N \notag \\
   & - (n\!-\!2p\!-\!2) \beta_0^{(N-1)}(-p\!-\!1) (\delta \dm)^N \iota^*.
\end{align}
But Theorem \ref{coeffeven} yields
\begin{align}\label{b-2}
   D_{2N}^{(p \to p)}(-p)
   & = -2N \sum_{i=1}^{N-1} \alpha_i^{(N)}(-p) (\delta \dm)^{N-i} \iota^* (\bar{\delta} \bar{\dm})^i \notag  \\
   & - \sum_{i=1}^{N-1} 2i \alpha_i^{(N)}(-p) (\dm \delta)^{N-i} \iota^* (\bar{\delta}
   \bar{\dm})^i \notag \\
   & - 2N \alpha_N^{(N)}(-p) \iota^* (\bar{\delta} \bar{\dm})^N \notag \\
   & - 2N \alpha_0^{(N)}(-p) (\delta \dm)^{N} \iota^*.
\end{align}
By combining \eqref{b-1} and \eqref{b-2} with the relations
\begin{equation}\label{rel-2a}
   \gamma_i^{(N-1)}(-p\!-\!1;n\!-\!p\!-\!1) = \alpha_i^{(N)}(-p), \quad
   i=1,\dots,N-1,
\end{equation}
\begin{equation}\label{rel-2b}
   2N (n\!-\!2p\!-\!2N\!-\!1) \beta_{i-1}^{(N-1)}(-p\!-\!1) = -2i \alpha_i^{(N)}(-p), \quad i=1,\dots,N
\end{equation}
and
\begin{equation}\label{rel-2c}
   (n\!-\!2p\!-\!2) \beta_0^{(N-1)}(-p\!-\!1) = \alpha_0^{(N)}(-p),
\end{equation}
we find
$$
   D_{2N}^{(p \to p)}(-p) = 2N D_{2N-1}^{(p+1 \to p)}(-p\!-\!1) \bar{\dm}.
$$
This proves \eqref{eq:supp2}.

Next, we prove the relations \eqref{rel-2a}--\eqref{rel-2c}. By
\eqref{a-coeff-odd}, we have
\begin{align*}
& \gamma_i^{(N-1)}(-p\!-\!1;n\!-\!p\!-\!1) = (-1)^{i} 2^{N-1} \frac{(N\!-\!1)!}{(2N\!-\!1)!} \binom{N}{i} \\
& \times (n\!-\!2p\!-\!2N\!-\!1)(n\!-\!2p\!-\!2i\!-\!2) \prod_{k=i+1}^{N-1}
(n\!-\!2p\!-\!2k\!-\!2) \prod_{k=1}^{i-1}(n\!-\!2p\!-\!2k\!-\!2N\!-\!1).
\end{align*}
The last line simplifies by letting the products run from $k=i$ and $k=0$,
respectively. The result coincides with $-\alpha_i^{(N)}(-p)$. This proves
\eqref{rel-2a}. Next, we have
\begin{multline*}
   2N (n\!-\!2p\!-\!2N\!-\!1) \beta_{i-1}^{(N-1)}(-p\!-\!1) = (-1)^{i-1} 2^N \frac{N!}{(2N\!-\!1)!}
   \binom{N-1}{i-1} \\ \times (n\!-\!2p\!-\!2N\!-\!1) \prod_{k=i}^{N-1}
   (n\!-\!2p\!-\!2k\!-\!2) \prod_{k=1}^{i-1} (n\!-\!2p\!-\!2N\!-\!2k\!-\!1).
\end{multline*}
Again, the last line simplifies by letting the last product run from $k=0$.
Further simplification confirms \eqref{rel-2b}. Finally, \eqref{rel-2c} is easy
to verify.

We continue with the proof of \eqref{eq:supp1}. On the one hand, Theorem
\ref{coeffodd} implies
\begin{align}\label{c-1}
   D_{2N+1}^{(p \to p-1)}(p\!-\!n\!+\!2N\!+\!1) & = -\sum_{i=1}^N \gamma_i^{(N)}(p\!-\!n\!+\!2N\!+\!1;n\!-\!p)
   (\delta \dm)^{N-i} \delta \iota^* (\bar{\dm} \bar{\delta})^i \notag \\
   & - (2N\!+\!1) \sum_{i=1}^{N-1} \beta_i^{(N)}(p\!-\!n\!+\!2N\!+\!1)
   (\delta \dm)^{N-i} \delta \iota^* (\bar{\delta} \bar{\dm})^i \notag \\
   & - (2N\!+\!1) \beta_0^{(N)}(p\!-\!n\!+\!2N\!+\!1) (\delta \dm)^N \delta \iota^* \notag\\
   & - (2N\!+\!1) \beta_N^{(N)}(p\!-\!n\!+\!2N\!+\!1) \iota^* (\bar{\delta}
   \bar{\dm})^N.
\end{align}
On the other hand, Theorem \ref{coeffeven} yields
\begin{align}\label{c-2}
   \delta D_{2N}^{(p \to p)}(p\!-\!n\!+\!2N\!+\!1) & = (2p\!-\!n\!+\!2N\!+\!1) \sum_{i=1}^N
   \alpha_i^{(N)}(p\!-\!n\!+\!2N\!+\!1) \delta (\dm \delta)^{N-i} \iota^* (\bar{\dm} \bar{\delta})^i \notag \\
   & +  (2p\!-\!n\!+\!2N\!+\!1)  \alpha_0^{(N)}(p\!-\!n\!+\!2N\!+\!1) \delta (\dm \delta)^{N} \iota^* \notag \\
   & + (2p\!-\!n\!+\!1) \alpha_N^{(N)}(p\!-\!n\!+\!2N\!+\!1) \delta \iota^* (\bar{\delta} \bar{\dm})^N \\
   & + \sum_{i=1}^{N-1} (2p\!-\!n\!+\!2N\!+\!1\!-\!2i) \alpha_i^{(N)}(p\!-\!n\!+\!2N\!+\!1)
   (\delta \dm)^{N-i} \delta \iota^*(\bar{\delta} \bar{\dm})^{i}. \notag
\end{align}
But we have the relations
\begin{equation}\label{rel-3a}
   \gamma_i^{(N)}(p\!-\!n\!+\!2N\!+\!1;n\!-\!p) = \alpha_i^{(N)}(p\!-\!n\!+\!2N\!+\!1)
\end{equation}
for $i=1,\dots,N$ and
\begin{equation}\label{rel-3b}
   -(n\!-\!2p\!-\!2N\!-\!1) (2N\!+\!1) \beta_i^{(N)}(p\!-\!n\!+\!2N\!+\!1)
   = (2p\!-\!n\!+\!2N\!-\!2i\!+\!1) \alpha_i^{(N)}(p\!-\!n\!+\!2N\!+\!1)
\end{equation}
for $i=0,\dots,N$.

The assertion \eqref{eq:supp1} follows by combining \eqref{c-1} and \eqref{c-2}
with \eqref{rel-3a} and \eqref{rel-3b}.

Now, we prove the relations \eqref{rel-3a} and \eqref{rel-3b}. The definition
\eqref{a-coeff-odd} yields
\begin{multline*}
   \gamma_i^{(N)}(p\!-\!n\!+\!2N\!+\!1;n\!-\!p)
   = (-1)^i 2^N \frac{N!}{(N\!+\!1)(2N)!} \binom{N+1}{i} (N\!-\!i\!+\!1) \\
   \times (2p\!-\!n\!+\!2N\!+\!1) \prod_{k=i+1}^N (2p\!-\!n\!+\!4N\!-\!2k\!+\!2)
   \prod_{k=1}^{i-1} (2p\!-\!n\!+\!2N\!-\!2k\!+\!1).
\end{multline*}
The second line simplifies by letting the last product run from $k=0$. Further
simplification confirms \eqref{rel-3a}. Next, we find
\begin{multline*}
   (2p\!-\!n\!+\!2N\!-\!2i\!+\!1) \alpha_i^{(N)}(p\!-\!n\!+\!2N\!+\!1) = (-1)^i
   2^N \frac{N!}{(2N)!} \binom{N}{i} \\ \times (2p\!-\!n\!+\!2N\!-\!2i\!+\!1)
   \prod_{k=i+1}^N (2p\!-\!n\!+\!4N\!-\!2k\!+\!2) \prod_{k=1}^i (2p\!-\!n\!+\!2N\!-\!2k\!+\!3)
\end{multline*}
and
\begin{multline*}
   (n\!-\!2p\!-\!2N\!-\!1) (2N\!+\!1) \beta_i^{(N)}(p\!-\!n\!+\!2N\!+\!1)
   = (-1)^{i} 2^N \frac{N!}{(2N)!} \binom{N}{i} \\
   \times (n\!-\!2p\!-\!2N\!-\!1) \prod_{k=i+1}^N (2p\!-\!n\!+\!4N\!-\!2k\!+\!2)
   \prod_{k=1}^i (2p\!-\!n\!+\!2N\!-\!2k\!+\!1).
\end{multline*}
These formulas easily imply \eqref{rel-3b}.

Finally, we prove \eqref{eq:supp1b}. On the one hand, Theorem \ref{coeffodd}
implies
\begin{align}\label{help4-1}
   D_{2N+1}^{(p+1 \to p)}(-n\!+\!p\!+\!1)
   & = -\sum_{i=1}^N \gamma_i^{(N)}(-n\!+\!p\!+\!1;n\!-\!p\!-\!1)
   (\delta \dm)^{N-i} \delta \iota^*(\bar{\dm}\bar{\delta})^i \notag \\
   & - (2N\!+\!1) \sum_{i=1}^{N-1} \beta_i^{(N)}(-n\!+\!p\!+\!1) (\dm \delta)^{N-i} \iota^*
   \bar{\delta} (\bar{\dm} \bar{\delta})^i \notag \\
   & - (2N\!+\!1) \beta_0^{(N)}(-n\!+\!p+\!1) (\dm \delta)^N \iota^* \bar{\delta} \notag \\
   & - (2N\!+\!1) \beta_N^{(N)}(-n\!+\!p\!+\!1) \iota^* \bar{\delta} (\bar{\dm} \bar{\delta})^N.
\end{align}
On the other hand, Theorem \ref{coeffeven} gives
\begin{align}\label{help4-2}
   D_{2N}^{(p \to p)}(-n\!+\!p) \bar{\delta}
   & = \sum_{i=1}^{N-1} (-n\!+\!2p\!-\!2i) \alpha_i^{(N)}(-n\!+\!p)
   (\dm\delta)^{N-i} \iota^* (\bar{\delta} \bar{\dm})^i \bar{\delta} \notag \\
   & + (-n\!+\!2p\!-\!2N) \sum_{i=1}^N \alpha_{i-1}^{(N)}(-n\!+\!p)
   (\delta \dm)^{N-i} \delta \iota^* (\bar{\dm} \bar{\delta})^i \notag \\
   & + (-n\!+\!2p) \alpha_0^{(N)}(-n\!+\!p) (\dm \delta)^{N} \iota^* \bar{\delta} \notag \\
   & + (-n\!+\!2p\!-\!2N) \alpha_N^{(N)}(-n\!+\!p) \iota^* (\bar{\delta} \bar{\dm})^N \bar{\delta}
\end{align}
Combining \eqref{help4-1} and \eqref{help4-2} with the relations
\begin{equation}\label{rel-4b}
   \gamma_i^{(N)}(-n\!+\!p\!+\!1;n\!-\!p\!-\!1) = \alpha_{i-1}^{(N)}(-n\!+\!p), \quad i=1,\dots,N
\end{equation}
and
\begin{equation}\label{rel-4a}
   (n\!-\!2p\!+\!2N) (2N\!+\!1) \beta_i^{(N)}(-n\!+\!p\!+\!1) = (n\!-\!2p\!+\!2i) \alpha_i^{(N)}(-n\!+\!p),
   \quad i=0,\dots,N
\end{equation}
completes the proof of \eqref{eq:supp1b}.

Now we prove \eqref{rel-4b} and \eqref{rel-4a}. Equation \eqref{a-coeff-odd}
implies that
\begin{multline*}
   \gamma_i^{(N)}(-n\!+\!p\!+\!1;n\!-\!p\!-\!1) = (-1)^i 2^N \frac{N!}{(2N\!+\!1)!}
   \frac{1}{N\!+\!1} \binom{N+1}{i} i (2N\!+\!1) \\ \times (n\!-\!2p\!+\!2N)
   \prod_{k=i+1}^N (-n\!+\!2p\!-\!2k\!+\!2) \prod_{k=1}^{i-1}
   (-n\!+\!2p\!-\!2k\!-\!2N\!+\!1).
\end{multline*}
The second line simplifies by letting the first product run up to $k=N+1$.
Further simplification shows that the result coincides with
$\alpha_{i-1}^{(N)}(-n\!+\!p)$. This proves \eqref{rel-4b}. Finally, the
left-hand side of \eqref{rel-4a} equals
\begin{equation*}
   (-1)^i 2^N \frac{N!}{(2N)!} \binom{N}{i} (n\!-\!2p\!+\!2N) \prod_{k=i+1}^N
   (-n\!+\!2p\!-\!2k\!+\!2) \prod_{k=1}^i (-n\!+\!2p\!-\!2k\!-\!2N\!+\!1).
\end{equation*}
The last expression simplifies by letting the first product run up to $k=N+1$.
On the other hand, the right-hand side of \eqref{rel-4a} equals
\begin{equation*}
   (-1)^i 2^N \frac{N!}{(2N)!} \binom{N}{i} (n\!-\!2p\!+\!2i) \prod_{k=i+1}^N
   (-n\!+\!2p\!-\!2k) \prod_{k=1}^i (-n\!+\!2p\!-\!2N\!-\!2k\!+\!1).
\end{equation*}
The latter expression simplifies by letting the first product run from $k=i$.
This proves \eqref{rel-4a}. The proof is complete.
\end{proof}

By Hodge conjugation, Theorem \ref{SuppFact} implies the following
factorization identities which involve even-order families of the second type.

\begin{theorem}\label{SuppFact-2} For $N \in \N$, we have the factorization identities
\begin{align*}
   D^{(p\to p-1)}_{2N}(p\!-\!n\!+\!2N) & = -(2N) \delta D^{(p \to p)}_{2N-1}(p\!-\!n\!+\!2N), \quad 1 \le p \le n-1, \\
   D^{(p\to p-1)}_{2N}(p\!-\!n) & = (2N)  D^{(p-1 \to p-1)}_{2N-1}(p\!-\!n) \bar{\delta}, \quad 1 \le p \le n.
\end{align*}
Moreover, for $N \in \N_0$, we have
\begin{align*}
   (-n\!+\!2p\!-\!2N\!-\!1) D^{(p \to p)}_{2N+1}(-p\!+\!2N\!+\!1) & =
   \dm D^{(p \to p-1)}_{2N}(-p\!+\!2N\!+\!1), \quad 1 \le p \le n-1,\\
   (n\!-\!2p\!-\!2N\!-\!2) D_{2N+1}^{(p \to p)}(-p) & = D_{2N}^{(p+1 \to p)}(-p\!-\!1) \bar{\dm},
   \quad 0 \le p \le n-1. \label{eq:supp1b}
\end{align*}
\end{theorem}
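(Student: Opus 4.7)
The strategy is to derive each of the four identities in Theorem \ref{SuppFact-2} from a corresponding identity in Theorem \ref{SuppFact} by Hodge conjugation. The natural pairing is (1) from \eqref{eq:supp2b}, (2) from \eqref{eq:supp2}, (3) from \eqref{eq:supp1}, and (4) from \eqref{eq:supp1b}. In each pair the Hodge-conjugated operator in the composition matches, since conjugation by $\star$ (resp.\ $\bar{\star}$) converts $\dm$ on $\R^{n-1}$ (resp.\ $\bar{\dm}$ on $\R^n$) into $\delta$ (resp.\ $\bar{\delta}$) up to a sign, by Lemma \ref{Hodge-gen}/(2)--(3).

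For each case the procedure is as follows. First, substitute $p \mapsto n-p$ in the relevant identity of Theorem \ref{SuppFact} so that, after conjugation, the form degrees match those on the LHS of the target identity. Second, sandwich both sides by $\star$ on the left and $\bar{\star}$ on the right. By Theorem \ref{Hodge-c}, the sandwiched LHS equals $(-1)^{pn}$ times the LHS of the target identity in Theorem \ref{SuppFact-2}. On the RHS, the $\dm$ or $\bar{\dm}$ sitting adjacent to the Hodge star is pushed through using Lemma \ref{Hodge-gen}: namely $\star \dm = (-1)^{q+1} \delta \star$ on $\Omega^q(\R^{n-1})$ and $\bar{\dm} \bar{\star} = (-1)^q \bar{\star} \bar{\delta}$ on $\Omega^q(\R^n)$. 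This extracts exactly the outer $\delta$ or post-composed $\bar{\delta}$ appearing in the target identity. The remaining inner operator, now of the form $\star \cdot D^{(n-q \to n-q')}_M \cdot \bar{\star}$, is reconverted to a family of the opposite type by a second application of Theorem \ref{Hodge-c}, after simplifying the resulting pair of $\star^2$ and $\bar{\star}^2$ factors by Lemma \ref{Hodge-gen}/(1).

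The main obstacle is the bookkeeping of signs and parameters. Each of the three manipulations — Theorem \ref{Hodge-c}, Lemma \ref{Hodge-gen}/(2)--(3), and Lemma \ref{Hodge-gen}/(1) — contributes a sign of the form $(-1)^{f(n,p,N)}$, and these must collapse consistently with the coefficient on the right-hand side of each of (1)--(4). In particular, identities (2) and (4) involve a post-composition of $\bar{\delta}$ or $\bar{\dm}$ with an odd-order family whose parameter $\lambda$ differs from that on the LHS, paralleling the analogous non-matching of parameters in \eqref{eq:supp2} and \eqref{eq:supp1b}. Verifying that the collected signs reproduce exactly the stated coefficients, including the dimensional factors $(-n+2p-2N-1)$ and $(n-2p-2N-2)$, requires systematic use of the parities $n(n+1) \equiv 0 \pmod 2$ and $(p-1)(2n-2p+1) \equiv (p-1) \pmod 2$ to simplify exponents such as $(n-p+1)n \equiv np \pmod 2$ arising in the $\star^2$ and $\bar{\star}^2$ reductions. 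Once these parity computations are completed, each of (1)--(4) emerges directly from its Theorem \ref{SuppFact} counterpart, and no new analytic input beyond what is already contained in Theorem \ref{SuppFact} and the general Hodge calculus is required.
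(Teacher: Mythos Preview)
Your approach is correct and is precisely the one taken in the paper: the authors state that the results follow by combining Theorem \ref{Hodge-c} with Theorem \ref{SuppFact} and omit the details. Your proposal supplies exactly those omitted details, with the correct pairing of identities and the appropriate use of Lemma \ref{Hodge-gen} for the sign bookkeeping.
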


\begin{proof} The results follow by combining Theorem \ref{Hodge-c} with
Theorem \ref{SuppFact}. We omit the details.
\end{proof}

\begin{bem}\label{SF-CBGO} Theorem \ref{SuppFact} and Theorem \ref{SuppFact-2} have extensions
to general metrics in terms of residue families \cite{fjs}. From that point of
view, the following arguments are of interest. The left-hand side of
\eqref{eq:supp1} vanishes for odd $n$ and $p=\frac{n-1}{2}-N$. Hence
$$
   \delta D_{2N}^{(p \to p)}(-\tfrac{n-1}{2}+N) = 0.
$$
By Theorem \ref{MainFactEven1}, it follows that
$$
   \delta L_{2N}^{(\frac{n-1}{2}-N)} \iota^* = 0.
$$
But this vanishing follows from the double factorization \eqref{double} of the
critical Branson-Gover operators on a manifold of even dimension $n$.
Similarly, for even $n$ and $p=\frac{n}{2}-N-1$, the last identity in Theorem
\ref{SuppFact-2} implies
$$
   D_{2N}^{(\frac{n}{2}-N \to \frac{n}{2}-N-1)}(-\tfrac{n}{2}+N) \bar{d} = 0.
$$
By Theorem \ref{MainFactEven2}, it follows that
$$
   \iota^* i_{\partial_n} \bar{L}_{2N}^{(\frac{n}{2}-N)} \bar{d} = 0.
$$
Again this vanishing result should be regarded as a consequence of the double
factorization of the critical Branson-Gover operators.
\end{bem}

\subsection{Applications}\label{applic}

In the present section, we describe two applications to gauge companion and
$Q$-curvature operators.

Equation \eqref{fact-1} in Theorem \ref{MainFactEven1} and Equation
\eqref{fact-2} in Theorem \ref{MainFactEven2} show that even-order symmetry
breaking operators of both types specialize at certain arguments to
Branson-Gover operators. The following result provides an analogous description
of the gauge companion operators in terms of odd-order conformal symmetry
breaking operators of the second type.

\begin{theorem}\label{TheGOperator} Assume that $n-1$ is even and that $n-2p \ge 1$. Then
the restriction of $D^{(p \to p-1)}_{n-2p}(-p)$ to the space of closed
$p$-forms is given by the gauge companion operator $G_{n-2p}^{(p)}$. More
precisely, we have
\begin{equation}\label{DG}
   D^{(p\to p-1)}_{n-2p}(-p)|_{\ker(\bar{\dm})} = -G^{(p)}_{n-2p} \iota^*.
\end{equation}
\end{theorem}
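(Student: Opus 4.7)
Since $n-1$ is even, $n$ is odd and so $n-2p$ is odd; write $n-2p = 2N+1$ with $N = \frac{n-1}{2}-p \ge 0$. On the Euclidean space $(\R^{n-1},g_0)$ the gauge companion operator is
$$
   G^{(p)}_{n-2p} = \delta (\dm\delta)^{N} = (\delta \dm)^N \delta,
$$
so the right--hand side of \eqref{DG} reads $-(\delta \dm)^N \delta\iota^*$. My plan is to feed $\lambda = -p$ into the geometric formula for $D_{2N+1}^{(p\to p-1)}(\lambda)$ provided by Theorem \ref{coeffodd} and show that essentially only one term survives after restriction to $\ker(\bar{\dm})$.

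First I would observe that at $\lambda = -p$ the scalar factor in front of the middle sum of \eqref{SBO-odd} vanishes, since $\lambda + n - p - 2N - 1 = n-2p-2N-1 = 0$. Next, I would use the alternative expression $\gamma_i^{(N)}(\lambda;p) = (\lambda+p-2i)\beta_i^{(N)}(\lambda) - (\lambda+p-2i+1)\beta_{i-1}^{(N)}(\lambda)$ from Remark \ref{gamma-alternative} with $p$ replaced by $n-p$:
$$
   \gamma_i^{(N)}(-p;n-p) = (2N+1-2i)\beta_i^{(N)}(-p) - (2N-2i+2)\beta_{i-1}^{(N)}(-p).
$$
A direct evaluation of $\beta_i^{(N)}(-p)$ from \eqref{b-even} using $n-2p = 2N+1$ collapses the two products to
$$
   \beta_i^{(N)}(-p) = 2^{2i}\,\frac{(N!)^2 (2N-2i)!}{(2N+1)!\,((N-i)!)^2},
$$
and the identity $(2N-2i+2) = 2(N-i+1)$ then yields $(2N+1-2i)\beta_i^{(N)}(-p) = (2N-2i+2)\beta_{i-1}^{(N)}(-p)$, hence $\gamma_i^{(N)}(-p;n-p) = 0$ for every $i\ge 1$. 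Thus the first sum in \eqref{SBO-odd} vanishes identically.

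Only the third sum of \eqref{SBO-odd} is then left, with coefficient $-(\lambda+n-p) = -(n-2p) = -(2N+1)$ at $\lambda=-p$. On $\ker(\bar{\dm})$ we have $(\bar{\delta}\bar{\dm})^i = 0$ for all $i \ge 1$, so only the $i=0$ contribution survives. Combined with the formula above, $\beta_0^{(N)}(-p) = 1/(2N+1)$, giving
$$
   D^{(p\to p-1)}_{n-2p}(-p)\big|_{\ker(\bar{\dm})} = -(2N+1)\beta_0^{(N)}(-p)\,(\delta\dm)^N \delta \iota^* = -(\delta\dm)^N \delta \iota^* = -G^{(p)}_{n-2p}\iota^*,
$$
which is the desired identity. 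The only non-routine step is the combinatorial cancellation $\gamma_i^{(N)}(-p;n-p) = 0$; all the rest is bookkeeping and the elementary observation that $(\bar{\delta}\bar{\dm})^i$ annihilates closed forms for $i\ge 1$.
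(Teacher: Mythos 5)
Your proof is correct and follows essentially the same route as the paper: specialize the geometric formula of Theorem \ref{coeffodd} at $\lambda=-p$, kill the middle sum by its vanishing scalar prefactor $\lambda+n-p-2N-1$, kill the first sum via $\gamma_i^{(N)}(-p;n-p)=0$ for $i\ge 1$, and reduce the third sum to its $i=0$ term on $\ker(\bar{\dm})$. Your explicit verifications of $\gamma_i^{(N)}(-p;n-p)=0$ (via Remark \ref{gamma-alternative}) and of $(n-2p)\beta_0^{(N)}(-p)=1$ correctly supply the two computational facts that the paper's proof merely asserts.
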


\begin{proof} Let $n-2p=2N+1$. Theorem \ref{coeffodd} implies
\begin{equation*}
   D^{(p\to p-1)}_{n-2p}(-p)|_{\ker(\bar{\dm})} = - \sum_{i=1}^N
   \gamma_i^{(N)}(-p;n\!-\!p) (\delta \dm)^{N-i} \delta \iota^* (\bar{\dm} \bar{\delta})^i
   - (n\!-\!2p) \beta_0^{(N)}(-p) (\delta \dm)^{N} \delta \iota^*.
\end{equation*}
But
$$
   \gamma_i^{(N)}(-p;n\!-\!p) = 0 \quad \mbox{for $i \ge 1$}
   \quad \mbox{and} \quad (n\!-\!2p) \beta^{(N)}_0(-p) = 1
$$
yield the result
\begin{equation*}
   D^{(p\to p-1)}_{n-2p}(-p)|_{\ker(\bar{\dm})} = -\delta (\dm\delta)^{\frac{n-1}{2}-p}\iota^*.
\end{equation*}
This proves the assertion.
\end{proof}

\begin{example} We illustrate Theorem \ref{TheGOperator} in low-order special cases. Using Example
\ref{LowOrderExampleDiffOp}, we get
\begin{align*}
   & D_1^{(p\to p-1)}(-p) = -\delta\iota^* = -G_1^{(p)} \iota^* \quad \text{for}\quad n-2p=1,\\
   & D_3^{(p\to p-1)}(-p)|_{\ker(\bar{d})} = -\delta \dm\delta\iota^* = -G_3^{(p)}
   \iota^* \quad \text{for }\quad n-2p=3.
\end{align*}
Note that for the first-order operator the  restriction to the kernel of
$\bar{\dm}$ is unnecessary.
\end{example}

We continue with a discussion of {\em $Q$-curvature} operators. We observe that
the supplementary factorization \eqref{eq:supp2} (or Theorem \ref{coeffeven})
shows that the restriction of $D_{2N}^{(p\to p)}(\lambda)$ to $\ker(\bar{\dm})$
vanishes at $\lambda=-p$. This motivates the following definition.

\begin{defn}\label{QCurvPoly} The $Q$-curvature polynomial
$$
   Q^{(p)}_{2N}(\lambda): \Omega^p(\R^n)|_{\ker(\bar{\dm})} \to \Omega^p(\R^{n-1})
$$
is defined by
\begin{equation}\label{Q-poly}
   (\lambda+p) Q^{(p)}_{2N}(\lambda) \st D_{2N}^{(p \to p)}(\lambda)|_{\ker(\bar{\dm})}.
\end{equation}
\end{defn}

Definition \ref{QCurvPoly} extends the notion of $Q$-curvature polynomials
introduced in \cite{Juhl}. We recall that, for general metrics, we define
$Q^{res}_{2N}(\lambda) \st D^{res}_{2N}(\lambda)(1)$. The polynomial
$Q^{res}_{2N}(\lambda)$ vanishes at $\lambda=0$. For the Euclidean metric
$g_0$, however, all $Q$-curvature polynomials $Q^{res}_{2N}(\lambda)$ vanish
identically. The fact that this is no longer the case for $p>0$ follows from
the following consequence of Theorem \ref{coeffeven}.

\begin{cor}\label{Q-pol} The polynomials $Q_{2N}^{(p)}(\lambda):
\Omega^p(\R^n) \to \Omega^p(\R^{n-1})$ are given by the explicit formula
$$
   Q_{2N}^{(p)}(\lambda) = \sum_{i=0}^N \alpha_i^{(N)}(\lambda) (\dm\delta)^{N-i} \iota^* (\bar{\dm}
   \bar{\delta})^i
$$
with the coefficients $\alpha_i^{(N)}$ as defined by \eqref{a-even}.
\end{cor}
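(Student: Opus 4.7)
The proof will be a direct computation starting from the geometric formula for $D_{2N}^{(p\to p)}(\lambda)$ given in Theorem \ref{coeffeven}. The plan is to apply the defining relation $(\lambda+p) Q_{2N}^{(p)}(\lambda) = D_{2N}^{(p\to p)}(\lambda)|_{\ker(\bar{\dm})}$ and show that restriction to $\ker(\bar{\dm})$ kills every summand in \eqref{SBO-even} except the first sum, which visibly equals $(\lambda+p)$ times the claimed formula. Canceling the overall factor $(\lambda+p)$ will then yield the result.

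First I would observe that on $\ker(\bar{\dm})$ the operator $(\bar{\delta}\bar{\dm})^i$ is trivially zero for every $i\ge 1$. This immediately annihilates the entire second sum in \eqref{SBO-even} (whose index runs from $i=1$ to $N-1$) as well as all summands of the third sum corresponding to $i\ge 1$. The only potentially surviving term from the third sum is the $i=0$ contribution $(\lambda+p-2N)\,\alpha_0^{(N)}(\lambda)(\delta\dm)^N\iota^*$.

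The key remaining point is that this leftover summand also vanishes on $\ker(\bar{\dm})$. This follows from the intertwining relation $\dm\iota^* = \iota^*\bar{\dm}$ of Lemma \ref{DiffCoDiff}/(1): for $\omega\in\ker(\bar{\dm})$ we have $\dm\iota^*\omega = \iota^*\bar{\dm}\omega = 0$, hence
\begin{equation*}
   (\delta\dm)^N\iota^*\omega = (\delta\dm)^{N-1}\delta\bigl(\dm\iota^*\omega\bigr) = 0.
\end{equation*}
Thus on $\ker(\bar{\dm})$ only the first sum of \eqref{SBO-even} contributes, giving
\begin{equation*}
   D_{2N}^{(p\to p)}(\lambda)|_{\ker(\bar{\dm})} = (\lambda+p)\sum_{i=0}^N \alpha_i^{(N)}(\lambda)(\dm\delta)^{N-i}\iota^*(\bar{\dm}\bar{\delta})^i.
\end{equation*}
Dividing both sides by $(\lambda+p)$ as per Definition \ref{QCurvPoly} produces the claimed formula for $Q_{2N}^{(p)}(\lambda)$.

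There is no serious obstacle here; the entire argument reduces to two elementary observations about the kernel of $\bar{\dm}$, both made possible because Theorem \ref{coeffeven} has already done the work of assembling the families into a form in which the three sums are neatly separated by the algebraic factors $(\lambda+p)$, $(\lambda+p-2i)$, and $(\lambda+p-2N)$. The mild subtlety worth flagging is that the $i=0$ term of the third sum would not vanish identically as a differential operator on $\Omega^p(\R^n)$; it disappears only upon restriction to $\ker(\bar{\dm})$ via the intertwining $\dm\iota^*=\iota^*\bar{\dm}$, which is exactly why $Q_{2N}^{(p)}(\lambda)$ is defined only on closed forms in Definition \ref{QCurvPoly}.
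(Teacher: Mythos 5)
Your proof is correct and follows exactly the route the paper intends: the corollary is stated as a direct consequence of Theorem \ref{coeffeven}, and your computation — killing the second and third sums of \eqref{SBO-even} on $\ker(\bar{\dm})$, with the $i=0$ term of the third sum handled via $\dm\iota^*=\iota^*\bar{\dm}$, then cancelling the factor $(\lambda+p)$ — is precisely the verification the paper leaves implicit. Nothing is missing.
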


\begin{example} We have
\begin{equation*}
   Q^{(p)}_{2}(\lambda) = (2\lambda\!+\!n\!-\!2) \dm \delta\iota^* -(2\lambda\!+\!n\!-\!3)\iota^*\bar{\dm}\bar{\delta}
\end{equation*}
and
\begin{align*}
   Q^{(p)}_{4}(\lambda) & = \tfrac 13 \big[(2\lambda\!+\!n\!-\!2)(2\lambda\!+\!n\!-\!4)
   (\dm\delta)^2 \iota^*
   -2(2\lambda\!+\!n\!-\!4)(2\lambda\!+\!n\!-\!5)(\dm\delta)\iota^*(\bar{\dm}\bar{\delta})\\
   & + (2\lambda\!+\!n\!-\!5)(2\lambda\!+\!n\!-\!7)(\bar{\dm}\bar{\delta})^2\big].
\end{align*}
\end{example}

Corollary \ref{Q-pol} shows that, for general $\lambda$, the $Q$-curvature
polynomial $Q^{(p)}_{2N}(\lambda)$ is {\em not} a tangential operator with
respect to the subspace $\R^{n-1}$. However, for certain values of $\lambda$ it
reduces to a tangential operator. In fact, we have the following result.

\begin{cor}\label{TangQPoly} At the argument $\lambda=N-\frac{n-1}{2}$,
the $Q$-curvature polynomial $Q^{(p)}_{2N}(\lambda)$ reduces to a tangential
operator. More precisely, we have the formula
$$
   Q_{2N}^{(p)}(N-\tfrac{n-1}{2}) = (\dm\delta)^N \iota^*.
$$
\end{cor}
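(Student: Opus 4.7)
The plan is to evaluate the explicit formula from Corollary \ref{Q-pol} at the distinguished argument and show that all but one term vanish. Concretely, I would start from
$$Q_{2N}^{(p)}(\lambda) = \sum_{i=0}^N \alpha_i^{(N)}(\lambda) (\dm\delta)^{N-i} \iota^* (\bar{\dm}\bar{\delta})^i$$
and specialize to $\lambda = N - \tfrac{n-1}{2}$, for which $2\lambda + n = 2N+1$.

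With this value of $\lambda$, the two products in the definition
$$\alpha_i^{(N)}(\lambda) = (-1)^i 2^N \tfrac{N!}{(2N)!} \tbinom{N}{i} \prod_{k=i+1}^N (2\lambda+n-2k) \prod_{k=1}^i (2\lambda+n-2k-2N+1)$$
become $\prod_{k=i+1}^N (2N+1-2k)$ and $\prod_{k=1}^i (2-2k)$, respectively. The key observation is that, for every $i \ge 1$, the second product contains the factor corresponding to $k=1$, which equals $0$; hence $\alpha_i^{(N)}(N-\tfrac{n-1}{2}) = 0$ for $i=1,\dots,N$, and only the term $i=0$ survives in the sum for $Q_{2N}^{(p)}$.

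It then remains to evaluate $\alpha_0^{(N)}(N-\tfrac{n-1}{2})$. In this case, the second product is empty (hence equals $1$) and the first product is $\prod_{k=1}^N (2N+1-2k) = (2N-1)(2N-3)\cdots 3 \cdot 1 = \tfrac{(2N)!}{2^N N!}$. Therefore
$$\alpha_0^{(N)}(N-\tfrac{n-1}{2}) = 2^N \tfrac{N!}{(2N)!} \cdot \tfrac{(2N)!}{2^N N!} = 1,$$
and we conclude $Q_{2N}^{(p)}(N-\tfrac{n-1}{2}) = (\dm\delta)^N \iota^*$, which in particular depends only on the tangential data $\iota^*\omega$ and is thus a tangential operator.

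The argument is essentially a direct substitution and double-factorial identity; there is no genuine obstacle here, since the heavy lifting was already done in establishing the geometric formula (Theorem \ref{coeffeven}) from which Corollary \ref{Q-pol} is extracted. The only thing to watch is the compatibility of the two edge cases (empty products equal to $1$) with the convention spelled out after \eqref{b-even}.
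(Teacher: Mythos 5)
Your proposal is correct and follows the same route as the paper: specialize the explicit formula of Corollary \ref{Q-pol} and observe that $\alpha_i^{(N)}(N-\tfrac{n-1}{2})=0$ for $i\ge 1$ while $\alpha_0^{(N)}(N-\tfrac{n-1}{2})=1$. The paper simply asserts these two coefficient identities, whereas you supply the (correct) computation via the vanishing factor $k=1$ and the double-factorial evaluation $\prod_{k=1}^N(2N+1-2k)=(2N)!/(2^N N!)$.
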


\begin{proof} The result follows from Corollary \ref{Q-pol} using
$$
\alpha_i^{(N)}(N-\tfrac{n-1}{2})=0 \quad \mbox{for $i \ge 1$}
$$
and
$$
\alpha_0^{(N)}(N-\tfrac{n-1}{2})=1.
$$
The proof is complete.
\end{proof}

Corollary \ref{TangQPoly} shows that, for the Euclidean metric, the following
definition of the $Q$-curvature operator using conformal symmetry breaking
operators coincides with the definition by
\begin{equation}\label{Q-classical}
L_{2N}^{(p)}|_{\ker \dm} = (\tfrac{n}{2}\!-\!N\!-\!p) Q_{2N}^{(p)},
\end{equation}
which generalizes the classical definition
$$
P_{2N}(1) = (\tfrac{n}{2}\!-\!N) Q_{2N}
$$
of the $Q$-curvature $Q_{2N}$. Note that \eqref{Q-classical} defines
$Q_{2N}^{(p)}$ only if $(\tfrac{n}{2}\!-\!N\!-\!p) \ne 0$, i.e., only in the
non-critical case.

\begin{defn}\label{Q-curvature}
\begin{equation}
   Q_{2N}^{(p)} \st Q_{2N}^{(p)}(N-\tfrac{n-1}{2}):
   \Omega^p(\R^{n-1})|_{\ker(\dm)} \to \Omega^p(\R^{n-1}).
\end{equation}
\end{defn}

The above observations also lead to the following result which should be
considered as an analog of the holographic description of the critical Branson
curvature $Q_n$ of $(M^n,g)$ in terms of $\dot{D}_n^{res}(0)(1)$ (\cite{Juhl},
\cite{JG-holo}); here dot denotes the derivative with respect to $\lambda$.

\begin{theorem}\label{holo-formula} Assume that $n-1$ is even and that $n-2p \ge 3$.
Then
\begin{equation}\label{holo}
   \dot{D}_{n-1-2p}^{(p \to p)}(-p)|_{\ker (\bar{\dm})} = Q_{n-1-2p}^{(p)} \iota^*
\end{equation}
as an identity of operators on closed forms on $\R^n$.
\end{theorem}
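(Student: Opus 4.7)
The plan is to derive the holographic formula as a direct consequence of Definition \ref{QCurvPoly} and the tangential reduction in Corollary \ref{TangQPoly}. Write $2N = n-1-2p$, so that $N = \tfrac{n-1}{2}-p$ is a non-negative integer under the hypotheses (with $N \ge 1$ because $n-2p \ge 3$). The key observation is that the identity
\begin{equation*}
   D^{(p\to p)}_{2N}(\lambda)|_{\ker(\bar{\dm})} = (\lambda+p)\, Q^{(p)}_{2N}(\lambda),
\end{equation*}
which is the defining relation of the $Q$-curvature polynomial, already contains the factor $(\lambda+p)$. Differentiating in $\lambda$ and evaluating at $\lambda=-p$ therefore kills the $(\lambda+p)\dot Q^{(p)}_{2N}(\lambda)$ contribution and leaves
\begin{equation*}
   \dot D^{(p\to p)}_{2N}(-p)|_{\ker(\bar{\dm})} = Q^{(p)}_{2N}(-p).
\end{equation*}

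Next, I would observe that for the chosen $N$ the special argument $\lambda=-p$ coincides with $\lambda = N-\tfrac{n-1}{2}$. Thus Corollary \ref{TangQPoly} applies and gives
\begin{equation*}
   Q^{(p)}_{2N}(-p) = (\dm\delta)^N \iota^*,
\end{equation*}
which is manifestly tangential. Finally, by Definition \ref{Q-curvature}, the right-hand side factors as $Q^{(p)}_{n-1-2p}\circ \iota^*$, where $Q^{(p)}_{n-1-2p} = (\dm\delta)^N$ is the $Q$-curvature operator on $\Omega^p(\R^{n-1})|_{\ker(\dm)}$. Noting that $\iota^*$ maps $\ker(\bar{\dm})$ into $\ker(\dm)$ by Lemma \ref{DiffCoDiff}/(1), so that the composition $Q^{(p)}_{n-1-2p}\iota^*$ is well-defined on $\ker(\bar{\dm})$, we obtain the claimed identity
\begin{equation*}
   \dot D^{(p\to p)}_{n-1-2p}(-p)|_{\ker(\bar{\dm})} = Q^{(p)}_{n-1-2p}\,\iota^*.
\end{equation*}

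There is no real obstacle here: once the $Q$-curvature polynomial framework and Corollary \ref{TangQPoly} are in place, the holographic formula is essentially a bookkeeping consequence of the simple zero of $D^{(p\to p)}_{2N}(\lambda)|_{\ker(\bar{\dm})}$ at $\lambda=-p$ and the coincidence of $-p$ with the distinguished argument $N-\tfrac{n-1}{2}$ at which the polynomial degenerates to a tangential operator. The only point worth verifying carefully is that these two arguments indeed coincide, i.e.\ that $2N=n-1-2p$ forces $-p = N-\tfrac{n-1}{2}$; this is the arithmetic reason the formula takes the clean form above and parallels the holographic identity $\dot D^{res}_n(0;g)(1)=Q_n(g)$ at the critical dimension.
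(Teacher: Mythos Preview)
Your proof is correct and follows essentially the same route as the paper's own argument: differentiate the defining relation $(\lambda+p)Q^{(p)}_{2N}(\lambda)=D^{(p\to p)}_{2N}(\lambda)|_{\ker(\bar{\dm})}$ at $\lambda=-p$, then invoke Corollary~\ref{TangQPoly} at the coinciding argument $-p=N-\tfrac{n-1}{2}$. Your write-up is slightly more explicit about the differentiation step and the well-definedness of $Q^{(p)}_{n-1-2p}\iota^*$ on $\ker(\bar{\dm})$, but the underlying logic is identical.
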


\begin{proof} On the one hand, \eqref{Q-poly} implies
$$
   \dot{D}_{2N}^{(p \to p)}(-p)|_{\ker (\bar{d})} = Q_{2N}^{(p)}(-p).
$$
On the other hand, we have
$$
   Q_{2N}^{(p)}(N-\tfrac{n-1}{2}) = (\dm\delta)^N \iota^*
$$
by Corollary \ref{TangQPoly}. In the critical case $2N=n-1-2p$, a comparison of
both facts proves the assertion.
\end{proof}

Finally, we give a proof of the double factorization property of the critical
Branson-Gover operators for the Euclidean metric on $\R^{n-1}$ from the
perspective of conformal symmetry breaking operators.

\begin{cor}\label{OneSideFact} Let $n-1$ be even and $p < \frac{n-1}{2}$. Then the
critical Branson-Gover operators $L_{n-2p-1}^{(p)}$ of the Euclidean metric on
$\R^{n-1}$ satisfy the double factorization identity
\begin{equation*}
   L_{n-2p-1}^{(p)} = (n\!-\!2p\!-\!1) \, \delta Q^{(p+1)}_{n-2p-3} \dm,
\end{equation*}
where $Q_{n-2p-3}^{(p+1)}=(\dm\delta)^{\frac{n-3}{2}-p}$.
\end{cor}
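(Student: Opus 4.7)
My plan is to derive the double factorization by combining the main factorization \eqref{fact-1} at the critical argument with the supplementary factorization \eqref{eq:supp2}, and then to compute a second-type odd-order family explicitly at a single special value of the parameter.

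Setting $N = \frac{n-1}{2} - p$ so that $2N = n-1-2p$ (even by hypothesis), the main factorization \eqref{fact-1} gives
\[
  D_{n-1-2p}^{(p \to p)}(-p) = -L_{n-1-2p}^{(p)}\iota^*,
\]
while the supplementary factorization \eqref{eq:supp2} applied to the same even-order family yields
\[
  D_{n-1-2p}^{(p \to p)}(-p) = (n-1-2p)\, D_{n-2-2p}^{(p+1\to p)}(-p-1)\,\bar d.
\]
Equating the two expressions reduces the desired identity to the computation of $D_{n-2-2p}^{(p+1\to p)}(-p-1)\,\bar d$ as an operator on $\Omega^p(\R^n)$.

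For this computation I would plug $\lambda = -p-1$ and $2M+1 = n-2-2p$ into the geometric formula for $D_{2M+1}^{(p+1\to p)}(\lambda)$ from Theorem \ref{coeffodd}. A short inspection shows that the two linear prefactors $\lambda + n - (p+1) - 2M - 1$ and $\lambda + n - (p+1)$ both vanish at this argument, and using Remark \ref{gamma-alternative} together with the resulting vanishing of $\beta_i^{(M)}$-terms one finds that every $\gamma_i^{(M)}(-p-1;n-p-1)$ vanishes as well. After composing with $\bar d$, the identity $\bar d^{\,2} = 0$ kills all terms of the form $\iota^*(\bar\delta\bar d)^i \bar d$ for $i\ge 1$, so the only surviving term is the one involving $\beta_0^{(M)}(-p-1)(\delta d)^{M}\delta\iota^*\bar d$. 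A direct telescoping product calculation shows $(n-2p-2)\beta_0^{(M)}(-p-1) = 1$, and using $\iota^*\bar d = d\iota^*$ this simplifies to $-(\delta d)^{M+1}\iota^* = -(\delta d)^{\frac{n-1}{2}-p}\iota^*$.

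Combining the previous two paragraphs gives $L_{n-1-2p}^{(p)}\iota^* = (n-1-2p)(\delta d)^{\frac{n-1}{2}-p}\iota^*$, and since every form on $\R^{n-1}$ is the pull-back under $\iota$ of its trivial extension to $\R^n$, I can cancel $\iota^*$ to obtain $L_{n-1-2p}^{(p)} = (n-1-2p)(\delta d)^{\frac{n-1}{2}-p}$. The final step is the algebraic rewrite $(\delta d)^{\frac{n-1}{2}-p} = \delta(d\delta)^{\frac{n-3}{2}-p} d = \delta\, Q_{n-2p-3}^{(p+1)} d$, which gives the stated factorization. The main obstacle is the bookkeeping in the middle paragraph: one must verify not only the vanishing of the $\gamma_i^{(M)}$ and of the prefactor of the second $\beta_i^{(M)}$-sum at the critical argument, but also the precise normalization $(n-2p-2)\beta_0^{(M)}(-p-1)=1$, which relies on recognizing the product $\prod_{k=1}^M(n-2p-2-2k)$ as the double factorial $(2M-1)!!$.
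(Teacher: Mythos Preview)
Your overall strategy is sound and the final computation is correct, but the middle paragraph contains a genuine error in the justification that you should repair.

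The claim that both linear prefactors vanish is false. At $\lambda=-p-1$ with $2M+1=n-2-2p$ you do get $\lambda+n-(p+1)-2M-1=0$, but the second prefactor is $\lambda+n-(p+1)=n-2p-2\neq 0$ (in fact this is exactly the factor $n-2p-2$ that reappears in your normalization identity $(n-2p-2)\beta_0^{(M)}(-p-1)=1$, so your exposition is internally inconsistent: you first claim it vanishes and then use it). Consequently, your argument for the vanishing of the $\gamma_i^{(M)}(-p-1;n-p-1)$ via Remark~\ref{gamma-alternative} and ``the resulting vanishing of $\beta_i^{(M)}$-terms'' does not work as written, since the $\beta_i^{(M)}(-p-1)$ themselves do not vanish.

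The $\gamma$-coefficients \emph{do} vanish, but for a different reason: inspect the defining formula \eqref{a-coeff-odd} directly. With the second argument equal to $n-p-1$, the bracket there becomes
\[
(\lambda+n-p-2M-2)(M+1)(2\lambda+n-2i)+(\lambda+p+1)(2M+1)(M-i+1),
\]
and at $\lambda=-p-1$ both summands vanish (the first because $\lambda+n-p-2M-2=0$, the second because $\lambda+p+1=0$). With this correction in place, your analysis is complete: the first sum vanishes by $\gamma=0$, the second sum vanishes by its zero prefactor, and in the third sum (with nonzero prefactor $-(n-2p-2)$) only the $i=0$ term survives after composition with $\bar d$, yielding exactly $-(\delta d)^{M+1}\iota^*$ as you state.

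For comparison, the paper's proof takes a more structural route: instead of evaluating $D_{2N-1}^{(p+1\to p)}(-p-1)$ directly from Theorem~\ref{coeffodd}, it invokes the \emph{second} supplementary factorization \eqref{eq:supp1} to rewrite this odd-order second-type family as $-\delta D_{2N-2}^{(p+1\to p+1)}$ restricted to closed forms, then identifies the latter with a $Q$-curvature polynomial via Corollary~\ref{TangQPoly}, and finally uses analytic continuation in $n$ to pass to the critical case. Your approach is more elementary (one explicit computation instead of two structural identities plus analytic continuation) and avoids the continuation step; the paper's approach better illustrates the organizing role of the symmetry breaking families, which is the stated purpose of the corollary.
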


\begin{proof} The supplementary factorizations \eqref{eq:supp2} and \eqref{eq:supp1}
read
\begin{equation}\label{eq:FirstFact}
   D_{2N}^{(p\to p)}(-p) = 2N D_{2N-1}^{(p+1\to p)}(-p-1) \bar{\dm},
\end{equation}
and
\begin{equation}\label{eq:SecondFact}
   (2N\!+\!2p\!-\!n\!+1)D_{2N-1}^{(p+1\to p)}(p\!-\!n\!+\!2N)
   = -\delta D_{2N-2}^{(p+1\to p+1)}(p\!-\!n\!+\!2N),
\end{equation}
respectively. By Theorem \ref{TangQPoly}, we have
\begin{equation*}
   D_{2N-2}^{(p+1\to p+1)}(p\!-\!n\!+\!2N)|_{\ker(\bar{\dm})}
   = (2N\!+\!2p\!-\!n\!+\!1) \, Q_{2N-2}^{(p+1)}(p\!-\!n\!+\!2N).
\end{equation*}
Now assume that $(2N+2p-n+1) \ne 0$. Then the restriction of
\eqref{eq:SecondFact} to the subspace of closed forms gives, after division by
the common factor,
\begin{equation}\label{eq:QApperas}
   D_{2N-1}^{(p+1\to p)}(p\!-\!n\!+\!2N)|_{\ker(\bar{\dm})}
   = -\delta Q_{2N-2}^{(p+1\to p+1)}(p\!-\!n\!+\!2N).
\end{equation}
Next, we apply analytic continuation in the dimension $n$ and conclude that if
$2N=n-2p-1$ then \eqref{eq:FirstFact}\ and \eqref{eq:QApperas} combine into
\begin{equation*}
   D_{n-2p-1}^{(p\to p)}(-p) = -(n\!-\!2p\!-\!1) \delta Q_{n-2p-3}^{(p+1)}(-p-1) \bar{\dm}.
\end{equation*}
An application of Theorem \ref{MainFactEven1} to the left-hand side and an
application of Theorem \ref{TangQPoly} to the right-hand side turn the last
equation into
\begin{equation*}
   L_{n-2p-1}^{(p)} = (n\!-\!1\!-\!2p)\, \delta Q_{n-2p-3}^{(p+1)} \dm
\end{equation*}
using $\dm \iota^* = \iota^* \bar{\dm}$. This completes the proof.
\end{proof}

Analogous arguments using the supplementary factorizations \eqref{eq:supp2b}
and \eqref{eq:supp1b} prove the double factorization identity
\begin{equation}\label{double-fact-2}
   \iota^* \bar{L}^{(p)}_{2p-n} = (n\!-\!2p) d \dot{D}_{2p-n-2}^{(p-1 \to
   p-1)}(-n\!+\!p\!-\!1) \bar{\delta}
\end{equation}
for even $n$ and $\frac{n}{2} < p \le n-1$. Alternatively,
\eqref{double-fact-2} is a direct consequence of Theorem \ref{coeffeven}.

\section*{Appendix: Gegenbauer and Jacobi polynomials}\label{app}

We summarize basic conventions and properties concerning Gegenbauer and Jacobi
polynomials.

First, we recall that the Pochhammer symbol of $a\in\C$ is defined by
$$
   (a)_l \st a(a+1) \cdots (a+l-1)
$$
for $l\in\N$, and $(a)_0 \st 1$. Then the generalized hypergeometric function
${}_pF_q$ of type $(p,q)$, $p,q\in\N$, is defined by the Taylor series
\begin{equation}
    {}_pF_q\left[\begin{matrix}a_1\;,\;\dots\;,\;a_p\\b_1\;,\;\dots\;,\;b_q\end{matrix};z\right]
      \st \sum_{l=0}^\infty\frac{(a_1)_l\dots (a_p)_l}{(b_1)_l\dots (b_q)_l}\frac{z^l}{l!},
\end{equation}
for $a_i\in\C$ ($1\leq i\leq q$), $b_j\in\C\setminus\{-\N_0\}$ ($1\leq j\leq
q$), and $z\in\C$.

Let $m\in\N$ and $\alpha, \beta \in \C$ such that $(\alpha+1)_m \neq 0$. The
corresponding Jacobi polynomial of degree $m$ is defined by
\begin{equation}\label{jac}
   P_m^{(\alpha,\beta)}(z) \st \frac{(\alpha+1)_m}{m!} {}_2F_1
   \left[\begin{matrix}-m\;,\;1+\alpha+\beta+m\\\alpha+1\end{matrix};\frac{1-z}{2}\right].
\end{equation}
A specialization of Jacobi polynomials leads to Gegenbauer polynomials:
\begin{align}
   C_m^{\alpha}(z) & = \frac{(2\alpha)_m}{(\alpha+\frac 12)_m} P_m^{(\alpha-\frac 12,\alpha-\frac 12)}(z)\notag\\
   & = \frac{(2\alpha)_{m}}{m!}
   \,_2F_1\left[\begin{matrix}-m\;,\;2\alpha+m\\ \alpha+\tfrac{1}{2}\end{matrix};\frac{1-z}{2}\right].
\end{align}
An explicit formula for Gegenbauer polynomials (see \cite[Section $3.15$,
formula $(9)$]{batemanerdelyi}) reads
\begin{equation}\label{Gegenbauer-Taylor}
   C_m^{\alpha}(z)=\sum_{k=0}^{\lfloor m/2\rfloor} (-1)^k \frac{\Gamma(m-k+\alpha)}
   {\Gamma(\alpha)k!(m\!-\!2k)!} (2z)^{m-2k}.
\end{equation}
In particular, we have the even polynomials
\begin{equation*}
   \frac{N!}{(-\lambda\!-\!\frac{n-1}{2})_N} C_{2N}^{-\lambda-\frac{n-1}{2}}(z)
   = \sum_{j=0}^N a_j^{(N)}(\lambda) (-1)^j z^{2N-2j}
\end{equation*}
and the odd polynomials
\begin{equation*}
   \frac{N!}{2(-\lambda\!-\!\frac{n-1}{2})_{N+1}} C_{2N+1}^{-\lambda-\frac{n-1}{2}}(z)
   = \sum_{j=0}^N b_j^{(N)}(\lambda)(-1)^j z^{2N+1-2j}
\end{equation*}
with the coefficients
\begin{equation}\label{eq:DefA}
   a_j^{(N)}(\lambda) \st (-2)^{N-j} \frac{N!}{j!(2N\!-\!2j)!}
   \prod_{k=j}^{N-1}(2\lambda\!-\!4N\!+\!2k\!+\!n\!+\!1) a_N^{(N)}(\lambda)
\end{equation}
and
\begin{equation}\label{eq:DefB}
   b_j^{(N)}(\lambda)\st (-2)^{N-j} \frac{N!}{j!(2N\!-\!2j\!+\!1)!}
   \prod_{k=j}^{N-1}(2\lambda\!-\!4N\!+\!2k\!+\!n\!-\!1) b_N^{(N)}(\lambda)
\end{equation}
for $0\leq j\leq N-1$ and $a_N^{(N)}(\lambda) = b_N^{(N)}(\lambda)=1$ (see
\cite[Theorems 5.1.2, 5.1.4]{Juhl}).

In the present paper, we also refer to the coefficients \eqref{eq:DefA} and
\eqref{eq:DefB} with {\em non-trivial normalizations} $a_N^{(N)}(\lambda)$ and
$b_N^{(N)}(\lambda)$ as {\em even} and {\em odd} {\em Gegenbauer coefficients},
respectively. However, we usually reserve the notation $a_j^{(N)}(\lambda)$ and
$b_j^{(N)}(\lambda)$ for the coefficients \eqref{eq:DefA} and \eqref{eq:DefB}
with the trivial normalizations $a_N^{(N)}(\lambda)=1$ and
$b_N^{(N)}(\lambda)=1$. Gegenbauer coefficients with non-trivial normalizations
will be denoted by different letters, e.g., $p_j^{(N)}(\lambda)$.

Accordingly (and by abuse of language) we shall also talk about Gegenbauer
polynomials with non-trivial normalizations. Additional parameters in
Gegenbauer coefficients are always separated by a semicolon $``;"$. The
Gegenbauer coefficients satisfy the recurrence relations
\begin{align}
   (N\!-\!j\!+\!1)(2N\!-\!2j\!+\!1) a^{(N)}_{j-1}(\lambda) +
   j(2\lambda\!+\!n\!-\!4N\!+\!2j\!-\!1) a^{(N)}_j(\lambda) & = 0, \label{Gegen-rec1} \\
   (N\!-\!j\!+\!1)(2N\!-\!2j\!+\!3) b^{(N)}_{j-1}(\lambda) +
   j(2\lambda\!+\!n\!-\!4N\!+\!2j\!-\!3) b^{(N)}_j(\lambda) & = 0 \label{Gegen-rec2}
\end{align}
for all $1 \le j \le N$.


\end{document}